\newcommand{\Wsp}[0]{\mathcal{W}^{\mathfrak{sp}}_{\infty}}
\newcommand{\vir}[0]{\text{Vir}^c}
\newcommand{\aff}{V^k(\mathfrak{sl}_2)}
\newcommand{\nlcalg}{\cL^{\fr{sp}}_{\infty}}
\newcommand{\Winf}{\mathcal{W}_{\infty}}
\newcommand{\Wev}{\mathcal{W}^{\text{ev}}_{\infty}}
\numberwithin{equation}{section}
\newtheorem{definition}{Definition}[section]
\newtheorem{lemma}{Lemma}[section]
\newtheorem{remark}{Remark}[section]
\newtheorem{proposition}{Proposition}[section]
\newtheorem{theorem}{Theorem}[section]
\newtheorem{corollary}{Corollary}[section]
\newtheorem{conjecture}{Conjecture}[section]
\newcommand{\B}[1]{\textbf{#1}}
\newcommand{\T}[1]{\text{#1}}
\newcommand{\fr}[1]{\mathfrak{#1}}
\def\cO{{\cal O}}
\def\cG{{\cal G}}
\def\cA{{\cal A}}
\def\ra{\rightarrow}
\def\cA{{\mathcal A}}
\def\cC{{\mathcal C}}
\def\cD{{\mathcal D}}
\def\cE{{\mathcal E}}
\def\cF{{\mathcal F}}
\def\cG{{\mathcal G}}
\def\cH{{\mathcal H}}
\def\cI{{\mathcal I}}
\def\cL{{\mathcal L}}
\def\cN{{\mathcal N}}
\def\cO{{\mathcal O}}
\def\cS{{\mathcal S}}
\def\cV{{\mathcal V}}
\def\cW{{\mathcal W}}
\def\ga{{\mathfrak a}}
\def\gb{{\mathfrak b}}
\def\gf{{\mathfrak f}}
\def\gg{{\mathfrak g}}
\def\gl{{\mathfrak l}}
\def\go{{\mathfrak o}}
\def\gp{{\mathfrak p}}
\def\gs{{\mathfrak s}}
\title{Building blocks for $\cW$-algebras of classical types}
\author{Thomas Creutzig}
\address{Department Mathematik, FAU Erlangen}
\email{creutzigt@math.fau.de}
\thanks{T. Creutzig is supported by NSERC Discovery Grant \#RES0048511.}
\author{Vladimir Kovalchuk}
\address{Department of Mathematics, University of Denver}
\email{vladimir.kovalchuk@du.edu}
\author{Andrew R. Linshaw}
\address{Department of Mathematics, University of Denver}
\email{andrew.linshaw@du.edu}
\thanks{A. Linshaw is supported by NSF Grant DMS-2401382 and Simons Foundation Grant MPS-TSM-00007694.}
\thanks{We thank S. Nakatsuka for pointing out a mistake in the classification of indecomposable nilpotents in types $B$ and $D$ in an earlier version.}
\begin{document}
	\maketitle

	\pagestyle{plain}
	
\noindent {ABSTRACT. The universal $2$-parameter vertex algebra $\cW_{\infty}$ of type $\cW(2,3,4,\dots)$ serves as a classifying object for vertex algebras of type $\cW(2,3,\dots,N)$ for some $N$ in the sense that under mild hypothesis, all such vertex algebras arise as quotients of $\cW_{\infty}$. There is an $\mathbb{N} \times \mathbb{N}$ family of such $1$-parameter vertex algebras which, after tensoring with a Heisenberg algebra, are known as $Y$-algebras. They were introduced by Gaiotto and Rap\v{c}\'ak and are expected to be the building blocks for all $\cW$-algebras in type $A$, i.e., every $\cW$-(super)algebra of type $A$ is an extension of a tensor product of finitely many $Y$-algebras. Similarly, the orthosymplectic $Y$-algebras are $1$-parameter quotients of a universal $2$-parameter vertex algebra $\cW^{\text{ev}}_{\infty}$ of type $\cW(2,4,6,\dots)$, which is a classifying object for vertex algebras of type $\cW(2,4,\dots, 2N)$ for some $N$. Unlike type $A$, these algebras are not all the building blocks for $\cW$-algebras of types $B$, $C$, and $D$. In this paper, we construct a new universal $2$-parameter vertex algebra of type $\cW(1^3, 2, 3^3, 4, 5^3,6,\dots)$ which we denote by $\cW^{\gs\gp}_{\infty}$ since it contains a copy of the affine vertex algebra $V^k(\gs\gp_2)$. We identify $8$ infinite families of $1$-parameter quotients of $\cW^{\gs\gp}_{\infty}$ which are analogues of the $Y$-algebras. We regard $\cW^{\gs\gp}_{\infty}$ as a fundamental object on equal footing with $\cW_{\infty}$ and $\cW^{\text{ev}}_{\infty}$, and we give some heuristic reasons for why we expect the $1$-parameter quotients of these three objects to be the building blocks for all $\cW$-algebras of classical types. Finally, we prove that $\cW^{\gs\gp}_{\infty}$ has many quotients which are strongly rational. This yields new examples of strongly rational $\cW$-superalgebras.}

\section{Introduction}
$\cW$-algebras are an important class of vertex algebras that have been studied in both the mathematics and physics literature for nearly 40 years. For any Lie (super)algebra $\mathfrak{g}$ and nilpotent element $f$ in the even part of $\mathfrak{g}$, the $\cW$-algebra $\cW^k(\mathfrak{g}, f)$ at level $k\in \mathbb{C}$, is defined via the generalized Drinfeld-Sokolov reduction \cite{KRW}. They are a common generalization of affine vertex algebras and the Virasoro algebra, as well as the $\cN=1$, $2$, and $4$ superconformal algebras. When $f$ is principal nilpotent, $\cW^k(\mathfrak{g},f)$ is called a principal $\cW$-algebra and is denoted by $\cW^k(\mathfrak{g})$; they appear in many settings including integrable systems \cite{B89,BM,GD,DS}, conformal field theory to higher spin gravity duality \cite{GG}, Alday-Gaiotto-Tachikawa correspondence \cite{AGT,SV,BFN}, and the quantum geometric Langlands program \cite{Fre07,Gai16,AFO,CG,FG,AF}. In general, $\cW^k(\gg,f)$ can be regarded as a chiralization of the finite $\cW$-algebra $\cW^{\text{fin}}(\gg,f)$ defined by Premet \cite{Pre}, which is a quantization of the coordinate ring of the Slodowy slice $S_f\subseteq \gg \cong \gg^*$.

Principal $\cW$-algebras satisfy {\it Feigin-Frenkel duality}, which is a vertex algebra isomorphism $\cW^k(\gg) \cong \cW^{k'}(^L \gg)$. Here $^L\gg$ is the Langlands dual Lie algebra, $h^{\vee}$,  $^L h^{\vee}$ are the dual Coxeter numbers of $\gg$, $^L\gg$, and $(k+h^{\vee})(k' + ^Lh^{\vee}) = r$ where $r$ is the lacity \cite{FF}. For $\gg$ simply-laced, there is another duality called the {\it coset realization} which was proven \cite{ACL}; see \cite{CN} for a short and recent proof. For generic values of $\ell$, we have a vertex algebra isomorphism 
	\begin{equation} \label{eq:cosetrealization} \cW^{\ell}(\gg)\cong  \text{Com}(V^{k+1}(\gg),V^k(\gg)\otimes L_1(\gg)),\qquad \ell +h^{\vee}=\frac{k+h^{\vee}}{(k+1)+ h^{\vee}},\end{equation} which descends to an isomorphism of simple vertex algebras $\cW_{\ell}(\gg)\cong \text{Com}(L_{k+1}(\gg), L_k(\gg)\otimes L_1(\gg))$ for all admissible levels $k$. This was a longstanding conjecture \cite{BBSS,FaLu,FKW92,KW89}, vastly generalizing the Goddard-Kent-Olive construction of the Virasoro algebra \cite{GKO}. Accordingly, we will call the cosets on the right hand side of \eqref{eq:cosetrealization} {\it GKO cosets}. Note that when $k$ is an admissible level for $\widehat{\gg}$, $\ell$ is a nondegenerate admissible level for $\widehat{\gg}$, so that $\cW_{\ell}(\gg)$ is strongly rational (i.e., lisse and rational) \cite{Ar1, Ar2}. There is a different coset realization for type $B$ (and type $C$, by Feigin-Frenkel duality) appearing in \cite{CL4}: we have
\begin{equation} \label{cosetBC} \cW^{\ell}(\gs\go_{2n+1})  \cong \text{Com}(V^k(\gs\gp_{2n}), V^k(\go\gs\gp_{1|2n})), \qquad \ell + h^{\vee}_{\gs\go_{2n+1}} =  \frac{k + h^{\vee}_{\go\gs\gp_{1|2n}}}{k + h^{\vee}_{\gs\gp_{2n}}}. 
\end{equation} However, no such coset realization is known for types $F$ and $G$, even conjecturally.

	Non-principal $\cW$-algebras are not as well understood, but they have become increasingly important in physics in recent years; see for example \cite{ArMT,CG,CH,GR,GRZ,ProI,ProII,PR}. In \cite{GR}, Gaiotto and Rap\v{c}\'ak introduced a family of vertex algebras $Y_{N_1, N_2, N_3}[\psi]$ called $Y$-algebras, which are indexed by three integers $N_1, N_2, N_3\geq 0$ and a complex parameter $\psi$. They are associated to interfaces of twisted $\cN=4$ supersymmetric gauge theories with gauge groups $U(N_1)$,  $U(N_2)$, and $U(N_3)$. The interfaces satisfy a permutation symmetry which is expected to induce a corresponding symmetry on the vertex algebras. This led Gaiotto and Rap\v{c}\'ak to conjecture a triality of isomorphisms of $Y$-algebras.

	The $Y$-algebras with one label zero are (up to a Heisenberg algebra) the affine cosets of a family of non-principal $\cW$-(super)algebras of type $A$ which are known as {\it hook-type}. For $n\geq 1$ and $m\geq 0$, we define $\cW^{\psi}(n,m) = \cW^k(\gs\gl_{n+m}, f_{n,1^m})$ where $f_{n,1^m}$ corresponds to the hook-type partition $(n,1,1,\dots, 1)$ of $n+m$, and $\psi = k + n+m$. It has affine subalgebra $V^{\psi-m-1}(\gg\gl_m)$, and affine coset 
$$\cC^{\psi}(n,m) = \text{Com}(V^{\psi-m-1}(\gg\gl_m), \cW^{\psi}(n,m)).$$ When $n=0$, we need a different definition; we set
$$\cC^{\psi}(0,m) = \text{Com}(V^{k-1}(\gg\gl_m), V^k(\gs\gl_m) \otimes \cS(m)),$$ where $\cS(m)$ is the rank $m$ $\beta\gamma$-system, which has an action of $L_{-1}(\gg\gl_m)$.

Similarly, for $n\geq 1$, $m\geq 0$, and $m\neq n$, we define $\cV^{\psi}(n,m) = \cW^k(\gs\gl_{n|m}, f_{n|1^m})$, where $f_{n|1^m}$ is principal in the subalgebra $\gs\gl_n$ and trivial in $\gs\gl_m$, and $\psi = k+n-m$. It has affine subalgebra $V^{-\psi-m+1}(\gg\gl_m)$, and affine coset
$$\cD^{\psi}(n,m) = \text{Com}(V^{-\psi-m+1}(\gg\gl_m), \cV^{\psi}(n,m)).$$
For $m = n \geq 2$, we need a slightly different definition: we define $\cD^{\psi}(n,n) = \text{Com}(V^{-\psi-n+1}(\gs\gl_n), \cV^{\psi}(n,n))^{\text{GL}_1}$, where $\cV^{\psi}(n,n) = \cW^k(\gp\gs\gl_{n|n}, f_{n|1^n})$. When $n=0$, we set
$$\cD^{\psi}(0,m) = \text{Com}(V^{-k+1}(\gg\gl_m), V^{-k}(\gs\gl_m) \otimes \cE(m)),$$ where $\cE(m)$ is the rank $m$ $bc$-system, which has an action of $L_{1}(\gg\gl_m)$. We also set $\cD^{\psi}(1,1) \cong \cA(1)^{\text{GL}_1}$ where $\cA(1)$ is the rank $1$ symplectic fermion algebra. Following the notation in \cite{CL3}, the triality theorem is that for integers $n\geq m \geq 0$, we have isomorphisms of $1$-parameter vertex algebras
\begin{equation} \label{typeAtriality} \cD^\psi(n, m)  \cong \cC^{\psi^{-1}}(n-m, m) \cong \cD^{\psi'}(m, n),\qquad  \frac{1}{\psi} +\frac{1}{\psi'} =1.\end{equation}
The case $\cD^\psi(n, 0)  \cong \cC^{\psi^{-1}}(n, 0)$ is just Feigin-Frenkel duality, so we may regard the isomorphisms $\cD^\psi(n, m)  \cong \cC^{\psi^{-1}}(n-m, m)$ as generalizations of Feigin-Frenkel duality. Similarly, 
the case $\cD^\psi(n, 0)  \cong \cD^{\psi'}(0, n)$ is exactly the coset realization; using the fact that $\cE(n)$ is an extension $L_1(\gg\gl_n)$, $\cD^{\psi'}(0, n)$ is easily seen to be isomorphic to the GKO coset appearing in \eqref{eq:cosetrealization}. Therefore the isomorphisms $\cD^\psi(n, m)  \cong \cD^{\psi'}(m, n)$ can be regarded as generalizations of the coset realization.

The key idea in the proof of \eqref{typeAtriality} that $\cC^\psi(n, m)$ and $\cD^\psi(n, m)$ can be realized explicitly as simple $1$-parameter quotients of the universal $2$-parameter vertex algebra $\cW_{\infty}$ of type $\cW(2,3,\dots)$. This is a classifying object for vertex algebras of type $\cW(2,3,\dots, N)$ for some $N$ satisfying some mild hypothesis. It was known to physicists since the early 1990s, and was constructed rigorously by the third author in \cite{Lin}. After tensoring with a Heisenberg algebra, $\cW_{\infty}$ has some better properties and is related to several other structures arising in different contexts. For example, up to suitable completions its associative algebra of modes is isomorphic to the Yangian of $\widehat{\gg\gl_1}$ \cite{AS,MO,Ts}, as well as the algebra ${\bf SH}^c$ defined in \cite{SV} as a certain limit of degenerate double affine Hecke algebras of $\mathfrak{gl}_n$. This identification allowed  Schiffmann and Vasserot to define an action of $\cW^{\ell}(\gs\gl_n)$ on the equivariant cohomology of the moduli space of $U_n$-instantons \cite{SV}.

Recently, it has been conjectured that the $Y$-algebras are the building blocks for {\it all} $\cW$-algebras in type $A$ in the sense that any such $\cW$-algebra is an extension of a tensor product of finitely many $Y$-algebras; see \cite[Conjecture B]{CFLN}. This is based on \cite[Conjecture A]{CFLN}, which says that the quantum Drinfeld-Solokov reduction can be carried out in stages; see \cite{GJ} for a similar conjecture in the setting of finite $\cW$-algebras. In \cite{CFLN}, this conjecture was proven at the level of graded characters and was also verified by computer for all $\cW$-algebras in type $A$ of rank at most $4$. In \cite{FFFN}, a more refined picture connecting all the $\cW$-algebras of $\gs\gl_4$ via partial and inverse reduction was presented. In these examples, it was also shown that reduction by stages gives more than just an isomorphism of vertex algebras; the total reduction functor and iterated reduction functors are isomorphic on the Kazhdan-Lusztig category.

Note that the strongly rational vertex algebras $\cW_{\ell}(\gs\gl_n)$ for all $n\geq 2$ and nondegenerate admissible levels $\ell$ for $\widehat{\gs\gl}_n$, are quotients of $\cW_{\infty}$. It is expected that these are all the strongly rational quotients $\cW_{\infty}$, but this is still an open question. The rationality of all the exceptional $\cW$-algebras in type $A$ was proven in \cite{AvE}, and the Kac-Wakimoto-Arakawa rationality conjecture was proven in full generality by McRae in \cite{McR}. From the perspective of building blocks, the rationality of such $\cW$-algebras can also be explained by the observation that at the special levels where rationality occurs, each of the building blocks should be isomorphic to one of the strongly rational algebras $\cW_{\ell}(\gs\gl_n)$.

	In \cite{GR}, Gaiotto and Rap\v{c}\'ak also introduced the orthosymplectic $Y$-algebras, which can be realized as affine cosets of $\cW$-(super)algebras in types $B$, $C$, and $D$. They conjectured a similar triality of isomorphisms which includes as special cases Feigin-Frenkel duality, as well as the coset realizations \eqref{eq:cosetrealization} for type $D$ and \eqref{cosetBC} for type $B$. This conjecture was proven by the first and third authors in \cite{CL4} by realizing these algebras explicitly as simple $1$-parameter quotients of the universal $2$-parameter vertex algebra $\cW^{\text{ev}}_{\infty}$ of type $\cW(2,4,6,\dots)$. This algebra was also known to physicists \cite{CGKV}, and was constructed by Kanade and the third author in \cite{KL}. We mention that $\cW^{\text{ev}}_{\infty}$ has several families of quotients that are strongly rational. First, the algebras $\cW_{\ell}(\gs\go_{2n+1})$ and $\cW_{r}(\gs\go_{2n})^{\mathbb{Z}_2}$, where $\ell$ and $r$ are nondegenerate admissible for $\widehat{\gs\go}_{2n+1}$ and $\widehat{\gs\go}_{2n}$, respectively, are all strongly rational quotients of $\cW^{\text{ev}}_{\infty}$. Also, another special case of orthosymplectic triality is a coset realization of principal $\cW$-algebras of $\go\gs\gp_{1|2n}$:
\begin{equation}\label{GKOtypeB}  \cW^{\psi' - n-1/2}(\go\gs\gp_{1|2n}) \cong \text{Com}(V^{-2\psi -2n +2}(\gs\go_{2n+1}), V^{-2\psi -2n+1}(\gs\go_{2n+1}) \otimes \cF(2n+1)), \qquad \frac{1}{\psi} + \frac{1}{\psi'} = 2.
\end{equation} Here $\cF(2n+1)$ is the rank $2n+1$ free fermion algebra. This result was proven in \cite{CL4} by first establishing the isomorphism of $\mathbb{Z}_2$-orbifolds of both sides, which are $1$-parameter quotients of $\cW^{\text{ev}}_{\infty}$, and then proving the uniqueness of the corresponding simple current extension. We regard the right hand side of \eqref{GKOtypeB} as the GKO coset of type $B$, and it was also conjectured in \cite{CL4} that this coset is strongly rational whenever the level $-2\psi -2n +1$ is admissible for $\widehat{\gs\go}_{2n}$. This would then imply the strong rationality of the corresponding $\mathbb{Z}_2$-orbifolds.

By analogy with type $A$, one might expect that the building blocks for $\cW$-(super)algebras of types $B$, $C$, and $D$ are the orthosymplectic $Y$-algebras, but it is readily seen that these are some, but not all, of the necessary building blocks. The reduction by stages conjecture says that if a nilpotent $f$ can be decomposed as $f = f_1 + f_2$ with $f_1 \geq f_2$ in a sense we shall define in Section \ref{sec:reduction}, then the reduction $H_f(V^k(\gg))$ coincides with $H_{f_2}(H_{f_1}(V^k(\gg)))$. We call a nilpotent $f$ {\it indecomposable} if it cannot be written in this way. The reason that only $Y$-algebras appear in \cite[Conjecture A]{CFLN} is that the only indecomposable nilpotents in type $A$ are the hook-type nilpotents. But in types $B$, $C$, and $D$, there are more indecomposable nilpotents. This suggests that more building blocks will be needed.

\subsection{Main result}
Our main result in this paper, which is a paraphrasing of Theorems \ref{thm:induction}, \ref{one-parameter quotients theorem}, and Corollary \ref{Wsp freely generated}, is the following
\begin{theorem} There exists a unique $2$-parameter vertex algebra $\cW^{\gs\gp}_{\infty}$ with the following features:
\begin{enumerate}
\item It is defined over a localization of the polynomial ring $\mathbb{C}[c,k]$ and is freely generated of type
\begin{equation} \label{sp2starting} \cW(1^3, 2, 3^3, 4, 5^3, 6,\dots),\end{equation} and is weakly generated by the fields in weights up to $4$.
\item The three fields in weight $1$ generate a copy of the affine vertex algebra $V^k(\gs\gp_2)$.
\item The field in weight $2$ is a conformal vector with central charge $c$.
\item The fields in each even weight $2,4,6,\dots$ transform as the trivial $\gs\gp_2$-module.
\item The three fields in each odd weight $3,5,7,\dots$ transform as the adjoint $\gs\gp_2$-module. 
\end{enumerate}
Moreover, $\cW^{\gs\gp}_{\infty}$ serves as a classifying object for vertex algebras with these properties; any vertex algebra with a strong generating set of type \eqref{sp2starting} (not necessarily minimal) satisfying the above conditions, is a quotient of $\cW^{\gs\gp}_{\infty}$.
\end{theorem}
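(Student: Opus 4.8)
The plan is to reduce the entire statement to the determination of the operator product expansions (OPEs) among a finite set of low-weight generators, and then to solve the resulting associativity constraints over the ring $\mathbb{C}[c,k]$. First I would fix the generating data demanded by (1)--(5): three weight-$1$ fields $X^0, X^{\pm}$ closing into $V^k(\gs\gp_2)$, a conformal vector $L$ of central charge $c$ in weight $2$, an adjoint triple in each odd weight and a singlet in each even weight, all of them apart from $L$ taken to be Virasoro primary. Using $\gs\gp_2 \cong \slt$ and its Clebsch--Gordan rules, I would write down the most general $\gs\gp_2$-equivariant and conformal-weight-homogeneous OPE among the generators of weights $\le 4$, with the structure constants treated as unknowns over $\mathbb{C}[c,k]$. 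Equivariance forces every OPE coefficient to lie in the appropriate $\mathrm{Hom}_{\gs\gp_2}$ space; for instance, a product of two adjoint (odd-weight) fields decomposes as trivial $\oplus$ adjoint $\oplus$ the $5$-dimensional module, and since there are \emph{no} $5$-dimensional generators, that component must be carried by normally ordered products. This observation, together with the module constraints (4) and (5), already cuts the number of free constants down to a finite list.

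Next I would impose associativity. Since the algebra is to be weakly generated in weights up to $4$, the fields in weights $5,6,7,\dots$ must be expressible as normally ordered products and derivatives of the weight-$\le 4$ generators; consequently the entire OPE algebra, and hence the full vertex algebra structure, is determined by the OPEs among the weight-$\le 4$ generators. I would therefore run the Jacobi (Borcherds) identities on all triples of these low generators, yielding a system of polynomial equations in the unknown structure constants. After using the residual freedom to rescale each primary generator in weights $\ge 3$, I expect this system to have a solution set which is exactly a two-parameter family, with the free parameters identified as $c$ (the central charge of $L$) and $k$ (the level of $V^k(\gs\gp_2)$), and with every remaining structure constant a rational function of $c$ and $k$; the denominators that occur dictate the precise localization of $\mathbb{C}[c,k]$ over which $\cW^{\gs\gp}_{\infty}$ is defined. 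This simultaneously yields existence (the solution defines a consistent OPE algebra, as in Theorem \ref{thm:induction}) and uniqueness up to isomorphism.

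To obtain the freeness claim (Corollary \ref{Wsp freely generated}), I would present $\cW^{\gs\gp}_{\infty}$ as the quotient of the free OPE algebra on the prescribed generators by the ideal generated by the Jacobi relations, and then verify that no further relations appear by comparing graded characters with the formal character predicted by free generation of type \eqref{sp2starting}; equivalently, one exhibits a PBW-type spanning set of iterated normally ordered products and checks its linear independence. Finally, for the classifying property (the content of Theorem \ref{one-parameter quotients theorem}), let $\cA$ be any vertex algebra strongly generated by fields of type \eqref{sp2starting}, not necessarily minimally, and satisfying (2)--(5). Its generators carry the same $\gs\gp_2$-module and conformal structure, so its OPE coefficients among the weight-$\le 4$ generators must solve the very same associativity system; hence they are obtained by specializing $(c,k)$ to the central charge and level of $\cA$. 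This specialization defines a vertex algebra homomorphism $\cW^{\gs\gp}_{\infty} \to \cA$, which is surjective because $\cA$ is generated by the images of those generators, exhibiting $\cA$ as a quotient.

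The main obstacle is the associativity analysis: showing that the Jacobi system has \emph{precisely} a two-parameter solution, neither over-determined nor under-determined, and that all Jacobi identities at higher weight are automatically consistent, so that no hidden obstruction forces additional relations or shrinks the parameter space. This step is heavily computational and will almost certainly require computer algebra for the OPEs. The delicate points are controlling the $5$-dimensional $\gs\gp_2$-modules appearing in intermediate products (which must decouple onto composite fields), verifying that the weak generation in weights $\le 4$ is self-consistent so that the decoupling of the weight-$\ge 5$ generators respects every Jacobi identity, and pinning down the exact localization of $\mathbb{C}[c,k]$ needed for all structure constants to be regular. A useful safeguard against errors in pinning the rational functions is to evaluate them against concrete realizations, since a rational function in $c,k$ is determined by its values on infinitely many specializations.
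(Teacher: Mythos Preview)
Your outline captures the broad strategy---impose $\gs\gp_2$-equivariance and conformal constraints on the OPEs, solve Jacobi identities to pin down structure constants as rational functions of $(c,k)$---and this matches the paper's approach at the base step. However, there are two genuine gaps.

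First, the extension to higher weight is not a matter of checking Jacobi identities only among the weight-$\le 4$ generators. The higher generators are \emph{defined} by the raising property $W^{n+2}:=W^4_{(1)}W^n$, and one must then determine all OPEs $W^i(z)W^j(w)$ for arbitrary $i,j$ and verify consistency. The paper does this by an explicit induction on total weight (Lemmas \ref{lemma:recursions0}--\ref{rproduct}): assuming all OPEs with $i+j\le 2n+1$ are known, specific Jacobi identities---mostly of the form $J_{r,s}(W^4,W^a,W^b)$ and $J_{r,s}(W^3,W^a,W^b)$---are used to solve for the OPEs with $i+j\in\{2n+2,2n+3\}$. Proposition \ref{prop:structure constants} first computes closed-form double-factorial formulas for the leading structure constants, which are needed to show that the linear recursions arising from these Jacobi identities are nonsingular. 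You flag this as a ``delicate point'' but supply no mechanism; without the inductive scheme it is unclear that the OPE algebra closes at all, and your claim that ``the entire OPE algebra is determined by the OPEs among the weight-$\le 4$ generators'' presupposes exactly what this induction is meant to establish.

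Second, your freeness argument is circular. You propose to ``verify that no further relations appear by comparing graded characters'', but the graded character of the quotient by Jacobi relations is precisely the unknown; and checking PBW linear independence abstractly would require controlling infinitely many Shapovalov-type determinants over $\mathbb{C}(c,k)$, which is no easier. The paper's device is essential here: it exhibits an infinite family of \emph{concrete} simple quotients---the generalized parafermions $\cP^k(n)=\mathrm{Com}(V^k(\gs\gp_{2n}),V^k(\gs\gp_{2n+2}))$---whose large-level limit is $\cH(3)\otimes\cH(4n)^{\mathrm{Sp}_{2n}}$, with first relation in weight $n^2+3n+2$ by Weyl's second fundamental theorem (Proposition \ref{weakgenerationOrbifold}). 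Since $n$ is arbitrary, any putative null vector in $\cW^{\gs\gp}_\infty$ at weight $N$ would descend to $\cP^k(n)$ for all large $n$, contradicting the known character; this is Corollary \ref{Wsp freely generated}. You mention concrete realizations only as a ``safeguard'' for pinning rational functions, but in fact they carry the entire weight of the free-generation proof.
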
 
There are $8$ infinite families of $1$-parameter quotients of $\cW^{\gs\gp}_{\infty}$ which are either $\cW$-algebras of a type we call {\it $\gs\gp_2$-rectangular}, or (orbifolds of) affine cosets of $\cW$-algebras of a type we call {\it $\gs\gp_2$-rectangular with a tail}. They are denoted by $\cC^{\psi}_{XY}(n,m)$ where $X = B,C$ and $Y = B,C,D,O$, and $n,m \in \mathbb{N}$. This list is quite parallel to the list of orthosymplectic $Y$-algebras; accordingly we call them $Y$-algebras of type $C$. We will give some heuristic reasons based on decompositions of nilpotents in types $B$, $C$, and $D$ for why we expect the $Y$-algebras of type $C$ to account for all the new building blocks for $\cW$-algebras of orthosymplectic types. In particular, we expect that every $\cW$-(super)algebra of type $B$, $C$, or $D$, is an extension of a tensor product of finitely many vertex algebras which are quotients of either $\cW^{\text{ev}}_{\infty}$ or $\cW^{\gs\gp}_{\infty}$.

The construction of $\cW^{\gs\gp}_{\infty}$ is similar to the constructions of $\cW_{\infty}$ and $\cW^{\text{ev}}_{\infty}$ in \cite{Lin,KL}, but is much more involved. We regard it as a fundamental object on equal footing with $\cW_{\infty}$ and $\cW^{\text{ev}}_{\infty}$, but it has inexplicably never appeared before either in the mathematics or physics literature. However, $\cW^{\gs\gp}_{\infty}$ does differ from $\cW_{\infty}$ and $\cW^{\text{ev}}_{\infty}$ in two important ways. First, the vertex algebras $\cC^{\psi}_{XY}(n,m)$ are all distinct, and there are no triality isomorphisms among them. By contrast, the $8$ families of orthosymplectic $Y$-algebras are organized into groups of three algebras that are isomorphic. Second, it is expected (although still an open question) that the $Y$-algebras and orthosymplectic $Y$-algebras are the complete list of simple, strongly finitely $1$-parameter quotients of $\cW_{\infty}$ and $\cW^{\text{ev}}_{\infty}$, respectively. However, $\cW^{\gs\gp}_{\infty}$ admits additional infinite families of simple, strongly finitely generated $1$-parameter quotients, which we denote by $\cC^{\ell}(n)$. Here $\ell$ is a complex parameter related to the central charge, $n \in \frac{1}{2}\mathbb{Z}$ is fixed, and $\cC^{\ell}(n)$ contains the simple quotient $L_n(\gs\gp_2)$. They are defined as follows:
 \begin{equation} \begin{split} \label{intro:diagcoset}
		\cC^{\ell}\big(-\frac{n}{2}\big)  & = \T{Com}(V^{\ell}(\fr{so}_n), V^{\ell+2}(\fr{so}_n)\otimes \cS(n))^{\mathbb{Z}_2}, \ n\in \mathbb{Z}_{\geq 2},
\\ \cC^{\ell}(n) & = \T{Com}(V^{\ell}(\fr{sp}_{2n}), V^{\ell-1}(\fr{sp}_{2n})\otimes \cE(2n)),\ n \in \mathbb{Z}_{\geq 1},
\\ \cC^{\ell}\big(n-\frac{1}{2}\big) &  = \T{Com}(V^{\ell}(\fr{osp}_{1|2n}), V^{\ell-1}(\fr{osp}_{1|2n})\otimes \cS(1)\otimes \cE(2n))^{\mathbb{Z}_2}, \ n \in \mathbb{Z}_{\geq 1}.
\end{split}
	\end{equation}
We regard $\cC^{\ell}(n)$ and $\cC^{\ell}\left(n-\frac{1}{2}\right)$ as the GKO cosets for $\gs\gp_{2n}$ and $\go\gs\gp_{1|2n}$, respectively. Note that when $\ell-1$ is admissible for $\widehat{\gs\gp}_{2n}$, we have an embedding $L_{\ell}(\fr{sp}_{2n}) \rightarrow L_{\ell-1}(\fr{sp}_{2n})\otimes \cE(2n)$, and the simple quotient $\cC_{\ell}(n)$ is just the coset $\T{Com}(L_{\ell}(\fr{sp}_{2n}), L_{\ell-1}(\fr{sp}_{2n})\otimes \cE(2n))$. We will prove the following result (Theorem \ref{thm:coset-rat}).
 \begin{theorem} \label{rationality:intro} 
Let $\ell -1$ be an admissible level for $\gs\gp_{2n}$. Then $\cC_\ell(n)$ is strongly rational. 
\end{theorem}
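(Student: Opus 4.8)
The plan is to reduce strong rationality of $\cC_\ell(n)$ to the two genuinely analytic properties, lissness and rationality, by first exposing the Howe dual pair hidden in the ghost system and then transferring rationality from its rational constituents through the diagonal coset. First I would record the building blocks. The rank $2n$ system satisfies $\cE(2n)\cong \cF(4n)$, the free fermion algebra of even rank $4n$, which is strongly rational; its symmetry algebra $L_1(\gs\go_{4n})$ contains the Howe dual pair $L_1(\gs\gp_{2n})\otimes L_n(\gs\gp_2)$ coming from the decomposition $\mathbb{C}^{4n}=\mathbb{C}^{2n}\boxtimes \mathbb{C}^2$ of the defining $\gs\go_{4n}$-module into (symplectic)$\,\otimes\,$(symplectic), with a standard Dynkin-index computation fixing the two levels as $1$ and $n$. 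Hence $\cE(2n)$ is a finite direct sum of $L_1(\gs\gp_{2n})\otimes L_n(\gs\gp_2)$-modules. Since the diagonal copy $L_\ell(\gs\gp_{2n})$ (with $\ell=(\ell-1)+1$) lies in $L_{\ell-1}(\gs\gp_{2n})\otimes L_1(\gs\gp_{2n})$ while $L_n(\gs\gp_2)$ commutes with every $\gs\gp_{2n}$-current, applying $\T{Com}(L_\ell(\gs\gp_{2n}),-)$ termwise shows that $\cC_\ell(n)$ is a finite conformal extension of $\cD_\ell(n)\otimes L_n(\gs\gp_2)$, where $\cD_\ell(n)=\T{Com}(L_\ell(\gs\gp_{2n}),L_{\ell-1}(\gs\gp_{2n})\otimes L_1(\gs\gp_{2n}))$. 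As $L_n(\gs\gp_2)$ is strongly rational for $n\in\mathbb{Z}_{\geq 1}$, and a conformal extension of a strongly rational vertex algebra by finitely many of its modules is again strongly rational, it suffices to prove that $\cD_\ell(n)$ is strongly rational.

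For lissness I would argue through associated varieties. Since $\ell-1$ is admissible, Arakawa's theorem gives $X_{L_{\ell-1}(\gs\gp_{2n})}\subseteq \cN$, the nilpotent cone of $\gs\gp_{2n}$, so $L_{\ell-1}(\gs\gp_{2n})$ is quasi-lisse, while $L_1(\gs\gp_{2n})$ is lisse. By the associated-variety estimates for cosets (Arakawa--Moreau), $X_{\cD_\ell(n)}$ is bounded by the invariant-theoretic quotient of $X_{L_{\ell-1}(\gs\gp_{2n})}\times X_{L_1(\gs\gp_{2n})}$ by the diagonal $Sp_{2n}$-action; since the second factor is a point and the image is a conical subvariety of $\cN$ with $\cN /\!/ Sp_{2n}=\{0\}$, we conclude $X_{\cD_\ell(n)}=\{0\}$, i.e.\ $\cD_\ell(n)$ is lisse. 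Admissibility enters here in an essential way: it is precisely what forces the affine directions of $X_{L_{\ell-1}(\gs\gp_{2n})}$ to be nilpotent and hence to be collapsed by the diagonal coset, exactly as in the simply-laced GKO cosets.

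For rationality I would start from the finite branching. Admissibility of $\ell-1$ yields finitely many admissible $L_{\ell-1}(\gs\gp_{2n})$-modules forming a semisimple category, and $L_1(\gs\gp_{2n})$ is rational; decomposing $L_{\ell-1}(\gs\gp_{2n})\otimes L_1(\gs\gp_{2n})$ as a finite sum of $L_\ell(\gs\gp_{2n})\otimes \cD_\ell(n)$-modules identifies the simple $\cD_\ell(n)$-modules as the branching spaces. One then establishes semisimplicity of the $\cD_\ell(n)$-module category using the dual-pair (double-commutant) property together with the coset-rationality machinery of McRae and of Creutzig--Kanade--Linshaw, now applicable because lissness has been secured. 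Self-duality and CFT-type are inherited from the free-field and affine ambient algebras, so these inputs assemble into strong rationality of $\cD_\ell(n)$, and hence, via the finite conformal extension of the first paragraph, of $\cC_\ell(n)$.

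The main obstacle is the rationality step. Lissness alone does not imply rationality, and the ambient algebra $L_{\ell-1}(\gs\gp_{2n})\otimes \cE(2n)$ is only quasi-lisse rather than strongly rational, so the clean ``commutant of a rational subalgebra inside a rational vertex algebra'' theorems do not apply verbatim; moreover, unlike the type $A$ situation, $\cD_\ell(n)$ is not a principal $\cW$-algebra and so is not covered by Arakawa's rationality theorem. The crux is therefore to promote the finite, semisimple branching of admissible modules across the diagonal coset into genuine semisimplicity of the full $\cD_\ell(n)$-module category, which amounts to verifying the double-commutant property and the requisite finiteness and rigidity inputs at the admissible level.
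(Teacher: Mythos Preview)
Your reduction via the Howe dual pair $L_1(\gs\gp_{2n})\otimes L_n(\gs\gp_2)\subset \cE(2n)$ is a natural first move, and the resulting factorization of $\cC_\ell(n)$ as a finite extension of $\cD_\ell(n)\otimes L_n(\gs\gp_2)$ is correct. The problem is precisely the one you name in your last paragraph: you do not prove that $\cD_\ell(n)=\T{Com}(L_\ell(\gs\gp_{2n}),L_{\ell-1}(\gs\gp_{2n})\otimes L_1(\gs\gp_{2n}))$ is strongly rational. The coset-rationality machinery you invoke requires the ambient algebra to be strongly rational, and $L_{\ell-1}(\gs\gp_{2n})$ at a non-integral admissible level is only quasi-lisse, not $C_2$-cofinite, so those theorems do not apply. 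Your lissness sketch is also not quite on firm ground: the associated-variety bounds you cite are established for Hamiltonian reductions, not for arbitrary diagonal cosets, and there is no general theorem guaranteeing $X_{\cD_\ell(n)}=\{0\}$ from the data you list. In effect you have traded one unknown rationality (of $\cC_\ell(n)$) for another (of $\cD_\ell(n)$) that is no easier; unlike the simply-laced GKO coset, $\cD_\ell(n)$ is not a principal $\cW$-algebra, so Arakawa's rationality theorem does not touch it.

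The paper avoids this entirely by a different intermediate algebra. Rather than $L_1(\gs\gp_{2n})$, it uses the Lie \emph{super}algebra $\go\gs\gp_{1|2n}$: one has an embedding $L_\ell(\go\gs\gp_{1|2n})\hookrightarrow L_{\ell-1}(\gs\gp_{2n})\otimes\cE(2n)$, and the commutant of $L_\ell(\go\gs\gp_{1|2n})$ in this ambient is identified (via \cite{ACK,CL2}) with the simple principal algebra $\cW_{\ell_2}(\gs\gp_{2n})$. Independently, by \cite{CGL}, $L_\ell(\go\gs\gp_{1|2n})$ is itself a conformal extension of $L_\ell(\gs\gp_{2n})\otimes \cW_{\ell_1}(\gs\gp_{2n})$. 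Combining these, $\cC_\ell(n)$ is a conformal extension of $\cW_{\ell_1}(\gs\gp_{2n})\otimes \cW_{\ell_2}(\gs\gp_{2n})$ with the explicit shifted levels $\ell_i+n+1=\frac{\ell+n+2-i}{2\ell+2n+1}$. When $\ell-1$ is admissible both $\ell_1,\ell_2$ are nondegenerate (co-)principal admissible, so both factors are strongly rational by \cite{Ar1,Ar2}, and then $\cC_\ell(n)$ is strongly rational by \cite{SY,CMSY}. The missing idea in your approach is exactly this use of $\go\gs\gp_{1|2n}$ to split the coset into two \emph{principal} $\cW$-algebra pieces whose rationality is already known, rather than into $L_n(\gs\gp_2)$ and a new diagonal coset whose rationality is not.
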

 We regard the family of algebras $\cC_\ell(n)$ as the analogue of the families of strongly rational quotients of $\cW_{\infty}$ and $\cW^{\text{ev}}_{\infty}$ mentioned above.  The proof of Theorem \ref{rationality:intro} is based on realizing $\cC_\ell(n)$ as an extension of $\cW_{\ell_1}(\gs\gp_{2n}) \otimes \cW_{\ell_2}(\gs\gp_{2n})$ for $\ell_1 = (n+1) + \frac{\ell+n+1}{2\ell+2n+1}$ and $\ell_2 = (n+1) + \frac{\ell+n}{2\ell+2n+1}$, which is strongly rational because $\ell_1, \ell_2$ are both nondegenerate (co)-admissible levels. As in type $A$, we expect that the rationality of exceptional $\cW$-algebras of types $B$, $C$, and $D$ can be explained using building blocks, i.e., such an exceptional $\cW$-algebra can be realized as an extension of a tensor product of quotients of $\cW^{\text{ev}}_{\infty}$ or $\cW^{\gs\gp}_{\infty}$ which are all rational. Using this perspective, we will give a new proof of the rationality of certain exceptional $\cW$-algebras, as well as the rationality of a new family of orthosymplectic $\cW$-superalgebras. 
  	
A key step in the proof of Theorem \ref{rationality:intro} is to show that for each $n \in \mathbb{N}$, $\cC^\ell(n)$ is an extension of $\cW^{\ell_1}(\gs\gp_{2n}) \otimes \cW^{\ell_2}(\gs\gp_{2n})$ as a $1$-parameter vertex algebra; see Theorem \ref{twocopiesofW(sp)}. Since $\cC^\ell(n)$ is a $1$-parameter quotient of $\cW^{\gs\gp}_{\infty}$ and both $\cW^{\ell_1}(\gs\gp_{2n})$ and $\cW^{\ell_2}(\gs\gp_{2n})$ are $1$-parameter quotients of $\cW^{\text{ev}}_{\infty}$, it is natural to ask whether $\cW^{\gs\gp}_{\infty}$ is an extension of two commuting copies of $\cW^{\text{ev}}_{\infty}$ in a way that is compatible with these maps for all $n\geq 1$. It can be checked by computer calculation that $\cW^{\gs\gp}_{\infty}$ does {\it not} have this property. However, there exists a completion $\tilde{\cW}^{\gs\gp}_{\infty}$ which is indeed an extension of two commuting copies of $\cW^{\text{ev}}_{\infty}$ with this compatibility property; see Theorem \ref{completion:twocopies}. Finally, we consider the quantum Hamiltonian reduction of $H_f(\cW^{\gs\gp}_{\infty})$ with respect to the nonzero nilpotent $f$ in $\gs\gp_2$. We expect that as a $2$-parameter vertex algebra, $H_f(\cW^{\gs\gp}_{\infty})$ is freely generated of type $\cW(2^3, 4, 3^3, 5, 6^3, 7,\dots)$, and is an extension of two commuting copies of $\cW^{\text{ev}}_{\infty}$. This is closely related to the fact that $\tilde{\cW}^{\gs\gp}_{\infty}$ is an extension of this structure.

\subsection{Organization} In Section \ref{sec:VOA} we recall free field algebras, affine vertex algebras, and $\cW$-algebras following the notation in the papers \cite{CL3,CL4} of the first and third authors. In Section \ref{sec:reduction} we define a notion of decomposition of a nilpotent and recall a conjecture on reduction by stages, which motivates our main construction. In Section \ref{sect:YtypeC} we introduce the $\gs\gp_2$-rectangular $\cW$-algebras as well as those with a tail, and define the $8$ families of $Y$-algebras of type $C$ which have strong generating type \eqref{sp2starting}. In Section \ref{sect:Diagonal}, we introduce the $4$ families of diagonal cosets \eqref{intro:diagcoset} that also have this strong generating type. In Section \ref{sect:main}, we construct the universal $2$-parameter vertex algebra $\cW^{\gs\gp}_{\infty}$ of this type, which is our main result. In Section \ref{sect:1paramquot}, we prove that the $Y$-algebras of type $C$ and the diagonal cosets indeed are $1$-parameter quotients of $\cW^{\gs\gp}_{\infty}$. In Section \ref{sect:recon}, we prove a reconstruction theorem (Theorem \ref{thm:reconstruction}) that says that the full OPE algebra of an $\gs\gp_2$-rectangular $\cW$-algebra with a tail, which is an extension of an affine vertex algebra and a $Y$-algebra of type $C$, is uniquely and constructively determined by the conformal weight and parity of the extension fields and the zero mode action of the Lie algebra on the extension fields. In Section \ref{sect:rational}, we prove Theorem \ref{rationality:intro}, which allows us to deduce new rationality results for $\cW$-superalgebras. Finally, in Section \ref{sect:completion} we explore the relationship between $\cW^{\gs\gp}_{\infty}$ and the tensor product of two copies of $\cW^{\text{ev}}_{\infty}$. In particular, we will define the completion $\tilde{\cW}^{\gs\gp}_{\infty}$ and prove Theorem \ref{completion:twocopies}.

\section{Vertex algebras} \label{sec:VOA} We will assume that the reader is familiar with vertex algebras, and we use the same notation as the paper \cite{CL3} of the first and third authors. We will make use of the following well-known identities which hold in any vertex algebra $\cA$.
\begin{equation}\label{conformal identity}
			(\partial a)_{(r)}b=-ra_{(r-1)}b,\quad r\in\mathbb{Z}.
		\end{equation}
		\begin{equation}\label{skew-symmetry}
			a_{(r)}b=(-1)^{|a||b|+r+1}b_{(r)}a + \sum_{i = 1}^{\infty}\frac{(-1)^{|a||b|+r+i+1}}{i!}\partial^i \left(b_{(r+i)}a\right),\quad r\in\mathbb{Z}.
		\end{equation}
		\begin{equation}\label{quasi-associativity}
			:a (:bc:):\ =\ :(:ab:)c: +\sum_{i=0}^{\infty}\frac{1}{(i+1)!}\left(:\!\partial^{i+1}(a) (b_{(i)}c)\!:+(-1)^{|a||b|}:\!\partial^{i+1}(b) (a_{(i)}c)\!:\right).
		\end{equation}
		\begin{equation}\label{quasi-derivation}
			a_{(r)}:bc:\ =\ :(a_{(r)}b)c:+(-1)^{|a||b|}: b(a_{(r)}c):+\sum_{i=1}^r\binom{r}{i}(a_{(r-i)}b)_{(i-1)}c, \quad r\geq 0.
		\end{equation}
		\begin{equation}\label{Jacobi}
			a_{(r)}(b_{(s)}c) = (-1)^{|a||b|}b_{(s)}(a_{(r)}c) + \sum_{i=0}^r \binom{r}{i}(a_{(i)}b) _{(r+s-i)}c, \quad r,s\geq 0.
		\end{equation}
		Identities \eqref{Jacobi} are known as {\it Jacobi identities}, and we often denote them using the shorthand $J_{r,s}(a,b,c)$. We use $J(a,b,c)$ to denote the set of all Jacobi identities $\{J_{r,s}(a,b,c)|r,s\geq 0\}$.

	\subsection{Free field algebras}
	A {\it free field algebra} is a vertex superalgebra $\cV$ with weight grading 
	\[\cV = \bigoplus_{d\in \frac{1}{2}\mathbb{Z}_{\geq 0}}\cV[d],\quad \cV[0] = \mathbb{C}\B{1},\] 
	with strong generators $\{X^i | i\in I\}$ satisfying OPE relations
	\[X^i(z)X^j(w)\sim a_{i,j} (z-w)^{-\Delta(X^i)-\Delta(X^j)}, \quad a_{i,j}\in\mathbb{C}, \quad a_{i,j} =0 \T{ if }\Delta(X^i)+\Delta(X^j)\not\in\mathbb{Z}.\]
	Note $\cV$ is not assumed to have a conformal structure. In \cite{CL3}, the first and third authors introduced the following families of free field algebras.
	\begin{enumerate} 
	\item Even algebras of orthogonal type $\cO_{\text{ev}}(n,k)$ for $n\geq 1 $ and even $k \geq 2$. When $k =2$, $\cO_{\text{ev}}(n,2)$ is just the rank $n$ Heisenberg algebra $\cH(n)$.
	\item Odd algebras of orthogonal type $\cO_{\text{odd}}(n,k)$ for $n\geq 1 $ and odd $k \geq 1$. When $k = 1$, $\cO_{\text{odd}}(n,1)$ is just the rank $n$ free fermion algebra $\cF(n)$.
	\item Even algebras of symplectic type $\cS_{\text{ev}}(n,k)$ for $n\geq 1$ and odd $k \geq 1$. When  $k = 1$, $\cS_{\text{odd}}(n,1)$ is just the rank $n$ $\beta\gamma$-system $\cS(n)$.
	\item Odd algebras of symplectic type $\cS_{\text{odd}}(n,k)$ for $n\geq 1$ and even $k \geq 2$. When  $k = 2$, $\cS_{\text{ev}}(n,2)$ is just the rank $n$ symplectic fermion algebra $\cA(n)$.
	\end{enumerate}
	We refer the reader to \cite{CL3} for the construction and key properties of these algebras.

\subsection{Affine vertex superalgebras}
Let $\gg$ be a simple, finite-dimensional Lie superalgebra with normalized Killing form $( \cdot | \cdot )$.
	Let $\{q^{\alpha} | \alpha \in S\}$ be a basis of $\gg$ which is homogeneous with respect to parity.
	Define the corresponding structure constants $\{f^{\alpha,\beta}_{\gamma} | \alpha,\beta,\gamma \in S\}$ by 
	\[[q^{\alpha},q^{\beta}] = \sum_{\gamma \in S} f^{\alpha,\beta}_{\gamma} q^{\gamma}.\]
	The affine vertex algebra $V^k(\gg)$ of $\gg$ at level $k$ is strongly generated by the fields $\{X^{\alpha} | \alpha \in S \}$, satisfying 
\begin{equation} \label{OPE:affine} X^{\alpha}(z)X^{\beta}(w) \sim k (q^{\alpha}|q^{\beta}) (z-w)^{-2} + \sum_{\gamma \in S}f^{\alpha,\beta}_{\gamma}X^{\gamma}(w)(z-w)^{-1}.\end{equation}
	We define $X_{\alpha}$ to be the field corresponding to $q_{\alpha}$ where $\{q_{\alpha} | \alpha \in S\}$ is the dual basis of $\gg$ with respect to $(\cdot|\cdot)$.
	The Sugawara conformal vector $L^{\gg}$ with central charge $c^{\fr{g}}$ is given by
	\begin{equation}\label{sugwara}
		L^{\gg}=\frac{1}{2(k+h^{\vee})}\sum_{\alpha \in S}(-1)^{|\alpha|} :\!X_{\alpha}X^{\alpha}\!:, \quad c^{\fr{g}}= \frac{k\ \T{sdim}\gg}{k+h^{\vee}}.
	\end{equation}
	Fields $X^{\alpha}(z)$ and $X_{\alpha}(z)$ are primary with respect to $L^{\gg}$ and have conformal weight $1$.

	Let $\cV$ be a vertex algebra equipped with a homomorphism
	 $V^k(\gg)\to \cV$.
	 We continue using notation $\{X^{\alpha}|\alpha \in S\}$ to denote the image of the generators.
	In particular, $\gg$ acts on $\cV$ by derivation via the zero modes $\{X^{\alpha}_{(0)}\}$. Let $P=\T{Span}\{P^i|i=1,\dots,n\}$ denote some irreducible $\gg$-submodule arising in $\cV$, and $\rho:\gg\to \T{End}(V)$ denote the corresponding action.
	We say that $P$ is affine primary if it satisfies
	\[X^{\alpha}(z)P^i(w)\sim (\rho(\alpha)P^i) (w)(z-w)^{-1}, \quad i=1,\dots,n.\]
	In all our examples these will be either trivial, standard or adjoint representations.

	Let $(V,g)$ be the vector superspace $\mathbb{C}^{n|2m}$ of dimension $n|2m$, equipped with a supersymmetric bilinear form $g$.
	The Lie superalgebra $\fr{osp}_{n|2m}$ is the Lie subsuperalgebra of $\fr{gl}_{n|2m}$ preserving $g$, i.e.,
	\[\fr{osp}_{n|2m}= \{A \in \fr{gl}_{n|2m} |\ (Av|w)+(-1)^{|A||v|}(v|Aw) = 0,\ \forall v ,w \in V\}.\]
	The even part of $\go\gs\gp_{n|2m}$ is the semisimple Lie algebra $\fr{so}_n\oplus \fr{sp}_{2m}$, and the odd part transforms as $\mathbb{C}^n\otimes  \mathbb{C}^{2m}$ under $\fr{so}_n\oplus \fr{sp}_{2m}$. 
	
	Choose an orthonormal basis $\{P_i|i=1,\dots,n\}$ for the even subspace $V_{\bar 0}$, and a symplectic basis $\{Q_i,Q_{-i}|i=1,\dots,m\}$ for the odd subspace $V_{\bar{1}}$, so that
	\begin{equation*}
		g(P_i,P_j) =\delta_{i,j}:=\begin{cases}
			1, & i=j,\\
			0,& i\neq j .
		\end{cases}\quad g(Q_i,Q_j) =\omega_{i,j}:=
		\begin{cases}
			\delta_{i,-j},& i\geq 1,\\
			-\delta_{i,-j},& i\leq 1.
		\end{cases}
	\end{equation*}
	In particular, the above pairings identify $\mathbb{C}^{n|2m}$ with the dual space $(\mathbb{C}^{n|2m})^{*}$, equipped with the dual basis $\{P_i^*|i=1,\dots,n\}\cup \{Q_i^*,Q_{-i}^*|i=1,\dots,m\}$.
	
	First, we consider the orthogonal Lie subalgebra $\fr{so}_n$. 
	Define maps $\{E_{i,j}|1 \leq  i < j\leq n \}$ by their action on $\mathbb{C}^{n|0}$ as endomorphisms
	\begin{equation}\label{soStandard}
		\fr{so}_n \to \fr{gl}_{n|0},\quad E_{i,j}\mapsto P_j^*\otimes P_i - P_i^*\otimes P_j.
	\end{equation}
This defines the standard representation of $\fr{so}_{n}$. In this basis, the Lie bracket of $\fr{so}_n$ has the form
	\[[E_{i,j},E_{p,q}] = -\delta_{j,q} E_{i,p}+ \delta_{j,p} E_{i,q} + 
	\delta_{i,q}E_{j,p} - \delta_{i,p}E_{j, q}.\]
	The normalized Killing form and the dual Coxeter number are 
	\begin{equation}
		(E_{i,j}|E_{p,q}) =\delta_{i,p}\delta_{j,q}-\delta_{i,q}\delta_{j,p}, \quad h^{\vee}_{\fr{so}_n} = n-2.
	\end{equation}
	
	Next we consider the symplectic Lie subalgebra $\fr{sp}_{2m}$. 
	Define maps $\{G_{i,j}|-m \leq  i,j \leq m, i,j\neq 0\}$ that act on $\mathbb{C}^{0|2m}$ as
	endomorphisms
	\begin{equation}\label{spStandard}
		\fr{sp}_{2m} \to \fr{gl}_{0|2m},\quad G_{i,j}\mapsto Q_j^*\otimes Q_i + Q_i^*\otimes Q_j.
	\end{equation}
	This defines the standard representation of $\fr{sp}_{2n}$.
	In this basis, the Lie bracket of $\fr{sp}_{2m}$ has the form
	\begin{equation}\label{orthonormalC}
		\begin{split}
			[G_{i,j},G_{p,q}] =\omega_{j,q}G_{i,p}+ \omega_{j,p}G_{i,q} + 
			\omega_{i,q}G_{j,p}+ \omega_{i,p}G_{j, q}.
		\end{split}
	\end{equation}
	The normalized Killing form and the dual Coxeter number are 
	\begin{equation}
		(G_{i,j}|G_{p,q}) =\omega_{i,p}\omega_{j,q}+\omega_{i,q}\omega_{j,q}, \quad h^{\vee}_{\fr{sp}_{2m}} = m+1.
	\end{equation}

	Lastly, we write down the basis for the odd part of $\fr{osp}_{n|2m}$.
	Define maps $\{X_{i,j}|1\leq i \leq n,-m\leq j\leq m,j\neq 0\}$ that act on  $\mathbb{C}^{n|2m}$ as endomorphisms
	\begin{equation}\label{ospStandard}
		X_{i,j}=P_i ^*\otimes Q_j + Q_{j}^*\otimes P_i.
	\end{equation}
	In this basis, the Lie bracket given by
	\begin{equation}
		[X_{i,j},X_{p,q}] = \delta_{i,p} Q_{j,q} + \omega_{j,q}E_{i,p}.
	\end{equation}

	\subsection{$\cW$-superalgebras}\label{sec:W}
	Let $\gg$ be a simple, finite-dimensional Lie (super)algebra as above, and let $f$ be a nilpotent element in the even part of $\gg$. Associated to $\gg$, $f$ and a complex number $k$, is the $\cW$-(super)algebra $\cW^k(\gg,f)$ of level $k$. The definition is due to Kac, Roan, and Wakimoto \cite{KRW}, and it generalizes the definition for $f$ a principal nilpotent and $\gg$ a Lie algebra given by Feigin and Frenkel \cite{FF}. We may complete $f$ to an $\fr{sl}_2$-triple $ \{f,h,e\}$ satisfying
	\[[h,e] =2 e, \quad [h,f] = -2f, \quad [e,f] =h.\]
	The semisimple element $x= \frac{1}{2}h$ induces a $\frac{1}{2}\mathbb{Z}$-grading on $\gg$ as follows.
	\begin{equation}\label{decomposition of g over sl2}
		\fr{g} = \bigoplus_{j\in \frac{1}{2}\mathbb{Z}}\fr{g}_j, \quad \gg_j = \{ a \in \gg | [x,a] = j a\}.
	\end{equation}
	We may assume that the basis $S$ of $\gg$ has the form $S = \bigcup S_k$, where $S_k$ is a basis of $\gg_k$. Write $S_+= \bigcup\{ S_k | k <0\}$ and $S_-= \bigcup\{S_k | k>0\}$ for bases of respective subspaces 
	\[ \gg_+ =\bigoplus_{j\in \frac{1}{2}\mathbb{Z}_{>0}}\fr{g}_j ,\quad  \gg_- =\bigoplus_{j\in \frac{1}{2}\mathbb{Z}_{<0}}\fr{g}_j.\]
Denote by $F(\gg_+)$ the algebra of charged fermions associated to the vector superspace $\gg_+ \oplus \gg_+^*$. It is strongly generated by fields $\{\varphi_{\alpha}, \varphi^{\alpha} | \alpha \in S_+\}$, where $\varphi_{\alpha}$ and $\varphi^{\alpha}$ have opposite parity to $q^{\alpha}$. They satisfy
	\[\varphi_{\alpha}(z)\varphi^{\beta}(w) \sim \delta_{\alpha,\beta}(z-w)^{-1},\quad\varphi_{\alpha}(z)\varphi_{\beta}(w)\sim 0 \sim \varphi^{\alpha}(z)\varphi^{\beta}(w).\]
	We give  $F(\gg_+)$ the conformal vector and associated central charge
		\begin{equation}\label{charge Fermions}
		L^{\T{ch}}=\sum_{\alpha \in S_+} (1-m_{\alpha}):\partial \varphi^{\alpha}\varphi_{\alpha}\!:-m_{\alpha}:\!\varphi^{\alpha}\partial \varphi_{\alpha}:,\quad c^{\T{ch}}= - \sum_{\alpha \in S_+} (-1)^{|\alpha|}(12m_{\alpha}^2-12m_{\alpha}+2).
	\end{equation}
	The fields $\varphi_{\alpha}(z)$ and $\varphi^{\alpha}(z)$ are primary with respect to $L^{\T{ch}}$ and have conformal weights of $1-m_{\alpha}$ and $m_{\alpha}$, respectively. Since $f \in \gg_{-1}$, it endows $\gg_{\frac{1}{2}}$ with a skew-symmetric bilinear form
	\begin{equation}\label{bilinear form}
		\langle a,b\rangle = (f|[a,b]).
	\end{equation}
	Denote by $F(\gg_{\frac{1}{2}})$ the algebra of neutral fermions associated to $\gg_{\frac{1}{2}}$. It has strong generators $\{\Phi_{\alpha}| \alpha \in S_{\frac{1}{2}}\}$, where $\Phi_{\alpha}$ and $q^{\alpha}$ have the same parity, satisfying
	\[\Phi_{\alpha}(z)\Phi_{\beta}(w) \sim \langle q^{\alpha},q^{\beta} \rangle (z-w)^{-1}.\]
	We give $F(\gg_{\frac{1}{2}})$ the conformal vector $L^{\T{ne}}$ with central charge $c^{\T{ne}}$, where
	\[ L^{\T{ne}}=\frac{1}{2}\sum_{\alpha \in S_{\frac{1}{2}}} :\!\partial \Phi^{\alpha}\Phi_{\alpha}\!:,\quad c^{\T{ne}}=-\frac{1}{2}\T{sdim}\gg_{\frac{1}{2}}.\]
	Here $\Phi^{\alpha}$ is dual to $\Phi_{\alpha}$ with respect to the bilinear form (\ref{bilinear form}), and $\Phi^{\alpha}(z), \Phi_{\alpha}(z)$ are primary of conformal weight of $\frac{1}{2}$ with respect to $L^{\T{ne}}$.
	
	As in \cite{KRW}, define $C^k(\gg,f)= V^k(\gg)\otimes F(\gg_+)\otimes F(\gg_{\frac{1}{2}})$. 
	It admits a $\mathbb{Z}$-grading by charge
	\[C^k(\gg,f)=\bigoplus_{j \in\mathbb{Z}} C_j, \]
	by giving the $\varphi_{\alpha}$ charge $-1$, the $\varphi^{\alpha}$ charge 1, and all others 0.
	There is an odd field $d = d_{\T{st}} + d_{\T{tw}}$ of charge $-1$, where
	\begin{equation}\label{Qfield}
		\begin{split}
			d_{\T{st}} &= \sum_{\alpha\in S_+} (-1)^{|\alpha|}:\!X^{\alpha}\varphi^{\alpha}\!: - \frac{1}{2}\sum_{\alpha,\beta,\gamma \in S_+} (-1)^{|\alpha||\gamma|}f^{\alpha,\beta}_{\gamma}:\!\varphi_{\gamma}\varphi^{\alpha}\varphi^{\beta}\!:,\\
			d_{\T{tw}} &=\sum_{\alpha\in S_+}(f|q^{\alpha})\varphi^{\alpha} + \sum_{\alpha \in S_{\frac{1}{2}}}:\!\varphi^{\alpha}\Phi_{\alpha}\!:.
		\end{split}
	\end{equation}
	It satisfies $d(z)d(w) \sim  0$, so the zero mode $d_{(0)}$ is a square-zero map. This endows the vertex algebra $C^k(\gg,f)$ with a structure of $\mathbb{Z}$-graded homology complex, and the $\cW$-algebra is defined to
	\[\cW^k(\gg,f): =H(C^k(\gg,f),d_{(0)}).\]
	It has a conformal vector represented by $L = L^{\gg}+\partial x + L^{\T{ch}} + L^{\T{ne}}$ with central charge 
	\begin{equation}\label{central charge W algebra}
		c(\gg,f,k) = \frac{k\ \T{sdim}(\gg)}{k+h^{\vee}}-12k(x|x) - \sum_{\alpha \in S_+} (-1)^{|\alpha|}(12m_{\alpha}^2-12m_{\alpha}+2)-\frac{1}{2}\T{sdim}(\gg_{\frac{1}{2}}),
	\end{equation}
	where $m_{\alpha} = j$ if $\alpha \in S_j$. 
	
	The following fields feature prominently in the description of $\cW$-algebras. 
	\begin{equation}\label{XtoJ} J^{\alpha}= X^{\alpha} + \sum_{\beta,\gamma\in S_+}(-1)^{|\gamma|}f^{\alpha,\beta}_{\gamma}:\!\varphi_{\gamma}\varphi^{\beta}\!: + \frac{(-1)^{|\alpha}}{2}\sum_{\beta,\gamma \in S_+}f^{\beta,\alpha}_{\gamma}:\!\Phi_{\beta}\Phi^{\gamma}\!:.\end{equation}
	
	Denote by $\gg^f$ the centralizer of $f$ in $\gg$, and let $\ga = \gg^f \cap \gg_0$, which is a Lie subsuperalgebra of $\gg$. The fields $\{J^{\alpha}|q^{\alpha} \in \fr{a}\}$ close under OPEs, and generate an affine vertex algebra of type $\fr{a}$, with its level shifted.
	\begin{theorem}[\cite{KW}, Thm. 2.1]
		\begin{equation}
			J^{\alpha}(z)J^{\beta}(w)\sim (k(q^{\alpha}|q^{\beta})+k^{\Gamma}(q^{\alpha},q^{\beta}))(z-w)^{-2} + f^{\alpha,\beta}_{\gamma} J^{\gamma}(w)(z-w)^{-1},
		\end{equation}
		where
		\[k^{\Gamma}(q^{\alpha},q^{\beta})=\frac{1}{2}\left(\kappa_{\gg}(q^{\alpha},q^{\beta})-\kappa_{\gg_0}(q^{\alpha},q^{\beta})-\kappa_{\frac{1}{2}}(q^{\alpha},q^{\beta})\right),\]
		with $\kappa_{\frac{1}{2}}$ the supertrace of $\gg_0$ on $\gg_{\frac{1}{2}}$.
	\end{theorem}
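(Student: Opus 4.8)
The plan is to compute the OPE $J^{\alpha}(z)J^{\beta}(w)$ directly from the definition \eqref{XtoJ}, taking $q^{\alpha},q^{\beta}\in\ga\subseteq\gg_0$ and exploiting the fact that $C^k(\gg,f)=V^k(\gg)\otimes F(\gg_+)\otimes F(\gg_{\frac{1}{2}})$ is a tensor product whose three factors have no mutual OPE contractions. Writing $J^{\alpha}=X^{\alpha}+\Theta^{\alpha}+\Xi^{\alpha}$, where $\Theta^{\alpha}=\sum_{\beta,\gamma\in S_+}(-1)^{|\gamma|}f^{\alpha,\beta}_{\gamma}:\!\varphi_{\gamma}\varphi^{\beta}\!:$ lives in $F(\gg_+)$ and $\Xi^{\alpha}=\frac{(-1)^{|\alpha|}}{2}\sum_{\beta,\gamma\in S_+}f^{\beta,\alpha}_{\gamma}:\!\Phi_{\beta}\Phi^{\gamma}\!:$ lives in $F(\gg_{\frac{1}{2}})$, the singular part of $J^{\alpha}(z)J^{\beta}(w)$ splits as $X^{\alpha}(z)X^{\beta}(w)+\Theta^{\alpha}(z)\Theta^{\beta}(w)+\Xi^{\alpha}(z)\Xi^{\beta}(w)$, with all cross terms vanishing.

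The affine piece is immediate from \eqref{OPE:affine}: it contributes $k(q^{\alpha}|q^{\beta})(z-w)^{-2}$ to the double pole and $\sum_{\gamma}f^{\alpha,\beta}_{\gamma}X^{\gamma}(w)(z-w)^{-1}$ to the simple pole. For the two fermionic pieces I would apply Wick's theorem to the normally ordered bilinears, using $\varphi_{\alpha}(z)\varphi^{\beta}(w)\sim\delta_{\alpha,\beta}(z-w)^{-1}$ and $\Phi_{\alpha}(z)\Phi_{\beta}(w)\sim\langle q^{\alpha},q^{\beta}\rangle(z-w)^{-1}$. Each bilinear is the standard fermionic realization of the $\gg_0$-action on a module ($\gg_+$ for the charged fermions, $\gg_{\frac{1}{2}}$ for the neutral ones, both preserved by $\ga$), so the single contraction reproduces the adjoint action: collecting the single-pole terms and invoking the Jacobi identity for the structure constants $f^{\alpha,\beta}_{\gamma}$ shows that $\Theta^{\alpha}(z)\Theta^{\beta}(w)$ and $\Xi^{\alpha}(z)\Xi^{\beta}(w)$ each close, contributing $\sum_{\gamma}f^{\alpha,\beta}_{\gamma}\Theta^{\gamma}(w)(z-w)^{-1}$ and $\sum_{\gamma}f^{\alpha,\beta}_{\gamma}\Xi^{\gamma}(w)(z-w)^{-1}$. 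Adding the three simple poles gives $\sum_{\gamma}f^{\alpha,\beta}_{\gamma}(X^{\gamma}+\Theta^{\gamma}+\Xi^{\gamma})=\sum_{\gamma}f^{\alpha,\beta}_{\gamma}J^{\gamma}$, as required.

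It remains to assemble the double poles. The double contractions of the fermionic bilinears produce trace terms: the charged fermions contribute $\operatorname{str}_{\gg_+}(\operatorname{ad}_{q^{\alpha}}\operatorname{ad}_{q^{\beta}})$ and the neutral fermions a multiple of $\operatorname{str}_{\gg_{\frac{1}{2}}}(\operatorname{ad}_{q^{\alpha}}\operatorname{ad}_{q^{\beta}})=\kappa_{\frac{1}{2}}(q^{\alpha},q^{\beta})$. The identification with $k^{\Gamma}$ then follows from the grading symmetry of the Killing form: since $q^{\alpha},q^{\beta}$ preserve the decomposition \eqref{decomposition of g over sl2} and the form pairs $\gg_j$ nondegenerately with $\gg_{-j}$, one has $\kappa_{\gg}-\kappa_{\gg_0}=2\operatorname{str}_{\gg_+}$, so the charged-fermion double pole is $\frac{1}{2}(\kappa_{\gg}-\kappa_{\gg_0})$. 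Subtracting the neutral-fermion contribution $\frac{1}{2}\kappa_{\frac{1}{2}}$, the total double pole is exactly $k(q^{\alpha}|q^{\beta})+\frac{1}{2}(\kappa_{\gg}-\kappa_{\gg_0}-\kappa_{\frac{1}{2}})(q^{\alpha},q^{\beta})$.

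The main obstacle I anticipate is the careful bookkeeping of signs and normal-ordering reorderings in the super setting: the factors $(-1)^{|\alpha|},(-1)^{|\gamma|}$ in $\Theta^{\alpha},\Xi^{\alpha}$, the parity reversal between $\varphi_{\alpha}/\varphi^{\alpha}$ and $q^{\alpha}$, and the Wick reordering signs must all be tracked precisely to land on the stated coefficients rather than ones differing by a sign or a factor of two. The most delicate point is verifying that the neutral-fermion double pole carries exactly $-\frac{1}{2}\kappa_{\frac{1}{2}}$ relative to the charged one, reflecting that $\gg_{\frac{1}{2}}$ is self-dual under the symplectic form \eqref{bilinear form} and so contributes "half" of a charged pair; this is where the conformal-weight and parity conventions of $F(\gg_{\frac{1}{2}})$ must be used with care.
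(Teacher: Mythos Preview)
The paper does not supply its own proof of this statement; it is quoted verbatim from Kac--Wakimoto \cite{KW} and used as input. Your outline---splitting $J^{\alpha}$ into its $V^k(\gg)$, $F(\gg_+)$, and $F(\gg_{\frac12})$ components, computing each OPE by Wick contraction, and identifying the double-pole traces via the grading symmetry $\kappa_{\gg}-\kappa_{\gg_0}=2\operatorname{str}_{\gg_+}$---is exactly the standard argument in \cite{KW}, so there is nothing to compare. Your caution about sign bookkeeping in the super setting and about the precise coefficient $-\tfrac12\kappa_{\frac12}$ from the self-dual neutral fermions is appropriate; those are indeed the only places where the computation can go wrong.
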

	
 The key structural theorem is the following.
\begin{theorem} \cite[Theorem 4.1]{KW} \label{thm:kacwakimoto}
Let $\gg$ be a simple finite-dimensional Lie superalgebra with an invariant bilinear
form $( \ \ |  \ \ )$, and let $x, f$ be a pair of even elements of $\gg$ such that ${\rm ad}\ x$ is diagonalizable with
eigenvalues in $\frac{1}{2} \mathbb Z$ and $[x,f] = -f$. Suppose that all eigenvalues of ${\rm ad}\ x$ on $\gg^f$ are non-positive, so that
$\gg^f = \bigoplus_{j\leq 0} \gg^f_j$. Then
 \begin{enumerate}
\item For each $q^\alpha \in \gg^f_{-j}$ with $j\geq 0$, there exists a  $d_{(0)}$-closed field $K^\alpha$ of conformal weight
$1 + j$, with respect to $L$.
\item The homology classes of the fields $K^\alpha$, where $\{q^\alpha\}$ is a basis of $\gg^f$, strongly and freely generate $\cW^k(\gg, f)$.
\item $H_0(C(\gg, f, k), d_0) = \cW^k(\gg, f)$ and $H_j(C(\gg, f, k), d_0) = 0$ if $j \neq 0$.
\end{enumerate}
\end{theorem}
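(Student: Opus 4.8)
The plan is to prove all three parts simultaneously via a spectral sequence on the cohomology complex $(C^k(\gg,f), d_{(0)})$, with $d = d_{\T{st}} + d_{\T{tw}}$ as in \eqref{Qfield}. The starting point is the $\fr{sl}_2$-representation theory behind the grading \eqref{decomposition of g over sl2}: decomposing $\gg$ into irreducible $\mathrm{ad}\,\fr{sl}_2$-modules gives $\gg = \gg^f \oplus [e,\gg]$, with $\gg^f$ the span of lowest-weight vectors. The hypothesis that $\mathrm{ad}\,x$ is non-positive on $\gg^f$ is equivalent to $\gg^f \cap \gg_+ = 0$, i.e.\ $\mathrm{ad}\,f$ is injective on $\gg_+$; by invariance of $(\cdot|\cdot)$ this also makes the form $\langle a,b\rangle=(f|[a,b])$ of \eqref{bilinear form} nondegenerate on $\gg_{\frac{1}{2}}$, so the neutral fermions $F(\gg_{\frac{1}{2}})$ form a genuine nondegenerate system. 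I would then filter $C^k(\gg,f)$ so that the leading piece $d_0$ of the induced differential is the Koszul (contraction-with-$f$) part of $d_{\T{tw}}$, built from $\sum_{\alpha\in S_+}(f|q^{\alpha})\varphi^{\alpha}$ together with $\sum_{\alpha\in S_{\frac{1}{2}}}:\!\varphi^{\alpha}\Phi_{\alpha}\!:$, while $d_{\T{st}}$ and the remaining part of $d_{\T{tw}}$ raise the filtration degree.

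The heart of the argument is the computation of $E_1 = H(C^k(\gg,f), d_0)$. Because $\mathrm{ad}\,f$ is injective on $\gg_+$, the leading differential pairs every generator $X^{\alpha}$ with $q^{\alpha}\in[e,\gg]\cap\gg_+$ against a charged fermion, and the nondegeneracy of $\langle\cdot,\cdot\rangle$ similarly pairs the neutral fermions $\Phi_{\alpha}$; the resulting subcomplexes are contractible Koszul complexes. What survives is the free vertex algebra strongly generated by the classes of the fields $J^{\alpha}$ of \eqref{XtoJ} with $q^{\alpha}\in\gg^f$, and this cohomology is concentrated entirely in charge $0$. This step is where the non-positivity hypothesis is indispensable, since it is exactly what guarantees that no generator indexed by $\gg^f$ is consumed by $d_0$.

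Since the total differential $d_{(0)}$ lowers charge by $1$ and $E_1$ is concentrated in charge $0$, every higher differential $d_r$ ($r\geq1$) must vanish, as it would carry a charge-$0$ class into charge $-1$ where $E_1=0$; hence the spectral sequence collapses at $E_1$. This yields $H_j(C^k(\gg,f),d_{(0)})=0$ for $j\neq0$ and identifies $H_0=\cW^k(\gg,f)$ with the free vertex algebra on the classes $[K^{\alpha}]$, proving (3) and the freeness in (2). Part (1), and the conformal weights in (2), then follow by lifting: each $E_1$-generator is represented to leading order by $J^{\alpha}$, which for $q^{\alpha}\in\gg^f_{-j}$ has weight $1+j$ with respect to $L=L^{\gg}+\partial x+L^{\T{ch}}+L^{\T{ne}}$, and the collapse lets one correct $J^{\alpha}$ by terms of strictly higher filtration (hence without changing its weight) to an honest $d_{(0)}$-closed field $K^{\alpha}$.

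The main obstacle is the $E_1$ computation: verifying Koszul acyclicity in the vertex-algebra rather than the purely Lie-theoretic setting, where one must track the normal-ordering corrections in $d_0$ and confirm that the surviving generators are precisely the $J^{\alpha}$ with $q^{\alpha}\in\gg^f$ and the stated weights. Convergence of the spectral sequence is not an issue, since every conformal-weight space of $C^k(\gg,f)$ is finite-dimensional, so the filtration is bounded in each weight and the sequence converges.
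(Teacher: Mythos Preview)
The paper does not give its own proof of this theorem; it is quoted verbatim as \cite[Theorem 4.1]{KW} and used as background for the structure theory of $\cW$-algebras. So there is no proof in the paper to compare against.

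That said, your sketch is essentially the argument of Kac--Wakimoto in the cited reference: one bigrades the complex by conformal weight and a suitable filtration degree so that the leading differential is the ``nonzero'' (Koszul-type) piece of $d_{\T{tw}}$, uses injectivity of $\mathrm{ad}\,f$ on $\gg_+$ and nondegeneracy of the neutral pairing to kill everything except the free vertex algebra on the $\gg^f$-indexed $J^\alpha$, and then observes that concentration in charge $0$ forces all higher differentials to vanish. Your identification of the main technical point --- carrying out the Koszul contraction at the vertex-algebra level with normal-ordering corrections, and checking that the surviving generators are exactly those with $q^\alpha\in\gg^f$ --- is accurate and is precisely where the work lies in \cite{KW}. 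The only refinement worth flagging is that in \cite{KW} the filtration is set up via a bigrading (conformal weight and a ``$p$-degree'' coming from the $\gg_+$-grading) rather than a single filtration, which is what makes the convergence and collapse arguments clean; your finite-dimensionality remark handles convergence, but you should make sure your filtration is compatible with the charge grading so that the ``$E_1$ concentrated in charge $0$ $\Rightarrow$ collapse'' step is literally correct.
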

One can also consider the reduction of a module, i.e., for a $V^k(\gg)$-module $M$, the homology of the complex $H(M \otimes F(\gg_+) \otimes F(\gg_{\frac{1}{2}}), d_0)$ is a $\cW^k(\gg, f)$-module that we denote by $H_{f}(M)$, that is 
\begin{equation}\label{Wmodule}
H_{f}(M) := H(M \otimes F(\gg_+) \otimes F(\gg_{\frac{1}{2}}), d_0).
\end{equation} In this notation, $\cW^k(\gg,f) = H_f(V^k(\gg))$.

\subsection{Large level limits of $\cW$-algebras}
Suppose that $\gg$ is a simple Lie (super)algebra and $( \  |  \ )$ is nondegenerate. Let  $x, f$, and 
$\gg^f$ be as in Theorem \ref{thm:kacwakimoto}. In \cite{CL3}, the first and third authors defined a certain large level limit $\cW^{\T{free}}(\gg,f) = \lim_{k\rightarrow \infty} \cW^k(\gg,f)$, which is a simple vertex algebra that can be regarded as the $0^{\T{th}}$-order approximation to $\cW^k(\gg,f)$,

	\begin{theorem}[\cite{CL3}, Thm 3.5 and Cor. 3.4]\label{thm:largelevellimit} 
		$\cW^{\T{free}}(\gg,f)$ is a free field algebra with strong generators $\{X^{\alpha}| q^{\alpha} \in \gg^f\}$ and OPEs
		\begin{equation}\label{freeField}
			X^{\alpha}(z)X^{\beta}(w) \sim (z-w)^{-2k}\delta_{j,k}B_k(q^{\alpha},q^{\beta})
		\end{equation}
		for $q^{\alpha} \in \gg^{f}_{-k}$ and $q^{\beta} \in \gg^{f}_{-j}$, where
		\[B_k:\gg_{-k}^{f}\times \gg^f_{-k} \to \mathbb{C}, \quad B_k(a,b):=(-1)^{2k}((\T{ad} (f))^{2k}b|a). \]
		Moreover, $\cW^{\infty}(\gg,f)$ decomposes as a tensor product of the standard free field algebras. 
		Specifically, let us refer to $2k$ in (\ref{freeField}) as pole order, and $X=\T{Span}\{X^{\alpha} | q^{\alpha} \in \gg^f_{-k}\}$.
		Then,
		\begin{itemize}
			\item if pole order is even and form $(\cdot|\cdot)$ is symmetric, then $X$ generate an even algebra of orthogonal type.
			\item if pole order is odd and form $(\cdot|\cdot)$ is symmetric, then $X$ generate an odd algebra of orthogonal type.
			\item if pole order is odd and form $(\cdot|\cdot)$ is skew-symmetric, then $X$ generate an even algebra of symplectic type.
			\item if pole order is even and form $(\cdot|\cdot)$ is skew-symmetric, then $X$ generate an odd algebra of symplectic type.
		\end{itemize}
	\end{theorem}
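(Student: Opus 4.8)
The plan is to analyze the leading large-$k$ behaviour of the OPE algebra of $\cW^k(\gg,f)$ in its canonical generators and to show that, after a uniform rescaling, only the most singular terms persist. By Theorem \ref{thm:kacwakimoto}, $\cW^k(\gg,f)$ is freely generated by the $d_{(0)}$-closed fields $K^\alpha$, one for each basis vector $q^\alpha\in\gg^f$, with $K^\alpha$ of conformal weight $1+j$ when $q^\alpha\in\gg^f_{-j}$. First I would record the $k$-dependence of the OPE structure constants: since each $K^\alpha$ is assembled from the affine currents \eqref{XtoJ} and the charged and neutral fermions, and since the only source of $k$ is the central term $k(q^\alpha|q^\beta)$ of the affine OPE \eqref{OPE:affine}, repeated application of the Borcherds identities \eqref{quasi-associativity}--\eqref{Jacobi} shows that every structure constant is a polynomial in $k$ and $(k+h^\vee)^{-1}$. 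Rescaling $K^\alpha\mapsto (k+h^\vee)^{-r_\alpha}K^\alpha$, with $r_\alpha$ chosen so that the top (constant) pole of each self-OPE has a finite nonzero limit, then produces a well-defined limit vertex algebra $\cW^{\T{free}}(\gg,f)$, whose generators I rename $X^\alpha$.

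The central claim, that this limit is a free field algebra, is the assertion that in the rescaled OPE $X^\alpha(z)X^\beta(w)$ every coefficient attached to a nonconstant field (a normally ordered monomial or a derivative) carries a strictly lower power of $k$ than the top, constant, coefficient, and therefore drops out. This degree-counting step is where I expect the main obstacle to lie: the constant term descends directly from the order-$k$ central term of \eqref{OPE:affine}, propagated along the $\mathfrak{sl}_2$-string joining $\gg^f_{-k}$ to $\gg^e_{k}$, whereas the cubic-and-higher corrections in \eqref{XtoJ} and in the $d_{(0)}$-cohomology representatives contribute only subleading powers. Making this precise demands uniform control of the $k$-scaling of the cohomology classes $K^\alpha$ and of their iterated products, not merely of the affine currents; I would organize it through the filtration of $\cW^k(\gg,f)$ by affine/fermionic monomial degree and argue on the associated graded. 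Granting it, the surviving coefficient is exactly $B_k(q^\alpha,q^\beta)$, and since the invariant form pairs $\gg_i$ with $\gg_{-i}$ only, it vanishes unless $q^\alpha$ and $q^\beta$ lie in the same graded piece; this is the factor $\delta_{j,k}$ in \eqref{freeField}. In particular distinct graded pieces have nonsingular mutual OPE, so $\cW^{\T{free}}(\gg,f)$ factors as the tensor product of the subalgebras generated by the various $\gg^f_{-k}$.

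It remains to identify the type of the free field algebra generated by $X=\T{Span}\{X^\alpha\mid q^\alpha\in\gg^f_{-k}\}$, which is governed by two signs. The first is the symmetry of the surviving form: using the (super)invariance and (super)symmetry of $(\cdot|\cdot)$ together with the evenness of the $\mathfrak{sl}_2$-triple element, one checks $B_k(a,b)=(-1)^{2k+|a||b|}B_k(b,a)$, so that $B_k$ is symmetric or skew according to the parity of $2k$ twisted by the symmetry of $(\cdot|\cdot)$ on $\gg^f_{-k}$; a symmetric form yields an algebra of orthogonal type and a skew form one of symplectic type. The second is the parity of the pole order $2k$, which via skew-symmetry \eqref{skew-symmetry} evaluated at the top mode (where all higher products vanish) gives $a_{\alpha\beta}=(-1)^{|q^\alpha||q^\beta|}(-1)^{2k}a_{\beta\alpha}$ and hence fixes the statistics of $X^\alpha$, bosonic or fermionic, compatibly with the symmetry of $B_k$. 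Matching the four combinations of (symmetry of the form, parity of $2k$) against the defining data of $\cO_{\text{ev}},\cO_{\text{odd}},\cS_{\text{ev}},\cS_{\text{odd}}$ then reproduces precisely the four cases in the statement, completing the proof.
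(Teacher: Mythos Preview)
The paper does not prove this theorem; it is quoted verbatim from \cite{CL3} (Theorem 3.5 and Corollary 3.4 there) and is used here only as a black box to compute strong generating types of cosets via their large-level limits. So there is no proof in the present paper to compare against.

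That said, your sketch tracks the actual argument in \cite{CL3} fairly closely: one rescales the Kac--Wakimoto generators $K^\alpha$ by appropriate powers of the level, shows that in the limit only the top-pole constant term survives, and then reads off the symmetry type of the resulting pairing $B_k$ from the parity of $2k$ and of the restriction of $(\cdot|\cdot)$. You are right that the delicate point is the uniform degree-counting showing that every non-constant OPE coefficient is subleading in $k$; in \cite{CL3} this is handled by working with explicit free-field realizations (Miura-type maps) rather than directly with the cohomology representatives, which makes the $k$-scaling transparent field by field. Your proposed alternative, filtering by monomial degree and passing to the associated graded, is plausible but you have not actually carried it out, and you flag this yourself (``Granting it\dots''). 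So the proposal is a correct outline with the main technical step left as an acknowledged gap, and the paper's route (via \cite{CL3}) bypasses that gap by a different, more computational, mechanism.
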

	This theorem is very useful for deducing the strong generating type of cosets of $\cW$-algebras by affine subalgebras. In the limit, the coset becomes a certain orbifold of a free field algebra whose structure can be analyzed using classical invariant theory. A strong generating for this orbifold will give rise to a strong generating set for the coset at generic levels.

	\subsection{Vertex algebras over commutative rings} \label{subsection:voaring}
	We will often work with vertex algebras over a commutative ring $R$ which is a finitely generated $\mathbb{C}$-algebra, or a localization of such a ring. We will use the notation and setup of \cite[Section 3]{Lin}. Let $\cV$ be a vertex algebra over $R$ with conformal weight grading
\begin{equation} \label{RVOAgrading} \cV = \bigoplus_{d \in \frac{1}{2} \mathbb{Z}_{\geq 0}} \cV[d],\qquad \cV[0] \cong R.\end{equation} An ideal $\cI \subseteq \cV$ is called {\it graded} if $$\cI = \bigoplus_{d \in \frac{1}{2} \mathbb{Z}_{\geq 0}} \cI[d],\qquad \cI[d] = \cI \cap \cV[d].$$ 
We recall the notion of simplicity from \cite{Lin}
\begin{definition} \label{VOAsimplicity}
$\cV$ is simple if there are no proper graded ideals $\cI$ such that $\cI[0] = \{0\}$.
\end{definition}
Note that this coincides with the usual notion of simplicity in the case that $R$ is a field. If $I\subseteq R$ is an ideal, we may regard $I$ as a subset of $\cV[0] \cong R$. Let $I \cdot \cV$ denote the vertex algebra ideal generated by $I$. Then $\cV^I = \cV / (I \cdot \cV)$ is a vertex algebra over $R/I$. Even if $\cV$ is simple as a vertex algebra over $R$, $\cV^I$ need not be simple as a vertex algebra over $R/I$. 

Let $R$ be the coordinate ring of some variety $X$, and let $\cV$ be a simple vertex algebra over $R$ with weight grading \eqref{RVOAgrading}. Let $I\subseteq R$ be an ideal such that $\cV^I$ is {\it not} simple, i.e., $\cV^I$ has a maximal proper graded ideal $\mathcal{I}$ such that $\mathcal{I}[0] = \{0\}$. Then the quotient 
$$\cV_{I} = \cV^I / \cI$$ is a simple vertex algebra over $R/I$. Letting $Y\subseteq X$ be the closed subvariety corresponding to $I$, we can regard $\cV_I$ as a simple vertex algebra defined over $Y$. As in \cite{Lin}, we define the {\it degeneration poset} of all ideals $I\subseteq R$ for which $\cV^I$ is not simple over $R/I$. For ideals $I_1, I_2$ in the degeneration poset, we have the corresponding simple vertex algebras  $\cV_{I_1} = \mathcal{V}^{I_1} / \mathcal{I}_1$ and $\mathcal{V}_{I_2} = \mathcal{V}^{I_2} / \mathcal{I}_2$ over $R/I_1$ and $R / I_2$, respectively. Let $Y_1, Y_2 \subseteq X$ be the closed subvarieties corresponding to $I_1, I_2$, and let $p \in Y_1 \cap Y_2$. Let $\mathcal{V}^p_{I_1}$ and $ \mathcal{V}^p_{I_2}$ be the vertex algebras over $\mathbb{C}$ obtained by evaluating at $p$. As above, $\mathcal{V}^p_{I_1}$ and $ \mathcal{V}^p_{I_2}$ need not be simple, and we denote their simple quotients by $\mathcal{V}_{I_1,p}$ and $\mathcal{V}_{I_2,p}$. Then $p$ corresponds to a maximal ideal $M_p \subseteq R$ containing both $I_1$ and $I_2$, so we have isomorphisms
\begin{equation}\label{tautologicalsio}  \mathcal{V}_{I_1,p} \cong \mathcal{V}_M \cong \mathcal{V}_{I_2,p}.\end{equation} Typically, the existence of such an isomorphism is nontrivial because $\cV_{I_1}$ and $\cV_{I_2}$ can be very different away from this intersection point.

Our main example $\cW^{\gs\gp}_{\infty}$ will be defined over a localization $R = D^{-1} \mathbb{C}[c,k]$ of the polynomial ring $\mathbb{C}[c,k]$, where $D$ is finitely generated multiplicatively closed set; see Theorem \ref{thm:induction}. We will identify certain $1$-parameter vertex algebras $\cC^{\psi}_{XY}(n,m)$ for $n,m \in \mathbb{N}$, and $\cC^{\ell}(n)$ for $n \in \frac{1}{2} \mathbb{Z}$, with quotients of $\cW^{\gs\gp}_{\infty}$ along prime ideals $I \subseteq R$, after a suitable localization. All the vertex algebras $\cC^{\psi}_{XY}(n,m)$ will be defined either as $\cW$-algebras $\cW^{\psi}_{XY}(n,m)$, affine cosets of such $\cW$-algebras, or $\mathbb{Z}_2$-orbifolds of such $\cW$-algebras or their cosets. Similarly, $\cC^{\ell}(n)$ is either an affine coset of an affine vertex algebra tensored with a free field algebra, or the $\mathbb{Z}_2$-orbifold of such a coset. The defining equation for the ideal $I$ is given by the formula for the central charge $c$ as a rational function of the level $k$ of the affine $\gs\gp_2$-subalgebra.

We will always regard $\cC^{\psi}_{XY}(n,m)$ and $\cC^{\ell}(n)$ as $1$-parameter vertex algebras, where $\psi$ and $\ell$ are regarded as {\it formal} parameters. They are defined over the localization of $R/I$ obtained by inverting all denominators of structure constants of $\cW^{\gs\gp}_{\infty}$ after replacing $c$ and $k$ with the corresponding functions of $\psi$ (respectively $\ell$). However, there is a subtle point which we wish to emphasize. At a given point $\psi_0 \in \mathbb{C}$, the specialization $$\cC^{\psi_0}_{XY}(n,m) := \cC^{\psi}_{XY}(n,m)  / (\psi-\psi_0) \cC^{\psi}_{XY}(n,m)$$ makes sense as long as $\psi$ is not in the (finite) set of poles of denominators of structure constants. However, $\cC^{\psi_0}_{XY}(n,m) $ can be a proper subalgebra of the \lq\lq honest" coset (or orbifold) obtained by first specializing the $\cW$-algebra $\cW^{\psi}_{XY}(n,m)$ to $\psi = \psi_0$, and then taking its coset (or orbifold), even though for generic values of $\psi$ these coincide. The same can happen for $\cC^{\ell_0}(n) := \cC^{\ell}(n) / (\ell - \ell_0) \cC^{\ell}(n)$ of $\cC^{\ell}(n)$ at a particular level $\ell_0 \in \mathbb{C}$.  The issue is that $\cW^{\gs\gp}_{\infty}$ is weakly generated by the fields in weight at most $4$, so the same holds for $\cC^{\psi_0}_{XY}(n,m)$ or $\cC^{\ell_0}(n)$, and this weak generation property of the honest coset (or orbifold) can fail at special values $\psi_0$ or $\ell_0$. At the moment, there is no constructive algorithm for determining the set of points where weak generation fails, and a priori these sets need not be finite. So even though we have isomorphisms \eqref{tautologicalsio} between simple quotients of any of the above $1$-parameter quotients of $\cW^{\gs\gp}_{\infty}$ at intersection points in their truncation curves, it is a difficult problem to show that at such intersection points, both vertex algebras coincide with the honest cosets (or orbifolds).

With some effort, we are able to show that in the case $\cC^{\psi}_{BC}(0,m) = \cW^{\psi-2m-2}(\gs\gp_{2(2m+1)}, f_{2m+1, 2m+1})$, there are only finitely many values of $\psi$ where the weak generation property can fail, and we describe them explicitly; see Theorem \ref{weakgeneration:Walgebras}. In Section \ref{sect:rational}, using a different approach that involves fusion categories for rational $\cW$-algebras, we will prove that for $n$ a positive integer and $\ell-1$ an admissible level for $\gs\gp_{2n}$, the simple quotient $\cC_{\ell}(n)$ is strongly rational and the weak generation property always holds, so that $\cC_{\ell}(n)$ is always a quotient of $\cW^{\gs\gp}_{\infty}$. In Section \ref{sect:rational}, we also determined for which levels $\psi$ the weak generation property holds for $\cC^{\psi}_{CB}(0,m)$ and $\cC^{\psi}_{BO}(0,m)$. Combining this with the rationality of $\cC_{\ell}(n)$, and using the isomorphisms \eqref{tautologicalsio} at intersection points of truncation curves, we will give a new proof of the rationality of certain exceptional $\cW$-algebras, and we will prove the rationality of $\cW_r(\go\gs\gp_{1|2(2m+1)}, f_{2m+1,2m+1})$ for $r = -(2m+\frac{3}{2}) + \frac{3+2k+4m}{1(2m+1)}$ for all but finitely many values of $k$ when $2m+1, 2k+1$ are coprime. For the other $1$-parameter families $\cC^{\psi}_{XY}(n,m)$ and $\cC^{\ell}(n)$, we leave open the problem of effectively determining for which levels the weak generation property holds.

\section{Indecomposable nilpotents and reduction by stages} \label{sec:reduction}

In this section, we provide some motivation for our main result, which is the construction of a new universal $2$-parameter vertex algebra that is freely generated of type \eqref{sp2starting}. Let $\mathfrak{g}$ be a simple Lie algebra, $f \in \mathfrak{g}$ a nilpotent element, and suppose that $f$ has a decomposition $f = f_1 + f_2$, where $f_1, f_2$ are both nilpotents in $\mathfrak{g}$. Complete both $f_1$ and $f_2$ to $\mathfrak{sl}_2$-triples in $\mathfrak{g}$, and let $\mathfrak{g}^{\natural}(f_1)$ and $\mathfrak{g}^{\natural}(f_2)$ be the centralizers of these $\mathfrak{sl}_2$-triples in $\mathfrak{g}$. Suppose further that 
\begin{equation} \label{decomp:firstattempt} f_2 \in \mathfrak{g}^{\natural}(f_1),\qquad f_1 \in \mathfrak{g}^{\natural}(f_2),\end{equation} and consider the reduction $H_{f_1}(\mathfrak{g})$. Since this has affine subalgebra $V^{k'}(\mathfrak{g}^{\natural}(f_1))$ and $f_2 \in \mathfrak{g}^{\natural}(f_1)$, it is meaningful to consider the reduction $H_{f_2}(H_{f_1}(\mathfrak{g}))$. For $i = 1,2$, let $\mathfrak{g}_i$ be the centralizer of $\mathfrak{g}^{\natural}(f_i)$ in $\mathfrak{g}$, and let $d_i$ be the rank of the largest simple component of $\mathfrak{g}_i$. We say that $f_1 \geq f_2$ if $d_1 \geq d_2$.

\begin{conjecture} \label{reductionstages}
If $f = f_1 +f_2$, $f_1, f_2$ satisfy \eqref{decomp:firstattempt}, and $f_1 \geq f_2$, then 
\begin{equation} \label{red:stages} H_{f}(\mathfrak{g}) \cong  H_{f_2}(H_{f_1}(\mathfrak{g})).\end{equation}
\end{conjecture}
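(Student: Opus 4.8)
The plan is to realize both $H_f(\mathfrak{g})$ and $H_{f_2}(H_{f_1}(\mathfrak{g}))$ as the cohomology of a single filtered BRST complex and to compare them through the associated spectral sequence. First I would record the Lie-theoretic input that makes the two sides comparable. The hypotheses \eqref{decomp:firstattempt} force $[f_1,f_2]=0$ and, more strongly, $[e_1,f_2]=[h_1,f_2]=0$ together with the symmetric statements for $f_1$. Completing to triples $(e_i,h_i,f_i)$ and setting $x_i=\tfrac12 h_i$, a direct check gives $[x_1+x_2,\,f_1+f_2]=-f$ and $[e_1+e_2,\,f_1+f_2]=h_1+h_2$, so $(e,h,f)=(e_1+e_2,\,h_1+h_2,\,f_1+f_2)$ is an $\mathfrak{sl}_2$-triple and $x=x_1+x_2$ is a good grading element for $f$. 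Consequently $\mathfrak{g}$ acquires a bigrading $\mathfrak{g}=\bigoplus_{j_1,j_2}\mathfrak{g}_{j_1,j_2}$ by the joint eigenvalues of $(x_1,x_2)$, and the centralizer $\mathfrak{g}^{\natural}(f_1)$ lies in $x_1$-degree $0$, inheriting the $x_2$-grading in which $f_2$ has degree $-1$. The ordering hypothesis $f_1\ge f_2$ should ensure that performing the $f_1$-reduction first yields a grading on $\mathfrak{g}^{\natural}(f_1)$ for which $f_2$ is an admissible (good) nilpotent, so that $H_{f_2}(H_{f_1}(\mathfrak{g}))$ is defined and Theorem \ref{thm:kacwakimoto} applies at each stage; I would confirm that this is the precise role of the ordering before proceeding, since otherwise the iterated object need not have the expected free generators.

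Next I would match the ghost systems and split the differential. The complex $C^k(\mathfrak{g},f)$ of \eqref{Qfield} carries charged fermions for $\mathfrak{g}_{+}=\bigoplus_{j_1+j_2>0}\mathfrak{g}_{j_1,j_2}$ and neutral fermions for $\mathfrak{g}_{1/2}$. I would split these according to the bigrading into a stage-one subsystem, attached to $\bigoplus_{j_1>0}\mathfrak{g}_{j_1,j_2}=\mathfrak{g}_+^{(1)}$ (the $f_1$-ghosts), and a complementary stage-two subsystem supported in $x_1$-degree $0$ (the $f_2$-ghosts inside $\mathfrak{g}^{\natural}(f_1)$). Using the combination $J^\alpha$ of \eqref{XtoJ}, whose classes generate the affine subalgebra $V^{k'}(\mathfrak{g}^{\natural}(f_1))$ of $H_{f_1}(\mathfrak{g})$, and splitting the twist term via $(f\mid q^\alpha)=(f_1\mid q^\alpha)+(f_2\mid q^\alpha)$, I would rewrite the total differential $d_{(0)}$ as $d^{(1)}+d^{(2)}$, where $d^{(1)}$ is the $f_1$-BRST differential and $d^{(2)}$ collects the $f_2$-twist term together with the stage-two cubic ghost terms.

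I would then filter $C^k(\mathfrak{g},f)$ (for instance by stage-two ghost number) so that the associated graded differential is precisely $d^{(1)}$. The $E_1$-page computes $H(d^{(1)})$, which by the module version \eqref{Wmodule} of the reduction equals $H_{f_1}(\mathfrak{g})$ tensored with the stage-two ghosts; here Theorem \ref{thm:kacwakimoto}(3) guarantees that this cohomology is concentrated in stage-one charge $0$ and freely generated by the $J^\alpha$ with $q^\alpha\in\mathfrak{g}^{\natural}(f_1)$. The differential induced on $E_1$ is then identified with the $f_2$-BRST differential acting on $H_{f_1}(\mathfrak{g})$ as a $V^{k'}(\mathfrak{g}^{\natural}(f_1))$-module, so that $E_2=H_{f_2}(H_{f_1}(\mathfrak{g}))$. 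Finally, applying Theorem \ref{thm:kacwakimoto}(3) to the total reduction $H_f$, whose cohomology is concentrated in total charge $0$, forces all higher differentials $d_r$ with $r\ge 2$ to vanish for degree reasons; the sequence collapses and $H_f(\mathfrak{g})\cong E_2=H_{f_2}(H_{f_1}(\mathfrak{g}))$.

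The step I expect to be the main obstacle is the ghost-matching, and specifically the neutral fermions. Because $x=x_1+x_2$, the half-integer space $\mathfrak{g}_{1/2}=\bigoplus_{j_1+j_2=1/2}\mathfrak{g}_{j_1,j_2}$ mixes pieces such as $(1/2,0)$, $(0,1/2)$ and $(1,-1/2)$, so the neutral fermions of the total reduction do \emph{not} split cleanly into stage-one and stage-two neutral fermions: some of them must be reinterpreted as charged fermions after the first reduction, and the skew form \eqref{bilinear form} has to be tracked carefully through this reorganization. Verifying that $d^{(1)}$ and the induced differential assemble into exactly the two genuine BRST differentials, rather than these plus correction terms, is where the compatibility conditions \eqref{decomp:firstattempt} and the ordering $f_1\ge f_2$ must be used in full force. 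As a consistency check that also pins down the level shift $k'$, I would first match graded (super)characters of both sides by an Euler–Poincaré computation, which reduces to additivity of the relevant grading data under $f=f_1+f_2$.
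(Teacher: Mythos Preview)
The statement you are attempting to prove is labeled \textbf{Conjecture} in the paper (Conjecture~\ref{reductionstages}), not a theorem or proposition. The paper does \emph{not} provide a proof; it explicitly presents this as an open problem, remarking only that in type $A$ the conjecture and some of its consequences appeared in \cite{CFLN}, and that the first isomorphism of \eqref{fks:typeA} has been established in the special case treated in \cite{FKN}. There is therefore no paper proof to compare your proposal against.

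Your proposal is the natural spectral-sequence strategy, and you have correctly identified the genuine obstruction: the neutral fermion space $\mathfrak{g}_{1/2}$ for the total grading $x=x_1+x_2$ does not decompose as the direct sum of the stage-one and stage-two neutral fermion spaces, because pieces such as $\mathfrak{g}_{1,-1/2}$ and $\mathfrak{g}_{-1/2,1}$ contribute to $\mathfrak{g}_{1/2}$ without belonging to either $\mathfrak{g}^{(1)}_{1/2}$ or $(\mathfrak{g}^{\natural}(f_1))_{1/2}$. Likewise, the charged fermion space $\mathfrak{g}_+$ for the total grading need not split as $\mathfrak{g}_+^{(1)}\oplus(\mathfrak{g}^{\natural}(f_1))_+$, since there can be subspaces with $j_1<0$ but $j_1+j_2>0$. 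As you say, one must reinterpret some total-reduction neutral fermions as stage-one charged fermions (or vice versa) and track the skew form \eqref{bilinear form} through this reorganization. You do not resolve this, and indeed the literature does not either in the stated generality; this is precisely why the statement remains a conjecture. Your outline is a reasonable plan of attack, but as written it is a sketch with an acknowledged gap rather than a proof.
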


Since $\text{Com}(V^{k'}(\gg^{\natural}(f_1)), H_{f_1}(\mathfrak{g}))$ is unchanged by applying $H_{f_2}$, Conjecture \ref{reductionstages} implies that $ H_{f_2}(H_{f_1}(\mathfrak{g}))$ is a conformal extension of 
$$\cW^{k'}(\gg^{\natural}(f_1), f_2) \otimes \text{Com}(V^{k'}(\gg^{\natural}(f_1)), \cW^k(\mathfrak{g}, f_1)).$$

In type $A$, this conjecture and some of its consequences appeared in \cite{CFLN}. In particular, every nilpotent $f \in \mathfrak{sl}_N$ can be written as a sum of hook-type nilpotents, which satisfy the condition \eqref{decomp:firstattempt} pairwise. If $f$ corresponds to a partition of $N$ with $m$ parts of size at least $2$, then $\cW^k(\mathfrak{sl}_N, f)$ would be a conformal extension of a tensor product of $m$ $Y$-algebras.
	
There may be different decompositions of a nilpotent $f$ such that \eqref{decomp:firstattempt} is satisfied. 
For example, consider $f_{n,n} \in \gs\gl_{2n}$, which corresponds to the partition with two parts of size $n$. We have $\gs\gl_{2n}^{\natural}(f_{n,n}) \cong \gs\gl_2$, which contains a nilpotent $f'$ which is conjugate to the standard minimal nilpotent $f_{2, 1^{2n-2}} \in \gs\gl_{2n}$. By abuse of notation, we write $f' = f_{2, 1^{2n-2}}$. Then $f_{n,n} + f_{2, 1^{2n-2}}$ is in the same conjugacy class as $f_{n+1, n-1}$, which also decomposes as the sum of hook-type nilpotents $f_{n+1, n-1} = f_{n+1, 1^{n-1}} + f_{n-1, 1^{n+1}}$. 
This suggests that 
\begin{equation} \label{fks:typeA} H_{f_{2, 1^{2n-2}}} (H_{f_{n,n}} (\gs\gl_{2n}))  \cong H_{f_{n+1, n-1}}(\gs\gl_{2n}) \cong H_{f_{n-1}} (H_{f_{n+1}}(\gs\gl_{2n})).\end{equation} The first isomorphism in \eqref{fks:typeA} has been proven by the second author with Fasquel and Nakatsuka in \cite{FKN}.
\begin{remark}
One can also ask about the relation between $H_{f}(\mathfrak{g})$ and  $H_{f_2}(H_{f_1}(\mathfrak{g}))$ when $f_2 \geq f_1$. In that case one expects that the two quantum Hamiltonian reduction differ by some free field algebra that depends on the nilpotent element. This was studied in the case of type $A$ in \cite{CDM} and in particular a connection between this type of iterated quantum Hamiltonian reductions and cohomologies of affine Laumon spaces was noticed. Our understanding of iterated reductions has been inspired by \cite{CDM}.
\end{remark}

In order to motivate our main construction, we define a slightly more restrictive notion of decomposition of a nilpotent. It has the property that in type $A$, the indecomposable nilpotents are exactly the hook-type nilpotents. 
\begin{definition} \label{def:indecomposable}
Let $\mathfrak{g}$ be a simple Lie algebra, and $f \in \mathfrak{g}$ a nilpotent element. We say that $f$ is decomposable if we can write $f = f_1 + f_2$, where $f_1, f_2$ are nilpotents with the following properties. 
\begin{enumerate} 
\item If we complete both $f_1$ and $f_2$ to $\mathfrak{sl}_2$ triples in $\mathfrak{g}$, and let $\mathfrak{g}^{\natural}(f_1)$ and $\mathfrak{g}^{\natural}(f_2)$ be the centralizers of these $\mathfrak{sl}_2$ triples in $\mathfrak{g}$, then 
$$f_2 \in \mathfrak{g}^{\natural}(f_1),\qquad f_1 \in \mathfrak{g}^{\natural}(f_2).$$
\item If $f_1$ further decomposes as $f_1 = f_{11} + f_{12}$ and this decomposition satisfies the above property, i.e., $f_{12} \in \mathfrak{g}^{\natural}(f_{11})$ and $f_{11} \in \mathfrak{g}^{\natural}(f_{12})$, then both $f_{11}$ and $f_{12}$ commute with $f_{2}$. 
\item Similarly, if $f_2$ further decomposes as $f_2 = f_{21} + f_{22}$ and $f_{22} \in \mathfrak{g}^{\natural}(f_{21})$ and $f_{21} \in \mathfrak{g}^{\natural}(f_{22})$, then both $f_{21}$ and $f_{22}$ commute with $f_{1}$.
\end{enumerate} 
If no such decomposition of $f$ exists, we say that $f$ is indecomposable. 
\end{definition}

According to this definition, for $n_1 > n_2$, $f_{n_1, n_2} = f_{n_1, 1^{n_2}} + f_{n_2, 1^{n_1}}$ is a decomposition. But $f_{n+1, n-1} = f_{n,n} + f_{2,1^{2n -2}}$ is not a decomposition because $f_{n,n}$ further decomposes as $f_{n, 1^n} + f_{n,1^n}$ but neither copy of $f_{n, 1^n}$ commutes with $f_{2, 1^{2n-2}}$. In fact, it is straightforward to check that the indecomposable nilpotents in type $A$ are exactly the hook-type nilpotents, and that the decomposition of any nilpotent in type $A$ as a sum of indecomposables is unique up to conjugacy. We now list the indecomposable nilpotents in the other classical Lie types.

	\subsection{Indecomposable nilpotents in type $C$}
	Recall that the nilpotent orbits in $\fr{sp}_{2N}$ are in bijection with partitions of $2N$, where the odd parts occurs with even multiplicity \cite[Theorem 5.1.3]{CMc}. The following nilpotents are indecomposable according to the above definition.
	\begin{enumerate}
		\item For $1\leq n \leq N$, let $f_{2n} \in \gs\gp_{2N}$ correspond to the partition $(2n,1^{2N-2n})$. This is a hook-type nilpotent as defined in \cite{CL4}, i.e., it is the principal nilpotent in $\gs\gp_{2n}\subseteq \gs\gp_{2N}$.
		
		\item For $1 \leq n < \frac{N}{2}$, let $f_{2n+1, 2n+1} \in \gs\gp_{2N}$ correspond to the partition $(2n+1, 2n+1, 2N-2n-2)$.
	\end{enumerate}

	\subsection{Indecomposable nilpotents in type $B$}
	Recall that nilpotent orbits in $\fr{so}_{2N+1}$ are in bijection with partitions of $2N+1$, where the even parts occurs with even multiplicity \cite[Theorem 5.1.2]{CMc}. The following nilpotents are indecomposable.
	\begin{enumerate}
		\item For $ 1 \leq n \leq N$, let $f_{2n+1} \in \gs\go_{2N+1}$ correspond to the partition $(2n+1, 1^{2N-2n})$ of $2N+1$. This is of hook-type, i.e., the principal nilpotent in $\gs\go_{2n+1}\subseteq \gs\go_{2N+1}$.
		
		\item For $ 1 \leq n \leq \frac{N}{2}$, let $f_{2n, 2n} \in \gs\go_{2N+1}$ correspond to the partition $(2n, 2n, 1^{2N-4n+1})$ of $2N+1$.
	\end{enumerate}

\subsection{Indecomposable nilpotents in type $D$}
Recall that the nilpotent orbits in $\fr{so}_{2N}$ are in bijection with partitions of $2n$, where the even parts occurs with even multiplicity, except that very even partitions (those with only even parts, each having even multiplicity) correspond to two orbits \cite[Theorem 5.1.4]{CMc}.
The following are indecomposable:
	\begin{enumerate}
		\item For $1 \leq n \leq N$, let $f_{2n-1} \in \gs\go_{2N}$ correspond to the partition $(2n-1, 1^{2N-2n+1})$ of $2N$. This is of hook-type, i.e., the principal nilpotent in $\gs\go_{2n-1}\subseteq \gs\go_{2N}$.
		
		\item For $1 \leq n \leq \frac{N}{2}$, let $f_{2n, 2n} \in \mathfrak{so}_{2N}$ correspond to the partition $(2n, 2n, 1^{2N-4n})$ of $2N$.
\end{enumerate}

	\subsection{Generating Types} We now consider the generating type of the corresponding $\cW$-algebras in the cases where $\gg^{\natural}$ is as small as possible.
\begin{enumerate}
		\item $\cW^k(\gs\gp_{2N}, f_{2N})$ is the principal $\cW$-algebra and has type $\cW(2,4,\dots, 2N)$.
		\item $\cW^k(\gs\gp_{2(2N+1)}, f_{2N+1, 2N+1})$ has type $\cW(1^3, 2, 3^3, 4, \dots, (2N-1)^3, 2N, (2N+1)^3)$.
		\item $\cW^k(\gs\go_{2N}, f_{2N-1})$ is the principal $\cW$-algebra and has type $\cW(2,4,\dots, 2N-2, N)$
		\item $\cW^k(\gs\go_{4N}, f_{2N, 2N})$ has type $\cW(1^3,2, 3^3, 4,\dots, (2N-1)^3, 2N)$.
		\item $\cW^k(\gs\go_{2N+1}, f_{2N+1})$ is the principal $\cW$-algebra  has type $\cW(2,4,\dots, 2N)$
		\item $\cW^k(\gs\go_{4N+1}, f_{2N, 2N})$ has type 
		$\cW\big(1^3,2, 3^3, 4,\dots, (2N-1)^3, 2N, \left( \frac{2N+1}{2}\right)^2\big)$.
\end{enumerate}
	
	We already know the existence of a universal $2$-parameter vertex algebra of type $\cW(2,4,6,\dots)$ that admits $\cW^k(\gs\gp_{2n+1})$ and $\cW^k(\gs\go_{2n})^{\mathbb{Z}_2}$ as $1$-parameter quotients. Examples (2) and (4) above suggest the existence of a universal vertex algebra which is freely generated of type \eqref{sp2starting} admitting all these algebras as $1$-parameter quotients. In fact, Example (6) has an action of $\mathbb{Z}_2$, and it is not difficult to see that the $\mathbb{Z}_2$-orbifold has this generating type. It can be seen easily that there is no family of decomposable nilpotents in types $B$, $C$, or $D$, whose unifying algebra is of the form \eqref{sp2starting} This suggests that the universal object of this type is genuinely new, and is not an extension of two commuting copies of $\cW^{\text{ev}}_{\infty}$. Moreover, the above analysis suggests that the quotients of this algebra are the remaining building blocks for $\cW$-algebras of classical types.
	
	In the next two sections, we enumerate $12$ infinite families of $1$-parameter vertex algebras with strong generating type $\cW(1^3, 2, 3^3, 4,\dots)$. In this notation, this strong generating set need not be minimal; in fact all our examples are strongly generated by a finite subset of these generators. First, in Section \ref{sect:YtypeC} we give $8$ $\mathbb{N} \times \mathbb{N}$ families which contain the $\cW$-algebras $\cW^k(\gs\go_{4n}, f_{2n,2n})$ and $\cW^k(\gs\gp_{2(2n+1)}, f_{2n+1, 2n+1})$ as special cases. These families are all either $\cW$-algebras or (orbifolds of) cosets of $\cW$-algebras, and the list is quite parallel to the orthosymplectic $Y$-algebras as given in \cite{CL4}. Therefore we call these the $Y$-algebras of type $C$. The remaining $4$ families arise in a different way as diagonal cosets, and have the property that the level $k$ of the affine subalgebra of $\gs\gp_2$ is a fixed integer or half integer, and also is simple.

	\section{$Y$-algebras of Type $C$} \label{sect:YtypeC}
	
	Here we define 8 families of $\cW$-(super)algebras that we need in a unified framework. 
	First, let $\gg$ be a simple, orthosymplectic Lie (super)algebra which contains a Lie sub(super)algebra $\fr{b}\oplus\fr{sp}_2\oplus \fr{a}$ of full rank. Moreover, we assume that $\gg$ has a decomposition as a $\fr{b}\oplus\fr{sp}_2\oplus \fr{a}$-module 
	\begin{equation}\label{g decomposition}
		\begin{split}
			\gg\cong&\, \fr{b}\oplus\fr{sp}_2\oplus \fr{a}\oplus 	\rho_{\gg}^{\tau}\otimes \mathbb{C}^3\otimes \mathbb{C}\oplus \rho_{\fr{b}}\otimes \mathbb{C}^2\otimes \rho_{\fr{a}},\\
		\end{split}
	\end{equation}
	where
		\[\rho^{\tau}_{\fr{g}}:=
		\begin{cases}
			\rho_{\omega_2},& \quad \fr{b} = \fr{sp}_{2n},\\
			\rho_{2\omega_1},& \quad \fr{b} = \fr{so}_{2n+1},
		\end{cases}\]
	with the following properties.
	\begin{enumerate}
		\item $\fr{b}$ is either $\fr{sp}_{2m}$ or $\fr{so}_{2m+1}$.
		\item $\fr{a}$ is either $\mathfrak{so}_{2n}$, $\mathfrak{so}_{2n+1}$, $\mathfrak{sp}_{2n}$, or $\mathfrak{osp}_{1|2n}$.
		\item $\rho_{\fr{a}}$ and $\rho_{\fr{b}}$ transform as the standard representations of $\fr{a}$ and $\fr{b}$ respectively, and have the same parity, which can be even or odd.
	\end{enumerate}

	Note that if $\fr{a} = \fr{osp}_{1|2n}$, $\rho_{\fr{a}}$ even means that $\rho_{\fr{a}}\cong \mathbb{C}^{2n|1}$ as a vector superspace, whereas $\rho_{\fr{a}}$ odd means that $\rho_{\fr{a}}\cong \mathbb{C}^{1|2n}$.
	If $\fr{g} = \fr{osp}_{m|2n}$ we use the following convention for its dual Coxeter number $h^{\vee}$.
	\begin{equation}
		h^{\vee} = 
		\begin{cases}
			m-2n-2 & \T{type } C\\
			n+1-\frac{1}{2}m & \T{type } B\\
		\end{cases},
		\quad \T{sdim}(\fr{osp}_{m|2n}) = \frac{(m-2n)(m-2n-1)}{2}.
	\end{equation}
	In this notation, type $B$ (respectively $C$) means that the subalgebra $\fr{b} \subseteq \fr{g}$ is of type $B$ (respectively $C$), and the bilinear form on $\fr{osp}_{m|2n}$ is normalized so that it coincides with the normalized Killing form on $\fr{b}$. The cases that we need are recorded in Table \ref{tab:Walgebras}.
	
	\begin{table}[h]
		\centering
		\caption{$\fr{sp}_2$-rectangular $\cW$-algebras with a tail.}
		\label{tab:Walgebras}
		\begin{tabular}{|l|l|l|l|l|l|}
			\hline
			Case & $\gg$& $\fr{a}$ & $\fr{b}$ & $\rho_{\fr{a}}\otimes\mathbb{C}^2\otimes\rho_{\fr{b}}$& $a$ \\
			\hline
			$CD$ & $\fr{so}_{2(2m)+2n}$ & $\fr{so}_{2n}$ & $\fr{sp}_{2m}$ & Even& $\psi-2n$\\
			$CB$ & $\fr{so}_{2(2m)+2n+1}$ & $\fr{so}_{2n+1}$ & $\fr{sp}_{2m}$ & Even& $\psi-2n-1$\\
			$CC$ & $\fr{osp}_{2(2m)|2n}$ & $\fr{sp}_{2n}$ & $\fr{sp}_{2m}$ & Odd &$-\frac{\psi}{2}-n$\\
			$CO$ & $\fr{osp}_{2(2m)+1|2n}$ & $\fr{osp}_{1|2n}$ & $\fr{sp}_{2m}$ & Odd&$-\frac{\psi}{2}-n + \frac{1}{2}$ \\
			$BD$ & $\fr{osp}_{2n|2(2m+1)}$ & $\fr{so}_{2n}$ & $\fr{so}_{2m+1}$ & Odd& $-2\psi-2n+4$\\
			$BB$ & $\fr{osp}_{2n+1|2(2m+1)}$ & $\fr{so}_{2n+1}$ & $\fr{so}_{2m+1}$ & Odd &$-2\psi-2n+3$\\
			$BC$ & $\fr{sp}_{2(2m+1)+2n}$ & $\fr{sp}_{2n}$ & $\fr{so}_{2m+1}$ & Even & $\psi-n-2$\\
			$BO$ & $\fr{osp}_{1|2(2m+1)+2n}$ & $\fr{osp}_{1|2n}$ & $\fr{so}_{2m+1}$ & Even &$\psi-n-\frac{3}{2}$\\
			\hline
		\end{tabular}
	\end{table}

	Let $f_{\fr{b}}$ be the nilpotent element which is principal in $\gb$ and trivial in $\fr{sp}_2\oplus \fr{a}$. 
	The corresponding $\cW$-algebras $\cW^{\ell}(\gg,f_{\fr{b}})$ will be called {\it $\fr{sp}_2$-rectangular} when $n = 0$ so that $\ga$ is trivial, and {\it $\fr{sp}_2$-rectangular with a tail} otherwise. The \lq\lq tail" refers to the affine subalgebra $V^a(\ga) \subseteq \cW^{\ell}(\gg,f_{\fr{b}})$, which has level $a$.

	Let $\rho_d$ denotes the $(d+1)$-dimensional representation of the $\fr{sl}_2$-triple $\{f,h,e\}$, and define two $\fr{sl}_2$-modules 
	\begin{equation}\label{evenodd}
		\begin{split}
			\T{Even}(m)= \bigoplus_{i=1}^m \rho_{4i-2},\quad \T{Odd}(m) =  \bigoplus_{i=1}^m \rho_{4i}.
		\end{split}
	\end{equation}
	Recall the decomposition (\ref{g decomposition}).
	Taking $f$ to be principal in $\fr{b}$ we have the following isomorphisms of $\fr{sl}_2$-modules.
	\begin{equation}
		\rho^{\fr{so}_{2n+1}}_{\omega_2} \cong \rho^{\fr{sp}_{2n}}_{2\omega_1}\cong  \T{Even}(n),\quad \rho^{\fr{so}_{2n+1}}_{2\omega_1} \cong \T{Odd}(n),\quad \rho^{\fr{sp}_{2n}}_{\omega_2} \cong\T{Odd}(n-1).\\
	\end{equation}
	To compute the central charge (\ref{central charge W algebra}) we have to evaluate the contribution of each term in the expression
	\[c=c_{\fr{g}}+c_{\T{dilaton}} + c_{\T{ghost}}.\]
	So we have to evaluate $c_{\T{dilaton}}$ and  $c_{\T{ghost}}$ from charged fermions arising from $\gg_{+}$.
	
	First, we evaluate the dilaton contribution. In types $B$ and $C$ this reduces to the sum of odd and even squares, respectively.
	\begin{equation*}
		\begin{split}
			c_{\T{dilaton}} =& -\ell\times\begin{cases}
				(m+2)(2m)(2m+1), \quad &\fr{b} = \fr{so}_{2m+1},\\
				(2m-1)(2m)(2m+1), \quad &\fr{b} = \fr{sp}_{2m}.\\
			\end{cases}\\
		\end{split}
	\end{equation*}
	Next, consider the decomposition $\gg=\bigoplus_{d}\rho_{d}$ into irreducible $\fr{sl}_2$-modules.
	Each $\rho_d$ gives rise to a field of conformal weight $\frac{d+1}{2}$ in $\cW^{\ell}(\gg,f_{\fr{b}})$.
	Using (\ref{charge Fermions}), the corresponding ghosts give rise to a central charge contribution 
	\begin{equation}\label{ghosts}
		c_d = -\frac{(d-1)(d^2-2d-1)}{2}.
	\end{equation}
	Using the decomposition (\ref{g decomposition}), we find that charged fermion contribution consists of three terms
	\begin{equation}\label{ghost}
		c_{\T{ghost}}=c_{\T{even}}+3c_{\T{odd}}+2\T{sdim}(\rho_{\fr{g}})c_{d_{\fr{b}}}.
	\end{equation}
	To compute $c_{\T{Even}}$ and $c_{\T{Odd}}$, we use (\ref{ghosts}) and (\ref{evenodd}) to obtain
	\[c_{\T{Even}}=\sum_{i=0}^n c_{4i+3}=6m^2-8m^4, \quad c_{\T{Odd}}=\sum_{i=0}^n c_{4i+1}=-2m(m+1)(4m(m+1)-1).\]
	The third term in (\ref{ghost}) is computed by applying formula (\ref{ghosts}).
	
	It follows from the above discussion that $\cW^{\ell}(\gg,f_{\fr{b}})$ is of type 
	\begin{equation}
		\begin{split}
			\cW\left(1^{3+\T{dim}(\fr{a})},2,3^3,4,\dots,(2m-1)^3,2m, (m+\frac{1}{2})^{2\T{dim}(\rho_{\fr{a}})}\right),\quad &\fr{b}= \fr{sp}_{2m},\\
			\cW\left(1^{3+\T{dim}(\fr{a})},2,3^3,4,\dots,2m,(2m+1)^3, \left(m+1\right)^{2\T{dim}(\rho_{\fr{a}})}\right),\quad &\fr{b}=\fr{so}_{2m+1}.
		\end{split}
	\end{equation}
	The affine subalgebra is $V^k(\fr{sp}_2)\otimes V^{a}(\fr{a})$ for some level $a$, recorded in Table \ref{tab:Walgebras}.
	We will always replace $\ell$ with the {critically shifted level} $\psi = \ell + h^{\vee}_{\fr{g}}$, where $h^{\vee}_{\fr{g}}$ is the dual Coxeter number of $\fr{g}$. We now describe the examples we need in greater detail.

	\subsection{Case CD}
	For $\gg =  \fr{so}_{4m+2n}$ with $m\geq 1$ and $n\geq 0$, we have an isomorphism of $\fr{sp}_{2m}\oplus \fr{sp}_2\oplus\fr{so}_{2n}$-modules
	\[\fr{so}_{4m+2n}\cong\fr{sp}_{2m}\oplus\fr{sp}_2\oplus\fr{so}_{2n} \oplus \rho_{\omega_2}\otimes \mathbb{C}^3\otimes\mathbb{C} \oplus \mathbb{C}^{2m}\otimes \mathbb{C}^2\otimes \mathbb{C}^{2n},\]
	and the critically shifted level is $\psi =\frac{k+2 m+n}{m}$.
	We define 
	\[\cW^{\psi}_{CD}(n,m) : = \cW^{\frac{k+2 m+n}{m} - h^{\vee}}(\fr{so}_{4m+2n},f_{\fr{sp}_{2m}}), \qquad h^{\vee} = 4 m + 2 n - 2,\] where $f_{\gs\gp_{2m}} = f_{2m, 2m}$ in the notation of Section \ref{sec:reduction}. The affine subalgebra is $V^{k}(\fr{sp}_2) \otimes V^{\psi-2n}(\fr{so}_{2n})$, and we define
	\[\cC^{\psi}_{CD}(n,m) : = \T{Com}(V^{\psi-2n}(\fr{so}_{2n}),\cW^{\psi}_{CD}(n,m))^{\mathbb{Z}_2}.\]
	The conformal element $L-L^{\fr{sp}_2} - L^{\fr{so}_{2n}}$ has central charge
	\begin{equation}\label{CD}
		c_{CD}=-\frac{k (2 k+1) (2 k m-k+2 m-n) (2 k m+k+4 m+n)}{(k+2) (k+n) (k+2 m+n)}.
	\end{equation}
	The free field limit of $\cW^{\psi}_{CD}(n,m)$ is
	\[\cO_{\T{ev}}(2n^2-n,2)\otimes \left( \bigotimes_{i=1}^{m}\cO_{\T{ev}}(1,4i)\otimes \bigotimes_{i=1}^{m}\cO_{\T{ev}}(3,4i-2) \right) \otimes \cS_{\T{ev}}(2n,2m+1).\]

	\begin{lemma}\label{lemma:CD}
		For $m\geq 1$ and $n\geq 0$, $\cC^{\psi}_{CD}(n,m)$ is of type 
		$\cW(1^3,2,3^3,4,\dots)$
		as a 1-parameter vertex algebra.
		Equivalently, this holds for generic values of $\psi$.
	\end{lemma}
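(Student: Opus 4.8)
The plan is to determine the strong generating type of $\cC^{\psi}_{CD}(n,m)$ by first analyzing its free field limit, and then arguing that the generating type at generic $\psi$ is controlled by this limit. The key input is Theorem \ref{thm:largelevellimit}, which tells us that as $k \to \infty$, the $\cW$-algebra $\cW^{\psi}_{CD}(n,m)$ degenerates into a tensor product of standard free field algebras. The coset $\cC^{\psi}_{CD}(n,m)$ then degenerates into the corresponding $\mathbb{Z}_2$-orbifold of the commutant of the free-field copy of $\fr{so}_{2n}$ (coming from the $V^{\psi-2n}(\fr{so}_{2n})$ tail) inside the free field limit. Since the free field limit is explicitly given in the excerpt as
\[
\cO_{\T{ev}}(2n^2-n,2)\otimes \left( \bigotimes_{i=1}^{m}\cO_{\T{ev}}(1,4i)\otimes \bigotimes_{i=1}^{m}\cO_{\T{ev}}(3,4i-2) \right) \otimes \cS_{\T{ev}}(2n,2m+1),
\]
the first step is to compute the relevant $\fr{so}_{2n}\times \mathbb{Z}_2$-invariants of this free field algebra.

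\emph{Second}, I would analyze this orbifold using classical invariant theory. The $\fr{so}_{2n}$ acts on the symplectic-type generators $\cS_{\T{ev}}(2n, 2m+1)$ through the standard representation $\mathbb{C}^{2n}$ (the $\rho_{\fr{b}}\otimes\mathbb{C}^2\otimes\rho_{\fr{a}}$ piece of \eqref{g decomposition}), and the three adjoint copies $\cO_{\T{ev}}(3, 4i-2)$ together with the singlets $\cO_{\T{ev}}(1,4i)$ are already $\fr{so}_{2n}$-invariant. The task is to produce a strong generating set for the $\fr{so}_{2n}$-invariant subalgebra, using a first and second fundamental theorem of invariant theory for the orthogonal group acting on the relevant copies of the standard module; the surviving $\fr{sp}_2$-covariants will be the three fields in each odd weight and the singlet in each even weight. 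The $\mathbb{Z}_2$-orbifold then eliminates the fields that are odd under the relevant involution (arising from the determinant-one constraint, i.e., passing from $O_{2n}$ to $SO_{2n}$ invariants is already accounted for, with the extra $\mathbb{Z}_2$ matching the decomposition in Example (6) of Section \ref{sec:reduction}). The outcome should be that the invariant algebra has a strong generating set of type $\cW(1^3,2,3^3,4,\dots)$, with weight-$1$ and odd-weight fields transforming in the adjoint of $\fr{sp}_2$ and even-weight fields as singlets.

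\emph{Third}, I would transfer this conclusion from the free field limit to generic $\psi$. This is the standard argument used in \cite{CL3,CL4}: a strong generating set for the orbifold in the limit lifts to a strong generating set for $\cC^{\psi}_{CD}(n,m)$ at generic level, because the dimensions of the graded pieces are upper semicontinuous in the deformation parameter and the limit controls them from above. Concretely, one checks that the normally ordered monomials in the proposed generators that span each conformal weight space in the limit continue to span at generic $\psi$, so no new strong generators are needed in any weight.

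\emph{The main obstacle} I expect is the invariant theory computation in the second step: producing an explicit, minimal-in-each-weight strong generating set for the $\fr{so}_{2n}$-invariants of a mixed free field algebra with both symplectic-type and orthogonal-type generators requires care, since one must control not only the first fundamental theorem (which generators appear) but also the relations and the precise $\fr{sp}_2$-module structure in each weight to confirm the multiplicities $1^3, 2, 3^3, 4, \dots$. The bookkeeping of how the three adjoint copies and the singlets combine with the $\fr{so}_{2n}$-invariant quadratics built from $\cS_{\T{ev}}(2n,2m+1)$, and verifying that these do not contribute spurious low-weight generators beyond those listed, is the delicate part; the semicontinuity argument of the third step, by contrast, is by now routine.
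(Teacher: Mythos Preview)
Your approach is essentially the same as the paper's: pass to the large level limit, identify the limit of the coset as an orbifold of the free field algebra, compute its strong generating type via classical invariant theory for the orthogonal group, and lift back to generic $\psi$ by the standard semicontinuity argument from \cite{CL3,CL4}. The paper makes one simplifying observation you nearly reach but do not state cleanly: since $\fr{so}_{2n}$ acts only on the $\cS_{\T{ev}}(2n,2m+1)$ factor, the limit of the coset factorizes as
\[
\left( \bigotimes_{i=1}^{m}\cO_{\T{ev}}(1,4i)\otimes \bigotimes_{i=1}^{m}\cO_{\T{ev}}(3,4i-2) \right) \otimes \cS_{\T{ev}}(2n,2m+1)^{\text{O}_{2n}},
\]
so the invariant theory is only needed for one factor, and the ``mixed'' computation you worry about does not arise.

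One point of confusion to clean up: the commutant by the affine $\fr{so}_{2n}$ subalgebra gives $\text{SO}_{2n}$-invariants in the limit (the zero modes of the Heisenberg generate $\fr{so}_{2n}$), and the additional $\mathbb{Z}_2$ in the definition of $\cC^{\psi}_{CD}(n,m)$ then promotes this to $\text{O}_{2n}$-invariants. Your parenthetical has this backwards, and the reference to Example (6) of Section \ref{sec:reduction} is inapposite (that example concerns type $B$, i.e., $\fr{so}_{4N+1}$, not the case $CD$).
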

	
	\begin{proof} By \cite[Lemma 4.2]{CL3}, $\cC^{\psi}_{CD}(n,m)$ has large level limit 
	$$ \left( \bigotimes_{i=1}^{m}\cO_{\T{ev}}(1,4i)\otimes \bigotimes_{i=1}^{m}\cO_{\T{ev}}(3,4i-2) \right) \otimes \cS_{\T{ev}}(2n,2m+1)^{\text{O}_{2n}}.$$ The first factor $\left( \bigotimes_{i=1}^{m}\cO_{\T{ev}}(1,4i)\otimes \bigotimes_{i=1}^{m}\cO_{\T{ev}}(3,4i-2) \right)$ has strong generating type $\cW(1^3,2,3^3,4,\dots, (2m-1)^{3}, 2m)$. Using classical invariant theory, the second factor $\cS_{\T{ev}}(2n,2m+1)^{\text{O}_{2n}}$ is easily seen to have an infinite strong generating set of type $\cW((2m+1)^3, 2m+2, (2m+3)^3, 2m+4,\dots)$, of which only finitely many are needed. The precise minimal strong generating type can be found using the methods of \cite{CL3,CL4} but we omit this since it is not needed in this paper. \end{proof}
	
	\subsection{Case CB}
	For $\gg =  \fr{so}_{4m+2n+1}$ with $m\geq 1$ and $n\geq 0$, we have an isomorphism of $\fr{sp}_{2m}\oplus \fr{sp}_2\oplus\fr{so}_{2n+1}$-modules
	\[\fr{so}_{4m+2n+1}\cong\fr{sp}_{2m}\oplus\fr{sp}_2\oplus\fr{so}_{2n+1} \oplus \rho_{\omega_2}\otimes \mathbb{C}^3\otimes\mathbb{C}  \oplus \mathbb{C}^{2m}\otimes \mathbb{C}^2\otimes \mathbb{C}^{2n+1},\]
	and the critically shifted level is $\psi =\frac{2 k+4 m+2 n+1}{2 m}$.
	We define
	\[\cW^{\psi}_{CB}(n,m) : = \cW^{\frac{2 k+4 m+2 n+1}{2 m} - h^{\vee}}(\fr{so}_{4m+2n+1},f_{\fr{sp}_{2m}}),\qquad h^{\vee} = 4m+2n-1,\] where $f_{\gs\gp_{2m}} = f_{2m, 2m}$. The affine subalgebra is $V^{k}(\fr{sp}_2) \otimes V^{\psi-2n-1}(\fr{so}_{2n+1})$, and we define
	\[\cC^{\psi}_{CB}(n,m) : = \T{Com}(V^{\psi-2n-1}(\fr{so}_{2n+1}),\cW^{\psi}_{CB}(n,m))^{\mathbb{Z}_2}.\]
	The conformal element $L-L^{\fr{sp}_2} - L^{\fr{so}_{2n+1}}$ has central charge
	\begin{equation}\label{CB}
		c_{CB}=-\frac{k (2 k+1) (4 k m-2 k+4 m-2 n-1) (4 k m+2 k+8 m+2 n+1)}{(k+2) (2 k+2 n+1)
			(2 k+4 m+2 n+1)}.
	\end{equation}
	The free field limit of $\cW^{\psi}_{CB}(n,m)$ is
	\[\cO_{\T{ev}}(2n^2+n,2)\otimes \left( \bigotimes_{i=1}^{m}\cO_{\T{ev}}(1,4i)\otimes \bigotimes_{i=1}^{m}\cO_{\T{ev}}(3,4i-2) \right) \otimes \cS_{\T{ev}}(2n+1,2m+1).\] As above, $\cC^{\psi}_{CB}(n,m)$ has large level limit
	$\left( \bigotimes_{i=1}^{m}\cO_{\T{ev}}(1,4i)\otimes \bigotimes_{i=1}^{m}\cO_{\T{ev}}(3,4i-2) \right) \otimes \cS_{\T{ev}}(2n+1,2m+1)^{\text{O}_{2n+1}}$, and $\cS_{\T{ev}}(2n+1,2m+1)^{\text{O}_{2n+1}}$ has strong generating type $\cW((2m+1)^3, 2m+2, (2m+3)^3, 2m+4,\dots)$, so we obtain

	\begin{lemma}\label{lemma:CB}
		For For $m\geq 1$ and $n\geq 0$, $\cC^{\psi}_{CB}(n,m)$ is of type 
		$\cW(1^3,2,3^3,4,\dots)$
		as a 1-parameter vertex algebra.
		Equivalently, this holds for generic values of $\psi$.
	\end{lemma}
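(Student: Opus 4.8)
The plan is to follow verbatim the template of the proof of Lemma \ref{lemma:CD}, since Case $CB$ differs from Case $CD$ only in that the even orthogonal tail $\fr{so}_{2n}$ is replaced by the odd orthogonal tail $\fr{so}_{2n+1}$. First I would invoke \cite[Lemma 4.2]{CL3} to pass to the large level limit: taking the coset by $V^{\psi-2n-1}(\fr{so}_{2n+1})$ and then the $\mathbb{Z}_2$-orbifold commutes with the free field limit, so that $\cC^{\psi}_{CB}(n,m)$ degenerates to the $\text{O}_{2n+1}$-orbifold
\[\left( \bigotimes_{i=1}^{m}\cO_{\T{ev}}(1,4i)\otimes \bigotimes_{i=1}^{m}\cO_{\T{ev}}(3,4i-2) \right) \otimes \cS_{\T{ev}}(2n+1,2m+1)^{\text{O}_{2n+1}},\]
exactly as recorded in the paragraph preceding the lemma. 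Here the Heisenberg factor $\cO_{\T{ev}}(2n^2+n,2)$ of the free field limit of $\cW^{\psi}_{CB}(n,m)$ is the piece removed by the coset, and $\text{O}_{2n+1}$ acts through the standard module on $\cS_{\T{ev}}(2n+1,2m+1)$ and trivially on the remaining tensor factor, the $\mathbb{Z}_2$ enhancing $\text{SO}_{2n+1}$ to $\text{O}_{2n+1}$.

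Next I would read off the strong generating type of each factor and splice the two lists. The first factor $\left( \bigotimes_{i=1}^{m}\cO_{\T{ev}}(1,4i)\otimes \bigotimes_{i=1}^{m}\cO_{\T{ev}}(3,4i-2) \right)$ is manifestly of type $\cW(1^3,2,3^3,4,\dots,(2m-1)^3, 2m)$, the factor $\cO_{\T{ev}}(3,2)$ supplying the three weight-$1$ fields; these lie in the tensor factor on which $\text{O}_{2n+1}$ acts trivially, hence survive the orbifold. For the second factor I would appeal to classical invariant theory for the full orthogonal group $\text{O}_{2n+1}$, via the first and second fundamental theorems, to produce a strong generating set of type $\cW((2m+1)^3, 2m+2, (2m+3)^3, 2m+4,\dots)$, of which only finitely many elements are actually needed. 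Concatenating the two ranges of weights yields precisely $\cW(1^3,2,3^3,4,\dots)$, proving the claim at the level of the large level limit, and by the standard argument a strong generating set of this type for the limit lifts to one for $\cC^{\psi}_{CB}(n,m)$.

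The main obstacle is the same one that appears in Lemma \ref{lemma:CD}, namely the classical invariant theory governing $\cS_{\T{ev}}(2n+1,2m+1)^{\text{O}_{2n+1}}$. The only genuine difference from Case $CD$ is that the orthogonal group now has odd rather than even rank; however, the first and second fundamental theorems take the same shape in both cases, and the resulting generating type $\cW((2m+1)^3, 2m+2,\dots)$ is identical, so this difference is cosmetic rather than structural. As in Lemma \ref{lemma:CD}, I would suppress the determination of the precise minimal strong generating type, since it plays no role here. Finally, the equivalence of the $1$-parameter statement with the statement for generic $\psi$ is automatic: a strong generating set of a given type for the $1$-parameter vertex algebra specializes at every generic value of $\psi$, and conversely a generating set valid for generic $\psi$ lifts back to the $1$-parameter algebra.
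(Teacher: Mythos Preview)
Your proposal is correct and follows essentially the same approach as the paper. The paper's argument, given in the paragraph immediately preceding the lemma, is exactly the one you outline: pass to the large level limit via \cite[Lemma 4.2]{CL3}, identify the first tensor factor as having type $\cW(1^3,2,\dots,(2m-1)^3,2m)$, invoke classical invariant theory for $\text{O}_{2n+1}$ to get the strong generating type $\cW((2m+1)^3,2m+2,\dots)$ for $\cS_{\T{ev}}(2n+1,2m+1)^{\text{O}_{2n+1}}$, and concatenate.
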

	
	\subsection{Case CC}
	For $\gg = \fr{osp}_{4m|2n}$ with $n\geq 1$ and $m\geq 1$, we have an isomorphism of $\fr{sp}_{2m}\oplus \fr{sp}_2\oplus\fr{sp}_{2n}$-modules
	\[\fr{osp}_{4m|2n}\cong\fr{sp}_{2m}\oplus\fr{sp}_2\oplus\fr{sp}_{2n} \oplus \rho_{\omega_2}\otimes \mathbb{C}^3\otimes\mathbb{C}  \oplus \mathbb{C}^{2m}\otimes \mathbb{C}^2\otimes \mathbb{C}^{0|2n},\]
	and the critically shifted level is $\psi = \frac{k+2 m-n}{m}$. 
	We define 
	\[\cW^{\psi}_{CC}(n,m) : = \cW^{\frac{k+2 m-n}{m}-h^{\vee}}(\fr{osp}_{4m|2n},f_{\fr{sp}_{2m}}), \qquad h^{\vee} = 4 m - 2 n - 2,\] where $f_{\gs\gp_{2m}}$ coincides with the nilpotent $f_{2m, 2m}$ in the Lie subalgebra $\gs\go_{4m}$. The affine subalgebra is $V^{k}(\fr{sp}_2)\otimes V^{-\frac{\psi}{2}-n}(\fr{sp}_{2n})$, and we define
	\[\cC^{\psi}_{CC}(n,m) : = \T{Com}(V^{-\frac{\psi}{2}-n}(\fr{sp}_{2n}),\cW^{\psi}_{CC}(n,m)),\]
	The conformal element $L-L^{\fr{sp}_2} - L^{\fr{sp}_{2n}}$ has central charge
	\begin{equation}\label{CC}
		c_{CC}=-\frac{k (2 k+1) (2 k m+k+4 m-n) (2 k m-k+2 m+n)}{(k+2) (k-n) (k+2 m-n)}.
	\end{equation}
	The free field limit of $\cW^{\psi}_{CC}(n,m)$ is
	\[\cO_{\T{ev}}(2n^2+n,2)\otimes \left( \bigotimes_{i=1}^{m}\cO_{\T{ev}}(1,4i)\otimes \bigotimes_{i=1}^{m}\cO_{\T{ev}}(3,4i-2) \right) \otimes \cO_{\T{odd}}(4n,2m+1).\]
As above, $\cC^{\psi}_{CC}(n,m)$ has large level limit
	$\left(\bigotimes_{i=1}^{m}\cO_{\T{ev}}(1,4i)\otimes \bigotimes_{i=1}^{m}\cO_{\T{ev}}(3,4i-2) \right) \otimes \cO_{\T{odd}}(4n,2m+1)^{\text{Sp}_{2n}}$, and $\cO_{\T{odd}}(4n,2m+1)^{\text{Sp}_{2n}}$ has strong generating type $\cW((2m+1)^3, 2m+2, (2m+3)^3, 2m+4,\dots)$, so we obtain

	\begin{lemma}\label{lemma:CC}
		For $n\geq 1$ and $m\geq 1$, $\cC^{\psi}_{CC}(n,m)$ is of type 
		$\cW(1^3,2,3^3,4,\dots)$
		as a 1-parameter vertex algebra.
		Equivalently, this holds for generic values of $\psi$.
	\end{lemma}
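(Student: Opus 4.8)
The plan is to mirror the proof of Lemma \ref{lemma:CD}, replacing the orthogonal orbifold appearing there with a symplectic one. First I would invoke \cite[Lemma 4.2]{CL3} to replace the coset at generic $\psi$ by its large level limit, the $\text{Sp}_{2n}$-orbifold
$$\left(\bigotimes_{i=1}^{m}\cO_{\T{ev}}(1,4i)\otimes \bigotimes_{i=1}^{m}\cO_{\T{ev}}(3,4i-2) \right) \otimes \cO_{\T{odd}}(4n,2m+1)^{\text{Sp}_{2n}},$$
in which the $\fr{sp}_{2n}$ acting on the tail $\mathbb{C}^{0|2n}$ integrates to a linear $\text{Sp}_{2n}$-action on the free field algebra $\cO_{\T{odd}}(4n,2m+1)$. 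Since the strong generating type of a coset is constant in $\psi$ away from a discrete set of levels, by the standard argument of \cite{CL3,CL4} it suffices to determine the strong generating type of this limit.

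The first tensor factor is identical to the first factor in Lemma \ref{lemma:CD}, namely the free field limit of the $\fr{sp}_2$-rectangular algebra (the tail-free case $n=0$), and therefore has strong generating type $\cW(1^3,2,3^3,4,\dots,(2m-1)^3,2m)$. For the second factor I would apply classical invariant theory, now for the symplectic group rather than the orthogonal group: the first fundamental theorem for $\text{Sp}_{2n}$ identifies the generators of the invariant ring as the quadratic combinations that pair the $4n$ odd orthogonal copies through the symplectic form. Translating these quadratic invariants into normally ordered fields produces an infinite strong generating set of type $\cW((2m+1)^3,2m+2,(2m+3)^3,2m+4,\dots)$, exactly as in the $\text{O}_{2n}$ computation of Lemma \ref{lemma:CD}, of which only finitely many are needed. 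Combining the two factors, the large level limit, and hence $\cC^{\psi}_{CC}(n,m)$ at generic $\psi$, has a strong generating set of type $\cW(1^3,2,3^3,4,\dots)$, which proves the lemma.

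I expect the one genuinely nontrivial step to be the classical invariant theory input, namely verifying that the $\text{Sp}_{2n}$-orbifold $\cO_{\T{odd}}(4n,2m+1)^{\text{Sp}_{2n}}$ truncates to the advertised generating type. Concretely, one must check that the higher-weight invariants predicted by the first fundamental theorem are decomposable in terms of the lower-weight generators together with those of the first factor. As in \cite{CL3,CL4}, this is carried out by a weight-by-weight analysis; the precise minimal strong generating type could be pinned down by the same methods, but, as in Lemma \ref{lemma:CD}, this refinement is not required for the present statement.
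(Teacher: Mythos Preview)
Your proposal is correct and follows essentially the same approach as the paper: pass to the large level limit via \cite[Lemma 4.2]{CL3}, identify the first tensor factor as having type $\cW(1^3,2,\dots,(2m-1)^3,2m)$, and use the first fundamental theorem for $\text{Sp}_{2n}$ to show that $\cO_{\T{odd}}(4n,2m+1)^{\text{Sp}_{2n}}$ has strong generating type $\cW((2m+1)^3,2m+2,(2m+3)^3,\dots)$. Your final paragraph's concern about decomposability of higher-weight invariants is not actually needed for the lemma as stated---the claim is only that a strong generating set of the indicated type exists, not that it is minimal, so the first fundamental theorem together with the derivative relations already suffices.
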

	
	\subsection{Case CO}
	For $\gg = \fr{osp}_{4m+1|2n}$ with $n\geq 1$ and $m\geq 1$, 
	we have an isomorphism of $\fr{sp}_{2m}\oplus \fr{sp}_2\oplus\fr{osp}_{1|2n}$-modules
	\[\fr{osp}_{4m+1|2n}\cong\fr{sp}_{2m}\oplus\fr{sp}_2\oplus\fr{osp}_{1|2n} \oplus \rho_{\omega_2}\otimes \mathbb{C}^3\otimes\mathbb{C}  \oplus \mathbb{C}^{2m}\otimes \mathbb{C}^2\otimes \mathbb{C}^{1|2n},\]
	and the critically shifted level is $\psi = \frac{2 k+4 m-2 n+1}{2 m}$. 
	We define 
	\[\cW^{\psi}_{CO}(n,m) : = \cW^{ \frac{2 k+4 m-2 n+1}{2 m} - h^{\vee}}(\fr{osp}_{4m+1|2n},f_{\fr{sp}_{2m}})), \qquad h^{\vee} = 4m-2n-1,\] where $f_{\gs\gp_{2m}}$ coincides with the nilpotent $f_{2m, 2m}$ in the Lie subalgebra $\gs\go_{4m+1}$. The affine subalgebra is $V^k(\fr{sp}_2)\otimes V^{-\frac{\psi}{2}-n+\frac{1}{2}}(\fr{osp}_{1|2n})$, and we define
	\[\cC^{\psi}_{CO}(n,m) : = \T{Com}(V^{-\frac{\psi}{2}-n+\frac{1}{2}}(\fr{osp}_{1|2n}),\cW^{\psi}_{CO}(n,m))^{\mathbb{Z}_2},\]
	The conformal element $L-L^{\fr{sp}_2} - L^{\fr{osp}_{1|2n}}$ has central charge
	\begin{equation}\label{CO}
		c_{CO}= - \frac{k (2 k+1) (4 k m+2 k+8 m-2 n+1) (4 k m-2 k+4 m+2 n-1)}{(k+2) (2 k-2 n+1)(2 k+4 m-2 n+1)}.
	\end{equation}
	The free field limit of $\cW^{\psi}_{CO}(n,m)$ is
	\begin{equation*}
	\begin{split}
		\cO_{\T{ev}}(2n^2+n,2)\otimes \cS_{\T{odd}}(n,2) &\otimes \left( \bigotimes_{i=1}^{m}\cO_{\T{ev}}(1,4i)\otimes \bigotimes_{i=1}^{m}\cO_{\T{ev}}(3,4i-2) \right)\\
		&\otimes  \cO_{\T{odd}}(4n,2m+1)\otimes \cS_{\T{ev}}(1,2m+1).\
	\end{split}
	\end{equation*}
	As above, $\cC^{\psi}_{CO}(n,m)$ has large level limit
	$$ \big(\bigotimes_{i=1}^{m}\cO_{\T{ev}}(1,4i)\otimes \bigotimes_{i=1}^{m}\cO_{\T{ev}}(3,4i-2) \big) \otimes  \big(\cO_{\T{odd}}(4n,2m+1)\otimes \cS_{\T{ev}}(1,2m+1)\big)^{\text{Osp}_{1|2n}},$$ and $\big(\cO_{\T{odd}}(4n,2m+1)\otimes \cS_{\T{ev}}(1,2m+1)\big)^{\text{Osp}_{1|2n}},$ has strong generating type $\cW((2m+1)^3, 2m+2, (2m+3)^3, 2m+4,\dots)$, so we obtain
	
	\begin{lemma}\label{lemma:CO}
		For $n\geq 1$ and $m\geq 1$, $\cC^{\psi}_{CO}(n,m)$ is of type 
		$\cW(1^3,2,3^3,4,\dots)$
		as a 1-parameter vertex algebra.
		Equivalently, this holds for generic values of $\psi$.
	\end{lemma}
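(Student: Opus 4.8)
The plan is to follow the same strategy as in the proofs of Lemmas \ref{lemma:CD}, \ref{lemma:CB}, and \ref{lemma:CC}: I would compute the large level limit of $\cC^{\psi}_{CO}(n,m)$, read off its strong generating type using classical invariant theory, and then lift this type to generic $\psi$.

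First I would identify the large level limit. By Theorem \ref{thm:largelevellimit}, the free field limit of $\cW^{\psi}_{CO}(n,m)$ is the tensor product displayed above, in which $\cO_{\T{ev}}(2n^2+n,2)\otimes \cS_{\T{odd}}(n,2)$ is the free field limit of the affine subalgebra $V^{-\frac{\psi}{2}-n+\frac{1}{2}}(\fr{osp}_{1|2n})$ (the Heisenberg factor accounting for the even part $\fr{sp}_{2n}$ and the symplectic fermion factor for the $2n$-dimensional odd part), while the remaining factors carry the residual $\text{Osp}_{1|2n}$-action. Since passing to the coset commutes with the large level limit by \cite[Lemma 4.2]{CL3}, and since the factors $\bigotimes_{i=1}^m \cO_{\T{ev}}(1,4i)\otimes \bigotimes_{i=1}^m \cO_{\T{ev}}(3,4i-2)$ are $\text{Osp}_{1|2n}$-invariant, the large level limit of $\cC^{\psi}_{CO}(n,m)$ factors as
\[ \Big(\bigotimes_{i=1}^m \cO_{\T{ev}}(1,4i)\otimes \bigotimes_{i=1}^m \cO_{\T{ev}}(3,4i-2)\Big)\otimes \big(\cO_{\T{odd}}(4n,2m+1)\otimes \cS_{\T{ev}}(1,2m+1)\big)^{\text{Osp}_{1|2n}}. \]

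Next I would determine the generating type of each factor. The first factor is the same tensor product of standard free field algebras occurring in all four CD/CB/CC/CO cases; it has strong generating type $\cW(1^3,2,3^3,4,\dots,(2m-1)^3,2m)$, with the generators organized into trivial $\fr{sp}_2$-modules in the even weights and adjoint $\fr{sp}_2$-modules in the odd weights. For the second factor I would invoke the super analogues of the first and second fundamental theorems of invariant theory for the supergroup $\text{Osp}_{1|2n}$, exactly as in \cite{CL3,CL4}. The $2(2n+1)$ generating free fields all have conformal weight $\frac{2m+1}{2}$; contracting their $\mathbb{C}^{1|2n}$-indices in pairs with the invariant supersymmetric form produces quadratic generators whose remaining flavor index lies in $\mathbb{C}^2\otimes\mathbb{C}^2 = \T{Sym}^2\oplus\Exterior^2$ as an $\fr{sp}_2$-module, giving the adjoint module in the odd weights and the trivial module in the even weights. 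Hence the second factor has a strong generating set of type $\cW((2m+1)^3,2m+2,(2m+3)^3,2m+4,\dots)$, of which only finitely many are actually needed.

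Combining the two factors shows that the large level limit of $\cC^{\psi}_{CO}(n,m)$ is of type $\cW(1^3,2,3^3,4,\dots)$. Finally, since a strong generating set for the large level limit lifts to a strong generating set at generic level, as noted after Theorem \ref{thm:largelevellimit}, the same strong generating type holds for $\cC^{\psi}_{CO}(n,m)$ as a $1$-parameter vertex algebra, i.e.\ for generic $\psi$. I expect the main obstacle to be the invariant theory for the second factor: unlike the purely orthogonal ($\text{O}_{2n}$) and symplectic ($\text{Sp}_{2n}$) orbifolds of Lemmas \ref{lemma:CD}--\ref{lemma:CC}, here one must control the joint invariants of a mixed odd-orthogonal and even-symplectic free field system under the supergroup $\text{Osp}_{1|2n}$. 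The relevant super first and second fundamental theorems are available and were used in \cite{CL3,CL4}, so in practice this step reduces to checking that the quadratic generators and their relations produce exactly the weights $(2m+1)^3,2m+2,\dots$; as in the earlier lemmas I would omit the precise minimal strong generating set, since only the (possibly non-minimal) type $\cW(1^3,2,3^3,4,\dots)$ is required.
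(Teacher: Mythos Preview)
Your proposal is correct and follows essentially the same approach as the paper: compute the large level limit using \cite[Lemma 4.2]{CL3}, factor it as the displayed tensor product, read off the type $\cW((2m+1)^3,2m+2,(2m+3)^3,\dots)$ of the $\text{Osp}_{1|2n}$-orbifold factor via Sergeev's first fundamental theorem, and combine with the free field factor of type $\cW(1^3,2,\dots,(2m-1)^3,2m)$. The paper omits exactly the same details you flag as the main obstacle, simply asserting the type of the orbifold factor by analogy with the earlier cases.
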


	\subsection{Case BD}
	For $\gg =\fr{osp}_{2n|2(2m+1)}$ with $n\geq 1$ and $m\geq 0$,
	we have an isomorphism of $\fr{so}_{2m+1}\oplus \fr{sp}_2\oplus\fr{so}_{2n}$-modules
	\[\fr{osp}_{2n|2(2m+1)}\cong\fr{so}_{2m+1}\oplus\fr{sp}_2\oplus\fr{so}_{2n} \oplus \rho_{2\omega_1}\otimes \mathbb{C}^3 \otimes\mathbb{C} \oplus \mathbb{C}^{2m+1}\otimes \mathbb{C}^2\otimes \mathbb{C}^{0|2n},\]
	and the critically shifted level is $\psi = \frac{k+2 m-n+2}{2 m+1}$. 
	We define 
	\[\cW^{\psi}_{BD}(n,m) : = \cW^{\frac{k+2 m-n+2}{2 m+1}-h^{\vee}}(\fr{osp}_{2n|2(2m+1)},f_{\fr{so}_{2m+1}}), \qquad h^{\vee} = 2m-n+2,\] where $f_{\gs\go_{2m+1}}$ coincides with the nilpotent $f_{2m+1, 2m+1}$ in the Lie subalgebra $\gs\gp_{2(2m+1)}$. The affine subalgebra is $V^k(\fr{sp}_2)\otimes V^{-2\psi-2n+4}(\fr{so}_{2n})$. Note that in the extremal case $m=0$, $\cW^{\psi}_{BD}(n,m) = V^{-2\psi-2n+4} (\go\gs\gp_{2n|2})$.
	We define
	\[\cC^{\psi}_{BD}(n,m) : = \T{Com}(V^{-2\psi-2n+4}(\fr{so}_{2n}), \cW^{\psi}_{BD}(n,m))^{\mathbb{Z}_2}.\]
	The conformal element $L-L^{\fr{sp}_2} - L^{\fr{so}_{2n}}$ has central charge
	\begin{equation}\label{BD}
		c_{BD}=-\frac{k (2 k+1) (2 k m+2 k+4 m-n+3) (2 k m+2 m+n)}{(k+2) (k-n+1) (k+2 m-n+2)}.
	\end{equation}
	The free field limit of $\cW^{\psi}_{BD}(n,m)$ is
	\[\cO_{\T{ev}}(2n^2-n,2) \otimes \left( \bigotimes_{i=1}^{m}\cO_{\T{ev}}(1,4i)\otimes \bigotimes_{i=1}^{m+1}\cO_{\T{ev}}(3,4i-2) \right)\otimes \cS_{\T{odd}}(2n,2m+2).\]
Then $\cC^{\psi}_{BD}(n,m)$ has large level limit 
$\left( \bigotimes_{i=1}^{m}\cO_{\T{ev}}(1,4i)\otimes \bigotimes_{i=1}^{m+1}\cO_{\T{ev}}(3,4i-2) \right)\otimes \cS_{\T{odd}}(2n,2m+2)^{\text{O}_{2n}}$. The first factor $\left( \bigotimes_{i=1}^{m}\cO_{\T{ev}}(1,4i)\otimes \bigotimes_{i=1}^{m+1}\cO_{\T{ev}}(3,4i-2) \right)$ has strong generating type $\cW(1^3,2,3^3,4,\dots, 2m, (2m+1)^3)$. Using classical invariant theory, it is easy to check that $\cS_{\T{odd}}(2n,2m+2)^{\text{O}_{2n}}$ has strong generating type $\cW(2m+2, (2m+3)^3, 2m+4, (2m+5)^3,\dots)$, so we obtain
	
	\begin{lemma}\label{lemma:BD}
		For $n\geq 1$ and $m\geq 0$, $\cC^{\psi}_{BD}(n,m)$ is of type 
		$\cW(1^3,2,3^3,4,\dots)$
		as a 1-parameter vertex algebra.
		Equivalently, this holds for generic values of $\psi$.
	\end{lemma}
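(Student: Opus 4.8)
The plan is to follow verbatim the strategy used for Lemma \ref{lemma:CD}: reduce the determination of the strong generating type to an orbifold computation in the large level limit, which linearizes the problem. By Theorem \ref{thm:largelevellimit} together with \cite[Lemma 4.2]{CL3}, a strong generating set for the large level limit of $\cC^{\psi}_{BD}(n,m)$ lifts to a strong generating set at generic $\psi$, so it suffices to read off the strong generating type of the free field orbifold
$$\left( \bigotimes_{i=1}^{m}\cO_{\T{ev}}(1,4i)\otimes \bigotimes_{i=1}^{m+1}\cO_{\T{ev}}(3,4i-2) \right)\otimes \cS_{\T{odd}}(2n,2m+2)^{\T{O}_{2n}}$$
recorded just before the statement. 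Here the passage from the coset to an $\T{O}_{2n}$-orbifold is exactly the content of taking $\T{Com}(V^{-2\psi-2n+4}(\fr{so}_{2n}),-)$ followed by the $\mathbb{Z}_2$-orbifold: the coset kills the $\fr{so}_{2n}$-Heisenberg while the extra $\mathbb{Z}_2$ enlarges $\T{SO}_{2n}$ to the full orthogonal group $\T{O}_{2n}$.

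For the first tensor factor I would simply record the weights of the generators of the even orthogonal free field algebras: $\cO_{\T{ev}}(1,4i)$ contributes one generator of weight $2i$ for $i=1,\dots,m$, while $\cO_{\T{ev}}(3,4i-2)$ contributes three generators of weight $2i-1$ for $i=1,\dots,m+1$. Thus this factor is of type $\cW(1^3,2,3^3,4,\dots,2m,(2m+1)^3)$. For the second factor I would invoke the first and second fundamental theorems of classical invariant theory for $\T{O}_{2n}$ acting on the copies of its standard module carried by $\cS_{\T{odd}}(2n,2m+2)$, whose generators sit in weight $m+1$. The orbifold is then strongly generated by the normally ordered quadratic invariants and their derivatives; the lowest such field has weight $2m+2$, and tracking the residual $\fr{sp}_2$-action on the two transverse indices produces the singlet in each even weight and the three-dimensional (adjoint) multiplet in each odd weight, giving an a priori infinite strong generating set of type $\cW(2m+2,(2m+3)^3,2m+4,(2m+5)^3,\dots)$, only finitely many of which are actually needed.

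Concatenating the two lists, every odd weight $1,3,5,\dots$ occurs with multiplicity three and every even weight $2,4,6,\dots$ with multiplicity one, so the large level limit, and hence $\cC^{\psi}_{BD}(n,m)$ at generic $\psi$, is of type $\cW(1^3,2,3^3,4,\dots)$. The only genuinely nontrivial step is the classical invariant theory input for $\cS_{\T{odd}}(2n,2m+2)^{\T{O}_{2n}}$, namely verifying that the quadratic invariants already strongly generate the orbifold and identifying the weights in which new generators first appear; this is precisely the kind of computation carried out by the methods of \cite{CL3,CL4}. As in Lemma \ref{lemma:CD}, I would not determine the minimal strong generating type, since only the stated non-minimal type $\cW(1^3,2,3^3,4,\dots)$ is required.
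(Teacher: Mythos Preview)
Your proposal is correct and follows essentially the same approach as the paper: pass to the large level limit, identify it as the tensor product of the free field algebra $\bigotimes_{i=1}^{m}\cO_{\T{ev}}(1,4i)\otimes \bigotimes_{i=1}^{m+1}\cO_{\T{ev}}(3,4i-2)$ of type $\cW(1^3,2,3^3,4,\dots,2m,(2m+1)^3)$ with the orbifold $\cS_{\T{odd}}(2n,2m+2)^{\T{O}_{2n}}$, and use classical invariant theory to see that the latter has strong generating type $\cW(2m+2,(2m+3)^3,2m+4,(2m+5)^3,\dots)$. The paper states exactly this argument in the paragraph preceding the lemma, so your write-up matches it step for step.
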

	
	\subsection{Case BB}
	For $\gg =\fr{osp}_{2n+1|2(2m+1)}$ with $n\geq 0$ and $m\geq 0$, 
	we have an isomorphism of $\fr{so}_{2m+1}\oplus \fr{sp}_2\oplus\fr{so}_{2n+1}$-modules
	\[\fr{osp}_{2n+1|2(2m+1)}\cong\fr{so}_{2m+1}\oplus\fr{sp}_2\oplus\fr{so}_{2n+1} \oplus \rho_{2\omega_1}\otimes \mathbb{C}^3\otimes\mathbb{C}  \oplus \mathbb{C}^{2m+1}\otimes \mathbb{C}^2\otimes \mathbb{C}^{0|2n+1},\]
	and the critically shifted level is $\psi = \frac{2 k+4 m-2 n+3}{2 (2 m+1)}$. 
	We define 
	\[\cW^{\psi}_{BB}(n,m) : = \cW^{ \frac{2 k+4 m-2 n+3}{2 (2 m+1)}-h^{\vee}}(\fr{osp}_{2n+1|2(2m+1)},f_{\fr{so}_{2m+1}}), \qquad h^{\vee} = 2m-n+\frac{3}{2},\] where $f_{\gs\go_{2m+1}}$ coincides with the nilpotent $f_{2m+1, 2m+1}$ in the Lie subalgebra $\gs\gp_{2(2m+1)}$. The affine subalgebra is $V^k(\fr{sp}_2)\otimes V^{-2\psi-2n+3}(\fr{so}_{2n+1})$. Again, in the case $m=0$, $\cW^{\psi}_{BB}(n,0) = V^{-2\psi-2n+3}(\go\gs\gp_{2n+1|2})$. We define
\[\cC^{\psi}_{BB}(n,m) : = \T{Com}(V^{-2\psi-2n+3}(\fr{so}_{2n+1}),  \cW^{\psi}_{BB}(n,m))^{\mathbb{Z}_2}.\]
	The conformal element $L-L^{\fr{sp}_2} - L^{\fr{so}_{2n+1}}$ has central charge
	\begin{equation}\label{BB}
		c_{BB}= - \frac{k (2 k+1) (4 k m+4 k+8 m-2 n+5) (4 k m+4 m+2 n+1)}{(k+2) (2 k-2 n+1) (2
			k+4 m-2 n+3)}
	\end{equation}
	The free field limit of $\cW^{\psi}_{BB}(n,m)$ is
	\[\cO_{\T{ev}}(2n^2+n,2) \otimes \left( \bigotimes_{i=1}^{m}\cO_{\T{ev}}(1,4i)\otimes \bigotimes_{i=1}^{m+1}\cO_{\T{ev}}(3,4i-2) \right)\otimes \cS_{\T{odd}}(2n+1,2m+2).\]
As above, $\cC^{\psi}_{BB}(n,m)$ has large level limit $\left( \bigotimes_{i=1}^{m}\cO_{\T{ev}}(1,4i)\otimes \bigotimes_{i=1}^{m+1}\cO_{\T{ev}}(3,4i-2) \right)\otimes \cS_{\T{odd}}(2n+1,2m+2)^{\text{O}_{2n+1}}$, and $\cS_{\T{odd}}(2n+1,2m+2)^{\text{O}_{2n+1}}$ has strong generating type $\cW(2m+2, (2m+3)^3, 2m+4, (2m+5)^3,\dots)$, so we obtain
\begin{lemma}\label{lemma:BB}
		For $n\geq 0$ and $m\geq 0$, $\cC^{\psi}_{BB}(n,m)$ is of type 
		$\cW(1^3,2,3^3,4,\dots)$
		as a 1-parameter vertex algebra.
		Equivalently, this holds for generic values of $\psi$.
	\end{lemma}

	\subsection{Case BC}
	For $\gg =\fr{sp}_{2(2m+1)+2n}$ with $n\geq 0$ and $m\geq 0$,
	we have an isomorphism of $\fr{so}_{2m+1}\oplus \fr{sp}_2\oplus\fr{sp}_{2n}$-modules
	\[\fr{sp}_{2(2m+1)+2n}\cong\fr{so}_{2m+1}\oplus\fr{sp}_2\oplus\fr{sp}_{2n} \oplus \rho_{2\omega_1}\otimes \mathbb{C}^3\otimes\mathbb{C}  \oplus \mathbb{C}^{2m+1}\otimes \mathbb{C}^2\otimes \mathbb{C}^{2n},\]
	and the critically shifted level is $\psi = \frac{k+2 m+n+2}{2 m+1}$. 
	We define 
	\[\cW^{\psi}_{BC}(n,m) : = \cW^{ \frac{k+2 m+n+2}{2 m+1}-h^{\vee}}(\fr{sp}_{2(2m+1)+2n},f_{\fr{so}_{2m+1}}),\qquad h^{\vee} = 2m+n+2,\] where $f_{\gs\go_{2m+1}}$ coincides with the nilpotent $f_{2m+1, 2m+1}$. The affine subalgebra is $V^k(\fr{sp}_2)\otimes V^{\psi-n-2}(\fr{sp}_{2n})$, and we define
	\[\cC^{\psi}_{BC}(n,m) : = \T{Com}(V^{\psi-n-2}(\fr{sp}_{2n}), \cW^{\psi}_{BC}(n,m)).\]
	The conformal element $L-L^{\fr{sp}_2} - L^{\fr{sp}_{2n}}$ has central charge
	\begin{equation}\label{BC}
		c_{BC}=-\frac{k (2 k+1) (2 k m+2 m-n) (2 k m+2 k+4 m+n+3)}{(k+2) (k+n+1) (k+2 m+n+2)}.
	\end{equation}
	The free field limit of $\cW^{\psi}_{BC}(n,m)$ is
	\[\cO_{\T{ev}}(2n^2+n,2)\otimes \cS_{\T{odd}}(n,2) \otimes \left( \bigotimes_{i=1}^{m}\cO_{\T{ev}}(1,4i)\otimes \bigotimes_{i=1}^{m+1}\cO_{\T{ev}}(3,4i-2) \right) \otimes \cO_{\T{ev}}(4n,2m+2).\]
	As above, $\cC^{\psi}_{BC}(n,m)$ has large level limit $\left( \bigotimes_{i=1}^{m}\cO_{\T{ev}}(1,4i)\otimes \bigotimes_{i=1}^{m+1}\cO_{\T{ev}}(3,4i-2) \right) \otimes \cO_{\T{ev}}(4n,2m+2)^{\text{Sp}_{2n}}$, and $ \cO_{\T{ev}}(4n,2m+2)^{\text{Sp}_{2n}}$ has strong generating type $\cW(2m+2, (2m+3)^3, 2m+4, (2m+5)^3,\dots)$, so we obtain
	\begin{lemma}\label{lemma:BC}
		For $n\geq 0$ and $m\geq 0$, $C^{\psi}_{BC}(n,m)$ is of type 
		$\cW(1^3,2,3^3,4,\dots)$
		as a 1-parameter vertex algebra.
		Equivalently, this holds for generic values of $\psi$.
	\end{lemma}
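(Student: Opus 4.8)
The plan is to follow verbatim the strategy of Lemma \ref{lemma:CD}, with the orthogonal orbifold of that proof replaced by a symplectic one and the type-$C$ tail of the $\fr{sp}_2$-rectangular part replaced by the type-$B$ shape already seen in Cases BD and BB. First I would apply Theorem \ref{thm:largelevellimit} to $\gg = \fr{sp}_{2(2m+1)+2n}$ with $f = f_{\fr{so}_{2m+1}}$ to identify the free field limit of $\cW^{\psi}_{BC}(n,m)$ recorded above. By \cite[Lemma 4.2]{CL3}, passage to the large level limit commutes with the coset by the affine subalgebra $V^{\psi-n-2}(\fr{sp}_{2n})$, so the large level limit of $\cC^{\psi}_{BC}(n,m)$ is the $\text{Sp}_{2n}$-invariant subalgebra
\begin{equation*}
\left( \bigotimes_{i=1}^{m}\cO_{\T{ev}}(1,4i)\otimes \bigotimes_{i=1}^{m+1}\cO_{\T{ev}}(3,4i-2) \right) \otimes \cO_{\T{ev}}(4n,2m+2)^{\text{Sp}_{2n}},
\end{equation*}
as recorded above.

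I would then treat the two tensor factors separately. The first factor is fixed pointwise by $\text{Sp}_{2n}$ and, being a tensor product of the standard even orthogonal-type free field algebras, is freely generated of type $\cW(1^3,2,3^3,4,\dots,2m,(2m+1)^3)$; here the terminal $(2m+1)^3$ rather than $2m$ reflects the fact that $\fr{b} = \fr{so}_{2m+1}$ is of type $B$, exactly as in Cases BD and BB. For the second factor I would invoke the first fundamental theorem of invariant theory for the symplectic group. The $4n$ generators of $\cO_{\T{ev}}(4n,2m+2)$, all of conformal weight $m+1$, transform as $\mathbb{C}^2\otimes\mathbb{C}^{2n}$ under $\fr{sp}_2\oplus\fr{sp}_{2n}$, i.e.\ as two copies of the standard $\text{Sp}_{2n}$-module. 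Hence $\cO_{\T{ev}}(4n,2m+2)^{\text{Sp}_{2n}}$ is strongly generated by the normally ordered quadratics $:\!\partial^a\xi\,\partial^b\eta\!:$ with the two standard indices contracted against the invariant symplectic form. These have conformal weight $2m+2+a+b$ and begin in weight $2m+2$; decomposing the two $\fr{sp}_2$-doublets via $\mathbb{C}^2\otimes\mathbb{C}^2 \cong \mathbb{C}^3\oplus\mathbb{C}$ and tracking the parity of the symplectic contraction produces a single trivial generator in each even weight and an adjoint triple in each odd weight, giving the strong generating type $\cW(2m+2,(2m+3)^3,2m+4,(2m+5)^3,\dots)$.

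Taking the union of the two generating sets produces a strong generating set for the large level limit of type $\cW(1^3,2,3^3,4,\dots)$. The final step is to lift this statement to generic $\psi$: since $\cC^{\psi}_{BC}(n,m)$ is a deformable family over a localization of $\mathbb{C}[\psi]$ with structure constants rational in $\psi$, a homogeneous strong generating set of the large level limit lifts to one of the same conformal weights for the $1$-parameter vertex algebra, by the mechanism of \cite[Section 3]{Lin} used throughout \cite{CL3,CL4}. I expect no genuinely new obstacle here, since the whole argument is parallel to the preceding cases; the only step requiring actual work is the symplectic invariant theory for $\cO_{\T{ev}}(4n,2m+2)^{\text{Sp}_{2n}}$, and this is the direct $\text{Sp}_{2n}$-analogue of the $\text{O}_{2n}$ computations already carried out in Cases BD and BB, so it reduces to the symplectic FFT together with the weight-and-parity bookkeeping indicated above.
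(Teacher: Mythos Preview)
Your proposal is correct and follows essentially the same approach as the paper: identify the large level limit of the coset via \cite[Lemma 4.2]{CL3} as the tensor product of the free field factor $\bigotimes_{i=1}^{m}\cO_{\T{ev}}(1,4i)\otimes \bigotimes_{i=1}^{m+1}\cO_{\T{ev}}(3,4i-2)$ with the symplectic orbifold $\cO_{\T{ev}}(4n,2m+2)^{\text{Sp}_{2n}}$, then use classical invariant theory to read off the strong generating type of each factor. The paper records this argument in the sentence immediately preceding the lemma and omits the bookkeeping you supply for the $\text{Sp}_{2n}$-invariants; your additional detail on the $\mathbb{C}^2\otimes\mathbb{C}^2$ decomposition and on lifting to generic $\psi$ is accurate and fills in what the paper leaves implicit.
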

	
	\subsection{Case BO}
	For $\gg =\fr{osp}_{1|2(2m+1)+2n}$ with $n\geq 0$ and $m\geq 0$, 
	we have an isomorphism of $\fr{so}_{2m+1}\oplus \fr{sp}_2\oplus\fr{osp}_{1|2n}$-modules
	\[\fr{osp}_{1|2(2m+1)+2n}\cong\fr{so}_{2m+1}\oplus\fr{sp}_2\oplus\fr{osp}_{1|2n} \oplus \rho_{2\omega_1}\otimes \mathbb{C}^3 \otimes \mathbb{C} \oplus \mathbb{C}^{2m+1}\otimes \mathbb{C}^2\otimes \mathbb{C}^{2n|1},\]
	and the critically shifted level is $\psi = \frac{2 k+4 m+2 n+3}{2 (2 m+1)}$. 
	We define 
	\[\cW^{\psi}_{BO}(n,m) : = \cW^{\frac{2 k+4 m+2 n+3}{2 (2 m+1)} - h^{\vee}}(\fr{osp}_{1|2(2m+1)+2n},f_{\fr{so}_{2m+1}}),\qquad h^{\vee} =2m+n+\frac{3}{2}, \] where $f_{\gs\go_{2m+1}}$ coincides with the nilpotent $f_{2m+1, 2m+1}$ in the Lie subalgebra $\gs\gp_{2(2m+1)+2n}$. The affine subalgebra is $V^k(\fr{sp}_2)\otimes V^{\psi-n-\frac{3}{2}}(\fr{osp}_{1|2n})$, and we define
\[\cC^{\psi}_{BO}(n,m) : = \T{Com}(V^{\psi-n-\frac{3}{2}}(\fr{osp}_{1|2n}),  \cW^{\psi}_{BO}(n,m))^{\mathbb{Z}_2}.\]
	The conformal element $L-L^{\fr{sp}_2} - L^{\fr{osp}_{1|2n}}$ has central charge
	\begin{equation}\label{BO}
		c_{BO}=-\frac{k (2 k+1) (4 k m+4 m-2 n+1) (4 k m+4 k+8 m+2 n+5)}{(k+2) (2 k+2 n+1) (2
			k+4 m+2 n+3)}
	\end{equation}
	The free field limit of $\cW^{\psi}_{BO}(n,m)$ is
	\begin{equation*}
		\begin{split}
		\cO_{\T{ev}}(2n^2+n,2)\otimes \cS_{\T{odd}}(n,2) &\otimes \left( \bigotimes_{i=1}^{m}\cO_{\T{ev}}(1,4i)\otimes \bigotimes_{i=1}^{m+1}\cO_{\T{ev}}(3,4i-2) \right)\\
		&  \otimes \cO_{\T{ev}}(2n,2m+2)\otimes \cS_{\T{odd}}(1,2m+2).
		\end{split}
	\end{equation*}
	As above, $\cC^{\psi}_{BO}(n,m)$ has large level limit 
	$$\big( \bigotimes_{i=1}^{m}\cO_{\T{ev}}(1,4i)\otimes \bigotimes_{i=1}^{m+1}\cO_{\T{ev}}(3,4i-2) \big)
\otimes \big(\cO_{\T{ev}}(2n,2m+2)\otimes \cS_{\T{odd}}(1,2m+2)\big)^{\text{Osp}_{1|2n}},$$
and $\big(\cO_{\T{ev}}(2n,2m+2)\otimes \cS_{\T{odd}}(1,2m+2)\big)^{\text{Osp}_{1|2n}}$ has strong generating type $\cW(2m+2, (2m+3)^3, 2m+4, (2m+5)^3,\dots)$, so we obtain

	\begin{lemma}\label{lemma:BO}
		For $n\geq 0$ and $m\geq 0$, $C^{\psi}_{BO}(n,m)$ is of type 
		$\cW(1^3,2,3^3,4,\dots)$
		as a 1-parameter vertex algebra.
		Equivalently, this holds for generic values of $\psi$.
	\end{lemma}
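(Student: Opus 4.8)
The plan is to argue exactly as in the proof of Lemma~\ref{lemma:CD}, by passing to the large level limit and identifying the resulting orbifold of a free field algebra via classical (super) invariant theory. First I would apply \cite[Lemma 4.2]{CL3} together with Theorem~\ref{thm:largelevellimit} to the coset $\cC^{\psi}_{BO}(n,m) = \T{Com}(V^{\psi-n-\frac{3}{2}}(\fr{osp}_{1|2n}), \cW^{\psi}_{BO}(n,m))^{\mathbb{Z}_2}$. As $\psi \to \infty$ the affine subalgebra $V^{\psi-n-\frac{3}{2}}(\fr{osp}_{1|2n})$ degenerates to the free field factor $\cO_{\T{ev}}(2n^2+n,2)\otimes \cS_{\T{odd}}(n,2)$ appearing in the free field limit of $\cW^{\psi}_{BO}(n,m)$, and taking its commutant amounts to passing to invariants under the connected orthosymplectic supergroup acting on the remaining factors. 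The extra $\mathbb{Z}_2$ is precisely the disconnected $\T{O}_1$ factor inside the even part $\T{O}_1 \times \T{Sp}_{2n}$ of $\T{Osp}_{1|2n}$, so that the commutant together with the $\mathbb{Z}_2$-orbifold yields the full $\T{Osp}_{1|2n}$-invariants. This gives the large level limit
\[
\left( \bigotimes_{i=1}^{m}\cO_{\T{ev}}(1,4i)\otimes \bigotimes_{i=1}^{m+1}\cO_{\T{ev}}(3,4i-2) \right) \otimes \big(\cO_{\T{ev}}(2n,2m+2)\otimes \cS_{\T{odd}}(1,2m+2)\big)^{\T{Osp}_{1|2n}}
\]
recorded above.

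Next I would analyze the two tensor factors separately. The first factor arises from the $\fr{sp}_2$-rectangular part of the decomposition of $\gg$ --- the summands $\fr{so}_{2m+1}$, $\fr{sp}_2$, and $\rho_{2\omega_1}\otimes\mathbb{C}^3$ --- all of which carry the trivial $\fr{osp}_{1|2n}$-action and are therefore untouched by the orbifold; by the structure of the orthogonal free field algebras in \cite{CL3} it has strong generating type $\cW(1^3,2,3^3,4,\dots,2m,(2m+1)^3)$. The second factor is the $\T{Osp}_{1|2n}$-orbifold of $\cO_{\T{ev}}(2n,2m+2)\otimes \cS_{\T{odd}}(1,2m+2)$, whose generators transform in the standard module $\mathbb{C}^{2n|1}$. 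Here I would invoke the first and second fundamental theorems of invariant theory for the orthosymplectic supergroup, in the form used in \cite{CL3,CL4}, to show that this orbifold is strongly generated, with only finitely many generators needed, by fields of type $\cW(2m+2,(2m+3)^3,2m+4,(2m+5)^3,\dots)$.

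Combining the two factors, whose generating weights interleave to fill out the sequence, the large level limit is of type $\cW(1^3,2,3^3,4,\dots)$. Finally, a strong generating set in the limit lifts to a strong generating set of $\cC^{\psi}_{BO}(n,m)$ as a $1$-parameter vertex algebra, and hence at generic $\psi$, by the standard deformation argument of \cite{CL3}; this is what is meant by the equivalence with the statement at generic values of $\psi$. I expect the main obstacle to be the second step: verifying that the super-orbifold $\big(\cO_{\T{ev}}(2n,2m+2)\otimes \cS_{\T{odd}}(1,2m+2)\big)^{\T{Osp}_{1|2n}}$ is strongly finitely generated of the stated type, since this requires the invariant theory of a Lie supergroup rather than an ordinary reductive group, and is exactly where the $\T{Osp}$ analogue of the computations of \cite{CL3,CL4} must be carried out.
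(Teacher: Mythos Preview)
Your proposal is correct and follows essentially the same approach as the paper: pass to the large level limit via \cite[Lemma 4.2]{CL3}, identify it as the tensor product of the free field factor of type $\cW(1^3,2,\dots,2m,(2m+1)^3)$ with the $\T{Osp}_{1|2n}$-orbifold $\big(\cO_{\T{ev}}(2n,2m+2)\otimes \cS_{\T{odd}}(1,2m+2)\big)^{\T{Osp}_{1|2n}}$, and then apply orthosymplectic invariant theory (Sergeev's first fundamental theorem, as used elsewhere in the paper) to see that the latter has strong generating type $\cW(2m+2,(2m+3)^3,2m+4,\dots)$. The paper omits the details you spell out, simply writing ``as above,'' so your account is in fact more explicit than the paper's, but the argument is the same.
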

	
	\begin{remark}
		Let $X$ be either $B$ or $C$.
		Then we have the following relations among the central charges.
		\[c_{XD}(n,m)=c_{XC}(-n,m),\quad c_{XB}(n,m)=c_{XO}(-n,m).\]
		This suggests a heuristic that type $\fr{so}_{2n+1}$ is the negative of $\fr{osp}_{1|2n}$, and $\fr{so}_{2n}$ is the negative of $\fr{sp}_{2n}$. In fact, a similar feature continues to hold for the diagonal cosets introduced in Section \ref{sect:Diagonal}.
	\end{remark}

	A consequence of Theorem \ref{thm:largelevellimit} and Lemmas \ref{lemma:CD}-\ref{lemma:BO} is the following.
	\begin{proposition}\label{extension features}
		Let $X= B$ or $C$ and let $Y = B$, $C$, $D$, or $O$.
		\begin{enumerate}
			\item $\cC^{\psi}_{XY}(n,m)$ is simple for generic values of $\psi$.
			\item For $\cW^{\psi}_{CY}(n,m)$, without loss of generality we may replace the strong generating field in each even weight $2,4,\dots, 2M$, and the three strong generating fields in each odd weight $1,3,\dots, 2M-1$, with elements of the same weight in the coset $\cC^{\psi}_{CY}(n,m)$. Similarly, for $\cW^{\psi}_{BY}(n,m)$, we may replace the strong generating field in each even weight $2,4,\dots, 2M$, and the three strong generating fields in each odd weight $1,3,\dots, 2M+1$, with elements of the same weight in the coset $\cC^{\psi}_{BY}(n,m)$.

			\item Let $U\cong \mathbb{C}^2\otimes \rho_{\fr{a}}$, where $\rho_{\fr{a}}$ is the standard representation of $\fr{a}$. It is spanned by fields $P^{1,j},P^{-1,j}$, where $j$ runs over a basis of $\rho_{\fr{a}}$.
			Then $U$ has a supersymmetric bilinear form 
			\[\langle \ ,\ \rangle:  U \to \mathbb{C}, \quad \begin{cases}
				\langle a,b\rangle = a_{(2m)}b,&\quad X=C,\\
				\langle a,b\rangle = a_{(2m+1)}b & \quad X=B.
			\end{cases}  \]
			This form is nondegenerate and coincides with the standard pairing on $\mathbb{C}^2\otimes \rho_{\fr{a}}$.
			Hence, without loss of generality, we may normalize the fields in $U$ as
			\begin{equation}\label{normalization}
				P^{\mu,i}(z)P^{\nu,j}(w)\sim 
				\begin{cases}
					\delta_{i,j}\delta_{\mu+\nu,0}(z-w)^{-2m-1}+\dotsb,&\quad X=C,\\
					\delta_{i,j}\delta_{\mu+\nu,0}(z-w)^{-2m-2}+\dotsb,&\quad X=B.\\
				\end{cases}
			\end{equation}
			Here, the remaining terms lie $V^{a}(\fr{a}) \otimes \cC^{\psi}_{XY}(n,m)$.
		\end{enumerate}
	\end{proposition}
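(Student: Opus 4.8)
The three parts all follow by combining the free field limit of Theorem \ref{thm:largelevellimit} with the structural information in Lemmas \ref{lemma:CD}--\ref{lemma:BO}, and I would organize the argument part by part.

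\emph{Part (1).} Each $\cC^{\psi}_{XY}(n,m)$ is a $1$-parameter vertex algebra, free as a module over a localization of $\mathbb{C}[\psi]$, whose $\psi\to\infty$ limit is, by the lemma preceding it, a tensor product of a free field algebra with the reductive orbifold of a free field algebra (for instance the factor $\cS_{\T{ev}}(2n,2m+1)^{\mathrm{O}_{2n}}$ in Case $CD$, and the analogous $\mathrm{Sp}_{2n}$- or $\mathrm{Osp}_{1|2n}$-orbifolds in the other cases). Free field algebras are simple, an orbifold of a simple vertex algebra by a reductive group is again simple, and a tensor product of simple vertex algebras is simple, so this limit is simple. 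By the standard deformation argument of \cite{CL3,CL4}, a $1$-parameter family whose free field limit is simple is simple for generic values of the parameter; hence $\cC^{\psi}_{XY}(n,m)$ is simple for generic $\psi$.

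\emph{Part (2).} By Lemmas \ref{lemma:CD}--\ref{lemma:BO}, the coset $\cC^{\psi}_{XY}(n,m)$, viewed inside $\cW^{\psi}_{XY}(n,m)$ as the ($\mathbb{Z}_2$-orbifold of the) commutant of $V^{a}(\fr a)$, already carries strong generators in weights $1^3,2,3^3,4,\dots$. For generic $\psi$ the action of $\fr a$ on $\cW^{\psi}_{XY}(n,m)$ is semisimple, so in each weight $w\in\{1,2,\dots,2M\}$ I would project the given strong generator onto its $\fr a$-trivial isotypic component lying in the commutant. Such a projection alters a generator only by normal-ordered products of $V^{a}(\fr a)$-currents with strictly lower-weight fields, so it leaves unchanged the weights at which new generators are needed; proceeding by induction upward in weight, the coset representatives together with the tail $V^{a}(\fr a)$ and the off-diagonal fields $U$ still strongly generate $\cW^{\psi}_{XY}(n,m)$, which is the asserted replacement.

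\emph{Part (3).} The fields of $U$ have conformal weight $m+\tfrac12$ in type $C$ and $m+1$ in type $B$, so the coefficient of the highest possible pole in $P^{\mu,i}(z)P^{\nu,j}(w)$, namely $a_{(2m)}b$ in type $C$ and $a_{(2m+1)}b$ in type $B$, lands in $\cW[0]=\mathbb{C}\B{1}$ and defines the scalar form $\langle\,,\,\rangle$. Since the zero modes of $V^{k}(\fr{sp}_2)\otimes V^{a}(\fr a)$ act by derivations and the form is valued in the joint invariants $\mathbb{C}\B{1}$, it is $\fr{sp}_2\oplus\fr a$-invariant; as $U\cong\mathbb{C}^2\otimes\rho_{\fr a}$ is an irreducible, self-dual $\fr{sp}_2\oplus\fr a$-module, Schur's lemma forces the form to be a scalar multiple of the standard tensor-product pairing, and its (super)symmetry type is read off by applying \eqref{skew-symmetry} to the top mode. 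That the subleading OPE terms lie in $V^{a}(\fr a)\otimes\cC_{XY}(n,m)$ follows from the $\mathbb{Z}_2$-grading under which $U$ is odd and $V^{a}(\fr a)\otimes\cC_{XY}(n,m)$ is the even part. Finally, nondegeneracy is inherited from the free field limit, in which $U$ is exactly the generating space of the symplectic-boson/free-fermion factor and its leading OPE coefficient is nonzero by construction; since this coefficient is a nonzero constant, the form remains nondegenerate generically and we rescale to obtain \eqref{normalization}.

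\emph{Main obstacle.} The delicate step is the case-by-case bookkeeping in Part (3): across the eight rows of Table \ref{tab:Walgebras} one must check that the parity of $\rho_{\fr a}$ together with the orthogonal-versus-symplectic type of $\fr a$ combine so that the product form has exactly the (super)symmetry forced by \eqref{skew-symmetry}, and that the leading free-field coefficient responsible for nondegeneracy is nonzero in every case. Parts (1) and (2) are then routine deformation and change-of-generators arguments.
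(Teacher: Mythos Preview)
Your argument is correct and takes essentially the same approach as the paper, which records the proposition only as ``a consequence of Theorem~\ref{thm:largelevellimit} and Lemmas~\ref{lemma:CD}--\ref{lemma:BO}'' without further elaboration; what you have written is precisely the natural unpacking of that sentence, using the simplicity of the free field limit for Part~(1), the strong generating type established in the lemmas for Part~(2), and the invariance plus Schur argument backed by the nondegenerate leading term in the free field limit for Part~(3).

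One small refinement for Part~(3): the $\mathbb{Z}_2$ you invoke should be identified as the one coming from the center of $\mathrm{Sp}_2$, under which $U\cong\mathbb{C}^2\otimes\rho_{\fr a}$ (standard $\fr{sp}_2$-module on the first factor) is odd while all other strong generators (trivial or adjoint $\fr{sp}_2$-modules) are even; this works uniformly across all eight rows of Table~\ref{tab:Walgebras}, independently of the parity of $\rho_{\fr a}$ and of whether a further $\mathbb{Z}_2$-orbifold enters the definition of $\cC^{\psi}_{XY}(n,m)$. With that identification the even part is, for generic $\psi$, the subalgebra generated by $V^{a}(\fr a)$ together with the $V^{a}(\fr a)$-commutant, which is what the proposition is asserting.
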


\section{Families with $\gs\gp_2$-level Constant}\label{sect:Diagonal}
Unlike $\cW_{\infty}$ and $\cW^{\text{ev}}_{\infty}$ where the $Y$-algebras are expected to account for all the simple, strongly finitely generated $1$-parameter quotients, $\cW^{\gs\gp}_{\infty}$ admits at least $4$ more infinite families of such $1$-parameter quotients. These algebras all contain the simple affine vertex algebra $L_k(\gs\gp_2)$ for a fixed $k$.

	\subsection{Cases B and D}
	For $n\geq 2$, let $\cS(n)$ denote the $\beta\gamma$-system of rank $n$. There is a standard homomorphism
	\begin{equation} \varphi:V^{-2}(\fr{so}_n) \hookrightarrow \cS(n),\end{equation} given in terms of the basis \eqref{soStandard} by 
		\begin{equation} E_{i,j} \mapsto :\!\beta^i\gamma^j\!:- :\!\beta^j\gamma^i \!:, \quad 1\leq i<j\leq n.
	\end{equation}
	The commutant of $V^{-2}(\fr{so}_n)$ inside $\cS(n)$ is $L_{-\frac{n}{2}}(\fr{sp}_2)$ \cite[Thm. 5.1]{LinSS}, and is generated by the following $\T{O}_n$-invariants

\begin{equation}
X =\sum_{i=1}^n :\!\beta^i\beta^i\!:, \qquad Y = \sum_{i=1}^n :\!\gamma^i\gamma^i\!:,\qquad H = \sum_{i=1}^n :\!\beta^i\gamma^i\!:.
\end{equation}
	The weight $\frac{1}{2}$ space is spanned by $\{\beta^i|i=1,\dots,n\}\cup\{\gamma^i|i=1,\dots,n\}$, and transforms as $\mathbb{C}^2\otimes \mathbb{C}^n$ under $\fr{sp}_2\oplus\fr{so}_n$ .
	We have a diagonal action
	\begin{equation}
		\begin{split}
			V^{\ell}(\fr{so}_n) \hookrightarrow V^{\ell+2}(\fr{so}_n)\otimes \cS(n),\quad E_{i,j} \mapsto E_{i,j}\otimes {\bf 1}+{\bf 1} \otimes \varphi(E_{i,j}).
		\end{split}
	\end{equation}
	Define
	\begin{equation}\label{diagonalBD}
		\cC^{\ell}\left(-\frac{n}{2}\right) = \T{Com}(V^{\ell}(\fr{so}_n), V^{\ell+2}(\fr{so}_n)\otimes \cS(n))^{\mathbb{Z}_2}.
	\end{equation}
	The free field limit of $\cC^{\ell}\left(-\frac{n}{2}\right) $ is the invariant algebra $\cS(n)^{\T{O}_n}$.
	By Weyl's first fundamental theorem for standard representation of $\T{O}_{n}$ \cite{W}, $\cS(n)^{\T{O}_n}$ has a strong generating set consisting of the $\T{O}_n$-invariants
	\begin{equation}
		\begin{split}
			X^{p,q} =&\sum_{i=1}^n\!:\partial^{p} \beta^i\partial^{q}\beta^{i}\!: \quad p\geq q\geq 0, \\
			Y^{p,q} =&\sum_{i=1}^n:\!\partial^{p} \gamma^i\partial^{q}\gamma^{i}\!:\quad p\geq q\geq 0,\\
			H^{p,q}  =&\sum_{i=1}^n:\!\partial^{p} \beta^i\partial ^{q}\gamma^{i}\!:,\quad p,q\geq 0.
		\end{split}
	\end{equation}

	\begin{lemma}\label{lemma:orbifoldB and D} In the case $n=1$, we have $\cS(1)^{\mathbb{Z}_2} \cong L_{-\frac{1}{2}}(\gs\gp_2)$. For $n\geq 2$, $\cS(n)^{\text{O}_n}$ is a simple vertex algebra of type $\cW(1^3,2,3^3,4,\dots)$ containing $L_{-\frac{n}{2}}(\gs\gp_2)$.
	\end{lemma}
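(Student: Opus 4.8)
For $n=1$, the group $\text{O}_1=\{\pm1\}\cong\mathbb{Z}_2$ acts by $\beta\mapsto-\beta$ and $\gamma\mapsto-\gamma$, so $\cS(1)^{\mathbb{Z}_2}$ is spanned by the normally ordered monomials in $\beta,\gamma$ and their derivatives of even total degree. The three weight-$1$ fields $X=\,:\!\beta\beta\!:$, $Y=\,:\!\gamma\gamma\!:$, $H=\,:\!\beta\gamma\!:$ lie in this orbifold, and a direct OPE computation shows that they satisfy the defining relations of $V^{-1/2}(\gs\gp_2)$. I would then show that $X,Y,H$ strongly generate $\cS(1)^{\mathbb{Z}_2}$: every even monomial in the $\partial^a\beta$ and $\partial^b\gamma$ can be rewritten as a normally ordered polynomial in $X,Y,H$ by repeated use of \eqref{quasi-associativity} and \eqref{conformal identity}, which is the first fundamental theorem for $\text{O}_1$ at the vertex algebra level. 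Since $\cS(1)$ is simple and $\mathbb{Z}_2$ is finite, the orbifold $\cS(1)^{\mathbb{Z}_2}$ is simple; being generated by a copy of $\gs\gp_2$ at level $-\tfrac12$, it must coincide with the simple quotient $L_{-1/2}(\gs\gp_2)$.

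For $n\geq2$, I would begin with Weyl's first fundamental theorem for the standard representation of $\text{O}_n$ \cite{W}: after lifting to the vertex algebra as in \cite{CL3}, $\cS(n)^{\text{O}_n}$ is strongly generated by the quadratic fields $X^{p,q},Y^{p,q},H^{p,q}$ displayed above, all of integer conformal weight $p+q+1$. Next I would reduce this set modulo total derivatives using the Leibniz relation $\partial X^{p,q}=X^{p+1,q}+X^{p,q+1}$ (and its analogues for $Y$ and $H$) together with the symmetry $X^{p,q}=X^{q,p}$. An induction on the index gap $p-q$ shows that modulo derivatives every $X^{p,q}$ reduces to the diagonal field $X^{s,s}$; counting the rank of $\partial$ on the $X$-type fields then shows that the space of new $X$-type generators is one-dimensional in each odd weight $2s+1$ (spanned by $X^{s,s}$) and zero in every even weight, and likewise for $Y$. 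For the $H$-type fields, the map $\partial$ from the $(d-1)$-dimensional span at weight $d-1$ into the $d$-dimensional span at weight $d$ is injective, leaving exactly one new $H$-type generator in each weight $d\geq1$. Since strong generation is stable under adjoining derivatives, the reduced set is again a strong generating set.

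It then remains to organize the surviving generators according to the $\gs\gp_2$-action given by the zero modes of $X,Y,H$, under which each pair $\beta^i,\gamma^i$ spans a copy of the standard $2$-dimensional module; consequently $X^{s,s}$, $Y^{s,s}$, and the $H$-type fields carry Cartan weights $-2$, $+2$, and $0$. In each odd weight $2s+1$ the three new generators therefore assemble into a single adjoint module, whereas in each even weight the unique new generator is a Cartan-weight-$0$ field with no weight-$\pm2$ partner among the generators, and hence transforms trivially modulo decomposables. This produces precisely the generating type $\cW(1^3,2,3^3,4,\dots)$. In weight $1$ the generators $X,Y,H$ are exactly those of $\text{Com}(V^{-2}(\fr{so}_n),\cS(n))=L_{-n/2}(\gs\gp_2)$ by \cite[Thm. 5.1]{LinSS}, which gives the asserted containment, and the simplicity of $\cS(n)^{\text{O}_n}$ follows from the simplicity of $\cS(n)$ together with the reductivity of $\text{O}_n$, as in \cite{CL3}.

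The step I expect to be most delicate is the representation-theoretic identification in even weight: one must check that the Cartan-weight-$0$ cokernel field genuinely transforms trivially, i.e.\ that $X_{(0)}$ and $Y_{(0)}$ carry it into the span of decomposable fields. This is dictated by the absence of new weight-$\pm2$ generators in even total weight established above, but verifying it cleanly requires tracking the normally ordered corrections that appear when one reorders $\beta$ and $\gamma$ inside the relevant products.
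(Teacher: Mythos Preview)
Your approach is essentially the paper's: both invoke Weyl's first fundamental theorem for $\text{O}_n$ to obtain the quadratic strong generators $X^{p,q},Y^{p,q},H^{p,q}$, then reduce modulo total derivatives via the relation $\partial A^{p,q}=A^{p+1,q}+A^{p,q+1}$ to get exactly one generator of each type at the correct weights, and deduce simplicity from orbifold theory (the paper cites \cite{DLM} rather than \cite{CL3}). The paper chooses $X^{2j,0},Y^{2j,0},H^{j,0}$ as cokernel representatives where you choose diagonal ones, but this is cosmetic.

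Two small clarifications. First, in the $n=1$ case, the first fundamental theorem for $\text{O}_1$ only yields the full family of quadratics $X^{p,q},Y^{p,q},H^{p,q}$ as strong generators; the further reduction to the three weight-$1$ fields $X,Y,H$ is not FFT but a separate step of finding decoupling relations expressing each higher quadratic as a normally ordered polynomial in $X,Y,H$ and their derivatives. This is easy in rank $1$ and is exactly what the paper does, but your phrasing conflates the two steps. Second, the lemma only asserts the strong generating \emph{type} $\cW(1^3,2,3^3,4,\dots)$, which is purely a count of generators by conformal weight; it does not assert that the generators sit in adjoint and trivial $\gs\gp_2$-modules. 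The paper's proof accordingly says nothing about module structure, so the step you flag as most delicate is not needed here.
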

	
	\begin{proof} The simplicity follows from \cite{DLM} for all $n\geq 1$. In the case $n=1$, it is easy to find decoupling relations expressing the strong generators $X^{p,q}$, $Y^{p,q}$, and $H^{p,q}$ for all $p,q$, as normally ordered polynomials in $X^{0,0}$, $Y^{0,0}$, and $H^{0,0}$. Therefore $X^{0,0}$, $Y^{0,0}$, and $H^{0,0}$ are strong generators and they clearly generate $L_{-\frac{1}{2}}(\gs\gp_2)$. For $n\geq 2$, using the fact that $\partial A^{p,q} = A^{p+1,q} + A^{p,q+1}$ for $A = X,Y,H$, we see that the sets $\{\partial^i X^{2j,0}, \partial^i Y^{2j,0}, \partial^i H^{j,0}|\ i,j \geq 0\}$ and $\{X^{p,q}, Y^{p,q}, H^{r,s}|\ p \geq q \geq 0,\ r,s \geq 0\}$ span the same vector space. Therefore $\{\partial^i X^{2j,0}, \partial^i Y^{2j,0}, \partial^i H^{j,0}|\ i,j \geq 0\}$ is a strong generating set, and it is of type $\cW(1^3,2,3^3,4,\dots)$. By \cite[Theorem 6.6]{Lin2}, only finitely many of these strong generators are needed, but for our purposes we do not need to compute the minimal strong generating set.
	\end{proof}

	\begin{lemma}\label{lemma:B and D}
For $n\in\mathbb{Z}_{\geq 2}$, $\cC^{\ell}\left(-\frac{n}{2}\right)$ is a simple, $1$-parameter vertex algebra of type $\cW(1^3,2,3^3,4,\dots)$ containing $L_{-\frac{n}{2}}(\gs\gp_2)$.
	\end{lemma}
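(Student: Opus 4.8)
The plan is to transfer all three assertions from the free field (large level) limit of $\cC^{\ell}(-\frac{n}{2})$, which the preceding discussion identifies as the orbifold $\cS(n)^{\T{O}_n}$, down to generic values of $\ell$. Since Lemma \ref{lemma:orbifoldB and D} already establishes that $\cS(n)^{\T{O}_n}$ is simple, of type $\cW(1^3,2,3^3,4,\dots)$, and contains $L_{-\frac{n}{2}}(\fr{sp}_2)$, the real content is that the limit controls the generic behavior of the family. I would organize this using the deformable family formalism of \cite[Section 3]{Lin}, realizing $\cC^{\ell}(-\frac{n}{2})$ as a vertex algebra over a suitable localization $R$ of $\mathbb{C}[\ell]$ whose limit as $\ell \to \infty$ is $\cS(n)^{\T{O}_n}$.

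First I would treat the strong generating type. By the remarks following Theorem \ref{thm:largelevellimit}, a strong generating set of the limit lifts to a strong generating set of the coset at generic $\ell$. The explicit generators $\{\partial^i X^{2j,0}, \partial^i Y^{2j,0}, \partial^i H^{j,0}\}$ of $\cS(n)^{\T{O}_n}$ from Lemma \ref{lemma:orbifoldB and D} have type $\cW(1^3,2,3^3,4,\dots)$ (three fields of weight $1$, one of weight $2$, three of weight $3$, and so on), so the corresponding lifted fields strongly generate $\cC^{\ell}(-\frac{n}{2})$, giving it the same type as a $1$-parameter vertex algebra. Note that I do not need minimality of this generating set, only that it is of this type.

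Next I would locate the copy of $L_{-\frac{n}{2}}(\fr{sp}_2)$. The weight-one fields $X, Y, H$ generate the commutant of $V^{-2}(\fr{so}_n)$ in $\cS(n)$, which equals $L_{-\frac{n}{2}}(\fr{sp}_2)$. They lie entirely in the $\cS(n)$ tensor factor and commute with $\varphi(\fr{so}_n)$, hence they commute with the diagonal copy of $V^{\ell}(\fr{so}_n)$ inside $V^{\ell+2}(\fr{so}_n)\otimes\cS(n)$; being the standard $\T{O}_n$-invariant quadratics, they are fixed by the $\mathbb{Z}_2$-action. Therefore $X, Y, H \in \cC^{\ell}(-\frac{n}{2})$, and since their mutual OPEs are computed within $\cS(n)$, they generate the same copy of $L_{-\frac{n}{2}}(\fr{sp}_2)$. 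In particular the $\fr{sp}_2$-level stays equal to the constant $-\frac{n}{2}$ throughout the family, which is the defining feature of these examples.

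The main obstacle is simplicity, which I would again argue through the limit. A nonzero proper graded ideal $\cI$ of $\cC^{\ell}(-\frac{n}{2})$ with $\cI[0]=\{0\}$, in the sense of Definition \ref{VOAsimplicity}, would specialize in the limit to such an ideal of $\cS(n)^{\T{O}_n}$, contradicting the simplicity asserted in Lemma \ref{lemma:orbifoldB and D}; hence $\cC^{\ell}(-\frac{n}{2})$ is simple for generic $\ell$. This is the same semicontinuity mechanism underlying Proposition \ref{extension features}(1). The delicate point is purely technical: one must set up the deformable family with the correct rescaling of the generators so that the limit is genuinely $\cS(n)^{\T{O}_n}$ and so that graded ideals specialize as claimed. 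Once this framework is in place, all three assertions follow uniformly from the corresponding facts about the orbifold $\cS(n)^{\T{O}_n}$.
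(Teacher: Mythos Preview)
Your proposal is correct and follows essentially the same route as the paper. The paper's proof is a one-line citation to \cite[Lemma 3.2 and Thm.~6.10]{CL2}, and what you have written is precisely the content those results package: Lemma~3.2 of \cite{CL2} gives the lifting of strong generators from the large level limit (your first paragraph), and Theorem~6.10 of \cite{CL2} gives generic simplicity of such diagonal cosets (your third paragraph). Your explicit identification of the $L_{-n/2}(\fr{sp}_2)$ subalgebra is more detailed than what the paper spells out, but is the natural argument. One minor point of care: in your simplicity argument the phrase ``would specialize in the limit to such an ideal'' glosses over the rescaling step needed to ensure a nonzero element of $\cI$ has nonzero limit; this is routine in the deformable-family framework but worth a sentence if you are not citing \cite{CL2} directly.
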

	
	\begin{proof} This is immediate from \cite[Lemma 3.2 and Thm. 6.10]{CL2}. 
	\end{proof} 
	
\begin{remark} \label{rem:casek=-1/2} In Section \ref{sect:1paramquot}, we will see that for all $n\geq 2$ and $n\neq 2,4,8$, $\cC^{\ell}\left(-\frac{n}{2}\right)$ is a $1$-parameter quotient of $\cW^{\gs\gp}_{\infty}$. For $n = 2,4,8$, so that $\cC^{\ell}\left(-\frac{n}{2}\right)$ contains $L_{-1}(\gs\gp_2)$, $L_{-2}(\gs\gp_2)$, $L_{-4}(\gs\gp_2)$, respectively, $\cC^{\ell}\left(-\frac{n}{2}\right)$ is not a quotient $\cW^{\gs\gp}_{\infty}$ because $\cW^{\gs\gp}_{\infty}$ is not defined for these values of the level $k$. However, we will see that by suitably rescaling the generators, we can extend  $\cW^{\gs\gp}_{\infty}$ to these values of $k$, and $\cC^{\ell}\left(-\frac{n}{2}\right)$ is indeed a $1$-parameter quotient of the extended algebra. The case $n=1$ is on a different footing. We will see that $\cW^{\gs\gp}_{\infty}$ can be extended to include the level $k = -\frac{1}{2}$, but the $1$-parameter quotient of the extended algebra is not the same as $\cS(1)^{\mathbb{Z}_2} \cong L_{-\frac{1}{2}}(\gs\gp_2)$, which is the meaning of $\cC^{\ell}\left(-\frac{n}{2}\right)$ for $n=1$. \end{remark}

	\subsection{Case C}
	Let $\cE(2n)$ denote the $bc$-system of rank $2n$, which is isomorphic to the free fermion algebra $\cF(4n)$. There is a standard embedding \begin{equation}\label{embedC}
		\begin{split}
			\varphi: L_1(\fr{sp}_{2n})\hookrightarrow \cE(2n),
		\end{split} 	\end{equation}  given in terms of the basis \eqref{spStandard} by
	\begin{equation*}
		\begin{split}
			G_{i,j} &\mapsto :\!b^{i}c^{-j}\!:+ :\!b^{j}c^{-i}\!:,\quad  1\leq i\leq j\leq n,\\
			G_{-i,-j} &\mapsto :\!b^{-i}c^{j}\!:+ :\!b^{-j}c^{i}\!: \quad  1\leq i\leq j\leq n,\\
			G_{-i,j} &\mapsto :\!b^ic^{j}\!:- :\!b^{-j}c^{-i}\!:,\quad 1\leq i,j\leq n.
		\end{split}
	\end{equation*}
	The commutant of $L_{1}(\fr{sp}_{2n})$ inside $\cE(2n)$ is isomorphic to $L_{n}(\fr{sp}_2)$ \cite{ORS}, and it is generated by the following $\fr{sp}_{2n}$-invariants.
	\begin{equation} \label{sl2generators}
		X = \sum_{i=1}^n :\!b^i b^{-i}\!:, \qquad Y = \sum_{i=1}^n :\!c^{i}c^{-i}\!:, \qquad H = \sum_{i=1}^n :\!b^ic^i\!:+:\!b^{-i}c^{-i}\!:.
	\end{equation}
	The weight $\frac{1}{2}$ space is spanned by $\{b^i,b^{-i}|i=1,\dots,n\}\cup\{c^i,c^{-i}|i=1,\dots,n\}$, and transforms under $\fr{sp}_2\oplus\fr{sp}_{2n}$ as $\mathbb{C}^2\otimes \mathbb{C}^{2n}$.
	Therefore, we have a diagonal action
	\begin{equation}
		\begin{split}
			V^{\ell}(\fr{sp}_{2n}) \hookrightarrow V^{\ell-1}(\fr{sp}_{2n})\otimes \cE(2n),\quad G_{i,j} \mapsto G_{i,j}\otimes {\bf 1} + {\bf 1} \otimes \varphi(G_{i,j}).
		\end{split}
	\end{equation}
	Define
	\begin{equation}\label{diagonalC}
		\cC^{\ell}(n) = \T{Com}(V^{\ell}(\fr{sp}_{2n}), V^{\ell-1}(\fr{sp}_{2n})\otimes \cE(2n)).
	\end{equation}
	The free field limit of $\cC^{\ell}(n)$ is the invariant algebra $\cE(2n)^{\T{Sp}_{2n}}$.
	By Weyl's first fundamental theorem for standard representation of $\T{Sp}_{2n}$ \cite{W} (modified for odd variables), $\cE(2n)^{\T{Sp}_{2n}}$ has a strong generating set consisting of the $\T{Sp}_{2n}$-invariants
		\begin{equation}
		\begin{split}
			X^{p,q}  =&\frac{1}{2}\sum_{i=1}^n:\!\partial^{p} b^i \partial^{q}b^{-i}\!: +:\!\partial^{q} b^i \partial^{p}b^{-i}\!:,\quad p\geq q\geq 0, \\
			Y^{p,q} =&\frac{1}{2}\sum_{i=1}^n:\!\partial^{p} c^i\partial ^{q}c^{-i}\!:+:\!\partial^{q} c^i\partial ^{p}c^{-i}\!:,\quad p\geq q\geq 0,\\
			H^{p,q}  =&\frac{1}{2}\sum_{i=1}^n:\!\partial^{p} b^i\partial ^{q}c^{-i}\!:+:\!\partial^{q} b^i\partial ^{p}c^{-i}\!:,\quad p,q\geq 0.
		\end{split}
	\end{equation}
	As above, removing the redundancy due to differential relations among the above generators, we find that $\cE(2n)^{\T{Sp}_{2n}}$ has a strong generating type $\cW(1^3,2,3^3,4,\dots)$. This continues to hold generically for the cosets (\ref{diagonalC}), so we obtain
	\begin{lemma}\label{lemma:C}
		For $n\in \mathbb{Z}_{\geq 1}$, $\cC^{\ell}(n)$ is a simple, $1$-parameter vertex algebra of type $\cW(1^3,2,3^3,4,\dots)$ containing $L_{n}(\gs\gp_2)$.
	\end{lemma}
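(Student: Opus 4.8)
The plan is to deduce all three assertions from the structure of the free field limit $\cE(2n)^{\T{Sp}_{2n}}$ together with the deformation machinery of \cite{CL2}, in close parallel with the proofs of Lemmas \ref{lemma:orbifoldB and D} and \ref{lemma:B and D}: one analyzes the $\ell\to\infty$ orbifold limit and then transports simplicity and the strong generating type back to $\cC^\ell(n)$ at generic $\ell$. I would first settle the orbifold $\cE(2n)^{\T{Sp}_{2n}}$. Its simplicity is immediate from \cite{DLM}, since $\cE(2n)$ is simple and $\T{Sp}_{2n}$ is a reductive group of automorphisms. For the generating type, Weyl's first fundamental theorem for the standard representation of $\T{Sp}_{2n}$ \cite{W}, in its odd-variable form, supplies the generating set $X^{p,q},Y^{p,q},H^{p,q}$ recorded before the statement, which I would prune exactly as in the fermionic analogue of Lemma \ref{lemma:orbifoldB and D}: using $\partial A^{p,q}=A^{p+1,q}+A^{p,q+1}$ for $A=X,Y,H$, one checks that $\{\partial^i X^{2j,0},\partial^i Y^{2j,0},\partial^i H^{j,0}: i,j\geq 0\}$ and $\{X^{p,q},Y^{p,q},H^{r,s}: p\geq q\geq 0,\ r,s\geq 0\}$ span the same subspace in each conformal weight. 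The only thing to verify is that the weight-by-weight dimension counts agree so that this transition is triangular and invertible; a short count shows the $X$- and $Y$-families each contribute one generator in every odd weight and the $H$-family one generator in every weight, giving precisely the type $\cW(1^3,2,3^3,4,\dots)$. Only finitely many of these are genuinely needed by \cite[Theorem 6.6]{Lin2}, though the minimal set is not required here.

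Next I would pass from the limit to generic $\ell$ by invoking the deformable-family framework of \cite{CL2}. The coset $\cC^\ell(n)$ is the generic fiber of a deformable family whose $\ell\to\infty$ limit is $\cE(2n)^{\T{Sp}_{2n}}$, so by \cite[Lemma 3.2 and Thm. 6.10]{CL2} it is simple as a $1$-parameter vertex algebra and inherits the strong generating type $\cW(1^3,2,3^3,4,\dots)$. The containment $L_n(\gs\gp_2)\subseteq\cC^\ell(n)$ is then immediate: the weight-one invariants $X,Y,H$ lie entirely in the $\cE(2n)$ tensor factor, so they commute with the diagonal copy of $V^\ell(\gs\gp_{2n})$ and belong to the coset, and by \cite{ORS} they generate a copy of the simple affine algebra $L_n(\gs\gp_2)$, whose level $n$ is independent of $\ell$.

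The main obstacle is the deformation step itself. A priori, strong generators of the limit orbifold could become linearly dependent or decouple into normally ordered products once $\ell$ is turned on, which would shrink the generating type; guaranteeing that this does not happen at generic $\ell$ is exactly what the deformable-family results of \cite{CL2} are designed to provide. Thus the real work lies in confirming that $\cC^\ell(n)$ fits their hypotheses, namely that it forms a deformable family whose limit is the reductive-group orbifold $\cE(2n)^{\T{Sp}_{2n}}$. Once that is established, the generating-type bookkeeping and the affine containment are routine.
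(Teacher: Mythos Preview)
Your proposal is correct and follows essentially the same route as the paper: the paper carries out the orbifold analysis of $\cE(2n)^{\T{Sp}_{2n}}$ in the discussion immediately preceding the lemma (pruning the Weyl generators by derivative relations exactly as you describe), and then asserts that the strong generating type and simplicity pass to generic $\ell$, citing the same \cite{CL2} machinery as in the proof of Lemma \ref{lemma:B and D}. The $L_n(\gs\gp_2)$ containment via \cite{ORS} and the fact that $X,Y,H$ lie in the $\cE(2n)$ factor is also already noted before the lemma.
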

	
	\begin{remark} \label{rem:casek=0} We will see in Section \ref{sect:1paramquot} that $\cC^{\ell}(n)$ is a $1$-parameter quotient of $\cW^{\gs\gp}_{\infty}$ for all $n \geq 1$. The case $n=0$ is on a different footing. Although $\cW^{\gs\gp}_{\infty}$ is not defined when the level $k = 0$, it can be extended to this level. The quotient of the extended algebra when $k=0$ is then a nontrivial $1$-parameter vertex algebra. As in the case $k=-\frac{1}{2}$, it does not arise as a diagonal coset.
\end{remark}

	\subsection{Case O}
	By \cite[Cor. 2.1]{CLS}, we have an embedding
	\begin{equation} \varphi: L_1(\fr{osp}_{1|2n}) \rightarrow \cS(1)\otimes \cE(2n) 
	\end{equation}
extending the map \eqref{embedC} on the even part $L_1(\gs\gp_{2n})$ by
	\begin{equation*}
		\begin{split}
			X_{1,-j} &\mapsto :\!\gamma c^{j}\!:- :\!\beta b^{-j}\!:,\quad 1\leq j\leq n,\\
			X_{1,j} &\mapsto :\!\gamma c^{-j}\!:+ :\! \beta b^{j}\!:,\quad 1\leq j\leq n.\\
		\end{split}
	\end{equation*}
	The commutant of $L_{1}(\fr{osp}_{1|2n})$ in $\cE(2n)\otimes \cS(1)$ contains $L_{n-\frac{1}{2}}(\fr{sp}_2)$, which it is generated by the following $\fr{osp}_{1|2n}$-invariants.
	\begin{equation}
		X = -:\!\beta\beta\!:-\sum_{i=1}^n :\!b^i b^{-i}\!:, \qquad
		 Y =\ :\!\gamma\gamma\!:+ \sum_{i=1}^n :\!c^{i}c^{-i}\!:,\qquad H=\  :\!\beta\gamma\!:+ \sum_{i=1}^n :\!b^ic^i\!:+:\!b^{-i}c^{-i}\!:.
	\end{equation}
	The weight $\frac{1}{2}$ space is spanned by odd variables $\{b^i,b^{-i},c^i,c^{-i}|i=1,\dots,n\}$, and even variables $\{\beta,\gamma\}$, that transform under $\fr{sp}_2\oplus\fr{osp}_{1|2n}$ as $\mathbb{C}^2\otimes \mathbb{C}^{1|2n}$.
	Therefore, we have a diagonal action
	\begin{equation}
		\begin{split}
			V^{\ell}(\fr{osp}_{1|2n}) \hookrightarrow V^{\ell-1}(\fr{osp}_{1|2n})\otimes \cE(2n)\otimes \cS(1),\quad 	G_{i,j} \mapsto G_{i,j}\otimes  {\bf 1} +  {\bf 1} \otimes \varphi(G_{i,j}).
		\end{split}
	\end{equation}
	Define
	\begin{equation}\label{diagonalO}
		\cC^{\ell}\left(n-\frac{1}{2}\right) = \T{Com}(V^{\ell}(\fr{osp}_{1|2n}), V^{\ell-1}(\fr{osp}_{1|2n})\otimes \cS(1)\otimes \cE(2n))^{\mathbb{Z}_2}.
	\end{equation}
	The free field limit of $\cC^{\ell}\left(n-\frac{1}{2}\right)$ is the invariant algebra $\left(\cS(1)\otimes\cE(2n)\right)^{\T{Osp}_{1|2n}}$.
	By Sergeev's first fundamental theorem for standard representation $\mathbb{C}^{1|2n}$ of $\T{Osp}_{1|2n}$ \cite{SI,SII}, $\left(\cS(1)\otimes\cE(2n)\right)^{\T{Osp}_{1|2n}}$ has a strong generating set consisting of the $\T{Osp}_{1|2n}$-invariants
	\begin{equation}
		\begin{split}
			X^{p,q} =&:\!\partial^{n}\beta\partial^{m}\beta\!:+\sum_{i=1}^n:\!\partial^{n} b^i \partial^{m}b^{-i}\!: +:\!\partial^{m} b^i \partial^{m}b^{-i}\!:,\quad p\geq q\geq 0, \\
			Y^{p,q} =&:\!\partial^{n}\gamma\partial^{m}\gamma\!:+\sum_{i=1}^n:\!\partial^{n} c^i\partial ^{m}c^{-i}\!:+:\!\partial^{m} c^i\partial ^{n}c^{-i}\!:,\quad p\geq q\geq 0,\\
			H^{p,q} =&:\!\partial^{n}\beta\partial^{m}\gamma\!:+\sum_{i=1}^n:\!\partial^{n} b^i\partial ^{m}c^{-i}\!:+:\!\partial^{m} b^i\partial ^{n}c^{-i}\!:,\quad p,q\geq 0.
		\end{split}
	\end{equation}
	Removing the redundancy due to differential relations among the generators, we find that $\left(\cS(1)\otimes\cE(2n)\right)^{\T{Osp}_{1|2n}}$ has a strong generating type $\cW(1^3,2,3^3,4,\dots)$. This continues to hold generically for the cosets (\ref{diagonalO}), so we obtain
	
	\begin{lemma}\label{lemma:O}
		For $n\in\mathbb{Z}_{\geq 1}$, $\cC^{\ell}\left(n-\frac{1}{2}\right)$ is a simple, $1$-parameter vertex algebra of type $\cW(1^3,2,3^3,4,\dots)$ containing $L_{n-\frac{1}{2}}(\fr{sp}_2)$. \end{lemma}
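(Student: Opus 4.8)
The plan is to follow the template used for Cases B, C, and D, adapting the invariant theory to the orthosymplectic setting. First I would use the free field limit recorded above, namely the invariant algebra $\left(\cS(1)\otimes\cE(2n)\right)^{\T{Osp}_{1|2n}}$, which by Sergeev's first fundamental theorem \cite{SI,SII} is strongly generated by the three families $X^{p,q}$, $Y^{p,q}$, $H^{p,q}$. I would then carry out the differential-relation reduction exactly as in Lemma \ref{lemma:orbifoldB and D}. Using the identity $\partial A^{p,q}=A^{p+1,q}+A^{p,q+1}$ for $A=X,Y,H$ together with the symmetries $X^{p,q}=X^{q,p}$ and $Y^{p,q}=Y^{q,p}$ built into the defining expressions, a short induction shows that the odd-index generators $X^{2j+1,0}$ and $Y^{2j+1,0}$ are derivatives of lower even-index ones, so that $\{\partial^i X^{2j,0},\,\partial^i Y^{2j,0},\,\partial^i H^{j,0}\mid i,j\geq 0\}$ already spans the full invariant set. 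Since $X^{2j,0}$ and $Y^{2j,0}$ have weight $2j+1$ while $H^{j,0}$ has weight $j+1$, this is a strong generating set of type $\cW(1^3,2,3^3,4,\dots)$.

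Next I would pass from the large-level limit to generic $\ell$ by the standard principle (see the discussion following Theorem \ref{thm:largelevellimit}) that a strong generating set for the free field limit lifts to one for the coset at generic level, giving the asserted generating type for the $1$-parameter vertex algebra $\cC^{\ell}\left(n-\frac{1}{2}\right)$. For simplicity I would argue as in Lemma \ref{lemma:B and D}: the commutant $\T{Com}\left(V^{\ell}(\fr{osp}_{1|2n}),\,V^{\ell-1}(\fr{osp}_{1|2n})\otimes\cS(1)\otimes\cE(2n)\right)$ is simple for generic $\ell$ by the coset-simplicity results of \cite{CL2}, and its $\mathbb{Z}_2$-orbifold remains simple by \cite{DLM}.

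Finally, to identify the affine subalgebra I would observe that the weight-$1$ fields $X,Y,H$ lie entirely in the $\cS(1)\otimes\cE(2n)$ tensor factor, so their mutual OPEs are computed there; by the commutant computation quoted above they generate $L_{n-\frac{1}{2}}(\fr{sp}_2)$, the level $n-\frac{1}{2}$ being the additive sum of the contributions $n$ from $\cE(2n)$ and $-\frac{1}{2}$ from $\cS(1)$ and hence independent of $\ell$. As these fields are $\fr{osp}_{1|2n}$-invariant and $\mathbb{Z}_2$-even, this copy of $L_{n-\frac{1}{2}}(\fr{sp}_2)$ embeds in $\cC^{\ell}\left(n-\frac{1}{2}\right)$.

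The main obstacle is the invariant-theoretic input: I must verify that Sergeev's theorem in this mixed even/odd setting produces exactly the families $X^{p,q},Y^{p,q},H^{p,q}$ with no further independent invariants, so that the reduction genuinely terminates at type $\cW(1^3,2,3^3,4,\dots)$ rather than leaving extra low-weight generators. A secondary point requiring care is the generic-level simplicity of the commutant before orbifolding, where the supersymmetric analogue of the arguments of \cite{CL2} is what I expect to need the most attention.
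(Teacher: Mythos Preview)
Your proposal is correct and follows essentially the same approach as the paper: the paper also argues via Sergeev's first fundamental theorem that the free field limit $(\cS(1)\otimes\cE(2n))^{\mathrm{Osp}_{1|2n}}$ is strongly generated by the families $X^{p,q},Y^{p,q},H^{p,q}$, reduces these via the differential relations to strong generating type $\cW(1^3,2,3^3,4,\dots)$, and then passes to generic $\ell$ exactly as you outline. Your treatment is in fact more explicit than the paper's, which leaves the simplicity and the identification of $L_{n-\frac{1}{2}}(\mathfrak{sp}_2)$ implicit from the parallel Cases B, C, D.
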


\section{Universal $2$-parameter vertex algebra $\Wsp$} \label{sect:main}
In this section we will construct the universal $2$-parameter vertex algebra $\Wsp$ of type \eqref{sp2starting}. The three fields in weight $1$ generate a copy of the affine vertex algebra $V^k(\gs\gp_2)$, the fields $\{L,W^{2i} | i\geq 2\}$ in each even weight transform as the trivial $\gs\gp_2$-module, and the three fields $\{X^{2i-1},Y^{2i-1},H^{2i-1} | i \geq 2\}$ in each odd weight transform as the adjoint $\gs\gp_2$-module. It is freely generated and defined over a localization of the polynomial ring $\mathbb{C}[c,k]$.
	
	As in the case of universal $2$-parameter vertex algebras $\Winf$ and $\Wev$ constructed in \cite{Lin} and \cite{KL}, $\Wsp$ will be the universal enveloping vertex algebra of a nonlinear Lie conformal algebra $\nlcalg$, defined over a localization of $\mathbb{C}[c,k]$ with generators $\{L,W^{2i} | i \geq 2\}\cup \{X^{2i-1},Y^{2i-1},H^{2-1} | i\geq 1\}$ and grading $\Delta(X^{2i-1})=\Delta(Y^{2i-1}) = \Delta(H^{2i-1}) =2i-1$, $\Delta(L)=2$ and  $\Delta(W^{2i})=2i$, in the sense of \cite{DSKI}.
	
	We shall work with the OPE rather than lambda-bracket formalism, so the sesquilinearity, skew-symmetry, and Jacobi identities from \cite{DSKI} are replaced with (\ref{conformal identity}-\ref{Jacobi}).
	As explained in \cite[Section 3]{Lin}, specifying a nonlinear Lie conformal algebra in the language of OPEs means specifying generators $\{a^1,a^2,\dots\}$ where field $a^i$ has conformal weight $d_i>0$, and pairwise product expansions 
	\[a^i(z)a^j(w)\sim \sum_{n=0}^{\infty} a^i(w)_{(n)}(w)a^j(w) (z-w)^{-n-1}, \]
	where each term $a^i(w)_{(n)}a^j(w)$ has conformal weight $d_i+d_j-n-1$, and is a normally ordered polynomial in the generators and their derivatives.
	Additionally, for all $a,b,c \in \{a^1,a^2,\dots\}$, (\ref{conformal identity}-\ref{quasi-derivation}) hold, and all Jacobi identities (\ref{Jacobi}) hold as a consequence of (\ref{conformal identity}-\ref{quasi-derivation}) alone.
	This data uniquely determines the universal enveloping vertex algebra which is freely generated by $\{a^1,a^2,\dots\}$.
	
	In this notation, our goal will be to construct the OPE algebra with strong generators $\{L,W^{2i} | i \geq 2\}\cup \{X^{2i-1},Y^{2i-1},H^{2i-1} | i\geq 1\}$ of $\Wsp$, such that identities (\ref{conformal identity}-\ref{quasi-derivation}) are imposed, and the Jacobi identities (\ref{Jacobi}) hold as a consequence of (\ref{conformal identity}-\ref{quasi-derivation}) alone. 
	
	\subsection{Set-up} \label{subsec:setup}
	We postulate that $\Wsp$ has the following features.
	
	\begin{enumerate} 
		\item Weight 1 fields $X^1,Y^1,H^1$ generate the universal affine algebra $V^k(\fr{sp}_2)$ of level $k$. 
		\item Weight 2 field $L$ generates the universal Virasoro algebra $\vir$ of central charge $c$.
		\item Even weight fields $W^{2i}$ each transform as the trivial $\fr{sp}_2$-module.
		\item Odd weight fields $X^{2i-1},Y^{2i-1},H^{2i-1}$ transform as the adjoint $\fr{sp}_2$-module.
		\item Weight 3 fields $X^3,Y^3,H^3$ are primary for $\vir$.
		\item Weight 4 field $W^{4}$ is primary for $\vir$.
		\item Subalgebras $V^k(\fr{sp}_2)$ and $\vir$, together with $W^4$ weakly generate $\Wsp$. Specifically, $W^4$ satisfies the raising property:
		\begin{equation}\label{raising}
			\begin{split}
			W^4_{(1)}X^{2i-1}=X^{2i+1},\quad &W^4_{(1)}Y^{2i-1}=Y^{2i+1},\quad W^4_{(1)}H^{2i-1}=H^{2i+1},\\
			&W^4_{(1)}W^{2i}=W^{2i+2}.
			\end{split}
		\end{equation}
	\end{enumerate}
	
	Since the strong generating type alternates by the parity of conformal weight, there are three types of interactions with some variation in their structure, e.g. see (\ref{ansatzWW}-\ref{ansatzXH}).
	\begin{enumerate}
		\item Even with even. 
		For $n\geq 0$ and $0\leq r\leq 2n-1$ we set 
		\[EE^{2n}_r =\{ L_{(r)} W^{2n-2} , \ W^{4}_{(r)}W^{2n-4} , \ W^{6}_{(r)}W^{2n-6} , \dots\}.\]
		\item Even with odd.
		Let us denote by $EO^{2i,2j-1}_{r}$ the set of following products
		\begin{center}
			\begin{tabular}{|c|c|c|c|}
				\hline
				$(\cdot_{(r)}\cdot)$&$X^{2j-1}$&$Y^{2j-1}$&$H^{2j-1}$\\
				\hline
				$W^{2i}$&$W^{2i}_{(r)}X^{2j-1}$&$W^{2j}_{(r)}Y^{2j-1}$&$W^{2j}_{(r)}H^{2j-1}$\\
				\hline
			\end{tabular}
		\end{center} 
		For $n\geq 0$ and $0\leq r\leq 2n$ we set 
		\[EO^{2n+1}_r =  EO^{2,2n-1}_{r} \cup EO^{4,2n-3}_{r} \cup EO^{6,2n-5}_{r} \cup \dotsb.\]
		\item Odd with odd. 
		Let us denote by $OO^{2i-1,2j-1}_{r}$ the set of following products
		\begin{center}
			\begin{tabular}{|c|c|c|c|}
				\hline
				$(\cdot_{(r)}\cdot)$&$X^{2j-1}$&$Y^{2j-1}$&$H^{2j-1}$\\
				\hline
				$X^{2i-1}$&$X^{2i-1}_{(r)}X^{2j-1}$&$X^{2i-1}_{(r)}Y^{2j-1}$&$X^{2i-1}_{(r)}H^{2j-1}$\\
				
				$Y^{2i-1}$&$Y^{2i-1}_{(r)}X^{2j-1}$&$Y^{2i-1}_{(r)}Y^{2j-1}$&$Y^{2i-1}_{(r)}H^{2j-1}$\\
				
				$H^{2i-1}$&$ H^{2i-1}_{(r)}X^{2j-1}$&$H^{2i-1}_{(r)}Y^{2j-1}$&$H^{2i-1}_{(r)}H^{2j-1}$\\
				\hline
			\end{tabular}
		\end{center}
		For $n\geq 0$ and $0\leq r\leq 2n-1$ we set 
		\[OO^{2n}_r = OO^{1,2n-1} \cup OO^{3,2n-3} \cup \dotsb.\]
	\end{enumerate}
	Let $D^n_r$ denote the set of all $r^{ \T{th} }$ products among generators of total weight $n$; specifically,
	\[D^{2m}_r = OO^{2m}_r\cup EE^{2m}_r,\quad D^{2m+1}_r =EO^{2m+1}_r.\]
	Write $D^n = \bigcup_{r\leq n} D^{n}_r$ and $D_n = \bigcup_{m\leq n}^n D^m$ for the OPE data among fields of total weight $n$ and not exceeding $n$, respectively. 
	Lastly, let $J^m$ denote the set of all Jacobi identities $J_{r,s}(a,b,c)$ among generating fields $a,b,c$ of total weight exactly $m$, and $J_n = \bigcup_{m\leq n}^n J^m$.
	
	Our strategy is similar to the one used in \cite{Lin} and \cite{KL}, and consists of four steps. 
	\begin{enumerate}
		\item We begin by writing down the most general OPEs in $D_9$ compatible with conformal weight and $V^k(\fr{sp}_2)$ symmetry. Next, we impose vertex algebra relations (\ref{conformal identity}-\ref{quasi-derivation}) along with Jacobi identities $J_{11}$ to uniquely determine $D_9$ in terms of two parameters $c$ and $k$; see Proposition \ref{prop:base case proposition}.
		\item Next, we use data $D_9$ obtained in the previous step, and raising property of $W^4$ field to explicitly evaluate an infinite set of structure constants, see Proposition \ref{prop:structure constants}.
		\item Next, we proceed inductively; we assume $D^{2n} \cup D^{2n+1}$ to be known, and show that a subset of Jacobi identities in $J^{2n+4}\cup J^{2n+5}$ uniquely determines all OPEs in $D^{2n+2}\cup D^{2n+3}$; see Proposition \ref{induction}.
		Note that we are not checking all the Jacobi identities $J^{2n+4}\cup J^{2n+5}$ at this stage, we leave open the possibility that some of them may not vanish but instead give rise to nontrivial null fields. At the end of induction, we obtain the existence of a possibly degenerate nonlinear Lie conformal algebra $\nlcalg$ over a localization of $\mathbb{C}[c,k]$. 
		We then invoke the De Sole-Kac correspondence \cite{DSKI} to conclude that universal enveloping vertex algebra $\Wsp$ indeed exists; see Theorem \ref{Wsp 2 parameters}.
		\item Lastly, we exhibit a family vertex algebras with known characters to prove free generation of $\Wsp$. Specifically, these are the generalized parafermions in type $C$; see Corollary \ref{Wsp freely generated}.
	\end{enumerate}

	Before we begin with the base case computation, we first investigate some useful consequences of postulated features (1)-(3) in the above list. With foresight of the main result of this section, Theorem \ref{thm:induction}, we often mention the vertex algebra $\Wsp$ during the process of its construction. At this stage it is not yet known if it is a freely generated 2-parameter vertex algebra. However, by the De Sole-Kac correspondence there always exists the universal enveloping vertex algebra $\Wsp$ of the nonlinear Lie conformal algebra $\nlcalg$, possibly degenerate or trivial. All our statements regarding this vertex algebra continue to hold before the construction is complete, even if it were to be degenerate or trivial.
	
	\subsection{PBW monomials and filtration}\label{sect:symmetry}
	We begin by fixing a choice of a lexicographic order on PBW monomials.
	First, order elements of lower conformal weight to be less than those of higher weight, and when they are of the same weight (necessarily odd), we order $X^{2i-1}<Y^{2i-1}<H^{2i-1}$ for each $i\geq 1$. Since $\Wsp$ is of type $\cW(1^3,2,3^3,4,\dots)$, its conformal weight $N$ subspace is spanned by PBW monomials of the form
	\begin{equation}\label{basis}
		\begin{split}
			&:\! \partial^{a_{1}^1} X^1 \dotsb \partial^{a_{e_1}^1} X^1  \  
			\partial^{b_{1}^1} Y^1 \dotsb \partial^{b_{f_1}^1} Y^1  \
			\partial^{c_{1}^1} H^1 \dotsb \partial^{c_{h_1}^1} H^1  \ 
			\partial^{k_{1}^2} L \dotsb \partial^{k_{r_2}^2} L  \dotsb \!:,\\
			&a^1_1\geq \dotsb \geq  a_{e_1}^1 ,\quad b^1_1\geq \dotsb \geq  b_{f_1}^1,\quad  c^1_1\geq \dotsb \geq  c_{h_1}^1, \quad k^2_1\geq \dotsb \geq  k_{r_1}^2, \dots ,\\
			&a_1+\dots +a_{e_1}+b_1+\dots +b_{f_1} + c_1+\dots +c_{h_1} + 2(k_1+\dots +k_{r_2} )+\dots = N.
		\end{split}
	\end{equation}
	Let $\omega$ be a sequence of partitions $(\omega_1,\omega_2,\dots)$.
	We say that a PBW monomial (\ref{basis}) is of type $\omega$ if 
	\[\omega_1 = (\{a^1_1,a^1_2,\dots,a^1_{e_1}\}\cup \{b^1_1,b^1_2,\dots,b^1_{f_1}\}\cup\{c^1_1,c^1_2,\dots,c^1_{h_1}\}),\quad \omega_2 = (k^2_1,k^2_2,\dots,k^2_{r_2}), \dots\]
	where $\omega_1$ is sorted in decreasing order, etc.
	Let $U^{(\omega)}$ be the subspace spanned by all PBW monomials of type $\omega$.
	For example, $U^{(1^1,3)}$ is spanned by monomials $\{:\!\partial X^1X^3\!:,:\!\partial H^1X^3\!:,:\!\partial X^1H^3\!:,:\!\partial X^1Y^3\!:,:\!\partial Y^1X^3\!:,:\!\partial Y^1H^3\!:,:\!\partial H^1Y^3\!:,:\!\partial Y^1Y^3\!:\}$.
	
	\begin{remark}
		Types of monomials are in bijection with a basis of $\cW_{1+\infty}$, which itself is bijective with plane partitions thanks to the formula of MacMahon. 
	\end{remark}
	
	Now we recall Li's canonical decreasing filtration \cite{L}.
	Let $F^p$ be spanned by elements of the form 
	\[:\! \partial^{n_1}\alpha^1 \partial^{n_2}\alpha^2 \dots \partial^{n_r}\alpha^r \!:,\]
	where $\alpha^1,\dots,\alpha^r \in \Wsp$, $n_i\geq 0$, and $n_1+\dotsb +n_r \geq p$. 
	Then we have
	\begin{equation}\label{LiFiltration}
		\Wsp=F^0 \supseteq F^1 \supseteq \dotsb .
	\end{equation}
	Set 
	\[\T{gr}^F(\Wsp) = \bigoplus_{p=0}^{\infty}F^p/F^{p+1},\]
	and for $p\geq 0$ let 
	\begin{equation}\label{projectionLi}
		\pi_p:F^p \to F^p/F^{p+1}\subseteq \T{gr}^F(\Wsp) 
	\end{equation}
	be the projection.
	Then $\T{gr}^F(\Wsp)$ is a graded commutative algebra with product 
	\[\pi_p(\alpha)\pi_q(\beta) = \pi_{p+q}(\alpha_{(-1)}\beta), \quad \alpha \in F^p,\ \beta \in F^q. \]
	
	\subsection{Symmetry}
	By our assumptions, the weight 1 fields $X^1$,$H^1$ and $Y^1$ generate the affine vertex algebra $V^k(\fr{sp}_2)$. In particular, the zero modes $\{X^1_{(0)},H^1_{(0)},Y^1_{(0)}\}$ generate the Lie algebra $\fr{sp}_2 \cong \fr{sl}_2$,
		\[X^1_{(0)}Y^1 = H^1 ,\quad X^1_{(0)}H^1 = -2X^1,\quad  Y^1_{(0)}H^1 = 2Y^1.\] We have an inner automorphism $\sigma$ of $\fr{sp}_2$ that maps
	\[X^1\mapsto Y^1, \quad Y^1\to X^1, \quad H^1\to -H^1, \quad L\mapsto L.\]
	Since $\fr{sp}_2$ acts by derivations on $\Wsp$, thanks to the raising property of $W^4$, we can extend $\sigma$ to all strong generators as follows:
	\[X^{2n-1}\mapsto Y^{2n-1}, \quad Y^{2n-1}\to X^{2n-1}, \quad H^{2n-1}\to -H^{2n-1}, \quad W^{2n} \mapsto W^{2n} ,\quad n\geq 2.\]
Next, it will be convenient to organize the basis \eqref{basis} into irreducible $\fr{sp}_2$-modules. The weight $N$ subspace $\Wsp[N]$ decomposes as
	\begin{equation}\label{sl2 decomposition}
		\Wsp[N] = \bigoplus_{\mu=0}^{N} \mathbb{C}^{M(N,2\mu)} \otimes \rho_{2\mu}.
	\end{equation}
	Here $M(N,\mu)$ is the multiplicity of a highest-weight irreducible $\fr{sp}_2$-module $\rho_{\mu}$ of highest weight $\mu$. We follow the standard choice of scaling for the basis elements of homogeneous weight spaces  \cite{H}.
	We have the weight space decomposition \[\rho_{\mu}=\rho_{\mu,-\mu}\oplus \rho_{\mu,-\mu+2}\oplus\dotsb\oplus \rho_{\mu,\mu-2}\oplus \rho_{\mu,\mu}.\] 
	Here, the weight space $\rho_{\mu, \mu - 2\alpha}$ is spanned by the vector $v_{\mu,\mu-2\alpha}=\frac{1}{\alpha!} (Y^1_{(0)})^{\alpha} v_{\mu,\mu}$, with $v_{\mu,\mu}$ being the highest weight vector.
	With this choice of basis for weight spaces, the $\fr{sp}_2$-action is as follows.
	\begin{equation}\label{sp2 scaling}
		X^1_{(0)} v_{\mu,\alpha} = (\frac{1}{2}\mu+1+\frac{1}{2}\alpha)v_{\mu,\alpha+2},\quad Y^1_{(0)} v_{\mu,\alpha}= (\frac{1}{2}\mu+1-\frac{1}{2}\alpha)v_{\mu,\alpha-2}, \quad H^1_{(0)}v_{\mu,\alpha}= \alpha v_{\mu,\alpha}.
	\end{equation}
	
	We now set up notation to help us organize the strong generators into $\fr{sp}_2$-modules.
	Denote by $U^{n}_{\mu}$ the $\fr{sp}_2$-module isomorphic to $\rho_{\mu}$, spanned by generators of conformal weight $n$, and by $U^{n}_{\mu,\alpha}$ the subspace of Cartan weight $\alpha$.
	By our assumptions, these are either trivial or adjoint $\fr{sp}_2$-modules; specifically we have
	\begin{equation}
		U^{2i}_0= \T{Span}\{W^{2i}\}\cong \rho_0, \quad U^{2i-1}_2=\T{Span}\{ W^{2i-1}_{2,-2}, W^{2i-1}_{2,2},W^{2i-1}_{2,0}\}\cong \rho_2,
	\end{equation}
	where the weight spaces are spanned by the following basis vectors
	\begin{equation}\label{convention Walpha}
		W^{2}_{0,0} = L, \quad W^{2i}_{0,0}= W^{2i},\quad W^{2i-1}_{2,2}= X^{2i-1}, \quad W^{2i-1}_{2,-2}= -Y^{2i-1}, \quad W^{2i-1}_{2,0} = -H^{2i-1},
	\end{equation}
	for $i\geq 1$.
	Note that in the case of strong generators, the conformal weight uniquely specifies the $\fr{sp}_2$-module $\rho_{\bar n}$, where $\bar n = 2( n\ \T{mod} \ \mathbb{Z}/2\mathbb{Z})$.
	Similarly, we will write 
	\begin{equation}\label{convention more}
		\begin{split}
		W^{(2i-1,2j-1)}_{2,2}= \ :\!X^{2i-1}H^{2j-1}\!:-: \!H^{2i-1}X^{2j-1}\!:, \quad &W^{(2i-1)^d}_{2,2}= \partial^d X^{2i-1},\\
		&W^{(2i)^d}_{0,0}= \partial^d W^{2i},
		\end{split}
	\end{equation}
	for $j\geq i \geq 1$ and $d\geq 1$. 
	The basis for the whole modules $U^{(2i-1,2j-1)}_2$ and $U^{2i-1}_{2}$ is generated in accordance with the $\fr{sp}_2$-action (\ref{sp2 scaling}).
	
	Next, we proceed to organize the rest of the basis (\ref{basis}) into $\fr{sp}_2$-modules. 
	Due to nonassociativity and noncommutativity in $\Wsp$, the subspace $U^{\omega}$ is in general not an $\fr{sp}_2$-module.
	However, its projection by the map (\ref{projectionLi}) on the associated graded $\T{gr}^F({\Wsp})$, again denoted $U^{\omega}$, is an $\fr{sp}_2$-module.
	Then we have an isomorphism of $\fr{sp}_2$-modules 
	\begin{equation}\label{omega}
		U^{\omega} \cong \cS^{\omega_1}(V_1) \otimes  \cS^{\omega_2}(V_0) \otimes \dotsb\otimes  \cS^{\omega_s}(V_{\bar s})\cong \bigoplus_{\mu} U^{\omega}_{\mu},
	\end{equation}
	where functor $\cS^{\lambda}$ defined as 
	\[\cS^{\lambda}(V) = \cS^{m_1}(V)\otimes \cS^{m_2}(V)\otimes \dotsb, \quad \lambda = (1^{m_1},2^{m_2},\dots).\]
	Let $\Omega_{\mu}^{\omega}$ be the index set for some basis of highest weight vectors in $U^{\omega}_{\mu}$ of highest weight $\mu$.
	By the $\fr{sp}_2$-action (\ref{sp2 scaling}) the choice of a basis for $U^{\omega}_{\mu,\mu}$ determines the basis for each weight space $U^{\omega}_{\mu,\alpha}$, and thus the whole $\fr{sp}_2$-module $U^{\omega}_{\mu}$.
	Thus chosen basis vectors for weight spaces $U^{\omega}_{\mu,\alpha}$ are denoted by $W^{\omega_{\chi}}_{\mu,\alpha}$, where $\chi$ indexes the set $\Omega^{\omega}_{\mu}$.
	If $U^{\omega}_{\mu}$ has dimension 1,
	then we simply denote the basis for weight spaces by $W^{\omega}_{\mu,\alpha}$.
	For example, this happens if $\omega = n$, $\omega=(n)^1$ or $\omega = (2i-1,2j-1)$, where we have made the choices of highest weight vectors in (\ref{convention Walpha}) and (\ref{convention more}).
	\begin{remark}
		In general, the decomposition (\ref{omega}) is not multiplicity free.
		It may be interesting to compute dimension $\T{dim}(\Omega_{\mu}^{\omega})$.
	\end{remark}
	For $v \in U^{\omega}_{\mu}$ we conjecture that it is possible to correct for the effects of nonassociativity and noncommutativity in $\Wsp$, in a uniform manner.
	Specifically, we have an $\mathfrak{sp}_2$-equivariant quantization map 
	\begin{equation}\label{correctionsp2map}
		\iota:\T{gr}^F(\Wsp) \to \Wsp,
	\end{equation}
	given by the following.
	\begin{conjecture}\label{correct}
		Let $v \in U^{\omega}_{\mu}$ be a highest weight vector of highest weight $\mu$ in the associated graded $ \T{gr}(\Wsp)$, of type $\omega = (\omega_1,\dots,\omega_s)$.
		Then the following expression is the highest weight vector of highest weight $\mu$ in $\Wsp$.
		\begin{equation}\label{correctionsp2}
			\iota (v) =  v+\sum_{n=1}^{\infty}e_n(\alpha_{1,\mu},\alpha_{2,\mu},\dots, \alpha_{l(\omega)-1,\mu}) (X^1_{(0)}Y^1_{(0)})^n v, \quad \alpha_{n,\mu} = -\frac{1}{(n+2)(n+\mu+1)},
		\end{equation}
		Here $e(x_1,\dots,x_n)$ is an elementary symmetric polynomial of degree $n$, and $l(\omega)$ is the sum of the lengths of partitions in $\omega$.
	\end{conjecture}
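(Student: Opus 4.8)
The assertion is that $\iota(v)$, which manifestly has $\fr{sp}_2$-weight $\mu$ since $\Theta:=X^1_{(0)}Y^1_{(0)}$ preserves weight, is annihilated by $X^1_{(0)}$, i.e. that it is a highest weight vector. I would organize the argument around the observation that \eqref{correctionsp2} is the application of a single operator: because $\sum_{n\ge 0}e_n(\alpha_{1,\mu},\dots,\alpha_{l-1,\mu})\,t^n=\prod_{j=1}^{l-1}(1+\alpha_{j,\mu}t)$ with $l=l(\omega)$, we may write $\iota(v)=Q(\Theta)v$ where $Q(t)=\prod_{j=1}^{l-1}(1+\alpha_{j,\mu}t)$. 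By feature (1), the zero modes $X^1_{(0)},Y^1_{(0)},H^1_{(0)}$ generate $\fr{sl}_2\cong\fr{sp}_2$, and since they are zero modes they act by derivations on every $n$-th product (the $r=0$ case of the Jacobi identity \eqref{Jacobi}); in particular every product in $\Wsp$ is $\fr{sl}_2$-equivariant. Thus $Q(\Theta)$ is an $\fr{sl}_2$-module endomorphism of each weight space, and the whole problem becomes one of $\fr{sl}_2$-representation theory on the weight-$\mu$ space of $v$, together with a determination of which isotypic types can occur in $v$.

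The key computation is the spectrum of $\Theta$. On the weight-$\mu$ subspace of an irreducible summand $\rho_d$ (so $d\ge\mu$ and $d\equiv\mu \bmod 2$) the operator $\Theta$ acts by the scalar $\tfrac14(d-\mu+2)(d+\mu)$; this follows from the $\fr{sl}_2$-relations and is independent of the scaling convention \eqref{sp2 scaling}. Using $\alpha_{j,\mu}=-1/((j+2)(j+\mu+1))$ and the identity $(j+2)(\mu+j+1)=\tfrac14(2j+4)(2\mu+2j+2)$, one checks that the factor $(1+\alpha_{j,\mu}\Theta)$ annihilates exactly the summand $\rho_{\mu+2j+2}$ and is invertible on every other summand. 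Consequently $Q(\Theta)$ kills every isotypic component $\rho_d$ with $d\in\{\mu+4,\mu+6,\dots,\mu+2l\}$, while acting as a nonzero scalar on the components $\rho_\mu$ and $\rho_{\mu+2}$. Since the weight-$\mu$ vector of a $\rho_\mu$-summand is by definition a highest weight vector, whereas the weight-$\mu$ vector of a $\rho_{\mu+2}$-summand is not, the conjecture is equivalent to two statements about the isotypic support of $v$: (i) $v$ has no component of type $\rho_d$ with $d>\mu+2l$, and (ii) $v$ has no component of type $\rho_{\mu+2}$.

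Claim (i) is easy and I would dispose of it first. Because the generators lie in $\rho_0$ or $\rho_2$ by features (3) and (4), and every $n$-th product is $\fr{sl}_2$-equivariant, any element of $\Wsp$ built from $l$ generators — in particular any monomial of type $\omega$, hence $v$ — has isotypic support contained in $\bigoplus_{d\le 2l}\rho_d$; as $2l\le\mu+2l$, this gives (i). The real content is claim (ii), and this is where the noncommutativity and nonassociativity of the normally ordered product must be confronted. At the level of the associated graded $\T{gr}^F(\Wsp)$ the leading term $\pi_p(v)$ of $v$ is a genuine highest weight vector of $\rho_\mu$ in the free polynomial algebra, so the failure of $v$ itself to be highest weight lives in deeper filtration: $X^1_{(0)}v\in F^{p+1}$ in the notation of \eqref{LiFiltration}, and it consists entirely of reordering and contraction defects. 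Claim (ii) reduces to showing that $X^1_{(0)}v$ has no $\rho_{\mu+2}$-component, and the $\fr{sl}_2$-relations yield the clean reciprocal identity $\Pi_{\mu+2}(v)=(\mu+2)^{-1}\Pi_{\mu+2}\!\left(Y^1_{(0)}X^1_{(0)}v\right)$, where $\Pi_{\mu+2}$ is the isotypic projector and $Y^1_{(0)}X^1_{(0)}v$ lies one filtration step deeper than $v$.

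I expect claim (ii) to be the main obstacle. The plan is to prove it by induction on the number of factors $l=l(\omega)$: realize $v$ as a normally ordered product of a single generator with an $(l-1)$-factor highest weight vector, apply $\fr{sl}_2$ Clebsch--Gordan together with the inductive hypothesis to the leading term, and control the contraction defects using the equivariance of the $n$-th products and the filtration estimate above, forcing the putative $\rho_{\mu+2}$-component into arbitrarily deep filtration until it must vanish. The delicate point — and the place where a naive term-by-term computation would grow with $l$ — is to package these defects uniformly; this is presumably also where the precise coefficients $\alpha_{n,\mu}=-1/((n+2)(n+\mu+1))$ and the elementary-symmetric structure of \eqref{correctionsp2} are seen to be forced, tying the proof of (ii) back to the existence of the $\fr{sp}_2$-equivariant quantization map \eqref{correctionsp2map} that the conjecture asserts.
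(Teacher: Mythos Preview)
The paper does not prove this statement: it is explicitly a conjecture, and the only thing the paper offers is the sentence ``We have verified Conjecture~\ref{correct} for $\Wsp[N]$ with $N\leq 9$''. So there is no proof in the paper to compare against; you are attempting something strictly beyond what the authors establish.

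Your operator reformulation $\iota(v)=Q(\Theta)v$ with $Q(t)=\prod_{j=1}^{l-1}(1+\alpha_{j,\mu}t)$ is correct and clarifying, and your eigenvalue computation for $\Theta=X^1_{(0)}Y^1_{(0)}$ on the weight-$\mu$ space of $\rho_d$ is right: it acts by $\tfrac14(d-\mu+2)(d+\mu)$. It is also correct that the factor $(1+\alpha_{j,\mu}\Theta)$ kills exactly $\rho_{\mu+2j+2}$, so $Q(\Theta)$ annihilates $\rho_{\mu+4},\dots,\rho_{\mu+2l}$ and is invertible on $\rho_\mu$ and $\rho_{\mu+2}$. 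Your claim (i) is fine. The reduction of the conjecture to claim (ii) is therefore valid.

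The genuine gap is that claim (ii) is not proven, and in fact it is \emph{false} already in the paper's own worked example. Take $\omega=(1,1)$, $\mu=0$, $v=\tfrac12:\!H^1H^1\!:+2:\!X^1Y^1\!:$. A direct computation using the derivation property of zero modes and quasi-commutativity gives $X^1_{(0)}v=-2\partial X^1\neq 0$ and $(X^1_{(0)})^2v=0$, so $v$ decomposes as $v_0+v_2$ with $v_2=\partial H^1\neq 0$ in $\rho_2=\rho_{\mu+2}$. Hence your inductive ``plan'' for (ii) cannot succeed as stated. Pushing your own spectral analysis through this example, $Q(\Theta)v=(1+\alpha_{1,0}\cdot 2)v_2+v_0=\tfrac23\,\partial H^1+v_0$, which is \emph{not} a highest weight vector, whereas the correct invariant is $v-\partial H^1$. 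So either the formula for $\alpha_{n,\mu}$ in the conjecture carries an index shift (your analysis shows the roots of $Q$ should hit $\rho_{\mu+2},\dots,\rho_{\mu+2l-2}$ rather than $\rho_{\mu+4},\dots,\rho_{\mu+2l}$, and then the needed auxiliary claim becomes ``$v$ has no $\rho_{2l}$-component'', which \emph{does} hold here), or the operator in \eqref{correctionsp2} is meant in a sense you and I are both misreading. In either case, your reduction does not establish the conjecture, and your proposed induction for (ii) targets a statement that is not true.
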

	As an example, consider the Sugawara vector $L^{\mathfrak{sp}_2}$.
	In our notation, it is an element of $U^{(1,1)}_0$. 
	The following is the highest weight vector in the associated graded $\T{gr}^F(\Wsp)$. \[v=\frac{1}{2}:\!H^1H^1\!:+2:\!X^1Y^1\!:.\]
	Since length of $(1,1)$ is 2, we only need the $n=1$ correction in (\ref{correctionsp2}), and $\alpha_{1,0}=-\frac{1}{6}$.
	Evaluating $\iota(v)$ we find the well-known $\fr{sp}_2$-invariant vector
	\[\iota (v)=\frac{1}{2}:\!H^1H^1\!:+2:\!X^1Y^1\!:-\partial H^1.\]

	\begin{remark}
		The map $\iota$ in (\ref{correctionsp2map}) and (\ref{correctionsp2}) is not unique. 
		For instance, any linear combination with other highest weight vector in the common subspace $U^{\omega}_{\mu}$ gives a different map which is also $\mathfrak{sp}_2$-equivariant.
	\end{remark}
	
	We have verified Conjecture \ref{correct} for $\Wsp[N]$ with $N\leq 9$, which is what we need for the base computation (Proposition \ref{prop:base case proposition}). Note that in the formula (\ref{correctionsp2}) for $\iota (v)$, the correction terms to $v$ have strictly greater degree in the filtration (\ref{LiFiltration}) than that of $v$. Therefore, we can regard $v$ as the leading term in (\ref{correctionsp2}). By abuse of notation, we use the same notation $U^{\omega}_{\mu}$ to denote the image $\iota (U^{\omega}_{\mu})$, which transforms as $\rho_{\mu}$ as an $\fr{sp}_2$-module. Finally, let $U_{\mu}(n)$ denote the subspace of conformal weight $n$ transforming as the $\fr{sp}_2$-module $\rho_{\mu}$.
	For example, the conformal weight 2 space decomposes as the $\fr{sp}_2$-modules
	\[\Wsp[2]=U_0(2)\oplus U_2(2)\oplus U_4(2)\cong 2\rho_{0}\oplus \rho_2\oplus \rho_4,\]
	where $U_0(2)$ is spanned by $L$ and Sugawara vector $L^{\fr{sp}_2}$, $U_2(2)$ is spanned by highest weight vector $\partial X^1$, and $U_4(2)$ is spanned by the highest weight vector $:\!X^1X^1\!:$.

	Next, we explain how $\fr{sp}_2$-symmetry imposes severe restrictions on the OPEs among strong generators.
	First, recall the following decompositions of $\fr{sp}_2$-modules
	\begin{equation}\label{modules}
		\begin{split}
			\rho_{2}\otimes \rho_2 \cong& \rho_0 \oplus \rho_2\oplus \rho_4,\\
			\rho_0\otimes \rho_2 \cong &\rho_2,\\
			\quad \rho_0\otimes \rho_0 \cong& \rho_0.
		\end{split}
	\end{equation}
	From \eqref{modules}, it follows that:
	\begin{itemize}
		\item only $U^{\omega}_{0}$, $U^{\omega}_{2}$, and $U^{\omega}_{4}$ can arise in the OPEs among the generators of odd conformal weights.
		\item only  $U^{\omega}_{2}$ can arise in the OPEs among the generators of even and odd conformal weights.
		\item only  $U^{\omega}_{0}$ can arise in the OPEs among the generators of even and even conformal weights.
	\end{itemize}
	
	\begin{remark}
		This constraint is analogous to $\mathbb{Z}_2$-symmetry of $\Winf$, which featured prominently in its construction \cite{Lin}.
	\end{remark}
	
	Further restrictions follow.
	Let $v_{\mu,\alpha}\otimes v_{\nu,\beta}$ denote the basis of $\fr{sp}_2$-modules appearing on the left side of the isomorphisms (\ref{modules}), and denote the basis for the $\fr{sp}_2$-modules appearing on the right side by $u^{2,2}_{\mu,\alpha}$, $u^{0,2}_{2,\alpha}$, and $u^{0,0}_{0,0}$, respectively.
	In terms of this basis we have the following relations.
	\begin{equation}\label{sp2mix}
		\begin{split}
			v_{2,\alpha}\otimes v_{2,\beta} =& \epsilon^{2,2}_{0}(\alpha,\beta)u_{0,\alpha+\beta}+ \epsilon^{2,2}_{2}(\alpha,\beta)u_{2,\alpha+\beta}+\epsilon^{2,2}_{4}(\alpha,\beta)u_{4,\alpha+\beta},\\
			v_{0,0}\otimes v_{2,\alpha} =& \epsilon^{0,2}_{2}(0,\alpha)u_{2,\alpha},\\
			v_{0,0}\otimes v_{0,0} =&\epsilon^{0,0}_0(0,0)u_{0,0}.\\
		\end{split}
	\end{equation}
	For each module arising on the right side of (\ref{modules}), we have a choice of the scaling for the highest weight vector.
	The constants $\epsilon^{\mu,\nu}_{\gamma}(\alpha,\beta)$ in (\ref{sp2mix}) are uniquely determined by this choice.
	We can choose the scaling of the $u_{0,0}$ and $u_{2,2}$ so that $\epsilon^{0,0}_0$ and $\epsilon^{0,2}_2$ are identically equal to 1.
	We have chosen the scaling for $u^{2,2}_{0,0}$,$u^{2,2}_{2,2}$ and $u^{2,2}_{4,4}$ so that $\epsilon^{2,2}_{0}(2,-2) =\epsilon^{2,2}_{2}(2,-2) =\epsilon^{2,2}_{4}(2,-2)=1$ in $v_{2,2}\otimes v_{2,-2}$. 
	This determines the remaining displayed in Table \ref{tab:adjointadjoint}.
	\begin{table}[h]
		\centering
		\begin{tabular}{|c|c|c|c|}
			\hline
			$\rho_{2}\otimes \rho_{2}$&$v_{2,-2}$&$v_{2,2}$&$v_{2,0}$\\
			\hline
			$v_{2,-2}$&$ 6 v_{4,-4}$&$ v_{0,0}-v_{2,0}+ v_{4,0}$&$2v_{2,-2}+3v_{4,-2}$\\
			\hline
			$v_{2,2}$&$ v_{0,0}+v_{2,0}+ v_{4,0}$&$ 6 v_{4,4}$&$-2v_{2,2}+3v_{4,2}$\\
			\hline
			$v_{2,0}$&$2v_{2,-2}+3v_{4,-2}$&$-2v_{2,2}+3v_{4,2}$&$-2v_{0,0}+4v_{4,0}$\\
			\hline
		\end{tabular}
		\caption{Relationships among structure constants by $\fr{sp}_2$-symmetry in the tensor product $\rho_2\otimes \rho_2$.}\label{tab:adjointadjoint}
	\end{table}
	
	The following is the master OPE among any two strong generators of the vertex algebra $\Wsp$.
	\begin{equation}\label{OPEgeneralG}
		W^{n}_{\mu,\alpha}(z)W^{m}_{\nu,\beta}(w)\sim \sum_{r=0}^{n+m-1}\sum_{\gamma=0,2,4}\sum_{\omega\in U_{\gamma}(r)}\sum_{\chi\in\Omega^{\omega}_{\gamma}} w^{n,m}_{\omega_{\chi},\gamma}(\alpha,\beta)W^{\omega_{\chi}}_{\gamma,\alpha+\beta}(w) (z-w)^{-n-m+r}.
	\end{equation}
	Thanks to $\fr{sp}_2$-symmetry, we have the following factorization of structure constants
	\begin{equation}\label{sp2motion}
		w^{n,m}_{\omega_{\chi},\gamma}(\alpha,\beta)=\epsilon_{\gamma}^{\mu,\nu}(\alpha,\beta)w^{n,m}_{\omega_{\chi},\gamma}, \quad \chi\in\Omega^{\omega}_{\gamma,\alpha+\beta}
	\end{equation}
	From now on we will write (\ref{OPEgeneralG}) in the following shorthand notation.
	\begin{equation}\label{OPEgeneral}
		W^{n}(z)W^{m}(w)  \sim \sum_{r=0}^{n+m-1}\sum_{\gamma=0,2,4}\sum_{\omega\in U_{\gamma}(r)}\sum_{\chi\in\Omega^{\omega}_{\gamma}} \epsilon_{\gamma}^{\bar n,\bar m}w^{n,m}_{\omega_{\chi},\mu}W^{\omega_{\chi}}_{\mu}(w) (z-w)^{-n-m+r}.
	\end{equation}
	To extract the OPEs among fields in (\ref{OPEgeneralG}) from (\ref{OPEgeneral}), we proceed as follows:
	\begin{enumerate}\label{explain}
		\item Determine that $W^n$ transforms as $\rho_{\bar n}$, and $W^m$ as $\rho_{\bar m}$, and recall the decomposition of $\rho_{\bar n}\otimes \rho_{\bar m}$ in (\ref{modules}).
		\item Given a choice of Cartan weights $\alpha$ and $\beta$ appearing in modules $U^n_{\bar n}$ and $U^{m}_{\bar m}$, select the basis of vectors $W^{n}_{\bar n,\alpha}$ and $W^{m}_{\bar m,\beta}$ of weight spaces $U^n_{\bar n,\alpha}$ and $U^n_{\bar m,\beta}$, determined by our choice of the highest weight vector (\ref{convention Walpha}) and $\fr{sp}_2$-action (\ref{sp2 scaling}).
		\item  Given that same choice of Cartan weights $\alpha$ and $\beta$, use Table \ref{tab:adjointadjoint} to evaluate $\epsilon^{\bar n,\bar m}_{\gamma}(\alpha,\beta)$, for $\gamma$ a highest weight of the highest weight module appearing in the decomposition of $\rho_{\bar n}\otimes \rho_{\bar m}$  in (\ref{modules}).
		Note that only in the case of interaction of odd with odd weights this is nontrivial.
		\item  Select the basis of vectors $W^{\omega_{\chi}}_{\mu,\alpha+\beta}$ and of the weight spaces $U^{\omega}_{\mu,\alpha+\beta}$, determined a choice of highest weight vectors in $U^{\omega}_{\mu}$ and the $\fr{sp}_2$-action (\ref{sp2 scaling}).
		Finally, we recover OPE relations (\ref{OPEgeneralG}).
	\end{enumerate}

	We will be imposing Jacobi identities $J_{r,s}(W^u,W^n,W^m)$.
	However, using the shorthand notation (\ref{OPEgeneral}) they still evaluate to expressions involving $\epsilon$ symbols introduced in (\ref{sp2mix}).
	Similarly as in the above discussion regarding how to extract (\ref{OPEgeneralG}) from shorthand (\ref{OPEgeneral}), we can extract from expressions $J_{r,s}(W^u,W^n,W^m)$ the Jacobi identities $J(W^u_{\bar u,\gamma},W^{n}_{\bar n,\mu},W^{m}_{\bar m,\beta})$.
	
	Finally, we note that structure constants $w^{n,m}_{\omega_{\chi},\mu}$ arising in (\ref{OPEgeneral}) depend on the choice of bases indexed by sets $\Omega^{\omega}_{\mu}$. Given such a choice, the structure constants are uniquely determined as rational functions in terms of $c$ and $k$, i.e., the universal algebra $\Wsp$ is a 2-parameter vertex algebra. This feature is independent of the choice of basis.

	\subsection{Step 1: base computation}
	The aim of this subsection is to describe the computation in sufficient detail, that the reader can reproduce our results. The computation was done using the Mathematica package \texttt{OPEdefs} \cite{T}. 
	We begin by setting up the most general OPEs in $D_9$ compatibile with assumptions in Subsection \ref{subsec:setup}.
	Next, we impose conformal and affine symmetry constraints which yield linear relations reducing the number of undetermined constants to just 24.
	Finally, we impose the remaining Jacobi equations in $J_{11}$, which allow us to express all structure constants as rational functions of 2 parameters $c$ and $k$, see Proposition \ref{prop:base case proposition}.

	\subsection{New generators}	
	We begin by introducing a modified strong generating set $\bigcup\{\tilde W^{i}|i\geq 1\}$ that is adapted for the purposes of the base case computation, and is used throughout this subsection. First, the affine fields $W^{1}$ cannot be corrected and remain fixed, thus $\tilde W^{1} =W^{1}$. Next, the conformal vector $W^2$ must be corrected to the $\fr{sp}_2$-coset Virasoro field $\tilde W^2 = W^2- L^{\fr{sp}_2}$, where we recall
	\[L^{\fr{sp}_2}= \frac{1}{2(k+2)}\Big(\frac{1}{2}:\!H^1H^1\!:+2:\!X^1Y^1\!: - \partial H^1\Big).\]
The weight three fields $W^{3}$ all remain the same, since they are primary for $V^k(\fr{sp}_2)$, so $\tilde W^{3} = W^{3}$.
	\begin{lemma}\label{deform}
		Up to a scaling parameter, there is unique correction $\tilde W^4$ of $W^4$ so that it commutes with $V^k(\gs\gp_2)$, and is primary for $\tilde W^2$. It has the explicit form
		\begin{equation}\label{W4deformed}
			\tilde{W}^4 = W^4 - \frac{1}{k+4}\Big(\frac{1}{2}:\!H^1H^3\!:+:\!X^1Y^3\!:+:\!Y^1X^3\!:\Big).
		\end{equation}
	\end{lemma}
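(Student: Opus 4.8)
The plan is to treat $\tilde W^4 = W^4 + (\text{correction})$, where the correction is forced to lie in the trivial $\fr{sp}_2$-isotypic component $U_0(4)$ of the weight-$4$ subspace, since $\tilde W^4$ must still transform as $\rho_0$. First I would write down the most general such correction as a $\mathbb{C}[c,k]$-linear combination of the finitely many $\fr{sp}_2$-invariant normally ordered monomials of weight $4$ built from the lower generators $X^1,Y^1,H^1,L,X^3,Y^3,H^3$ and their derivatives. By the decomposition $\rho_2\otimes\rho_2\cong\rho_0\oplus\rho_2\oplus\rho_4$ of \eqref{modules}, the only invariant coupling the weight-$1$ fields to the weight-$3$ fields is (up to reordering and total derivatives) $c_1 = \tfrac12 :\!H^1H^3\!: + :\!X^1Y^3\!: + :\!Y^1X^3\!:$; the remaining candidate corrections ($\partial^2 L$, $:\!LL\!:$, and purely affine weight-$4$ invariants together with their derivatives) involve no weight-$3$ generator.

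Next I would impose the two requirements. Because the commutant $\text{Com}(V^k(\fr{sp}_2),\Wsp)$ is a subalgebra containing $L^{\fr{sp}_2}$, any field commuting with $V^k(\fr{sp}_2)$ automatically has regular OPE with $L^{\fr{sp}_2}$, so for such a field primarity with respect to $\tilde W^2 = W^2 - L^{\fr{sp}_2}$ is equivalent to primarity with respect to $W^2=L$. Thus the two conditions become: (A) $a_{(r)}\tilde W^4 = 0$ for every affine generator $a$ and $r\geq 1$ (the $r=0$ relations hold by triviality of the module), and (B) $L_{(r)}\tilde W^4 = 0$ for $r\geq 2$; since $W^4$ is already primary for $\vir$ by the setup, (B) reduces to requiring the correction itself to be $L$-primary once (A) holds. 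Using the inner automorphism $\sigma$ and the explicit $\fr{sp}_2$-action \eqref{sp2 scaling}, it suffices to impose the highest weight instances, i.e. the conditions coming from $X^1$; all others follow by $\fr{sp}_2$-covariance. Each $X^1_{(r)}\tilde W^4$ is a highest weight vector of $\rho_2$ of weight $4-r$, so (A) is a finite linear system.

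The coefficient of $c_1$ is pinned down first: among all candidate corrections, only $c_1$ produces a component along the generators $X^3,Y^3,H^3$ in its OPE with $X^1$ (via $X^1_{(1)}:\!Y^1X^3\!:$ and its companions, using the level-$k$ central term of the $V^k(\fr{sp}_2)$ OPE), whereas $\partial^2 L$, $:\!LL\!:$ and the affine invariants only ever generate affine descendants and $L$-descendants. Matching this against the $X^3$-component of $X^1_{(1)}W^4$ forces the coefficient to be $-\tfrac{1}{k+4}$, exactly the pole appearing in \eqref{W4deformed}, which reflects the breakdown of the construction at $k=-4$. I would then verify, by a direct OPE computation with \texttt{OPEdefs} \cite{T}, that with this single correction all remaining singular terms in $a_{(r)}\tilde W^4$ vanish and that $\tilde W^4$ stays primary for $L$, so that no further corrections are needed and the solution space is one-dimensional. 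The main obstacle is precisely this last verification: one must check that the single term $c_1$ simultaneously annihilates the entire nonregular part of the OPE with all three affine currents and preserves $L$-primarity, rather than merely cancelling the leading pole; this is the computation that establishes both existence and uniqueness up to the overall scaling of $\tilde W^4$.
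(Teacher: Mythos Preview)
Your proposal is essentially correct and follows the same route as the paper's proof: write the most general correction in lower generators, impose the commutation and primarity constraints, and solve the resulting linear system (the paper simply says ``imposing the desired constraints\dots we obtain \eqref{W4deformed}''). Your additional structural analysis---isolating $c_1$ as the only correction that can produce an $X^3$-component, and reading off the coefficient $-\tfrac{1}{k+4}$ from $X^1_{(1)}W^4 = X^3 + (\text{affine descendants})$ versus $X^1_{(1)}c_1 = (k+4)X^3$---is a nice way to see \emph{why} the answer has this particular shape, but it is not a genuinely different argument.

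One small correction: you write that $L^{\fr{sp}_2}$ lies in $\text{Com}(V^k(\fr{sp}_2),\Wsp)$. This is false; for instance $X^1_{(1)}L^{\fr{sp}_2} = X^1 \neq 0$. The conclusion you want---that a field commuting with $V^k(\fr{sp}_2)$ has regular OPE with $L^{\fr{sp}_2}$---is still true, but for the opposite reason: $L^{\fr{sp}_2}$ is \emph{inside} $V^k(\fr{sp}_2)$, so anything commuting with all of $V^k(\fr{sp}_2)$ commutes with it in particular. This slip does not affect the rest of your argument.
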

	\begin{proof}
		Let $\tilde W^4 = W^4 +\dotsb$, where the omitted terms are normally ordered products of monomials in $\{W^{j}| j =1,2,3\}$ and their derivatives. Imposing the desired constraints, namely that
		\[\tilde{W}^{1}(z) \tilde W^{4} (w) \sim 0,  \quad \tilde W^2 (z) \tilde W^4(w) \sim 4 \tilde W^4(w) (z-w)^{-2}  + \partial \tilde W^4(w) (z-w)^{-1},\]
		we obtain (\ref{W4deformed}).
	\end{proof}
	Next, we define the fields of higher conformal weight inductively via the raising property: 
		\begin{equation}\label{raise}
		\tilde{W}^4_{(1)}\tilde{W}^{n} = \tilde{W}^{n+2},\quad n\geq 3.
	\end{equation}

	We may rewrite our assumptions on the algebra $\Wsp$ in terms of the new generators as follows. 
	\begin{enumerate}
		\item Weight one fields $\tilde{W}^{1}$ generate the universal affine algebra $V^k(\fr{sp}_2)$ of level $k$:
		\[\tilde{W}^{1}(z)\tilde{W}^{1}(w) \sim \epsilon^{2,2}_0k  (z-w)^{-2}+ \epsilon^{2,2}_2 \tilde{W}^{1}(w) (z-w)^{-1}.\]
		\item Weight two field $W^{2}$ generates the universal Virasoro algebra of central charge $c$.
		So $\tilde W^2 = W^2 - L^{\fr{sp}_2}$ generates the Virasoro algebra with $\tilde c = c - \frac{3k}{k+2}$:
		\begin{equation}
			\begin{split}
				\tilde W^2(z)\tilde W^2_{0,0}(w) \sim& \frac{\tilde c}{2}(z-w)^{-4}+2 \tilde W^2(w)(z-w)^{-2}(w)+\partial \tilde W^2(w)(z-w)^{-1}.
			\end{split}
		\end{equation}
		In particular, the action on higher weight generators is as follows
		\begin{equation}
			\begin{split}
				\tilde W^2(z)\tilde W^{2n}(w) \sim\dotsb&+2n \tilde W^{2n}(w)(z-w)^{-2}(w)+\partial \tilde W^{2n}(w)(z-w)^{-1}, \ n\geq 3,\\
				\tilde W^2(z)\tilde W^{2n-1}(w) \sim\dotsb&+(W^2_{(1)}\tilde W^{2n-1}-L^{\fr{sp}_2}_{(1)}\tilde W^{(2n-1)}) (w)(z-w)^{-2}\\
				&+(W^2_{(0)}\tilde W^{2n-1}-L^{\fr{sp}_2}_{(0)}\tilde W^{2n-1} (w)(z-w)^{-1} , \quad n\geq 3.
			\end{split}
		\end{equation}
		\item Using $\tilde W^2=W^2-L^{\fr{sp}_2}$ we compute its action on $\tilde{W}^{3}$:
		\[ \tilde W^2(z) \tilde{W}^{3}(w) \sim \frac{3k+4}{k+2}\tilde{W}^{3}(w)(z-w)^{-2}+ \left(\partial \tilde{W}^{3}-\frac{1}{k+2}W^{(1,3)}_{2}\right)(w)(z-w)^{-1}.\]
		\item By Lemma \ref{deform}, weight four field $\tilde W^4$ is primary for $\tilde W^2$:
		\[\tilde W^2(z)\tilde{W}^4(w) \sim 4\tilde{W}^4(w)(z-w)^{-2}+ \partial\tilde{W}^4(w)(z-w)^{-1}.\]
		\item By (\ref{raise}), fields $\tilde{W}^{2n}$ and $\tilde{W}^{2n+1}$ are primary for $V^k(\fr{sp}_2)$:
		\begin{equation}\label{affine primary}
			\tilde{W}^{1}(z)\tilde{W}^{2n}(w) \sim 0, \quad \tilde{W}^{1}(z)\tilde{W}^{2n+1}(w)\sim \epsilon^{2,2}_2 \tilde{W}^{2n+1}(w)(z-w)^{-1}, \quad n\geq 1.
		\end{equation}
		\item $\tilde W^4$ satisfies the raising property (\ref{raise}):
		\[\tilde W^4(z)\tilde W^{n}(w)\sim \dotsb +\tilde W^{n+2}(w)(z-w)^{-2}+\dotsb,\quad  n\geq 2.\]
	\end{enumerate}
	For the remaining OPEs, we posit they have the most general form compatible with the conformal weight gradation and $\fr{sp}_2$-symmetry, as in (\ref{OPEgeneral}).
	For clarity, we specialize \eqref{OPEgeneral} for each type of interaction
	\begin{equation}\label{OPEs}
		\begin{split}
			\tilde W^{2i}(z)\tilde W^{2j}(w) &\sim \sum_{r=0}^{2i+2j-1}\left(V^{2i,2j}_{0}(r)\right)(w) (z-w)^{-r-1},\\
			\tilde W^{2i}(z)\tilde W^{2j-1}(w) &\sim \sum_{r=0}^{2i+2j-2}  \left(V^{2i,2j-1}_{2}(r)\right)(w) (z-w)^{-r-1},\\
			\tilde W^{2i-1}(z)\tilde W^{2j-1}(w) &\sim\sum_{r=0}^{2i+2j-3}\sum_{\gamma=0,2,4} \left( V^{2i-1,2j-1}_{\gamma}(r)\right)(w) (z-w)^{-r-1}.
		\end{split}
	\end{equation}
	In (\ref{OPEs}), $V^{n,m}_{\mu}(r)$ represents the general linear combination of all PBW monomials of conformal weight $n+m-r$ transforming as the $\rho_{\mu}$ $\fr{sp}_2$-module.
	As an example, we have used Table \ref{tab:adjointadjoint} to write the first order poles among weights 3 and 5 fields.

	\begin{figure}[h]
		\centering
		\begin{footnotesize}
		\begin{tabular}{|c|c|c|c|}
			\hline
			$W^3_{(0)}W^5$&$Y^5$&$X^5$&$H^5$\\
			\hline
			$Y^3$&$ 6 V^{3,5}_{-4,4}(0)$&$ V^{3,5}_{0,0}(0)-V^{3,5}_{2,0}(0)+V^{3,5}_{4,0}(0)$&$2V^{3,5}_{2,-2}(0)+3V^{3,5}_{4,-2}(0)$\\
			\hline
			$X^3$&$V^{3,5}_{0,0}(0)+V^{3,5}_{2,0}(0)+V^{3,5}_{4,0}(0)$&$ 6 V^{3,5}_{4,4}(0)$&$-2V^{3,5}_{2,2}(0)+3V^{3,5}_{4,2}(0)$\\
			\hline
			$H^3$&$2V^{3,5}_{2,-2}(0)+3V^{3,5}_{4,-2}(0)$&$-2V^{3,5}_{2,2}(0)+3V^{3,5}_{4,2}(0)$&$-2V^{3,5}_{2,0}(0)+4V^{3,5}_{4,0}(0)$\\
			\hline
		\end{tabular}
		\end{footnotesize}
	\end{figure}

	\subsection{Affine and conformal symmetry}
	Let $\tilde W^{n}$ and $\tilde W^{m}$ be two generating fields of $\Wsp$.
	In our ansatz (\ref{OPEgeneral}) we have already imposed $\fr{sp}_2$-symmetry and the conformal weight grading conditions. Now we impose the full affine and conformal symmetry constraints on OPEs $\tilde W^{n}(z)\tilde W^{m}(w)$ for all OPEs in $D_9$. This is equivalent to the vanishing of Jacobi identities $J(\tilde{W}^{1},\tilde W^{n},\tilde W^{m})$ and $J(\tilde W^2,\tilde W^{n},\tilde W^{m})$.
	These give rise to linear constraints among the undetermined structure constants.
	Doing so, we find the following.
	\begin{equation}\label{conformal ansatz}
		\tilde W^{n}(z)\tilde W^{m}(w) \sim \sum_{\gamma=0,2,4}\epsilon^{\bar n,\bar m}_{\gamma} \sum_{\omega\in U_{\gamma}(r)}\sum_{\chi\in\Omega^{\omega}_{\gamma}} \tilde w^{n,m}_{\omega_{\chi},\gamma}\sum_{\Lambda} \big(
		\beta^{n,m}_{\gamma,\Delta(\omega_{\chi})}(\Lambda):\! \Lambda \tilde W^{\omega_{\chi}}_{\gamma}\!: (w) (z-w)^{\Delta(\Lambda)+\Delta(\omega)-n-m}\big).
	\end{equation}
	Here $\tilde W^{\omega_{\chi}}_{\gamma}$ are certain fields which we define later in (\ref{data}) and (\ref{fields}).
	Fields $\Lambda$ are contained in $V^k(\fr{sp}_2)\otimes \vir$, and  $\beta^{n,m}_{\gamma,\Delta(\omega_{\chi})}(\Lambda)$ are rational functions of $c$, $k$, and $w^{a,b}_{\alpha_{\xi},\gamma}$ for $a+b\leq \Delta(\omega_{\chi})$, see Remark (\ref{rem:prim}).
	In particular, we have $\beta^{n,m}_{\gamma,\Delta({\omega}_{\chi})}(\B{1})=1$.
	Accordingly, if any additional parameters in the OPE algebra of $\Wsp$ exist, they must arise as structure constants $\tilde w^{n,m}_{\omega_{\xi},\gamma}$.
	Since our aim to show that $\Wsp$ is a 2-parameter vertex algebra, all the relevant data is contained in these constants.
	Therefore we extract the terms in equation (\ref{conformal ansatz}) with $\Lambda=\B{1}$, and use the following shorthand notation, as in \cite{CV}.
	\begin{equation}\label{OPEs refined}
		\tilde W^{n} \times \tilde W^{m} =\sum_{\gamma=0,2,4} \sum_{\omega \in U_{\gamma}}\sum_{\chi \in \Omega^{\omega}_{\gamma}} \epsilon^{\bar{n},\bar{m}}_{\gamma} \tilde{w}^{n,m}_{\omega_{\chi},\gamma}W^{\omega_{\chi}}_{\gamma}.
	\end{equation}
	The above shorthand (\ref{OPEs refined}) represents the notation (\ref{conformal ansatz}), which itself stands for the OPEs (\ref{OPEgeneralG}) among fields which are the basis of Cartan weight spaces in $U^{n}_{\bar n}$ and $U^m_{\bar m}$ as in (\ref{convention Walpha}). To extract fields on the right hand side of (\ref{OPEs refined}), we follow the same procedure as explained in (\ref{explain}). Using the above notation, the imposition of affine and conformal symmetry for OPEs in $D_9$ leaves us with following undetermined constants.
	\begin{equation}\label{data}
		\begin{split}
			\tilde W^{4} \times \tilde W^{4}  =& \tilde w^{4,4}_0 \B{1} + \tilde w^{4,4}_4 \tilde W^{4} + \tilde W^{6},\\
			\tilde W^{4} \times \tilde W^{3}  =& \tilde w^{4,3}_3 \tilde{W}^{3} + \tilde W^{5},\\
			\tilde{W}^{3} \times \tilde{W}^{3}  =& \epsilon^{2,2}_0 \tilde w^{3,3}_0 \B{1} +\epsilon^{2,2}_2 \tilde w^{3,3}_3  \tilde{W}^{3} + \epsilon^{2,2}_0 \tilde w^{3,3}_4\tilde W^{4}+\epsilon^{2,2}_2 \tilde w^{3,3}_5\tilde W^{5},\\
			\tilde W^{4} \times \tilde W^{5}  =& \tilde w^{4,5}_3  \tilde{W}^{3} +\tilde w^{4,5}_5 \tilde W^{5} + \tilde W^{7},\\
			\tilde{W}^{3} \times \tilde W^{6}  =& \tilde w^{3,6}_3  \tilde{W}^{3} + \tilde w^{3,6}_5 \tilde W^{5}+\tilde w^{3,6}_7 \tilde W^{7} + \tilde w^{3,6}_{(3,4),2} \tilde W^{(3,4)}_2\\
			&+ \tilde w^{3,6}_{(3,5),2}\tilde W^{(3,5)}_2+\tilde w^{3,6}_{(1,3,4),2}\tilde W^{(1,3,4)}_2,\\
			\tilde{W}^{3} \times \tilde W^{5}  =&   \epsilon^{2,2}_0 \tilde w^{3,5}_0 \B{1} + \epsilon^{2,2}_2 \tilde w^{3,5}_3  \tilde{W}^{3} +  \epsilon^{2,2}_0 \tilde w^{3,5}_4 \tilde W^{4} +  \epsilon^{2,2}_0 \tilde w^{3,5}_6 \tilde W^{6}+ \epsilon^{2,2}_0 \tilde w^{3,5}_{(3,3),0}\tilde W^{(3,3)}_0 \\
			&+ \epsilon^{2,2}_4 \tilde w^{3,5}_{(3,3),4}\tilde W^{(3,3)}_4 + \epsilon^{2,2}_2 \tilde w^{3,5}_7 \tilde W^{7} +\epsilon^{2,2}_2 \tilde w^{3,5}_{(3,4),2} \tilde W^{(3,4)}_2 + \epsilon^{2,2}_4 \tilde w^{3,5}_{(1,3,3),4}\tilde W^{(1,3,3)}_4.\\
		\end{split}
	\end{equation}
	Here, we have the $\fr{sp}_2$-modules generated by the highest weight vectors
	\begin{equation}\label{fields}
		\begin{split}
			\tilde W^{(3,3)}_{4,4} =& :\!\tilde X^3\tilde X^3\!:,\\
			\tilde W^{(3,3)}_0 =& 2:\!\tilde X^3\tilde Y^3:\!+\frac{1}{2}:\!\tilde H^3\tilde H^3\!:+\dotsb,\\
			\tilde W^{(3,4)}_{2,2} =& :\!\tilde X^3\tilde W^4\!:,\\
			\tilde W^{(1,3,3)}_{4,4} =& :\!X^1\tilde X^3\tilde H^3\!:- :\!H^1\tilde X^3\tilde X^3\!:,\\
			\tilde W^{(1,3,4)}_{2,2} =& :\!X^1 \tilde H^3\tilde W^4\!:- :\!H^1\tilde X^3\tilde W^4\!:,\\
		\end{split}
	\end{equation}
	where the omitted terms in $\tilde W^{(3,3)}_0$ represent normally ordered monomials in $\{\tilde W^{i}|\ i\leq 5\}$ and their derivatives, which are necessary to make $	\tilde W^{(3,3)}_0$ $\fr{sp}_2$-invariant.
	
	\begin{remark}\label{rem:prim}
		It is a well-known fact \cite{BPZ} that conformal symmetry fixes the coefficients of all fields appearing in the OPEs $W^{n}(z)W^{m}(w)$ in terms of the coefficients of Virasoro primaries only, given that fields $W^{n}$ are primary. 
		Our computations suggests that a similar fact holds when symmetry is enlarged to  $V^k(\fr{sp}_2)\otimes \vir$. 
		While the generators $W^n_{\mu}$ for $n\geq 5$ are not primary for the symmetry algebra, they can be uniquely corrected to primary vectors, at least for $n\leq 7$. Then, the functions $\beta^{n,m}_{\gamma,\Delta(\omega_{\chi})}(\Lambda)$ arising in (\ref{conformal ansatz}) are rational functions of $c$ and $k$ alone.
	\end{remark}
	
	\subsection{Nonlinear constraints}
	The imposition of Jacobi identities \[J(\tilde W^{3},\tilde W^{3},\tilde W^{3}), \ J(\tilde{W}^4, \tilde{W}^{3}, \tilde{W}^{3}),\ J(\tilde{W}^4,\tilde{W}^4,\tilde{W}^{3})\] allows us to express all of the structure constants in (\ref{data}) as rational functions in the central charge $c$ and $\fr{sp}_2$-level $k$.
	Specifically, we obtain a system of quadratic equations.
	The solution breaks up in two parts, which is explained by the automorphism $\sigma$.
	We choose our scaling to eliminate any square roots in the OPE algebra, so that it is defined over a localization of polynomial ring; see (\ref{loc}) and (\ref{localization}). Finally, we replace the new generating set $\{\tilde W^{i}|i\geq 1\}$ with the original generating set $\{W^{i} | i\geq 1\}$, and we normalize the fields $W^{3}$ and $W^{4}$ to have the leading poles 
	\begin{equation}\label{scaling}
		\begin{split}
			w^{3,3}_0=&\frac{c (k-1) (k+2) \left(c k+2 c+6 k^2+11 k+4\right)}{576 k ( k+1) (2 k+1) (3 k+4)},\\
			w^{4,4}_0=&\frac{c (k-1) \left(c k+2 c+6 k^2+11 k+4\right)\alpha(c,k)}{20736 (5 c+22) k^2 (k+1)^2 (k+2)^2 (2 k+1)^2 (3 k+4)},\\
			\alpha(c,k) =&c^2(k+2)^2 (2 k-1) (2 k+3) (3 k+4) +c k(k+2)\big(\\
			& \left(192 k^4+1216 k^3+2510 k^2+1961 k+478\right) \\
			&+2 k (2 k+1) \left(96 k^4+1172 k^3+4014k^2+5311 k+2376\right)\big).
		\end{split}
	\end{equation}
	
	\begin{remark}
		Thanks to weight 3 fields $W^{3}$ the difficulty of this computation is comparable to that of $\Winf$ in \cite{Lin}. 
		It is also similar to $\Wev$ in \cite{KL}, in that some quadratic equations must be solved.
	\end{remark}
	
	Although the structure constants in the OPE algebra are not polynomials in $c$ and $k$, they have only finitely many poles.
	Specifically, these are contained in the set
	\begin{equation}\label{loc}
		\{		( 5 c+22), k,(2 k+1), (k+1), (k+2), (k+4),(3k+4),(5k+8),(7k+16)\}.
	\end{equation}
	Let $D$ be the multiplicatively closed set generated by \eqref{loc}, and let 	\begin{equation}\label{localization}
		R=D^{-1}\mathbb{C}[c,k]
	\end{equation}
	be the corresponding localization of $\mathbb{C}[c,k]$.

	\begin{proposition}\label{prop:base case proposition}
		\
		\begin{enumerate}
			\item Data $D_9$ is fully determined in terms of the central charge $c$ and $\fr{sp}_2$-level $k$. Moreover, all Jacobi identities $J_{11}$ vanish. 
			\item Under the scaling (\ref{scaling}) all structure constants appearing in $D_9$ are elements of the ring $R$.
			\item We have $w^{4,3}_{(15),2} =w^{4,5}_{(17),2} =w^{4,5}_{(35),2} =w^{3,6}_{(17),2}=w^{3,6}_{(35),2}=0$, and moreover
			\begin{equation}
				\begin{split}
					W^4_{(1)} W^{3} =& W^{(5)},\quad 	W^4_{(0)} W^{3} = \frac{3}{5}\partial  W^{5} +\dotsb,\\
					W^{4}_{(1)}W^{4} =&W^{6},\quad W^{4}_{(0)}W^{4} =\frac{1}{2}W^{6}+\dotsb,\\
					W^{3}_{(1)} W^{3} =&\epsilon^{2,2}_0 W^{4}+ \dotsb,\quad W^{3}_{(0)} W^{3} =\epsilon^{2,2}_2\frac{3}{5}W^{5} +\epsilon^{2,2}_0\frac{1}{2} \partial W^{4}+ \dotsb\\
					W^{3}_{(1)}W^5 =& \epsilon^{2,2}_0 \frac{5}{4}W^{6}+\dotsb,\quad W^{3}_{(0)}W^5 =\epsilon^{2,2}_2 \frac{3}{7}W^{5}+\epsilon^{2,2}_0 \frac{5}{12}\partial W^{6}+\dotsb,\\
					W^{4}_{(1)}W^{5} =&W^{7}+\dotsb,\quad W^{4}_{(0)}W^{5} = \frac{3}{7}\partial W^{7}+\dotsb,\\
					W^{6}_{(1)}W^{3} =&\frac{6}{5}W^{7}+\dotsb,\quad W^{6}_{(0)}W^{3} = \frac{6}{7}\partial W^{7}+\dotsb,\\
					W^{8}_{(1)}W^{1} =&\frac{16}{5}W^{7}+\dotsb,\quad W^{8}_{(0)}W^{1} = \frac{16}{5}\partial W^{7}+\dotsb,\\
				\end{split}
			\end{equation}
		\end{enumerate}
	\end{proposition}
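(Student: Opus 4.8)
The plan is to follow the four-step strategy sketched in Subsection~\ref{subsec:setup}, carrying out the computation with \texttt{OPEdefs} \cite{T} but organizing it so that $\fr{sp}_2$-symmetry reduces the genuine unknowns to a finite list. First I would write down the most general OPEs in $D_9$ compatible with the conformal weight grading and the factorization (\ref{sp2motion}) forced by $\fr{sp}_2$-symmetry; by the discussion surrounding (\ref{OPEgeneral}) it suffices to track the $\fr{sp}_2$-covariant constants $\tilde w^{n,m}_{\omega_\chi,\gamma}$ attached to highest weight vectors, since the Cartan-weight dependence is already fixed by Table~\ref{tab:adjointadjoint} and the action (\ref{sp2 scaling}). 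Working in the modified generating set $\{\tilde W^i\}$ of Lemma~\ref{deform} is essential here: because $\tilde W^2 = W^2 - L^{\fr{sp}_2}$ commutes with $V^k(\fr{sp}_2)$ and the higher $\tilde W^n$ are affine primary by (\ref{affine primary}), the affine and conformal symmetries decouple and the ansatz collapses to the form (\ref{conformal ansatz}).

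Next I would impose the linear constraints from affine and conformal symmetry, namely the vanishing of $J(\tilde W^1,\tilde W^n,\tilde W^m)$ and $J(\tilde W^2,\tilde W^n,\tilde W^m)$ for all products in $D_9$. As explained in Remark~\ref{rem:prim}, these determine every descendant coefficient $\beta^{n,m}_{\gamma,\Delta(\omega_\chi)}(\Lambda)$ as a rational function of $c$, $k$, and the lower-weight leading constants, so that only the $24$ constants displayed in (\ref{data}) remain free. The one nontrivial point at this stage is that each generator $W^n_\mu$ with $5 \le n \le 7$ can be uniquely corrected to a $V^k(\fr{sp}_2)\otimes\vir$-primary, which is what forces the $\beta$ functions to depend on $c$ and $k$ alone; this I would verify directly in weights up to $7$.

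The heart of the proof is the nonlinear step: imposing $J(\tilde W^3,\tilde W^3,\tilde W^3)$, $J(\tilde W^4,\tilde W^3,\tilde W^3)$, and $J(\tilde W^4,\tilde W^4,\tilde W^3)$ produces a system of \emph{quadratic} equations in the $24$ unknowns with coefficients in the fraction field of $\mathbb{C}[c,k]$. The expected main obstacle is solving this system and recognizing, as the text indicates, that its solution set splits into two branches interchanged by the automorphism $\sigma$; one must select a branch and then fix the residual scaling freedom of $W^3$ and $W^4$ so as to remove all square roots, yielding the normalization (\ref{scaling}). Care is needed precisely because the equations are genuinely quadratic rather than linear (in contrast with the type $A$ case), so one cannot simply invert a matrix; the feature that makes this tractable is that after the scaling choice the solution becomes a single-valued rational function of $c$ and $k$.

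Finally, with all of $D_9$ pinned down I would verify the three assertions. For part (1) it remains to check that \emph{all} Jacobi identities $J_{11}$ vanish, not merely the three imposed above; this is the essential consistency check guaranteeing that $D_9$ defines a (possibly degenerate) nonlinear Lie conformal algebra through weight $9$, and I would confirm it by direct evaluation. For part (2) I would inspect the explicit rational expressions and confirm that every denominator factors into the elements of (\ref{loc}), so all constants lie in the localization $R$ of (\ref{localization}). Part (3) then follows by reading off the solved OPEs after converting from $\{\tilde W^i\}$ back to the original generators $\{W^i\}$: the five listed coefficients vanish, and the displayed leading poles (such as $W^4_{(1)}W^3 = W^5$, $W^4_{(1)}W^4 = W^6$, and the $\epsilon^{2,2}_\gamma$-factored odd-odd poles) are exactly those produced by the chosen normalization.
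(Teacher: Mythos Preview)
Your proposal is correct and follows essentially the same approach as the paper: set up the general ansatz in the modified generators $\{\tilde W^i\}$, impose affine and conformal symmetry to reduce to the $24$ constants in (\ref{data}), solve the quadratic system coming from $J(\tilde W^3,\tilde W^3,\tilde W^3)$, $J(\tilde W^4,\tilde W^3,\tilde W^3)$, $J(\tilde W^4,\tilde W^4,\tilde W^3)$ with the $\sigma$-branch choice and normalization (\ref{scaling}), then verify the remaining Jacobi identities in $J_{11}$ and read off parts (2) and (3). This is precisely the computation the paper reports (carried out with \texttt{OPEdefs}), and no separate proof is given beyond the description you have reproduced.
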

	
	From now on, we will work with the original strong generating set $\{W^{i}|i\geq 1\}$, unless specified otherwise.
	
	\subsection{Step 2: constant structure constants}	
	Here, we consider consequences of the weak generation property (\ref{raising}).
	In Proposition \ref{prop:structure constants}, we find explicit forms for infinitely many structure constants specified in the following products.
	\begin{equation}
		\begin{split}
			W^{n}_{(1)}W^m=& \epsilon^{\bar n,\bar m}_{\overline{n+m-2}}w^{n,m}_{n+m-2}W^{n+m-2}+\dotsb.\\
		\end{split}
	\end{equation}
	In general, for the structure constants $w^{n,m}_{\omega_{\chi},\mu}$ in (\ref{OPEgeneral}) to be well-defined, we must make explicit the choice of basis indexed by the set $\Omega^{\omega}_{\mu}$ for the subspace $U^{\omega}_{\mu}$.
	Since we only need to solve for some structure constants, and not all, we make only a partial choice of basis.
	We must verify that the structure constants we define are independent of the choice of basis for the complementary subspace.
	To resolve this ambiguity, we introduce a decreasing $\fr{sp}_2$-invariant filtration on $\Wsp$., so that the complementary subspace belongs to a higher filtration degree. We define the {\it degree} of a PBW monomial as follows:
	\[\T{deg}(:\!\partial^{d_1}a_1\dotsb\partial^{d_n}a_n\!:)= n+ \sum_{i=1}^n d_i. \]
	Let $G_n$ denote the span of all PBW monomials of degree greater or equal to $n$.
	Then $G_n$ is a decreasing filtration
	\begin{equation}\label{filtration}
		\Wsp=G_0\supset G_1\supset G_2 \supset\dotsb.
	\end{equation}
	We have surjections
	\begin{equation}\label{surjections}
		\sigma_n:G_{n} \to G_{n}/G_{n+1},
	\end{equation}
	and we have the associated graded algebra
	\begin{equation}\label{graded}
		\T{gr}^G(\Wsp) := \bigoplus_{n=0}^{\infty} G_{n}/G_{n+1}.
	\end{equation}
	This filtration is not {\it good} in the sense of \cite{L}, and $\T{gr}^G(\Wsp) $ is neither commutative nor associative. However, the quasi-derivation identity (\ref{quasi-derivation}) and existence of a conformal weight grading structure implies that 
	\begin{equation}\label{complexity0}
		:\!a_{(-1)}b\!:\ \in G_{n+m},\ \text{for}\  a \in G_n \ \text{and}\ b\in G_m,
	\end{equation}
	and for all nonnegative $r^{\T{th}}$-products we have
	\begin{equation}\label{complexity1}
		a_{(r)}G_n \subset G_{n-r}.
	\end{equation}
	Note that $G_1/G_2$ is the span of all strong generators of the algebra, and $G_2/G_3$ is spanned by all strong generators with one derivative and normally ordered quadratics with no derivatives.
	
	Recall the subspace $U^{\omega}_{\mu}$ discussed in (\ref{sect:symmetry}), and consider $U^{\omega}_{\mu}\cap G_i$. The map $\sigma_i$ restricts to the intersection 
\[\sigma_i: U^{\omega}_{\mu}\cap G_i \to U^{\omega}_{\mu}\cap G_i /  U^{\omega}_{\mu} \cap G_{i+1} \subseteq G_i / G_{i+1}.\]
Fix some basis of $\sigma_i (U^{\omega}_{\mu}\cap G_i)$ and lift it to a set of linearly independent vectors  $\bigcup_{\alpha}\{W^{\omega_{\chi}}_{\mu,\alpha}|\chi \in\Omega^{\omega}_{\mu}\}$ in $U^{\omega}_{\mu}\cap G_i$ so that we have splitting of vector space
\[U^{\omega}_{\mu} = \T{Span}(\bigcup_{\alpha}\{W^{\omega_{\chi}}_{\mu,\alpha}| \chi \in \Omega^{\omega}_{\mu}\})\oplus C.\]
Here the complement $C$ lies in $ G_{i+1}\cap U^{\omega}_{\gamma,\gamma}$. Without the loss of generality, since the degree filtration (\ref{filtration}) is $\fr{sp}_2$-invariant, we can assume that this basis is compatible with the $\fr{sp}_2$-action (\ref{sp2 scaling}). Now the structure constants $w^{n,m}_{\omega_{\chi},\gamma}$ are well-defined, i.e. independent of the choice of  basis for $C$. In what follows we apply this procedure to the cases of $i=1$ and $i=2$, and $U^{\omega}_{\mu,\mu}$ is of dimension 1. Specifically, our partial basis will be given by (\ref{convention Walpha}) and (\ref{convention more}).

	We will need to impose Jacobi identities $J_{2,0}(W^{2i},W^{n},W^{m})$ of the form
	\begin{equation}\label{L2equation}
		J_{2,0}(W^{2i},W^{n},W^{m}) = \varepsilon^{2i,n,m}_{2,0} W^{2i+n+m-4}+\dotsb = 0,
	\end{equation}
	which will give rise to equation (\ref{20identity}).
	Property (\ref{complexity1}) together with the structure of conformal gradation implies that only linear terms with at most one derivative or quadratics terms with no derivatives may contribute to the coefficient of $W^{2i+n+m-4}$ in (\ref{L2equation}).
	Recall that we have already made this choice in (\ref{convention Walpha}) and (\ref{convention more}).
	We proceed to analyze the $3$ types of interactions: even with even, even with odd, and odd with odd weight fields.
	
	\begin{itemize}
		\item Even with even. The first order pole of $W^{2i}$ and $W^{2j}$ has odd conformal weight $2i+2j-1$ and transforms as the trivial $\fr{sp}_2$-module.
		So only $\partial W^{2i+2j-2}$ contributes in (\ref{L2equation}), and we write
		\begin{equation}\label{ansatzWW}
			\begin{split}
				W^{2i}_{(0)}W^{2j}=&v_{0}^{2i,2j}\partial W^{2i+2j-2}+W_{0,0}^{2i,2j}(0),\\
				W^{2i}_{(1)}W^{2j}=&v_{1}^{2i,2j}W^{2i+2j-2}+W_{0,0}^{2i,2j}(1),
			\end{split}
		\end{equation}
		where $W_{0,0}^{2i,2j}(-)$ are some normally ordered polynomial in the generators $\{W^{n}|n\leq 2i+2j-3\}$ and their derivatives.
		Here, we used a new variable for the structure constant $v^{2i,2j}_0= w^{2i,2j}_{(2i+2j-2)^1,0}$ and $v^{2i,2j}_1=w^{2i,2j}_{2i+2j-2,0}$.
		Note that the $\fr{sp}_2$-label is uniquely determined.
		
		\item  Even with odd. The first order pole of $W^{2i}$ and $W^{2j-1}$ has even conformal weight $2i+2j-2$ and transforms as the adjoint $\fr{sp}_2$-module.
		So $\partial W^{(2i+2j-3)}$ and quadratics $W^{(2l-1,2i+2j-2l-3)}_2$ contribute in (\ref{L2equation}), and we write
		\begin{equation}\label{ansatzWH}
			\begin{split}
				W^{2i}_{(0)}W^{2j-1}&=v_{0}^{2i,2j-1}\partial W^{2j+2i-3}+\sum_{l=0}^{\lfloor \frac{i+j}{2}\rfloor}q_{l}^{2i,2j-1}W^{(2l-1,2i+2j-2l-3)}_2+W_{2}^{2i,2j-1}(0),\\
				W^{2i}_{(1)}W^{2j-1}&=v_{1}^{2i,2j-1} W^{2j+2i-3}+W^{2i,2j-1}_{2}(1),
			\end{split}
		\end{equation}
		where $W^{2i,2j-1}_{2,-}(-)$ are some normally ordered polynomial in generators $\{W^{n}|n\leq 2i+2j-4 \}$.
		Here, we used a new variable for the structure constant $v^{2i,2j-1}_0=	w^{2i,2j-1}_{(2i+2j-1)^1,2}$, $v^{2i,2j-1}_1=w^{2i,2j}_{2i+2j-1,2}$ and $q^{2i,2j-1}_l=w_{(2l-1,2i+2j-2l-3),2}^{2i,2j-1}$.
		Note that the $\fr{sp}_2$-label is uniquely determined.
		\item Odd with odd. The first order pole of $W^{2i-1}$ and $W^{2j-1}$ has odd conformal weight $2i+2j-3$ and transforms under $\fr{sp}_2$ as the $\rho_0\oplus \rho_2 \oplus \rho_4$.
		So only $\partial W^{(2i+2j-3)}$ contributes in (\ref{L2equation}), and we write
		\begin{equation}\label{ansatzXH}
			\begin{split}
				W^{2i-1}_{(0)}W^{2j-1}=&\epsilon^{2,2}_2 a^{2i-1,2j-1} W^{2j+2i-3}+\epsilon^{2,2}_0 v_{0}^{2i-1,2j-1}\partial W^{2i+2j-4}\\
				&+ W_{0}^{2i-1,2j-1}(0)+W_{2}^{2i-1,2j-1}(0)+W_{4}^{2i-1,2j-1}(0),\\
				W^{2i-1}_{(1)}W^{2j-1}=&\epsilon^{2,2}_0v_{1}^{2i-1,2j-1} W^{2i+2j-4}+W_{0}^{2i-1,2j-1}(1)+ W_{2}^{2i-1,2j-1}(1)+ W_{4}^{2i-1,2j-1}(1),
			\end{split}
		\end{equation}
		where terms $W_{-,-}^{2i-1,2j-1}(-)$ are some normally ordered polynomial in the generators $\{W^{n} | n\leq 2i+2j-4\}$, and their derivatives.
		Here, we used a new variable for the structure constant $v^{2i-1,2j-1}_0=	w^{2i-1,2j-1}_{(2i+2j-4)^1,0}$, $v^{2i-1,2j-1}_1=w^{2i-1,2j-1}_{2i+2j-4,0}$ and $a^{2i-1,2j-1}=w_{2i+2j-3,2}^{2i-1,2j-1}$.
		Note that the $\fr{sp}_2$-label is uniquely determined.
	\end{itemize}
	
	\begin{remark}
		Ansatz (\ref{ansatzWW}) is formally the same as the one used in the construction of $\Wev$; see \cite[Section 3, Eq. 3.2 and Eq. 3.3]{KL}. Unlike the constructions of $\Winf$ and $\Wev$, 
		we see a quadratic $W^{(2l-1,2i+2j-2l-3)}_2$ arising in (\ref{ansatzWH}) and a derivative-free monomial $W^{2j+2i-3}$ in (\ref{ansatzXH}).
	\end{remark}
	Our notation above extends the one used for the structure constants (\ref{sp2motion}). 
	Specifically, if we denote by $W^{n,m}_{\gamma,\alpha+\beta}(r)(\alpha,\beta)$ the normally ordered differential polynomial, arising as above in $W^{n}_{\bar n,\alpha}(z)W^{m}_{\bar m,\beta}(w)$, then affine symmetry affords a factorization
	\[W^{n,m}_{\gamma,\alpha+\beta}(r)(\alpha,\beta) = \epsilon^{\bar n,\bar m}_{\gamma}(\alpha,\beta) W^{n,m}_{\gamma,\alpha+\beta}(r),\quad r=0,1.\]
	Therefore, up to $\fr{sp}_2$-symmetry, all relevant structure is contained in the constants with the form $v^{n,m}_{0}$, $v^{n,m}_{1}$ and $a^{2i-1,2j-1}$, and normally ordered differential polynomials $W^{n,m}_{\gamma,\alpha+\beta}(r)$.
	Denote the double factorial by
	$$a!!=\begin{cases} 
		(2n-1)(2n-3)\dotsb 3, & a=2n-1,\\
		(2n)(2n-2)\dotsb 2, & a=2n.
	\end{cases}$$

\begin{proposition} \label{prop:structure constants} Let the notation be fixed as in (\ref{ansatzWW}-\ref{ansatzXH}). We have the following expressions for the first and second order poles among the strong generators.
		\begin{enumerate}
		\item Even and even weight fields. 
			\begin{equation*}
				\begin{split}
					W^{2i}_{(0)}W^{2j}=&\frac{(2i-1)(2i)!!(2j)!!}{8(2i+2j-2)(2i+2j-4)!!}\partial W^{2i+2j-2}+W^{2i,2j}_{0}(0),\\
					W^{2i}_{(1)}W^{2j}=&\frac{(2i)!!(2j)!!}{8(2i+2j-4)!!}W^{2i+2j-2}+W^{2i,2j}_{0}(1), \quad i\geq2.
				\end{split}
			\end{equation*}
			\item Even and odd weight fields. 
			\begin{equation*}
				\begin{split}
					W^{2i}_{(0)}W^{2j-1}&=\frac{(2i-1)(2i)!!(2j-1)!!}{8(2i+2j-3)(2i+2j-5)!!}\partial W^{2j+2i-3}+W_{2}^{2i,2j-1}(0),\\
					W^{2i}_{(1)}W^{2j-1}&=\frac{(2i)!!(2j-1)!!}{8(2i+2j-5)!!} W^{2j+2i-3}+W_{2}^{2i,2j-1}(1), \quad i\geq2.
				\end{split}
			\end{equation*}
			\item Odd and odd weight fields.
			\begin{equation*}
				\begin{split}
					W^{2i-1}_{(0)}W^{2j-1}=&\epsilon^{2,2}_{2}\frac{(2i-1)!!(2j-1)!!}{(2i+2j-3)!!} W^{2j+2i-3}+\epsilon^{2,2}_{0}\frac{2(2i-2)(2i-1)!!(2j-1)!!}{9(2i+2j-4)(2i+2j-6)!!}\partial W^{2i+2j-4}\\
					&+W_{0}^{2i-1,2j-1}(0)+W_{2}^{2i-1,2j-1}(0)+V_{4}^{2i-1,2j-1}(0),\\
					W^{2i-1}_{(1)}W^{2j-1}=&\epsilon^{2,2}_{0}\frac{2(2i-1)!!(2j-1)!!}{9(2i+2j-6)!!} W^{2i+2j-4}+W_{0}^{2i-1,2j-1}(1)\\
					&+W_{2}^{2i-1,2j-1}(1)+W_{4}^{2i-1,2j-1}(1).
				\end{split}
			\end{equation*}
		\end{enumerate}
	\end{proposition}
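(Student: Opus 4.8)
The plan is to prove all three families of formulas simultaneously by induction on the total conformal weight, using the raising property of $W^4$ to climb the tower of generators and the Jacobi identities $J_{2,0}(W^4,W^n,W^m)$ from \eqref{L2equation} to produce recursions among the leading structure constants. The degree filtration \eqref{filtration} is what makes this work cleanly: by \eqref{complexity0} and \eqref{complexity1} all of the omitted differential polynomials $W^{\bullet,\bullet}_\gamma(r)$ in the assumed OPE forms \eqref{ansatzWW}--\eqref{ansatzXH} lie in strictly higher filtration degree than the bare generator they sit beside, so upon passing to $\T{gr}^G(\Wsp)$ only the leading generator survives. This is exactly the mechanism that makes the constants $v_0,v_1,a$ well defined, independently of the residual choices of complementary bases, and that isolates a single coefficient to compare across the Jacobi identity.

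First I would record that it suffices to determine the first-order constants $v_1^{n,m}$ together with the odd-odd constant $a^{2i-1,2j-1}$. Applying skew-symmetry \eqref{skew-symmetry} and \eqref{conformal identity} to the leading generator of $W^n_{(0)}W^m$ forces the coefficient of $\partial W^{n+m-2}$ to be $\tfrac{n-1}{n+m-2}$ times the coefficient of $W^{n+m-2}$ in $W^n_{(1)}W^m$; this is precisely the relation $v_0^{n,m}=\tfrac{n-1}{n+m-2}\,v_1^{n,m}$ one reads off in all three cases of the proposition, and it disposes of every $(0)$-product formula once the $(1)$-product formulas are known. The anchor for the induction is the raising property \eqref{raise} itself: since $W^4_{(1)}W^n=W^{n+2}$ with coefficient exactly $1$, we have $v_1^{4,m}=1$ for all $m$, which one checks agrees with the claimed double factorials in each case.

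Next I would substitute $W^{n+2}=W^4_{(1)}W^n$ into $J_{2,0}(W^4,W^n,W^m)$ and extract the coefficient of the leading generator $W^{n+m}$. The bracket term contributes $(W^4_{(1)}W^n)_{(1)}W^m=W^{n+2}_{(1)}W^m$, i.e. the unknown $v_1^{n+2,m}$; the raising property on $W^4_{(1)}W^m=W^{m+2}$ feeds in $v_1^{n,m+2}$; and \eqref{conformal identity} rewrites the $(W^4_{(0)}W^n)$-contribution, whose leading part is a multiple of $\partial W^{n+2}$, back in terms of $W^{n+2}_{(1)}W^m$ and of $v_1^{n,m}$. This produces a linear recursion for $v_1^{n+2,m}$ in terms of $v_1^{n,m}$ and $v_1^{n,m+2}$, and a direct substitution then confirms that the proposed closed forms satisfy it, the verification collapsing in each case to an elementary identity among double factorials. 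The odd-odd constant $a^{2i-1,2j-1}$ requires its own recursion, obtained the same way but by reading off the $\rho_2$ channel through the Clebsch--Gordan coefficients $\epsilon^{2,2}_\gamma$ of Table \ref{tab:adjointadjoint}; here one keeps the three channels $\rho_0\oplus\rho_2\oplus\rho_4$ separate and uses that the adjoint channel is the antisymmetric part, so that it decouples from the $\rho_0$ recursion for $v_1$.

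The hardest part will be the bottom of each tower, where the naive recursion degenerates: the self-referential $(W^4_{(0)}W^n)$-term tends to cancel the $(W^4_{(1)}W^n)_{(1)}W^m$-term at the lowest weight, so the step launching the even tower from weight $4$ to weight $6$ (and the corresponding first odd-odd step) carries no information from $J_{2,0}(W^4,W^4,W^m)$ and must instead be read directly off the explicitly computed data $D_9$ of Proposition \ref{prop:base case proposition}. Beyond this, the genuine bookkeeping burden is twofold: verifying independence of the structure constants from the residual basis choices indexed by $\Omega^\omega_\mu$ --- which is exactly where the filtration $G_\bullet$ and the compatibility of the partial bases \eqref{convention Walpha}, \eqref{convention more} with the $\fr{sp}_2$-action are used --- and carrying the $\epsilon^{2,2}_\gamma$ factors correctly through the odd-odd Jacobi identities so that the two recursions for $v_1$ and $a$ truly separate. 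Once these are controlled, the induction closes and the double-factorial formulas follow.
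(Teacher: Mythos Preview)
Your overall architecture---induction on total weight, $J_{2,0}$ with $W^4$ to manufacture a recursion, and the $G$-filtration to isolate leading coefficients---matches the paper's. But two steps as you describe them do not work.

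First, skew-symmetry alone does not yield $v_0^{n,m}=\tfrac{n-1}{n+m-2}\,v_1^{n,m}$. Applying \eqref{skew-symmetry} at order $0$ and reading off the leading generator gives only the symmetric constraint $v_0^{n,m}+v_0^{m,n}=v_1^{n,m}$ (using $v_1^{n,m}=v_1^{m,n}$), which is one equation in two unknowns. The paper obtains the asymmetric relation \eqref{tempEq} by taking the $l=1$ case of \eqref{20identity}, i.e.\ from $J_{2,0}(L,W^n,W^m)$, using $v_1^{2,n}=n$ and $v_0^{2,n}=1$. Without this input your recursion from $J_{2,0}(W^4,W^n,W^m)$ reads $v_1^{n+2,m}=v_0^{n,m}/(1-v_0^{4,n})$ and still mixes $v_0$ with $v_1$; your description of it as a relation among $v_1^{n+2,m},v_1^{n,m},v_1^{n,m+2}$ is not what the identity actually produces.

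Second, and more seriously, you have overlooked the quadratic constants $q_l^{2i,2j-1}$ appearing in \eqref{ansatzWH}. The normally ordered monomials $W^{(2l-1,2i+2j-2l-3)}_2$ sit at degree $2$ in the $G$-filtration, the \emph{same} degree as $\partial W^{2i+2j-3}$, so your claim that ``only the leading generator survives'' is false here: these quadratics contribute to the coefficient of $W^{2l+n+m-4}$ in \eqref{L2equation} and spoil the clean form of \eqref{20identity}. The paper spends the first half of its proof establishing $q_l^{2i,2j+1}=0$ inductively, via $J_{0,0}(W^{2l},W^{2i-1},W^{2j-1})$ and $J_{1,0}(W^4,W^{2N-2},W^3)$, before it is entitled to extract \eqref{20identity}. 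This is a genuine missing ingredient, not bookkeeping.

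Finally, for $a^{2i-1,2j-1}$ the paper does not use the $\rho_2$-channel of a $J_{2,0}$ identity as you suggest; it uses $J_{0,1}(W^{2l},X^1,H^{2i-1})$, which directly produces the relation \eqref{solving for x} expressing $a^{2l-1,2i-1}$ in terms of the already-determined $v_0^{2l,1}$ and $v_1^{2l,2i-1}$. Your $\rho_2$-channel idea may be salvageable, but as stated it is too vague to verify and would in any case require the quadratic-vanishing step you omitted.
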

	
	\begin{proof}
		We will proceed by induction on $N$.
		Our base case is Proposition \ref{prop:base case proposition}, with $N=3$.
		Inductively, assume that all structure constants defined in (\ref{ansatzWW}-\ref{ansatzXH}) have the form as in Proposition \ref{prop:structure constants} for products in $D^{2N+1}_0\cup D^{2N+2}_0$.
		In particular, it means that all $\{q_{l}^{2i,2j-1}| 2i+2j-1 =2N+1,l\geq 1\}$ vanish.
		First, we will show that constants $q_{l}^{2i,2j+1}$ arising in $D^{2N+3}_0$ vanish.
		
		To this end, let $i,j,l$ be integers such that $2i+2j+2l=2N+6$ and $l\geq 2$, and
		consider the Jacobi identity $J_{0,0}(W^{2l},W^{2i-1},W^{2j-1})$, which has the form
		\[W^{2l}_{(0)}(W^{2i-1}_{(0)}W^{2j-1}) =W^{2i-1}_{(0)}(W^{2l}_{(0)}W^{2j-1})+ (W^{2l}_{(0)}W^{2i-1})_{(0)}W^{2j-1}.\]
		By induction only the left side can contribute a nonzero quadratic expression (\ref{ansatzXH}). However, since $w^{2i-1,2j-1}_{2i+2j-3}\neq 0$, it follows that quadratics do not arise in the product $W^{2l}_{(0)}W^{2i+2j-3}$.
		It remains to show that $W^{2N}_{(0)}W^{3}$ has no nonzero quadratics.
		This can be done by imposing the Jacobi identity $J_{1,0}(W^{4},W^{2N-2},W^{3})$ which has the form
		\begin{equation}
			\begin{split}
				W^{4}_{(1)}(W^{2N-2}_{(0)}W^{3}) =& W^{2N-2}_{(0)}W^{5}+W^{2N-2}_{(0)}W^{3}+(W^{4}_{(0)}W^{2N-2})_{(1)}W^{3}\\
				=&W^{2n-2}_{(0)}W^{5}+(1-v^{4,2N-2}_{0})W^{2N}_{(0)}W^{3}+(W^{4,2N-2}_0(0))_{(1)}W^{3}.
			\end{split}
		\end{equation}
		Since by induction $v^{4,2N-2}_{0} \neq 1$, $W^{2N}_{(0)}W^{3}$ has no nonzero quadratics. Finally, the term $W^{2N+2}_{(0)}W^{1}$ has no quadratics since $W^{2N+2}$ transforms as the trivial $\fr{sp}_2$-module.
		
		Now, assuming that no quadratics arise in $D^{2N+3}_0$, we can show that the structure constants have the desired form in a uniform manner.
		Let $m\neq 1$ and $n\neq 2$, or $m=1$ and $n$ is even, see Remark (\ref{comment}), and so that $2l+n+m\leq 2N+6$.
		Extracting the coefficient of fields $W^{2l+n+m-4}$ in identities $J_{2,0}(W^{2l},W^{n},W^{m})$ gives a relation
		\begin{equation}\label{20identity}
			v^{2l,n+m-2}_{1} v^{n,m}_{0} + (v^{2l, n}_{0} - v^{2l,n}_{1}) v^{2l+n-2, m}_{1}=0.
		\end{equation}
		
		Set $l=1$ in (\ref{20identity}), and recall that the Virasoro action implies that $v^{2,n}_{0}=1$ and $v^{2, n}_1=n$ for $n\geq 1$. 
		Thus we find
		\begin{equation}\label{tempEq}
		v^{n,m}_{0}=\frac{n-1}{n+m-2}v^{n,m}_{1}.
		\end{equation}
		Next, set $l=2$ in (\ref{20identity}) and using the above relation (\ref{tempEq}) we obtain a recurrence 
		\begin{equation}\label{inductLeft}
			v^{n+2, m}_{1}=-\frac{(n-1)v^{4,n+m-2}_{1}}{(n+m-2)(\frac{3}{n+2}v^{4, n}_{1} - v^{4,n}_{1})}v^{n,m}_{1} =\frac{n+2}{n+m-2}v^{n,m}_{1},
		\end{equation}
		where we have used the raising property $v^{4,n}_{1}=v^{4,n+m-2}_{1}=1$.
		Thanks to skew-symmetry we exchange indices $n$ and $m$ to obtain
		\begin{equation}\label{inductRight}
			v^{n,m+2}_{1}=\frac{m+2}{n+m-2}v^{n,m}_{1}.
		\end{equation}
		This proves the inductive hypothesis for $v^{n,m}_{1}$ and $v^{n,m}_{0}$. 
		Lastly, we evaluate $a^{2i-1,2l-1}$.
		Let $i,l$ be integers such that $2l+2i-2 = 2N$.
		We extract the coefficient of $X^{2N-1}$ in Jacobi identity $J_{0,1}(W^{2l},X^{1},H^{2i-1})$, and obtain a relation 
		\begin{equation}\label{solving for x}
			v^{2l,1}_{0} a^{2l-1,2i-1} = -2v^{2l,2i-1}_{1}.
		\end{equation}
		Since we have already determined $v^{2l,1}_{0}$ and $v^{2l,2i-1}_{0}$, the above allows us to solve for $a^{2i-1,2l-1}$.
	\end{proof}

	\begin{remark}\label{comment}
		When $n=2j-1$ and $m=1$, identity (\ref{20identity}) develops a contribution from structure constant of the monomial $:\!\partial H^1 W^{2i+2j-4}\!:$ arising in $W^{2i}_{(0)}W^{2j-1}$.
		Though it can computed exactly, we do not require it for this proof.
	\end{remark}

	\subsection{Step 3: Induction}\label{induction}
	The main result of this subsection is Theorem \ref{thm:induction}. 
	It is proved by induction, and  the process is similar to that of \cite{Lin} and \cite{KL}.
	Our base case is Proposition \ref{prop:base case proposition}.
	By inductive data we mean the set of OPEs in $D_{2n}\cup D_{2n+1}$, and that they are fully expressed in terms of parameters $c$ and $k$. 
	At this stage, the OPEs in $D^{2n+2}\cup D^{2n+3}$ are yet undetermined.
	We will use a subset of Jacobi identities in $J^{2n+4}\cup J^{2n+5}$ to express $D^{2n+2}\cup D^{2n+3}$ in terms of inductive data.
	We write $A\equiv 0$ to denote that $A$ is computable from inductive data $D_{2n}\cup D_{2n+1}$.
	
	\begin{lemma}\label{recursionsSym}
		Let $n$ be a positive integer. Then we have the following.
		\begin{enumerate}
			\item OPEs $W^4(z)W^{n}(w)$ and $L(z)W^{n}(w)$ together determine $L(z)W^{n+2}(w)$.
			\item OPEs $W^{3}(z)W^{n}(w)$ and $W^{1}(z)W^{n}(w)$ together determine $W^{1}(z)W^{n+2}(w)$.
		\end{enumerate}
	\end{lemma}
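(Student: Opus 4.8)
The plan is to propagate both the conformal ($L$) and affine ($W^1$) symmetries upward through the raising operator, using the weak-generation property \eqref{raising} in the form $W^{n+2} = W^4_{(1)} W^n$ and commuting the relevant action past $W^4_{(1)}$ by means of the Jacobi identities \eqref{Jacobi}.

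For part (1), I would begin with $L_{(r)} W^{n+2} = L_{(r)}(W^4_{(1)} W^n)$ and expand using $J_{r,1}(L, W^4, W^n)$. Both $L$ and $W^4$ are even, so no signs occur, and since $W^4$ is primary of weight $4$ for $\vir$ (feature (6)) we have $L_{(0)} W^4 = \partial W^4$, $L_{(1)} W^4 = 4 W^4$, and $L_{(i)} W^4 = 0$ for $i \ge 2$. The inner sum therefore collapses to two terms, and applying the conformal identity \eqref{conformal identity} in the form $(\partial W^4)_{(r+1)} W^n = -(r+1) W^4_{(r)} W^n$ yields the recursion
\[ L_{(r)} W^{n+2} = W^4_{(1)}\big(L_{(r)} W^n\big) + (3r-1)\, W^4_{(r)} W^n. \]
Here the last summand is read directly off the OPE $W^4(z) W^n(w)$, while $L_{(r)} W^n$ is read off $L(z) W^n(w)$; the remaining operation $W^4_{(1)}(\,\cdot\,)$ is the raising map itself, which by the quasi-derivation identity \eqref{quasi-derivation} reduces to raising the individual generators occurring in the differential polynomial $L_{(r)} W^n$, all of conformal weight at most $n$, hence already available. (The case $r=0$ needs only the mild care that $W^4_{(1)}(\partial W^n) = \partial W^{n+2} + W^4_{(0)} W^n$, which again involves only $W^4(z)W^n(w)$.)

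For part (2) I would run the same argument with $L$ replaced by each affine generator $W^1 \in \{X^1, Y^1, H^1\}$, via $J_{r,1}(W^1, W^4, W^n)$. The difference is that the original field $W^4$ is not affine-primary, so the inner sum does not collapse to a single term; however, since $W^4$ transforms trivially under $\fr{sp}_2$ we have $W^1_{(0)} W^4 = 0$, and by conformal- and $\fr{sp}_2$-weight counting $W^1_{(i)} W^4$ can be nonzero only for $i = 1, 2, 3$, of weights $3, 2, 1$ respectively. The crucial structural point is that $W^1_{(1)} W^4$ is a multiple of the weight-$3$ generator $W^3$: this follows from Lemma \ref{deform}, since the affine-nonprimary part of $W^4$ is precisely the normally ordered correction built from $:\!W^1 W^3\!:$, and the $W^3$ are affine primary, so acting by an affine mode produces only $W^3$-multiples at weight $3$ and $W^1$-descendants below. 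Consequently the correction terms contract $W^n$ only against $W^3$ and against $W^1$, so they are furnished exactly by $W^3(z) W^n(w)$ and $W^1(z) W^n(w)$; the leftover term $W^4_{(1)}(W^1_{(r)} W^n)$ is again the raising of a polynomial extracted from $W^1(z) W^n(w)$.

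The main obstacle is not the Jacobi bookkeeping but the verification underlying the phrase ``$W^4_{(1)}$ is determined'': one must check that applying the raising map to the differential polynomials $L_{(r)} W^n$ and $W^1_{(r)} W^n$ never requires a generator beyond the current inductive bound, which is a matter of tracking conformal weights through \eqref{quasi-derivation}, and --- for part (2) --- confirming that the only weight-$3$ contribution hidden in $W^1_{(1)} W^4$ is genuinely proportional to $W^3$, carrying no $:\!W^1 L\!:$-type quadratic that would illegitimately reintroduce $L(z)W^n(w)$. Once these points are settled, both recursions are formal consequences of \eqref{conformal identity}, \eqref{quasi-derivation}, and \eqref{Jacobi}.
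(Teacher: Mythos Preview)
Your proof is correct and follows essentially the same route as the paper's: both use a single Jacobi identity to commute the symmetry operator past $W^4_{(1)}$. The paper phrases it as $J_{1,r}(W^4,L,W^n)$ and $J_{1,r}(W^4,W^1,W^n)$ rather than your $J_{r,1}(L,W^4,W^n)$ and $J_{r,1}(W^1,W^4,W^n)$, but since the two orderings are related by swapping $a$ and $b$ in \eqref{Jacobi}, the resulting recursions are identical. In fact your displayed formula $L_{(r)}W^{n+2}=W^4_{(1)}(L_{(r)}W^n)+(3r-1)W^4_{(r)}W^n$ carries the correct sign (the paper's displayed version has a sign slip, as the check $r=1$ shows).

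For part (2) your caution is well placed but the obstacle dissolves on inspection: using Lemma~\ref{deform} one computes directly that $W^1_{(1)}W^4=W^3$ exactly and $W^1_{(i)}W^4=0$ for $i\geq 2$, so no $:\!W^1L\!:$-type quadratic appears and the inner sum in $J_{r,1}(W^1,W^4,W^n)$ collapses to the single term $rW^3_{(r)}W^n$. The paper's ordering $J_{1,r}(W^4,W^1,W^n)$ makes this collapse automatic (only $i=0,1$ enter), which is why its argument is shorter, but both arrive at the same recursion.
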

	
	\begin{proof}
		The Jacobi identity $J_{1,r}(W^4,W^{1},W^{n})$ gives rise to the following relation
		\[ W^{1}_{(r)}W^{n+2}=W^4_{1}W^{1}_{(r)}W^{n}-rW^{3}_{(r)}W^{n}.\]
		Note it also reproduces the affine action when restricted to $r=0$.
		Similarly, the Jacobi identity $J_{1,r}(W^4,L,W^{n})$ gives rise to a relation
		\[L_{(r)}W^{n+2} = W^4_{(1)}L_{(r)}W^{n}-(3r-1)W^4_{(r)}W^{n},\]
		which also reproduces the Virasoro action when restricted to $r=1$ and $r=0$.
	\end{proof}
	
	In Lemmas \ref{lemma:recursions0}-\ref{lemma:recursions3}, we use the raising property of $W^4$ to establish the following.
	\begin{proposition}\label{aa}
		\
		\begin{enumerate}
			\item $W^4(z)W^{2n-2}(w)$ with inductive data $D_{2n}$ together determine $EE^{2n+2}$.
			\item $W^4(z)W^{2n-2}(w)$ and $W^{3}(z)W^{2n-1}(w)$ with inductive data $D_{2n}$ together determine $OO^{2n+2}$.
			\item $W^4(z)W^{2n-1}(w)$ with inductive data $D_{2n+1}$ together determine $EO^{2n+3}$.
		\end{enumerate}
	\end{proposition}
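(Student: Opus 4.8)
The plan is to deduce all three statements from the raising property \eqref{raising} together with the Jacobi identities $J_{1,s}(W^4,-,-)$, in exactly the spirit of Lemma~\ref{recursionsSym}. The point of \eqref{raising} is that every even generator satisfies $W^{2i+2}=W^4_{(1)}W^{2i}$ and every odd generator satisfies $W^{2i+1}=W^4_{(1)}W^{2i-1}$, so that each undetermined product of total weight $2n+2$ or $2n+3$ is obtained by applying $W^4_{(1)}$ to a product of weight two less. Feeding this into $J_{1,s}(W^4,W^a,W^b)$ and using \eqref{conformal identity}--\eqref{Jacobi} converts the unknown product into $W^4_{(1)}$ of a lower-weight product (inductive data) plus correction terms built from strictly lower products; the content of the proposition is the bookkeeping that verifies every term on the right lies in $D_{2n}$ (resp.\ $D_{2n+1}$) or in the single given OPE.

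For part (1) I would first pin down the base product $L_{(r)}W^{2n}$ using $J_{1,r}(W^4,L,W^{2n-2})$ exactly as in Lemma~\ref{recursionsSym}(1), which expresses it through $L_{(r)}W^{2n-2}\in D_{2n}$ and the given $W^4_{(r)}W^{2n-2}$. I would then induct on the first index: writing $W^{2i+2}_{(s)}W^{2j}=(W^4_{(1)}W^{2i})_{(s)}W^{2j}$ and applying $J_{1,s}(W^4,W^{2i},W^{2j})$ yields
\[
W^{2i+2}_{(s)}W^{2j}=W^4_{(1)}\!\left(W^{2i}_{(s)}W^{2j}\right)-W^{2i}_{(s)}W^{2j+2}-\left(W^4_{(0)}W^{2i}\right)_{(1+s)}W^{2j},
\]
where $W^{2i}_{(s)}W^{2j}\in D_{2n}$ and $W^{2i}_{(s)}W^{2j+2}$ is a weight-$(2n+2)$ product of strictly smaller first index, hence known by the inner induction. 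Skew-symmetry \eqref{skew-symmetry} lets me restrict to $i\le j$, which keeps the weight $2i+3$ field $W^4_{(0)}W^{2i}$ inside $D_{2n}$. Parts (2) and (3) run identically: Lemma~\ref{recursionsSym}(2) produces the base product $W^1_{(r)}W^{2n+1}$ from the given $W^3(z)W^{2n-1}(w)$, and the recursion of Lemma~\ref{recursionsSym}(1) produces $L_{(r)}W^{2n+1}$ from the given $W^4(z)W^{2n-1}(w)$; one then climbs the odd index with $J_{1,s}(W^4,W^{2i-1},W^{2j-1})$ and the mixed index with $J_{1,s}(W^4,W^{2i},W^{2j-1})$, tracking the Koszul signs of the odd fields throughout.

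The main obstacle is the last correction term $(W^4_{(0)}W^{2i})_{(1+s)}W^{2j}$. By Proposition~\ref{prop:structure constants} the field $W^4_{(0)}W^{2i}$ contains $v^{4,2i}_0\,\partial W^{2i+2}$ with $v^{4,2i}_0=\tfrac{3}{2i+2}$, and since $(\partial W^{2i+2})_{(1+s)}W^{2j}=-(1+s)W^{2i+2}_{(s)}W^{2j}$ by \eqref{conformal identity}, the product being solved for reappears on the right. Thus the recursion is implicit, and one must either verify that the scalar $1-(1+s)v^{4,2i}_0$ (and its odd and mixed analogues) is invertible, or --- more robustly --- invoke the conformal covariance of Remark~\ref{rem:prim} so that only the finitely many primary structure constants need to be solved directly, the remaining poles being absorbed into the localization $R$ of \eqref{localization}. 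The other delicate point is that the surviving normally ordered pieces of $W^4_{(0)}W^{2i}$ must be reduced by quasi-associativity \eqref{quasi-associativity}--\eqref{quasi-derivation} to products among generators of strictly lower total weight; this constrains the ordering of the three parts, since the top case of $OO^{2n+2}$ calls for $W^4_{(0)}W^{2n-1}$, which is $EO^{2n+3}$ data, so I would establish (3) before (2) or run the inductions for $D^{2n+2}$ and $D^{2n+3}$ together as in Subsection~\ref{induction}.
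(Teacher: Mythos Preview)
Your approach is essentially the paper's own. The paper proves the proposition through Lemmas~\ref{lemma:recursions0}--\ref{lemma:recursions3}, one for each pole order: $J_{1,0}(W^4,W^i,W^j)$ for $s=0$, $J_{1,1}(W^4,W^i,W^j)$ for $s=1$, and $J_{r,1}(W^4,W^i,W^j)$ for $r>1$. Your displayed recursion is exactly~\eqref{Recurion0Full}--\eqref{Recurion1Partial} once the $\partial W^{i+2}$ piece of $W^4_{(0)}W^i$ is moved to the left; your coefficient $1-(1+s)v^{4,i}_0$ reproduces the paper's $1-v^{4,i}_0$ and $1-2v^{4,i}_0$ for $s=0,1$. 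The paper handles invertibility exactly as you suggest---directly checking the zeros ($i=1$ for $s=0$, $i=4$ for $s=1$) and noting those cases are covered by the affine action and the raising property respectively---rather than via conformal covariance. For $r>1$ the paper's $J_{r,1}$ collapses the iteration to a single step $W^{i+2}_{(r)}W^j\equiv\text{const}\cdot W^4_{(r)}W^{i+j-2}$, whereas your $J_{1,r}$ steps through the first index; both are fine.

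One bookkeeping slip: you worry that the $OO^{2n+2}$ recursion invokes $W^4_{(0)}W^{2n-1}\in EO^{2n+3}$. It does not. In $J_{1,s}(W^4,W^{2i-1},W^{2j-1})$ the odd indices satisfy $(2i-1)+(2j-1)=2n$, not $2n+2$, so the correction term is $W^4_{(0)}W^{2i-1}$ with $2i-1\le 2n-3$, which lies in $EO^{\le 2n+1}$---already inductive data in the paper's sense (the paper's inductive hypothesis is $D_{2n}\cup D_{2n+1}$, as stated at the top of Subsection~\ref{induction}). The dangerous-looking contribution from the $a^{2i-1,2j-1}W^{2n-1}$ term in $W^{2i-1}_{(0)}W^{2j-1}$ is harmless too: applying $W^4_{(1)}$ to the single field $W^{2n-1}$ is just the raising property, producing $W^{2n+1}$ without needing any further OPE. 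So no reordering of parts (2) and (3) is required; the paper treats them in the stated order.
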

	
	First, we consider the first order poles.
	In Proposition \ref{prop:structure constants}, we have already determined some structure constants arising in the first order poles.
	Therefore, to determine first order poles it is sufficient to analyze the normally ordered differential monomials $W^{i,j}_{0,0}(0)$, $W^{i,j}_{2,0}(0)$, and $W^{i,j}_{4,0}(0)$, defined in (\ref{ansatzWW}-\ref{ansatzXH}).
	Note that by Proposition \ref{prop:structure constants}, the coefficients of $\partial W^{i+j}$ arising in Jacobi identities $J_{1,0}(W^4,W^{i},W^{j})$ vanish.
	
	\begin{lemma}\label{lemma:recursions0}
		Modulo the inductive data, we have that
		\begin{enumerate}
			\item $W^4_{(0)}W^{2n-2}$  determine $\{W^{2i+4}_{(0)}W^{2n-2i-2} | i\geq 1\}$.
			\item $W^4_{(0)}W^{2n-1}$  determine $\{W^{2i+4}_{(0)}W^{2n-2i-1} | i\geq 1\}$.
			\item $W^{3}_{(0)}W^{2n-1}$ and $W^4_{(0)}W^{2n-2}$ determine $\{ W^{2i+3}_{(0)}W^{2n-2i-1}| i\geq 1\}$.
		\end{enumerate}
	\end{lemma}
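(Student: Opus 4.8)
The plan is to prove all three statements by a single mechanism: raise the first factor of a first-order pole one generator at a time, using the raising property $W^4_{(1)}W^{\ell}=W^{\ell+2}$ of \eqref{raising} inside the single Jacobi identity $J_{1,0}(W^4,W^a,W^b)$ of \eqref{Jacobi}. Writing out \eqref{Jacobi} with $r=1$, $s=0$ and substituting $W^4_{(1)}W^a=W^{a+2}$ and $W^4_{(1)}W^b=W^{b+2}$ yields the master relation
\begin{equation}\label{masterrel}
W^{a+2}_{(0)}W^b \;=\; W^4_{(1)}\bigl(W^a_{(0)}W^b\bigr)\;-\;W^a_{(0)}W^{b+2}\;-\;\bigl(W^4_{(0)}W^a\bigr)_{(1)}W^b.
\end{equation}
Because $W^4_{(1)}W^a=W^{a+2}$ has coefficient exactly $1$, the target $W^{a+2}_{(0)}W^b$ occurs in the Jacobi identity with coefficient $1$, so \eqref{masterrel} can always be solved for it; this is the structural reason the raising property is indispensable here. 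In each part I fix the total weight and induct on $i\ge 1$, the base case $i=0$ being the given datum, and read off the three terms on the right of \eqref{masterrel}.

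For part (1) take $a=2i+2$, $b=2n-2i-2$, with base datum $W^4_{(0)}W^{2n-2}$. The inner product $W^{2i+2}_{(0)}W^{2n-2i-2}$ has total weight $2n$ and lies in $D_{2n}$, so $W^4_{(1)}$ of it is computable from the inductive data together with the raising property (which evaluates $W^4_{(1)}$ on its leading term $\partial W^{2n-2}$); the term $W^{2i+2}_{(0)}W^{2n-2i}$ is exactly the case $i-1$; and $W^4_{(0)}W^{2i+2}$ has total weight $2i+6\le 2n+2$, equal to the base datum at $i=n-2$ and inductive otherwise. Part (2), with $a=2i+2$, $b=2n-2i-1$ and base datum $W^4_{(0)}W^{2n-1}$, runs identically, the previous case now being $W^{2i+2}_{(0)}W^{2n-2i+1}$. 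Here the products have weight $2n+3$, so the recursion is carried out only after the full weight $2n+2$ level has been resolved; this is consistent with the ordering built into Proposition~\ref{aa}, in which the even–even and odd–odd poles of $D^{2n+2}$ are fixed before the even–odd poles of $D^{2n+3}$.

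Part (3) is the genuinely new case, and it is here that the second hypothesis enters. With $a=2i+1$, $b=2n-2i-1$ both odd, the base datum is $W^3_{(0)}W^{2n-1}$ and the previous-case term is $W^{2i+1}_{(0)}W^{2n-2i+1}$. The extra ingredient is that the odd–odd first-order pole $W^{2i+1}_{(0)}W^{2n-2i-1}$ contains a trivial-module ($\rho_0$) component proportional to $\partial W^{2n-2}$ (cf.\ \eqref{ansatzXH}), so evaluating the term $W^4_{(1)}\bigl(W^{2i+1}_{(0)}W^{2n-2i-1}\bigr)$ requires the even–even OPE $W^4(z)W^{2n-2}(w)$, in particular its first-order pole $W^4_{(0)}W^{2n-2}$ — exactly the second hypothesis. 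The extremal index $i=n-1$, where the second factor degenerates to the affine field $W^1$, is handled separately: by skew-symmetry \eqref{skew-symmetry} the product $W^{2n+1}_{(0)}W^1$ reduces to the affine zero-mode action on the already-constructed field $W^{2n+1}=W^4_{(1)}W^{2n-1}$, exactly as in Lemma~\ref{recursionsSym}.

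The main obstacle is not any single computation but the bookkeeping: in every application of \eqref{masterrel} one must certify that each term on the right is genuinely available — inductive data in $D_{2n}\cup D_{2n+1}$, the declared base datum, weight $2n+2$ data established in an earlier part, or an earlier term of the induction on $i$ — and that the term $W^4_{(1)}(\cdots)$ is expanded using only the raising property on the top generators and $W^4$-OPEs against strictly lower generators, so that no circularity is introduced. Verifying the weight inequalities over the admissible range of $i$ in each case, and disposing of the $W^1$ boundary terms via \eqref{skew-symmetry} and Lemma~\ref{recursionsSym}, constitutes the bulk of the argument; once \eqref{masterrel} and the ordering of the levels are fixed, the remaining manipulations are mechanical.
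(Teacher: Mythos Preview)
Your approach --- iterate the Jacobi identity $J_{1,0}(W^4,W^a,W^b)$ --- is the same as the paper's. But there is a real gap in your bookkeeping. You assert that because $W^4_{(0)}W^a$ is known (inductive or base datum), the term $(W^4_{(0)}W^a)_{(1)}W^b$ on the right of your master relation is also known, and hence the target $W^{a+2}_{(0)}W^b$ appears with coefficient exactly $1$. This is false: by the ansatz \eqref{ansatzWW}--\eqref{ansatzXH}, the element $W^4_{(0)}W^a$ has leading term $v_0^{4,a}\,\partial W^{a+2}$, and by \eqref{conformal identity} one has $(\partial W^{a+2})_{(1)}W^b=-W^{a+2}_{(0)}W^b$, which is precisely your target. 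So the third term on the right of your master relation hides a copy of the target with coefficient $-v_0^{4,a}$, and after moving it to the left the true coefficient is $v^{4,a}_1-v^{4,a}_0=1-v_0^{4,a}$, not $1$. This is exactly the paper's display \eqref{Recurion0Full}.

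To salvage the argument you must check that $1-v_0^{4,a}\neq 0$, which the paper does by invoking the explicit value $v_0^{4,a}=3/(a+2)$ from Proposition~\ref{prop:structure constants}, giving $1-v_0^{4,a}=(a-1)/(a+2)\neq 0$ for $a\ge 2$; it also notes that the genuine vanishing at $a=1$ is harmless since the $W^1$-action is already prescribed. Once you supply this missing step your argument and the paper's coincide, and your explanation of why the second hypothesis $W^4_{(0)}W^{2n-2}$ enters part~(3) --- via the $\rho_0$-component $\partial W^{2n-2}$ of the odd--odd pole feeding through $W^4_{(1)}$ --- is correct and matches the paper.
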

	
	\begin{proof}
		Consider a general Jacobi identity $J_{1,0}(W^4,W^{i},W^{j})$ which reads
		\begin{equation}\label{Recurion0Full}
			(v^{4,i}_{1}-v^{4,i}_{0})W^{i+2,j}_{\mu}(0) =
			\epsilon^{\bar i,\bar j}_{\mu}v^{i,j}_{0}  W^{4,i+j-2}_{\mu}(0)-W^{i,j+2}_{\mu}(0)+R^{4,i,j}_{1,0}, \quad \mu=0,2,4,
		\end{equation}
		where 
		\begin{equation}
			R_{1,0}^{4,i,j}=\epsilon^{\bar i,\bar j}_{\mu}W^4_{(1)}\left(W^{i,j}_{\mu}(0)\right) - \left(W^{4,i}_{\bar i}(0)\right)_{(1)}W^{j}, \quad \mu=0,2,4.
		\end{equation}
		Note that $R_{1,0}$ is known from inductive data, so we may write
		\begin{equation}\label{Recurion0Partial}
			W^{i+2,j}_{\mu}(0)\equiv \epsilon^{\bar i,\bar j}_{\mu}\frac{v^{i,j}_{1}}{1-v^{4,i}_{0}} W^{4,i+j-2}_{\mu}(0)-\frac{1}{1-v^{4,i}_{0}}W^{i,j+2}_{\mu}(0),\quad \mu=0,2,4.
		\end{equation}
		Note that $1-v^{4,1}_{0} =0$, and hence the recursion (\ref{Recurion0Partial}) is not valid for $i=1$. 
		However, this is not an issue since the products $W^{1}_{(0)}W^{n}$ are known by our assumptions.
		Finally, we iterate recursion (\ref{Recurion0Partial}) and this gives rise to linear relations
		\begin{equation}\label{recursions0}
			\begin{split}
				W^{2l+4,2n-2l-2}_{0,0}(0) \equiv& p_{l}(n)W^{4,2n-2}_{0,0}(0) ,\\
				W^{2l+4,2n-2l-1}_{2,0}(0)  \equiv& q_{l}(n)W^{4,(2n-1)}_{2,0}(0) ,\\
				W^{2l+3,2n-2l-1}_{0,0}(0)  \equiv& h_{l}(n)W^{4,2n-2}_{0,0}(0) +d^{l}W^{3,2n-1}_{0,0}(0) ,\\
				W^{2l+3,2n-2l-1}_{2,0}(0)  \equiv& d^{l}W^{(3),(2n-1)}_{2,0}(0) ,\\
				W^{2l+3,2n-2l-1}_{4,0}(0)  \equiv& d^{l}W^{(3),(2n-1)}_{4,0}(0) ,
			\end{split}
		\end{equation}
		where $p_l(n),q_l(n),h_l(n)$ are some nonzero rational functions in $n$, and $d^{l}$ are constants.
	\end{proof}

	Next, we consider the second order poles.
	As before, by Proposition \ref{prop:structure constants} we have already determined some structure constants arising in the second order poles.
	Therefore, to determine second order poles it is sufficient to analyze the normally ordered differential monomials $W^{i,j}_{0}(1)$, $W^{i,j}_{2}(1)$, and $W^{i,j}_{4}(1)$.
	Moreover by Proposition \ref{prop:structure constants}, the coefficients of $W^{i+j}$ arising in Jacobi identities $J_{1,1}(W^4,W^{i},W^{j})$ vanish.
	\begin{lemma}\label{lemma:recursions1}
		Modulo the inductive data, we have that
		\begin{enumerate}
			\item $W^6_{(1)}W^{2n-4}$ determine $\{ W^{2i+4}_{(1)}W^{2n-2i-2} | i\geq 2\}$.
			\item $W^6_{(1)}W^{2n-3}$ determine $\{ W^{2i+4}_{(1)}W^{2n-2i-3}) | i\geq 2\}$.
			\item $W^{3}_{(1)}H^{2n-1}$ determine $\{W^{2i+3}_{(1)}W^{2n-2i-1})| i\geq 1\}$.
		\end{enumerate}
	\end{lemma}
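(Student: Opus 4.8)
The plan is to run the same strategy as in the proof of Lemma \ref{lemma:recursions0}, but with the Jacobi identity $J_{1,1}(W^4,W^i,W^j)$ in place of $J_{1,0}(W^4,W^i,W^j)$. Expanding $J_{1,1}$ by \eqref{Jacobi} and invoking the raising property \eqref{raising} in the forms $W^4_{(1)}W^i = W^{i+2}$ and $W^4_{(1)}W^j = W^{j+2}$ yields
\begin{equation*}
W^4_{(1)}\bigl(W^i_{(1)}W^j\bigr) = W^i_{(1)}W^{j+2} + \bigl(W^4_{(0)}W^i\bigr)_{(2)}W^j + W^{i+2}_{(1)}W^j + R^{4,i,j}_{1,1},
\end{equation*}
where $R^{4,i,j}_{1,1}$ collects the contributions of the lower-complexity corrections to $W^4_{(1)}W^i$ and $W^4_{(1)}W^j$. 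The left-hand side is assembled from $W^i_{(1)}W^j\in D^{i+j}$, which is inductive data, acted on by $W^4_{(1)}$; by the raising property and quasi-derivation \eqref{quasi-derivation} this is computable from $D_{2n}\cup D_{2n+1}$. I would then extract, module by module, the coefficients of the normally ordered differential polynomials $W^{i+2,j}_\mu(1)$ of \eqref{ansatzWW}--\eqref{ansatzXH}, exactly as \eqref{Recurion0Full} was extracted from $J_{1,0}$, so that the unknown product $W^{i+2}_{(1)}W^j$ is related to the lower-first-index product $W^i_{(1)}W^{j+2}$.

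The decisive new point is the coefficient of the target. In $\bigl(W^4_{(0)}W^i\bigr)_{(2)}W^j$ the leading term $v^{4,i}_0\,\partial W^{i+2}$ contributes, via the conformal identity \eqref{conformal identity} $(\partial W^{i+2})_{(2)}W^j = -2\,W^{i+2}_{(1)}W^j$, a factor $-2v^{4,i}_0$, while $W^{i+2}_{(1)}W^j$ itself appears with coefficient $v^{4,i}_1 = 1$. Using $v^{4,i}_0 = \tfrac{3}{i+2}$ and $v^{4,i}_1 = 1$ from Proposition \ref{prop:structure constants}, the net coefficient of $W^{i+2}_{(1)}W^j$ is $1-2v^{4,i}_0 = \tfrac{i-4}{i+2}$, so for $i\neq 4$ we may solve
\begin{equation*}
W^{i+2,j}_\mu(1)\ \equiv\ \frac{i+2}{i-4}\bigl(\,\text{terms in }W^{i,j+2}_\mu(1)\text{ and in }D_{2n}\cup D_{2n+1}\,\bigr),
\end{equation*}
in direct analogy with \eqref{Recurion0Partial}. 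The extra factor of $2$ relative to the $(1,0)$ case moves the degeneracy from $i=1$ to $i=4$: for the even--even and even--odd blocks the raised index $i$ is even, so the step across $i=4$ (first index $6$) is obstructed and $W^6_{(1)}W^{2n-4}$, respectively $W^6_{(1)}W^{2n-3}$, must serve as the base, producing all first indices $\geq 8$; for the odd--odd block $i$ is odd and the obstruction never occurs, so the single base $W^3_{(1)}H^{2n-1}$ already generates all first indices $\geq 5$. This reproduces the three index ranges in the statement. I would then iterate the recursion downward in the first index exactly as in the passage from \eqref{Recurion0Partial} to \eqref{recursions0}, reducing each odd--odd product to its highest-weight component by the $\fr{sp}_2$-factorization \eqref{sp2motion} together with Table \ref{tab:adjointadjoint}.

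The main obstacle, and the place where this argument is genuinely more delicate than Lemma \ref{lemma:recursions0}, is controlling $R^{4,i,j}_{1,1}$ and the non-leading part of the composite $(2)$-product $\bigl(W^4_{(0)}W^i\bigr)_{(2)}W^j$. Here a second-order pole of a normally ordered field is taken, so a priori the expansion can reference other, still-undetermined products of the same total weight rather than only inductive data; for instance, expanding $W^4_{(0)}W^{2a-1}$ in the odd--odd case produces a monomial $:\!W^{2a+1}W^1\!:$, and applying $(\,\cdot\,)_{(2)}W^{2b-1}$ brings in products such as $W^{2b-1}_{(2)}W^{2a+1}$ of total weight $2n+2$. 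This circularity does not arise for the first-order poles of Lemma \ref{lemma:recursions0}, where only $(1)$-products occurred. I would resolve it by a nested downward induction on the degree filtration \eqref{filtration}, using the complexity estimates \eqref{complexity0}--\eqref{complexity1} to show that every such cross-reference lands in strictly higher filtration degree than the monomial currently being solved for, so that the linear system for the coefficients $W^{i+2,j}_\mu(1)$ is triangular and the target is the unique new unknown at each stage. Carrying the $\fr{sp}_2$-module bookkeeping of $R^{4,i,j}_{1,1}$ through all three interaction types is the most laborious, though entirely mechanical, part of the verification.
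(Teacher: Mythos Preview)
Your approach is exactly the paper's: impose $J_{1,1}(W^4,W^i,W^j)$, use $(\partial W^{i+2})_{(2)}W^j=-2W^{i+2}_{(1)}W^j$ together with the raising property to obtain the coefficient $1-2v^{4,i}_0=\tfrac{i-4}{i+2}$, observe the degeneracy at $i=4$, and iterate. The paper records precisely the relation $(1-2v^{4,i}_0)W^{i+2,j}_\mu(1)=W^{i,j+2}_\mu(1)+R^{4,i,j}_{1,1}$ with $R^{4,i,j}_{1,1}=W^4_{(1)}\bigl(W^{i,j}_\mu(1)\bigr)-\bigl(W^{4,i}_{\bar i}(0)\bigr)_{(2)}W^j$, and then simply asserts $R$ is inductive.

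Where you diverge is in the last paragraph. Your worry about circularity is built on a false example: by the very definition of the ansatz \eqref{ansatzWH}, the remainder $W^{4,2a-1}_{2}(0)$ is a normally ordered polynomial in generators of weight at most $2a$, so a monomial $:\!W^{2a+1}W^1\!:$ cannot occur (and the potential quadratics $q_l^{4,2a-1}$ were already shown to vanish in Proposition \ref{prop:structure constants}). More generally, every constituent of $W^{4,i}_{\bar i}(0)$ has weight $\leq i+1$, so by quasi-derivation $\bigl(W^{4,i}_{\bar i}(0)\bigr)_{(2)}W^j$ only calls products $W^a_{(r)}W^j$ with $a+j\leq i+j+1$; likewise $W^4_{(1)}\bigl(W^{i,j}_\mu(1)\bigr)$ only calls $W^4_{(r)}W^a$ with $4+a\leq i+j+1$. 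For parts (1) and (3) one has $i+j=2n$, hence these land in $D_{2n+1}$, which is inductive; for part (2) one has $i+j=2n+1$, so the bound is $2n+2$, and those products are supplied by parts (1) and (3), which are established first. No nested filtration argument is needed; the weight bounds built into the ansatz already make the system triangular.
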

	
	\begin{proof}
		Consider a general Jacobi identity $J_{1,1}(W^4,W^{i},W^{j})$ which reads
		\begin{equation}\label{Recursion1Full}
			(1-2v^{4,i}_{0})W^{i+2,j}_{\mu}(1) =W^{i,j+2}_{\mu}(1)+R_{1,1}^{4,i,j}, \quad  \mu=0,2,4,
		\end{equation}
		where
		\[R_{1,1}^{4,i,j}=W^4_{(1)}\left(W^{i,j}_{\mu}(1)\right)-\left(W^{4,i}_{\bar i}(0)\right)_{(2)}W^{j}.
		\]
		Note that $R_{1,1}^{4,(i),(j)}$ is known from inductive data, so we may write
		\begin{equation}\label{Recurion1Partial}
			W^{i+2,j}_{\mu}(1) \equiv \frac{1}{1-2v^{4,i}_{0}}W^{i,j+2}_{\mu}(1), \quad  \mu=0,2,4.
		\end{equation}
		Note that $1-2v^{4,4}_{0}=0$ and thus (\ref{Recurion1Partial}) is not valid for $i=2$. 
		However, this is not an issue since we have assumed that the raising property holds.
		Iterating the recursion (\ref{Recurion0Partial}) gives rise to the desired linear relations.
	\end{proof}
	
	Lastly, we consider the higher order products.
	\begin{lemma}\label{lemma:recursions3}
		Let $r>1$. 	Modulo the inductive data, we have that
		\begin{enumerate}
			\item $W^4_{(r)}W^{2n-2}$ determine $\{W^{2i+2}_{(r)}W^{2n-2i} | i\geq 2\}$.
			\item $W^{3}_{(r)}W^{2n}$ determine $\{W^{2i+3}_{(r)}W^{2n-2i-1} | i\geq 1\}$.
			\item $W^{3}_{(r)}W^{2n-1}$ determine $\{W^{2i+3}_{(r)}W^{2n-2i-1} | i\geq 1\}$.
		\end{enumerate}
	\end{lemma}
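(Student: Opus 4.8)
The plan is to establish all three parts by the device already used for Lemmas \ref{lemma:recursions0} and \ref{lemma:recursions1}: raise the first factor by two using the raising property \eqref{raising} of $W^4$, and read the recursion off from a single Jacobi identity of type $J_{1,r}(W^4,-,-)$, working throughout modulo the inductive data $D_{2n}\cup D_{2n+1}$. Concretely, for part (1) I would expand $J_{1,r}(W^4,W^{2a-2},W^{2n+2-2a})$ via \eqref{Jacobi}; using $W^4_{(1)}W^{2a-2}=W^{2a}$ and $W^4_{(1)}W^{2n+2-2a}=W^{2n+4-2a}$, the two even--even products $W^{2a}_{(r)}W^{2n+2-2a}$ and $W^{2a-2}_{(r)}W^{2n+4-2a}$ appear on the right, while the inner product $W^{2a-2}_{(r)}W^{2n+2-2a}$ has weight $2n-r-1<2n$ and so lies in $D_{2n}$. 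The hypothesis $r>1$ is exactly what makes $W^4_{(1)}\!\big(W^{2a-2}_{(r)}W^{2n+2-2a}\big)$ involve only products of total weight $\le 2n+1$, so that the full left-hand side is computable from inductive data. The same identity with $W^{2i+1}$ in place of $W^{2a-2}$ yields the recursion for the odd--odd products of parts (2) and (3); part (3) climbs from the seed $W^3_{(r)}W^{2n-1}$ (the case $i=0$), and for part (2) one first produces the seed $W^5_{(r)}W^{2n-2}$ from $W^3_{(r)}W^{2n}$ by applying $J_{1,r}(W^4,W^3,W^{2n-2})$ and then climbs as in part (3).

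In every one of these identities the coefficient with which the \emph{raised} product occurs is $1-(r+1)v^{4,m}_0$, where $m$ is the weight of the field being raised; this self-contribution enters because $W^4_{(0)}W^m=v^{4,m}_0\,\partial W^{m+2}+\cdots$, and the term $\partial W^{m+2}$ contributes $-(r+1)v^{4,m}_0$ times the raised product through \eqref{conformal identity}. By Proposition \ref{prop:structure constants} one has $v^{4,m}_0=\tfrac{3}{m+2}$, so this coefficient equals $\tfrac{2a-3(r+1)}{2a}$ in part (1) and $\tfrac{2i-3r}{2i+3}$ in the odd--odd climb, while the part-(2) bridge carries the coefficient $1-\tfrac{3(r+1)}{5}=\tfrac{2-3r}{5}\ne 0$. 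Since the $\mathfrak{sp}_2$-factors $\epsilon^{\bar n,\bar m}_\gamma$ split off by equivariance \eqref{sp2motion}, each recursion may be run separately on the $\rho_0,\rho_2,\rho_4$ isotypic pieces with the same scalar coefficient.

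The crux is that this coefficient vanishes at a single weight --- at $a=\tfrac{3(r+1)}{2}$ (odd $r$) in part (1), and at $i=\tfrac{3r}{2}$ (even $r$) in parts (2)--(3) --- so the \emph{raising} step fails there. The way I would get around this is to note that the \emph{lowering} step is never obstructed: in the same identity $J_{1,r}(W^4,W^{2a-2},W^{2n+2-2a})$ the product with the smaller first factor, $W^{2a-2}_{(r)}W^{2n+4-2a}$, occurs with coefficient $1$, so solving for it is always possible. Thus I would run the recursion in both directions: upward from the given low-weight seed until the obstruction, and downward from a high-weight seed obtained from the \emph{same} given seed by skew-symmetry \eqref{skew-symmetry} (e.g.\ $W^4_{(r)}W^{2n-2}$ gives $W^{2n-2}_{(r)}W^4$ up to corrections $\partial^i(W^{2n-2}_{(r+i)}W^4)$ of strictly higher pole order). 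Carrying this out requires a subsidiary downward induction on the pole order $r$, so that the higher-pole skew-symmetry corrections are already known; checking that the upward and downward chains meet and jointly cover the exceptional weight for all $r$ and $n$ is the delicate bookkeeping, but it is a finite verification at one weight per pole order and introduces no new free constants. Hence $D^{2n+2}\cup D^{2n+3}$ is determined as rational functions of $c$ and $k$, as required.
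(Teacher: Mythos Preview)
Your argument is correct in outline, but it takes an unnecessarily circuitous route compared with the paper. You use the Jacobi identity $J_{1,r}(W^4,W^m,W^j)$, which produces a \emph{two-term ladder} relating $W^{m+2}_{(r)}W^j$ (with coefficient $1-(r+1)v^{4,m}_0$) to $W^{m}_{(r)}W^{j+2}$; as you note, the raising coefficient can vanish, forcing you into the skew-symmetry/downward-pole-order workaround. The paper instead uses $J_{r,1}(W^4,W^i,W^j)$: expanding this identity, the $l=0$ and $l=1$ terms of the sum contribute $(r-(r+1)v^{4,i}_0)\,W^{i+2}_{(r)}W^j$, while the \emph{left} side of the Jacobi identity, $W^4_{(r)}(W^i_{(1)}W^j)=v^{i,j}_1\,W^4_{(r)}W^{i+j-2}+\cdots$, gives a direct link to the single seed $W^4_{(r)}W^{i+j-2}$. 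Thus the paper obtains the \emph{one-step} relation
\[
W^{i+2}_{(r)}W^{j}\ \equiv\ \frac{v^{i,j}_{1}}{\,r-(r+1)v^{4,i}_{0}\,}\,W^{4}_{(r)}W^{i+j-2},
\]
with no intermediate climb. The crucial gain is the coefficient $r-(r+1)v^{4,i}_0=\dfrac{r(i-1)-3}{i+2}$: for $r>1$ this vanishes only at $i=1+\tfrac{3}{r}$, which is never an integer $\geq 3$ (the case $r=3,\,i=2$ corresponds to $L$, whose products are already fixed by conformal structure). So the paper's choice of Jacobi indices eliminates your obstruction entirely and avoids the nested induction on pole order. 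Your skew-symmetry patch does work, but the simpler device is to swap the roles of $1$ and $r$ in the Jacobi identity.
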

	
	\begin{proof}
		Consider Jacobi identity $J_{r,1}(W^4,W^{i},W^{j})$ which reads
		\begin{equation}\label{rRecution}
			\begin{split}
				(r-(r+1)v^{4,i}_{0})W^{i+2}_{(r)}W^{j} ={v^{i,j}_{1}} W^4_{(r)}W^{i+j-2}+ R_{r,1}^{4,i,j},
			\end{split}
		\end{equation}
		where \[R_{r,1}^{4,i,j}=W^4_{(r)}W^{i,j}_{\mu}(1)-\left(W^{4,i}_{\bar i}(0)\right)_{(r+1)}W^{j}-\sum_{j=2}^r\binom{r}{j}(W^4_{j}W^{i})_{(r+1-j)}W^{j}.\]
		Note that $R_{r,1}^{4,i,j}$ is known from inductive data, so we may write
		\begin{equation}\label{Recurionrpartial}
			W^{i+2}_{(r)}W^{j} \equiv \frac{v^{i,j}_{1}}{	r-(r+1)v^{4,i}_{0}}W^{4}_{(r)}W^{i+j-2}.
		\end{equation}
		Iterating recursions (\ref{Recurionrpartial}) gives rise to the desired linear relations.
	\end{proof}
	
	From now on we assume that Jacobi identities used in the proof of Proposition \ref{aa} have been imposed.
	In the next series of Lemmas \ref{0product}, \ref{1product}, and \ref{rproduct}, we write down a small set of Jacobi identities to obtain linear relations among the desired products. 
	This reduces our problem to solving a linear system of equations.
	Their proofs are similar, so we only provide an account for the most complicated case, which is part (1) of the following.
	
	\begin{lemma}\label{0product}
		$D^{2n+2}_0$ and $D^{2n+3}_0$ is determined from inductive data with $D^{2n+2}_1$ and $D^{2n+3}_1$. Specifically, assuming Lemma \ref{lemma:recursions0}, we have the following.
		\begin{enumerate}
			\item $J_{0,1}(W^4,H^3,H^{2n-3})$ and $J_{0,0}(X^3,Y^3,H^{2n-3})$ express $W^{4}_{(0)}W^{2n-2}$ and $H^{3}_{(0)}H^{2n-1}$ in terms of inductive data and  $OO^{2n+2}_1$.
			\item $J_{0,1}(W^4,X^3,H^{2n-3})$ expresses $X^3_{(0)}H^{2n-1}$ in terms of inductive data and $OO^{2n+2}_1$.
			\item $J_{0,1}(W^4,W^6,H^{2n-5})$ expresses $W^4_{(0)}H^{2n-3}$ in terms of inductive data together with $EE^{2n+2}_1$ and $EO^{2n+3}_1$.
		\end{enumerate}
	\end{lemma}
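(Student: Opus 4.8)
The plan is to reduce the determination of every zeroth product in $D^{2n+2}_0\cup D^{2n+3}_0$ to a short list of seed products, and then to fix those seeds with the three Jacobi identities in the statement. By Lemma \ref{lemma:recursions0}, modulo the inductive data every element of $EE^{2n+2}_0$ is a known rational multiple of $W^4_{(0)}W^{2n-2}$, every element of $OO^{2n+2}_0$ is a known combination of $W^4_{(0)}W^{2n-2}$ and $W^{3}_{(0)}W^{2n-1}$, and every element of $EO^{2n+3}_0$ is a known multiple of $W^4_{(0)}W^{2n-1}$. Using $\fr{sp}_2$-equivariance and the decomposition $\rho_2\otimes\rho_2\cong\rho_0\oplus\rho_2\oplus\rho_4$ of \eqref{modules}, the product $W^{3}_{(0)}W^{2n-1}$ is pinned down by its $\rho_0$- and $\rho_4$-parts, visible in $H^3_{(0)}H^{2n-1}$, together with its $\rho_2$-part, visible in $X^3_{(0)}H^{2n-1}$, whose leading coefficient $a^{3,2n-1}$ is already fixed by Proposition \ref{prop:structure constants}. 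Hence it suffices to compute the seeds $W^4_{(0)}W^{2n-2}$, $H^3_{(0)}H^{2n-1}$, $X^3_{(0)}H^{2n-1}$ and $W^4_{(0)}W^{2n-1}$.

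For part (1) I would expand $J_{0,1}(W^4,H^3,H^{2n-3})$ and $J_{0,0}(X^3,Y^3,H^{2n-3})$ using \eqref{Jacobi}. In each identity, after applying Lemma \ref{lemma:recursions0}, every zeroth product collapses onto the two seeds $W^4_{(0)}W^{2n-2}$ and $H^3_{(0)}H^{2n-1}$, while the terms in which an outer weight-$1$ mode strikes an inner product (for example $H^3_{(1)}(W^4_{(0)}H^{2n-3})$, whose inner factor has leading part a multiple of $\partial H^{2n-1}$) additionally produce first products lying in $OO^{2n+2}_1$, and all remaining terms are inductive data. The two identities thus furnish a coupled linear system, organized by $\fr{sp}_2$-type, in the two seeds, whose coefficients are the leading structure constants of Propositions \ref{prop:base case proposition} and \ref{prop:structure constants} (such as $W^4_{(0)}H^3=\tfrac{3}{5}\partial H^5+\cdots$ and the recursion factors $1-v^{4,i}_0$ of Lemma \ref{lemma:recursions0}). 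Solving it expresses $W^4_{(0)}W^{2n-2}$ and $H^3_{(0)}H^{2n-1}$ through $OO^{2n+2}_1$ and inductive data.

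Parts (2) and (3) are then single-equation variants. With $W^4_{(0)}W^{2n-2}$ in hand, $J_{0,1}(W^4,X^3,H^{2n-3})$ becomes one equation whose only new unknown is the remaining $\rho_2$-content of $X^3_{(0)}H^{2n-1}$, again solved in terms of $OO^{2n+2}_1$ and inductive data. For part (3) I would expand $J_{0,1}(W^4,W^6,H^{2n-5})$: here $W^6_{(1)}H^{2n-5}$ has leading term a nonzero multiple of $H^{2n-1}$ by Proposition \ref{prop:structure constants}, so $W^4_{(0)}(W^6_{(1)}H^{2n-5})$ reproduces the even--odd seed $W^4_{(0)}W^{2n-1}$ governing $EO^{2n+3}_0$, while all other terms reduce via Lemma \ref{lemma:recursions0} to $EE^{2n+2}_1$, $EO^{2n+3}_1$ and inductive data; isolating the seed gives the claim.

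The genuinely delicate point is the bookkeeping in part (1): one must check that after Lemma \ref{lemma:recursions0} no zeroth product other than the two intended seeds survives, that every surviving first product genuinely sits in $OO^{2n+2}_1$ rather than at a weight outside the inductive range, and, above all, that the coefficient matrix of the resulting system is invertible over the localization $R$ of \eqref{localization}. This invertibility rests on the nonvanishing of the recursion denominators $1-v^{4,i}_0$ and of the leading base-case constants of Proposition \ref{prop:base case proposition} away from the poles in \eqref{loc}; verifying this, and the analogous nondegeneracy of the single equations in parts (2) and (3), is the main labor, although it is a finite computation of rational functions in $c$ and $k$ once the identities are written out.
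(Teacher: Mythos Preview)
Your proposal is correct and follows the same strategy as the paper: reduce $D^{2n+2}_0\cup D^{2n+3}_0$ to a short list of seed products via Lemma \ref{lemma:recursions0}, then solve for those seeds using the indicated Jacobi identities, tracking that the surplus terms are first products in $OO^{2n+2}_1$, $EE^{2n+2}_1$, or $EO^{2n+3}_1$ and verifying the resulting linear system is nondegenerate over $R$.

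Two minor remarks. First, the paper extracts the $\rho_2$ and $\rho_4$ pieces of $W^{3}_{(0)}W^{2n-1}$ already in part (1), from the non-$\rho_0$ components of $J_{0,0}(X^3,Y^3,H^{2n-3})$; you instead defer the $\rho_2$ piece to part (2) via $J_{0,1}(W^4,X^3,H^{2n-3})$. Both routes work. Second, you correctly identify the even--odd seed in part (3) as $W^4_{(0)}W^{2n-1}$ (i.e.\ $W^4_{(0)}H^{2n-1}\in EO^{2n+3}_0$); the paper's statement reads $W^4_{(0)}H^{2n-3}$, which lies in $EO^{2n+1}_0$ and is already inductive data, so your reading is the intended one.
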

	\begin{proof}
		Here we only prove part (1), and the rest is similar.
		To determine all products in $D^{2n+2}_0$, it suffices to determine $W^{4,2n-2}_{0,0}(0)$, $W^{3,2n-1}_{0,0}(0)$, $W^{3,2n-1}_{2,0}(0)$ and $W^{3,2n-1}_{4,0}(0)$. 
		Expanding $J_{0,1}(W^4,H^3,H^{2n-3})$ and $J_{0,0}(X^3,Y^3,H^{2n-3})$, projecting onto $\rho_0$ component, and omitting the inductively known data we obtain two linear relations
		\begin{equation}\label{XYH}
			\begin{split}
				0\equiv&	v^{3,2n-3}_{1} W^{4,2n-2}_{0,0}(0)-v^{4, 2n-3}_{0} W^{3, 2n-1}_{0,0}(0) + 
				v^{4,3}_{0} W^{5, 2n-3}_{0,0}(0) -v^{4, 2n-3}_{0} \partial W^{3, 2n-1}_{0,0}(1)\\
				0\equiv&a^{3,3} W_{0,0}^{5, 2n-3} (0)-2v^{3,2n-3}_{1} W^{3,2n-1}_{0,0}(0).
			\end{split}
		\end{equation}
		Using recurrences obtained in Lemma \ref{recursions0}, we can express $W^{5,2n-3}_{0,0}(0)$ in terms of $W^{3,2n-1}_{0,0}(0)$, modulo inductive data. 
		Finally, we observe that two linear relations (\ref{XYH}) are linearly independent, and provide solutions
		\begin{align*}
			W^{4,2n-2}_{0,0}(0) \equiv&\frac{9 (2 n+3) (n-1)!}{4 n (2 n-1) (2 n+1) \left(\frac{1}{2}\right)_{n-1}} \partial W^{3,2n-1}_{0,0}(1),\\
			W^{3,2n-1}_{0,0}(0) \equiv&\frac{3}{n (2 n+1)} \partial W^{3,2n-1}_{0,0}(1).
		\end{align*}
		Thus products $W^{4}_{(0)}W^{2n-2}$ and $W^{3}_{(0)}W^{2n-3}$ are determined by inductive data together with $\partial W^{3,2n-1}_{0,0}(1) \in OO_1^{2n+2}$. 
		Similarly, the $\rho_2$ and $\rho_4$ components of the Jacobi identity $J_{0,0}(X^3,Y^3,H^{2n-3})$ express $W_{2,0}^{3,2n-1}(0)$ and $W_{4,0}^{3,2n-1}(0)$ in terms of inductive data.
		This completes the proof of part (1).
	\end{proof}
	
	\begin{lemma}\label{1product}
		Data $D^{2n+2}_1$ and $D^{2n+3}_1$ is determined from inductive data with $D^{2n+2}_2$ and $D^{2n+3}_2$. Specifically, we have the following.
		\begin{enumerate}
			\item $J_{0,2}(W^4,H^3,H^{2n-3})$ expresses $H^{3}_{(1)}H^{2n-1}$ in terms of inductive data and  $OO^{2n+2}_2$.
			\item $J_{0,2}(W^4,W^4,W^{2n-4})$ expresses $W^{6}_{(1)}W^{2n-4}$ in terms of inductive data and  $EE^{2n+2}_2$.
			\item  $J_{0,2}(W^4,W^6,H^{2n-5})$ expresses $W^6_{(1)}H^{2n-3}$ in terms of inductive data and $EO^{2n+3}_2$ and $EE^{2n+2}_2$.
		\end{enumerate}
	\end{lemma}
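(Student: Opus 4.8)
The plan is to prove Lemma \ref{1product} by the same mechanism used in Lemma \ref{0product}: we extract, from a single carefully chosen Jacobi identity in each case, a linear relation that expresses the desired second-order pole in terms of inductive data together with the third-order poles, i.e. the data in $D^{2n+2}_2 \cup D^{2n+3}_2$. Throughout we work modulo inductive data, using the convention $A \equiv 0$ to mean $A$ is computable from $D_{2n}\cup D_{2n+1}$, and we freely invoke Lemmas \ref{lemma:recursions0}--\ref{lemma:recursions3}, which already let us propagate any single product $W^4_{(r)}W^{j}$ or $W^3_{(r)}W^{j}$ to an entire family. By Proposition \ref{aa} we have also already imposed the Jacobi identities that determine $EE^{2n+2}$, $OO^{2n+2}$, and $EO^{2n+3}$ from the distinguished seed products, so it remains only to pin down those seeds.

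First I would treat part (1). Expanding $J_{0,2}(W^4,H^3,H^{2n-3})$ and projecting onto the $\rho_0$-component, Proposition \ref{prop:structure constants} tells us that the coefficient of the strong generator $W^{2n}$ in the second-order pole is already fixed, so the new content lives entirely in the differential monomial $W^{3,2n-1}_{0,0}(1)$, together with a known $W^6_{(1)}$-contribution coming from $W^4_{(1)}H^{2n-3}$ and the $(W^{4,3}_{2}(0))_{(3)}H^{2n-3}$ term. The raising property $W^4_{(1)}H^{2n-1}=H^{2n+1}$ and the recurrences \eqref{recursions0} express every $W^{2i+3}_{(1)}W^{2n-2i-1}$ appearing on the two sides in terms of $H^3_{(1)}H^{2n-1}$. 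The resulting relation is linear in $H^3_{(1)}H^{2n-1}$ with a nonzero, explicitly rational coefficient in $n$, whose nonvanishing is guaranteed by the denominators tracked in \eqref{loc}; solving gives $H^3_{(1)}H^{2n-1}$ modulo inductive data and modulo $OO^{2n+2}_2$, as claimed. Parts (2) and (3) are strictly easier: for (2) the identity $J_{0,2}(W^4,W^4,W^{2n-4})$ involves only the trivial $\gs\gp_2$-module, so no $\rho_2$ or $\rho_4$ projections intervene, and the recursion of Lemma \ref{lemma:recursions1}(1) collapses the system to a single equation for $W^6_{(1)}W^{2n-4}$; for (3) the identity $J_{0,2}(W^4,W^6,H^{2n-5})$ couples $W^6_{(1)}H^{2n-3}$ to $EO^{2n+3}_2$ and $EE^{2n+2}_2$, and again the seed is isolated by one projection and one application of Lemma \ref{lemma:recursions1}(2).

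The main obstacle I anticipate is purely bookkeeping rather than conceptual: one must verify that the scalar coefficient of the sought second-order pole in each extracted relation is genuinely nonzero, i.e. that the linear system is nondegenerate, and that the recurrences of Lemma \ref{lemma:recursions1} may legitimately be applied at the relevant index. The delicate point is exactly the exclusion noted after \eqref{Recurion1Partial}: the recursion fails at $i=2$ because $1-2v^{4,4}_{0}=0$, which is precisely why the seeds in parts (1)--(3) are chosen to start from $W^6$ (weight six) rather than $W^4$, ensuring $i\geq 4$ in \eqref{Recurion1Partial} so that $1-2v^{4,i}_0 \neq 0$. Once the seeds are correctly placed outside this degenerate locus and the denominators in \eqref{loc} are checked to be invertible in $R$, the solvability of each $1\times 1$ linear relation is automatic and the lemma follows. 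As in Lemma \ref{0product}, I would write out part (1) in detail and remark that (2) and (3) are analogous, since the $\gs\gp_2$-equivariance encoded in Table \ref{tab:adjointadjoint} makes the remaining projections routine.
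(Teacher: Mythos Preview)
Your proposal is correct and follows exactly the template the paper intends: the paper explicitly says the proofs of Lemmas \ref{0product}, \ref{1product}, \ref{rproduct} are all similar and only writes out Lemma \ref{0product}(1) in detail, so your fleshing-out is precisely what was left to the reader. One small correction: in part (1) you invoke ``the recurrences \eqref{recursions0}'' to collapse the $W^{2i+3}_{(1)}W^{2n-2i-1}$ to the single seed $H^3_{(1)}H^{2n-1}$, but those are the zeroth-order recursions; you mean the first-order recursions of Lemma \ref{lemma:recursions1}(3) (via \eqref{Recurion1Partial}), and likewise your remark that ``the seeds in parts (1)--(3) are chosen to start from $W^6$'' only applies to (2) and (3)---in (1) the seed is $H^3$ and the $i=2$ degeneracy is irrelevant there.
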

	
	\begin{lemma}\label{rproduct}
		Data $D^{2n+2}_r$ and $D^{2n+3}_r$ is determined from inductive data with $D^{2n+2}_{r+1}$ and $D^{2n+3}_{r+1}$. Specifically, we have the following.
		\begin{enumerate}
			\item Let $r>1$.  $J_{1,r}(W^{2n-1},H^3,H^3)$ and $J_{1,r}(W^4,H^{2n-3},W^{4})$ express $H^{3}_{(r)}H^{2n-1}$ in terms of inductive data and  $OO^{2n+2}_{r+1}$.
			\item Let $r>0$. $J_{r+1,0}(W^4,X^3,H^{2n-3})$ expresses $X^3_{(r)}H^{2n-1}$ in terms of inductive data. 
			\item Let $r>1$. $J_{1,r}(W^{2n-4},W^4,W^{4})$ and $J_{1,r}(W^4,W^{2n-4},W^{4})$ express $W^{4}_{(r)}W^{2n-2}$ in terms of inductive data and  $EE^{2n+2}_{r+1}$.
			\item Let $r>1$. $J_{1,r}(W^4,W^6,H^{2n-5})$ expresses $W^4_{(r)}H^{2n-1}$ in terms of inductive data.		\end{enumerate}
	\end{lemma}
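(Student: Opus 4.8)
The plan is to prove Lemma~\ref{rproduct} in exact parallel with Lemmas~\ref{0product} and~\ref{1product}, as the generic step of a downward recursion on the pole order: assuming that every product of pole order at least $r+1$ in $D^{2n+2}\cup D^{2n+3}$ has already been fixed, I would pin down the pole-$r$ products from the four families of Jacobi identities named in the statement. As in Lemma~\ref{0product}, the four parts proceed by an identical mechanism, so I would write out only the odd-with-odd case (part~(1)), which is the most delicate because of the three isotypic components $\rho_0\oplus\rho_2\oplus\rho_4$ occurring in \eqref{ansatzXH}, and merely indicate the analogues for parts~(2)--(4).

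The first reduction is to observe, via Lemma~\ref{lemma:recursions3}, that modulo inductive data every pole-$r$ product $W^{2i+3}_{(r)}W^{2n-2i-1}$, $W^{2i+2}_{(r)}W^{2n-2i}$ is a known rational multiple of a single seed, so it suffices to determine $H^3_{(r)}H^{2n-1}$ for part~(1), $X^3_{(r)}H^{2n-1}$ for part~(2), $W^4_{(r)}W^{2n-2}$ for part~(3), and $W^4_{(r)}H^{2n-1}$ for part~(4). I would then expand each prescribed identity using \eqref{Jacobi}. In every one the outer mode is a $W^{\ast}_{(1)}$ applied to a product of total weight at most $2n+1$, which is computable from $D_{2n}\cup D_{2n+1}$ after a quasi-derivation expansion \eqref{quasi-derivation}, while the terms of the shape $(\cdot_{(0)}\cdot)_{(r+1)}\cdot$ carry pole order $r+1$ and so are part of the prescribed input. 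In the identities whose first entry is $W^4$ (the second identity of parts~(1) and~(3), and the single identities of parts~(2) and~(4)) the raising property \eqref{raise} converts the term $(W^4_{(1)}W^{\ast})_{(r)}W^{\ast}$ directly into the desired seed; in the companion identities $J_{1,r}(W^{2n-1},H^3,H^3)$ and $J_{1,r}(W^{2n-4},W^4,W^4)$, whose first entry is not $W^4$, the seed is recovered only after skew-symmetry \eqref{skew-symmetry} and Lemma~\ref{lemma:recursions3} are used to eliminate the auxiliary products that appear.

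Projecting each expanded identity onto its $\rho_0$, $\rho_2$, and $\rho_4$ parts produces a linear system whose unknowns are the isotypic components of the pole-$r$ seed and whose inhomogeneous part lies in inductive data together with $OO^{2n+2}_{r+1}$ (resp.\ $EE^{2n+2}_{r+1}$). This is precisely why parts~(1) and~(3) each require two Jacobi identities: a single relation determines the seed only up to auxiliary products of the adjacent interaction type, and the companion identity, together with Lemma~\ref{lemma:recursions3}, removes those auxiliaries and leaves a system of full rank. Part~(2) is cleaner, since $J_{r+1,0}(W^4,X^3,H^{2n-3})$ already isolates $X^3_{(r)}H^{2n-1}$ with no surviving pole-$(r+1)$ contribution. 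The outcome is an explicit expression for each seed in terms of inductive data and the pole-$(r+1)$ data, which by the reduction above determines all of $D^{2n+2}_r$ and $D^{2n+3}_r$ and completes the recursive step of Proposition~\ref{aa}.

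The genuine difficulty is the non-degeneracy bookkeeping rather than any conceptual point. I must check that the pivots of these linear systems are invertible. The pivots produced by the raising property and by the recursion \eqref{Recurionrpartial} are the rational numbers $r-(r+1)v^{4,i}_0$; since Proposition~\ref{prop:structure constants} gives $v^{4,i}_1=1$ and hence $v^{4,i}_0=\tfrac{3}{i+2}$ by \eqref{tempEq}, one computes $r-(r+1)v^{4,i}_0=\tfrac{r(i-1)-3}{i+2}$, which is a nonzero rational number for every index $i\geq 3$ that occurs and every $r>1$. This is exactly why the hypotheses $r>1$ (and $r>0$ in part~(2)) are imposed: at pole orders $0$ and $1$ the corresponding pivots $1-v^{4,1}_0$ and $1-2v^{4,4}_0$ vanish, which forces the separate treatment of Lemmas~\ref{0product} and~\ref{1product}. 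The only remaining point of care is to confirm that solving the $\rho_0,\rho_2,\rho_4$ systems, whose inhomogeneous terms are rational functions of $c$ and $k$, introduces no denominator outside the set \eqref{loc}, so that every resulting structure constant lies in $R=D^{-1}\mathbb{C}[c,k]$ of \eqref{localization}; this I would verify, as in the base case \ref{prop:base case proposition}, with the \texttt{OPEdefs} computation.
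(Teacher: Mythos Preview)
Your proposal is correct and takes essentially the same approach as the paper. In fact the paper omits the proof of Lemma~\ref{rproduct} entirely, stating before Lemma~\ref{0product} that ``their proofs are similar, so we only provide an account for the most complicated case, which is part (1)'' of Lemma~\ref{0product}; your outline---reduce to seeds via Lemma~\ref{lemma:recursions3}, expand the named Jacobi identities, project onto the $\rho_0,\rho_2,\rho_4$ components, and solve the resulting linear system---is precisely that pattern, and your explicit verification that the pivots $r-(r+1)v^{4,i}_0=\tfrac{r(i-1)-3}{i+2}$ are nonzero for $r>1$ and the relevant $i$ supplies a detail the paper leaves implicit.
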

	
	This process terminates after finitely many steps, since all elements of $D^{2n+3}_r\cup D^{2n+4}_r$ vanish for $r\geq 2n+2$. Therefore, we have proven the following.
	
	\begin{theorem}\label{thm:induction}
		There exists a nonlinear conformal algebra $\nlcalg$ over the localized ring $D^{-1}\mathbb{C}[c,k]$ with $D$ being the multiplicatively closed set generated by 
		\[\{(5 c+22), k,(2 k+1), (k+1), (k+2), (k+4),(3k+4),(5k+8),(7k+16)\},\] 
		satisfying the features in Subsection \ref{subsec:setup}, whose universal enveloping vertex algebra $\Wsp$ has the following properties.
		\begin{enumerate}\label{Wsp 2 parameters}
			\item It has conformal weight grading \[\Wsp=\bigoplus_{N=0}^{\infty}\Wsp[N],\quad \Wsp[0]=D^{-1}\mathbb{C}[c,k].\]
			\item It is strongly generated by fields $\{X^{2i-1},Y^{2i-1},H^{2-1} | i\geq 1\}\cup \{L,W^{2i} | i \geq 2\}$ and satisfies the OPE relations in Proposition \ref{prop:base case proposition}, Jacobi identities in $J_{11}$ and those which appear in Lemmas \ref{lemma:recursions0} -\ref{rproduct}.
			\item It is the unique initial object in the category of vertex algebras with the above properties.
		\end{enumerate} 
	\end{theorem}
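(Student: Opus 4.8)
The plan is to prove Theorem \ref{thm:induction} by induction on conformal weight, taking Proposition \ref{prop:base case proposition} as the base case. That proposition already determines all OPE data $D_9$ as elements of the localized ring $R = D^{-1}\mathbb{C}[c,k]$ of \eqref{localization} and verifies that every Jacobi identity in $J_{11}$ vanishes. The inductive hypothesis will be that all OPEs in $D_{2n}\cup D_{2n+1}$ are determined over $R$; the inductive step is to determine $D^{2n+2}\cup D^{2n+3}$ from this data using a carefully selected subset of the Jacobi identities in $J^{2n+4}\cup J^{2n+5}$. The conclusion then assembles the determined structure constants into a possibly degenerate nonlinear Lie conformal algebra $\nlcalg$ over $R$ and invokes the De Sole--Kac correspondence \cite{DSKI} to produce its universal enveloping vertex algebra $\Wsp$.

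The inductive step proceeds in two stages. First I would exploit the raising property \eqref{raising} of $W^4$: the identities $J_{1,r}(W^4, W^1, W^n)$ and $J_{1,r}(W^4, L, W^n)$ of Lemma \ref{recursionsSym}, together with the recursions of Proposition \ref{aa} and Lemmas \ref{lemma:recursions0}--\ref{lemma:recursions3}, reduce every OPE in $D^{2n+2}\cup D^{2n+3}$ to a small collection of seed products involving $W^3$ and $W^4$. The key point is that these recursions are invertible: the denominators that appear, such as $1 - v^{4,i}_0$, $1 - 2v^{4,i}_0$, and $r - (r+1)v^{4,i}_0$, are by the explicit values of Proposition \ref{prop:structure constants} nonzero rational functions of the index $i$, vanishing only at a few boundary values that are handled separately by the raising property and the known affine and Virasoro actions. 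Second, I would run the downward descent in pole order carried out in Lemmas \ref{0product}, \ref{1product}, and \ref{rproduct}: there $D^{2n+2}_r\cup D^{2n+3}_r$ is expressed in terms of the inductive data together with $D^{2n+2}_{r+1}\cup D^{2n+3}_{r+1}$, and since all $r$-th products vanish for $r$ sufficiently large, this cascade terminates and reduces the problem to a finite linear system whose solvability is checked directly.

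Assembling these steps, all structure constants of $\Wsp$ are determined as elements of $R$, so the OPE data defines $\nlcalg$ and $\Wsp$ exists by \cite{DSKI}. For the uniqueness and initiality claim, I would observe that the postulated features of Subsection \ref{subsec:setup} force every structure constant to equal the value just computed; hence any vertex algebra realizing a strong generating set of the prescribed type with these features admits a canonical homomorphism from $\Wsp$, exhibiting $\Wsp$ as the initial object in the stated category.

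The hardest part will be organizing the Jacobi identities so that the induction closes: certifying that the explicitly chosen finite subset of identities in Lemmas \ref{recursionsSym}--\ref{rproduct} genuinely suffices to pin down every OPE in $D^{2n+2}\cup D^{2n+3}$, and that the associated recursions and finite linear systems are non-degenerate. The appearance of the localization \eqref{loc} traces back to the poles in $c$ and $k$ of the base-case constants in \eqref{scaling}, which must remain invertible for every descendant structure constant to lie in $R$. A secondary, and deliberately deferred, difficulty is that only a subset of the Jacobi identities is imposed, so at this stage $\nlcalg$ is merely a \emph{possibly} degenerate nonlinear Lie conformal algebra: the unimposed identities may produce null fields rather than vanish identically. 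Proving that $\Wsp$ is in fact freely generated, hence that $\nlcalg$ is non-degenerate, is a separate matter addressed later through the generalized parafermion characters in Corollary \ref{Wsp freely generated}.
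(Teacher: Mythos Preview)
Your proposal is correct and follows essentially the same route as the paper: the theorem is stated in the paper immediately after Lemmas \ref{0product}--\ref{rproduct} with the words ``Therefore, we have proven the following,'' so the proof is precisely the induction you describe, with Proposition \ref{prop:base case proposition} as base case, the raising-property recursions of Lemma \ref{recursionsSym} and Proposition \ref{aa} (via Lemmas \ref{lemma:recursions0}--\ref{lemma:recursions3}) reducing to seed products, the descent in pole order of Lemmas \ref{0product}--\ref{rproduct} closing the step, and the De Sole--Kac correspondence assembling the result. Your remarks on the non-degeneracy of the recursion denominators, the origin of the localization set, and the deferral of free generation to Corollary \ref{Wsp freely generated} are all on target.
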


	\subsection{Step 4. free generation}
	There are more Jacobi identities than those imposed in Lemmas \ref{lemma:recursions0}-\ref{rproduct}.
	So it is not yet clear that all Jacobi identities among the strong generators hold as a consequence of (\ref{conformal identity}-\ref{quasi-derivation}) alone, or equivalently, that $\nlcalg$ is a nonlinear Lie conformal algebra and $\Wsp$ is freely generated. 
	In order to prove this, we will consider certain simple quotients of $\Wsp$. 
	First, recall the localized ring $R$ (\ref{localization}), and let 
	\[I\subseteq R \cong \Wsp [0] \]
	be an ideal, and let $I\cdot \Wsp$ denote the vertex algebra ideal generated by $I$. 
	The quotient 
	\begin{equation}
		\cW^{\fr{sp},I}_{\infty} = \Wsp / I\cdot \Wsp
	\end{equation}
	has strong generators $\{W^{i} | i\geq 1\}$ satisfying the same OPE relations as the corresponding  generators of $\Wsp$ where all structure constants in $R$ are replaced by their images in $R/I$.
	
	We now consider a localization of $\cW^{\fr{sp},I}_{\infty}$.
	Let $E \subseteq R/I$ be a multiplicatively closed set, and let $S= E^{-1}(R/I)$ denote the localization of $R/I$ along $S$. Thus we have the localization of $R/I$-modules
	\[\cW^{\fr{sp},I}_{\infty, S}= S\otimes_{R/I}\cW^{\fr{sp},I}_{\infty},\]
	which is a vertex algebra over $S$.
	
	\begin{theorem}\label{one-parameter quotients theorem}
		Let $R$, $I$, $E$, and $S$ be as above, and let $\cW$ be a simple vertex algebra over $S$ with the following properties.
		\begin{enumerate}
			\item $\cW$ is generated by affine fields $\bar {X}^1,\bar {H}^1,\bar {Y}^1$, Virasoro field $\bar{L}$ of central charge $c$ and a weight 4 primary field $\bar{W}^4$.
			\item Setting $\bar{W}^{n+2} =\bar{W}^4_{(1)} \bar{W}^{n}$ for all $i\geq 2$, the OPE relations for $\bar{W}^{n}(z)\bar{W}^{m}(z)$ for $n+m\leq 9$ are the same as in $\Wsp$ if the structure constants are replaced with their images in $S$.
		\end{enumerate}
		Then $\cW$ is the simple quotient $\cW^{\fr{sp}}_{\infty,S,I}$ of $\cW^{\fr{sp},I}_{\infty,S}$ by its maximal graded ideal $\cI$.
	\end{theorem}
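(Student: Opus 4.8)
The plan is to produce a surjective vertex algebra homomorphism $\phi:\cW^{\fr{sp},I}_{\infty,S}\to\cW$ and then identify $\cW$ with the simple quotient by invoking simplicity. First I would record the consequences of free generation: by Corollary \ref{Wsp freely generated}, $\Wsp$ is freely generated over $R$, so both $\cW^{\fr{sp},I}_{\infty}$ and its localization $\cW^{\fr{sp},I}_{\infty,S}$ are freely generated over $R/I$ and over $S$, with OPE data obtained from that of $\Wsp$ by reducing the structure constants modulo $I$ and localizing along $E$; in particular $\cW^{\fr{sp},I}_{\infty,S}[0]\cong S$, and (by the conventions of Section \ref{subsection:voaring}) $\cW[0]\cong S$ as well.

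The crux is to verify that the fields $\bar{X}^1,\bar{H}^1,\bar{Y}^1,\bar{L},\bar{W}^4$ together with the higher fields $\bar{W}^{n+2}:=\bar{W}^4_{(1)}\bar{W}^{n}$ satisfy \emph{all} the OPE relations of $\cW^{\fr{sp},I}_{\infty,S}$, not merely those in $D_9$. This is exactly the setting of the inductive construction in Theorem \ref{thm:induction}: the entire OPE algebra of $\Wsp$ is determined from the base data $D_9$ together with the raising property and the finite list of Jacobi identities isolated in Lemmas \ref{lemma:recursions0}--\ref{rproduct}. In $\cW$ the raising property holds by our definition of $\bar{W}^{n+2}$, the base relations with $n+m\le 9$ agree with those of $\Wsp$ by hypothesis (2), and every Jacobi identity holds automatically because $\cW$ is a vertex algebra. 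Re-running the induction verbatim then forces each OPE among the $\bar{W}^{n}$ to coincide with the image in $S$ of the corresponding OPE in $\Wsp$. By the universal property of the universal enveloping vertex algebra (the De Sole--Kac correspondence, as used in Theorem \ref{thm:induction}), this yields a homomorphism $\phi$ with $\phi(W^{n})=\bar{W}^{n}$, which is surjective because the $\bar{W}^{n}$ generate $\cW$ by hypothesis (1).

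Finally I would conclude via simplicity. The kernel $\ker\phi$ is a graded ideal, and since $\phi$ restricts to the identity $S\cong\cW^{\fr{sp},I}_{\infty,S}[0]\to\cW[0]\cong S$ in weight zero, we have $\ker\phi[0]=0$. Hence $\ker\phi$ lies in the maximal graded ideal $\cI$ (the sum of all graded ideals with trivial weight-zero component), producing a surjection $\cW\cong\cW^{\fr{sp},I}_{\infty,S}/\ker\phi\twoheadrightarrow\cW^{\fr{sp}}_{\infty,S,I}$ whose kernel $\cI/\ker\phi$ is a graded ideal of $\cW$ with trivial weight-zero part. By simplicity of $\cW$ this kernel vanishes, so $\ker\phi=\cI$ and $\cW\cong\cW^{\fr{sp}}_{\infty,S,I}$, as claimed.

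I expect the main obstacle to be the middle step: one must ensure that the inductive determination of the OPE algebra transfers from the freely generated model $\Wsp$ to an arbitrary vertex algebra $\cW$ meeting the hypotheses, even though $\cW$ may carry nontrivial relations among its generators. The only delicate points are the finitely many exceptional indices in Lemmas \ref{lemma:recursions0}--\ref{rproduct} where a recursion coefficient degenerates (for instance $1-v^{4,1}_{0}=0$ and $1-2v^{4,4}_{0}=0$); these must be seen to be covered by the assumed $V^k(\fr{sp}_2)\otimes\vir$ symmetry and the raising property, exactly as in the original construction, so that no OPE relation in $\cW$ is left unconstrained.
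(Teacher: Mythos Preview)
Your proof is correct and follows the same line as the paper's: use the inductive determination behind Theorem \ref{thm:induction} to propagate the low-weight OPE hypothesis to all OPEs among the $\bar W^n$, obtain a surjection $\cW^{\fr{sp},I}_{\infty,S}\twoheadrightarrow\cW$, and conclude by simplicity. One caveat: your opening appeal to Corollary \ref{Wsp freely generated} is both unnecessary (nothing in your argument actually uses free generation) and a forward reference the paper deliberately avoids---its proof explicitly allows the nonlinear Lie conformal algebra to be ``possibly degenerate'' at this stage, since free generation is only established afterward.
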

	\begin{proof}
		The assumption that generators $\{W^{i} | i\geq 1\}$ satisfy the above OPE relations is equivalent to the statement that the Jacobi identities in $J_{11}$ hold in the corresponding nonlinear Lie conformal algebra, which is possibly degenerate. 
		Then all OPE relations among the generators $\{ \bar {W}^{i} | i\geq 1\}$ of $\cW^{\fr{sp},I}_{\infty,S}$ must also hold among the fields $\{ \bar {W}^{i} | i\geq 1\}$, since they are formal consequences of these OPE relations together with Jacobi identities, which hold in $\cW$.
		It follows that $\{ \bar {W}^{i} | i\geq 1 \}$ close under OPE and strongly generate a vertex subalgebra $\cW' \subseteq \cW$, which must coincide with $\cW$ since $\cW$ is assumed to be generated by $\{\bar{X}^1,\bar{Y}^1,\bar{H}^1,\bar{L},\bar{W}^4\}$.
		So $\cW$ has the same strong generating set and OPE algebra as $\cW^{\fr{sp},I}_{\infty,S}$.
		Since $\cW$ is simple and the category of vertex algebras over $R$ with this strong generating set and OPE algebra has a unique simple graded object, $\cW$ must be the simple quotient $\cW^{\fr{sp}}_{\infty,S,I}$ of $\cW^{\fr{sp},I}_{\infty,S}$.
	\end{proof}

\begin{theorem} \label{weakgeneration:Walgebras} For all $m\geq 2$, $\cC^{\psi}_{BC}(0,m) = \cW^{\psi - 2m-2}(\gs\gp_{2(2m+1)}, f_{2m+1, 2m+1})$, is weakly generated by the fields in weight at most $4$ for all $\psi \in \mathbb{C}$, with the possible exception of 
\begin{enumerate}
\item The critical value $\psi = 0$,
\item $\psi \in \{1, \ \frac{2 m}{1 + 2 m},\ \frac{2( m-1)}{1 + 2 m},\ \frac{2 (1 + 3 m)}{3 (1 + 2 m)}, \ \frac{2 (1 + 5 m)}{5 (1 + 2 m)},\ \frac{2 (7 m-1)}{7 (1 + 2 m)}\}$, which correspond to $k \in \{ -1, -2, -4, -\frac{4}{3}, -\frac{8}{5}, -\frac{16}{7}\}$,
\item The values where the central charge $c_{BC} = -\frac{22}{5}$.
\end{enumerate}
\end{theorem}

\begin{proof} For $f = f_{2m+1, 2m+1}$, $\cW^{\psi - 2m-2}(\gs\gp_{2(2m+1)}, f)$ has a strong generating set which is in bijection with a basis for $\gs\gp_{2(2m+1)}^f$. These generators close nonlinearly under OPE with structure constants given by polynomial functions of $\psi$. 

 Let $\omega^{2i}$ and $\omega^{2i+1}$ be such a choice of generators of $\cW^{\psi - 2m-2}(\gs\gp_{2(2m+1)}, f)$, and let $\xi^{\omega_{2i}}$ and $\xi^{\omega_{2i+1}}$ be the corresponding elements of the $\gs\gp_{2(2m+1)}^f$. In particular, this implies that 
$$\omega^3_{(0)} \omega^{2j+1} = c_{3,2j+1} \omega^{2j+3} + \cdots,$$ where $[\xi^{\omega_{3}}, \xi^{\omega_{2j+1}}] = c_{3,2j+1} \xi^{\omega_{2j+3}}$. Here $c_{3,2j+1}$ are nonzero constants, and the remaining terms are normally ordered monomials in the previous fields $\omega^{a}$ for $a< 2j+3$ and their derivatives.

Without loss of generality, can assume $W^1 = \omega^1$ for $W=X,Y,H$, and by abuse of notation we can also replace $\omega^2$ with $W^2$, without changing the fact for fields $\omega^a, \omega^b$ for $a,b \neq 2$, the structure constants of terms not involving $\omega^2$ are still polynomials in $\psi$. Additionally, we may assume without loss of generality that 
$$H^3 = h^3 + \cdots,\quad X^3 = x^3 + \cdots, \quad Y^3 = y^3 + \cdots,$$ where the remaining terms depend only on $\omega^2, x,y,h$. This is because the leading structure constants appearing in Proposition \ref{prop:structure constants}, which are independent of $c,k$, are independent of the scaling of $W^3$. For $i\geq 2$, we can write
$$W^{2i} = \lambda_{2i} \omega^{2i} + \dots,\quad H^{2i+1} = \lambda_{h, 2i+1} h^{2i+1} + \cdots,\quad X^{2i+1} = \lambda_{x, 2i+1} x^{2i+1} + \cdots, \quad Y^{2i+1} = \lambda_{y, 2i+1} y^{2i+1} + \cdots,$$ where $\lambda_{2i}$, $\lambda_{h,2i+1}$, $\lambda_{x,2i+1}$, $\lambda_{y,2i+1}$ are polynomial functions of $\psi$. Using the fact that $\{x^{2i+1}, h^{2i+1},y^{2i+1}\}$ transform as the adjoint $\gs\gp_2$-module, we can also assume that for each $i \geq 2$, $\lambda_{h,2i+1} = \lambda_{x,2i+1} = \lambda_{y,2i+1}$, so we replace them with $\lambda_{2i+1}$. To prove the theorem, it suffices to show that $\lambda_{2i}$, $\lambda_{2i+1}$ are all nonzero constants.

By Proposition \ref{prop:structure constants},
$$X^{3}_{(0)} Y^{2j - 1} = \frac{3!! (2j-1)!!}{(2j+1)!!} H^{2j+1} + \cdots,$$ while on the other hand
$$ X^{3}_{(0)} Y^{2j - 1} = x^3_{(0)} (\lambda_{2j-1} y^{2j-1} )+ \cdots  = \lambda_{2j-1} c_{3,2j-1} h^{2j+1} + \cdots = \frac{\lambda_{2j-1} c_{3,2j-1}}{\lambda_{ 2j+1}} H^{2j+1} + \cdots.$$
Therefore $\frac{\lambda_{2j-1} c_{3,2j-1}}{\lambda_{2j+1}} = \frac{3!! (2j-1)!!}{(2j+1)!!}$, so by induction we see that the functions $\lambda_{2j-1}$ are all constant.

We need a slightly different argument to show that $\lambda_{2i}$ are all constant. First, 
$$x^3_{(1)} \omega^{2j} = d_{3,2j} x^{2j+1} + \cdots,\qquad  x^3_{(1)} y^{2j+1} = d_{3,2j+1} \omega^{2j+2} .$$
A prioiri, the structure constants $d_{3,2i}$ and $d_{3,2i+1}$ are polynomial functions of $\psi$, but in the course of the proof we will see that they are also constants.
By Proposition \ref{prop:structure constants}, we have
$X^3_{(1)} (X^3_{(1)}  Y^{3}) = X^5 + \cdots$, while on the other hand
$$X^3_{(1)}  (X^{3}_{(1)} Y^3) = x^3_{(1)} (x^3_{(1)} y^3) + \cdots =  d_{3,3} x^3_{(1)} \omega^4 = d_{3,3} d_{3,4} x^5 = \frac{d_{3,3} d_{3,4}}{\lambda_5} X^5.$$
Therefore $\frac{d_{3,3} d_{3,4}}{\lambda_5} = 1$, so $d_{3,3} d_{3,4}$ is a nonzero constant. Since both $d_{3,3}$ and $d_{3,4}$ are polynomials in $\psi$, they must both be nonzero constants as well.

More generally, for $j > 2$, we have $$X^3_{(1)}  (X^{3}_{(1)} Y^{2j-1}) = X^3_{(1)} \frac{6 (2j-1)!!}{(2j-2)!!} W^{2j} = \frac{6 (2j-1)!!}{(2j-2)!!} \frac{3 (2j)!!}{8(2j-1)!!} X^{2j+1}.$$ 
On the other hand, 
\begin{equation} \begin{split} X^3_{(1)}  (X^{3}_{(1)} Y^{2j-1}) & = \lambda_{2j-1} x^3_{(1)} (x^3_{(1)} y^{2j-1}) + \cdots 
\\ & =  \lambda_{2j-1} d_{3,{2j-1}} x^3_{(1)} \omega^{2j} + \cdots
\\ & = \lambda_{2j-1} d_{3,2j-1} d_{3,2j} x^{2j+1} + \cdots
\\ & = \frac{\lambda_{2j-1} d_{3,2j-1} d_{3,2j}}{\lambda_{2j+1}} X^{2j+1} + \cdots.\end{split} \end{equation}
 It follows that  $$\frac{\lambda_{2j-1} d_{3,2j-1} d_{3,2j}}{\lambda_{2j+1}} = \frac{6 (2j-1)!!}{(2j-2)!!} \frac{3 (2j)!!}{8(2j-1)!!},$$ and by the same argument $d_{3,2j-1}$ and $d_{3,2j}$ are nonzero constants for all $j>2$.

Next, we have $X^3_{(1)} W^4 = W^5$, and 
$$X^3_{(1)} W^4 = x^3_{(1)}( \lambda_4 \omega^4) = \lambda_3 d_{3,4} x^5 + \cdots = \frac{\lambda_4 d_{3,4}}{\lambda_5} X^5 + \cdots.$$ Therefore  $\frac{\lambda_4 d_{3,4}}{\lambda_5}  = 1$, so that $\lambda_4$ is a nonzero constant.

Finally, by Proposition \ref{prop:structure constants}, for $j\geq 2$ we have
$$X^3_{(1)} \big(Y^3_{(1)} W^{2j} \big) = \bigg(\frac{2(2j)!!}{8(2j-1)!!}\bigg)\bigg(\frac{6(2j+1)!!}{9(2j)!!} \bigg)W^{2i+2} + \cdots .$$ 
On the other hand, 
\begin{equation} \begin{split} X^3_{(1)} \big(Y^3_{(1)} W^{2j} \big) & = x^3_{(1)} \big(y^3_{(1)} \lambda_{2j} \omega^{2j} \big)  + \cdots
\\ & =  d_{3,2j+1} d_{3,2j} \lambda_{2j}  \omega^{2j+2} + \cdots
\\ & = \frac{d_{3,2j+1} d_{3,2j} \lambda_{2j}}{\lambda_{2j+2}} W^{2j+2} + \cdots. \end{split} \end{equation}
Therefore
$$\frac{d_{3,2j+1} d_{3,2j} \lambda_{2j}}{\lambda_{2i+2}} =  \bigg(\frac{2(2j)!!}{8(2j-1)!!}\bigg)\bigg(\frac{6(2j+1)!!}{9(2j)!!} \bigg),$$ so $\frac{ \lambda_{2j}}{\lambda_{2i+2}}$ is a nonzero constant. Since $\lambda_4$ is a nonzero constant, so is $\lambda_{2j}$ for all $j >2$. \end{proof}

	\begin{corollary} \label{WBC0m:quotient} For all $m \geq 1$, $\cC^{\psi}_{BC}(0,m)$ is a $1$-parameter quotient of $\cW^{\mathfrak{sp}}_{\infty}$.
		\end{corollary}
		
		\begin{proof} This follows from Theorem \ref{weakgeneration:Walgebras} and Theorem \ref{thm:induction}.
		\end{proof}

	\begin{corollary}\label{Wsp freely generated}
		All Jacobi identities among generators $\{W^{i} | i\geq 1\}$ holds as consequences of (\ref{conformal identity}-\ref{quasi-derivation}) alone, so $\nlcalg$ is a nonlinear Lie conformal algebra with generators $\{W^{i} | i\geq 1\}$. Equivalently, $\Wsp$ is freely generated by $\{W^{i} | i\geq 1\}$ and has graded character
		\begin{equation}\label{character}
			\chi (\Wsp,q) =\sum_{n=0}^{\infty}rank_{R}(\Wsp [n])q^n = \prod_{i,j,l=1}^{\infty}\frac{1}{(1-q^{2i+2j+1})^{3}(1-q^{2i+2l+2})}.
		\end{equation}
		For any prime ideal $I \subseteq R$, $\cW^{\fr{sp},I}_{\infty}$ is freely generated by $\{W^{i} | i\geq 1\}$ as a vertex algebra over $R/I$ and
		\[\chi (\cW^{\fr{sp},I}_{\infty},q) =\sum_{n=0}^{\infty}rank_{R/I}(\cW^{\fr{sp},I}_{\infty}[n])q^n = \prod_{i,j,l=1}^{\infty}\frac{1}{(1-q^{2i+2j+1})^{3}(1-q^{2i+2l+2})}.\]
		For any localization $S=(E^{-1}R)/I$ along a multiplicatively closed set $E\subseteq R/I$, $\cW^{\fr{sp},I}_{\infty,S}$ is freely generated by $\{W^{i} | i\geq 1\}$ and 
		\[\chi (\cW^{\fr{sp},I}_{\infty,S},q) = \sum_{n=0}^{\infty}rank_{S}(\cW^{\fr{sp},I}_{\infty,S}[n])q^n = \prod_{i,j,l=1}^{\infty}\frac{1}{(1-q^{2i+2j+1})^{3}(1-q^{2i+2l+2})}.\]
	\end{corollary}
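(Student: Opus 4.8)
The plan is to show that the nonlinear conformal algebra $\nlcalg$ produced by Theorem \ref{thm:induction} is \emph{nondegenerate}: no Jacobi identity beyond those imposed fails to hold, so no relation (null field) arises among the normally ordered PBW monomials in $\{W^i \mid i\geq 1\}$. By the De Sole--Kac correspondence this is equivalent to the claim that these PBW monomials form an $R$-basis of $\Wsp$, i.e. that $\Wsp$ is freely generated. Since the $W^i$ strongly generate, $\T{rank}_R \Wsp[n]$ is bounded above by the number $p(n)$ of PBW monomials of weight $n$, and free generation is exactly the assertion that equality holds in every weight; once this is established the graded character is $\sum_n p(n)q^n$, which is the product \eqref{character}. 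So the whole corollary reduces to ruling out relations.

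The key input is the family of generalized parafermion quotients $\cP^k(n)=\cW^{\psi}_{BC}(n,0)$. By Corollary \ref{Genpara:quotient} each $\cP^k(n)$ is a $1$-parameter quotient of $\Wsp$, realized by a homomorphism $\Phi_n\colon \Wsp \to \cP^k(n)$ lying over the ring map $R\to E^{-1}\mathbb{C}[k]$ that substitutes $c\mapsto c_n(k)$, where $c_n(k)$ is the central charge \eqref{cGenPar} and $k$ is retained. By Corollary \ref{Genparatype C} the character of $\cP^k(n)$ at generic $k$ coincides with that of its free field limit $\cH(3)\otimes \cH(4n)^{\T{Sp}_{2n}}$, and by Proposition \ref{weakgenerationOrbifold} the first relation among the generators of $\cH(4n)^{\T{Sp}_{2n}}$ occurs in weight $n^2+3n+2$ (adjoining the free factor $\cH(3)$ introduces no earlier relation). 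Hence $\cP^k(n)$ is freely generated of type $\cW(1^3,2,3^3,4,\dots)$ in all weights $\leq n^2+3n+1$; in particular its PBW monomials are linearly independent over $E^{-1}\mathbb{C}[k]$ through that weight.

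Now I would argue by contradiction. Suppose $\Wsp$ is not freely generated and let $N$ be the least weight carrying a relation $\rho=\sum_\alpha c_\alpha m_\alpha=0$, with the $m_\alpha$ distinct weight-$N$ PBW monomials and the $c_\alpha\in R$ not all zero. Fix any $n$ with $n^2+3n+1\geq N$. Applying $\Phi_n$ carries each $m_\alpha$ to the corresponding PBW monomial of $\cP^k(n)$ and each $c_\alpha$ to $\Phi_n(c_\alpha)$, producing a relation among the PBW monomials of $\cP^k(n)$ in weight $N$. By freeness of $\cP^k(n)$ in this range, $\Phi_n(c_\alpha)=0$ for every $\alpha$; equivalently each $c_\alpha$ vanishes on the irreducible plane curve $C_n\subseteq \mathbb{C}^2$ defined by $c=c_n(k)$. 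This holds for all sufficiently large $n$. The central charges in \eqref{cGenPar} are pairwise distinct rational functions of $k$ (their pole loci differ), so the $C_n$ are infinitely many distinct irreducible curves. Since $R$ is a localization of $\mathbb{C}[c,k]$, a nonzero element of $R$ vanishes on only finitely many irreducible curves; therefore every $c_\alpha=0$, contradicting $\rho\neq 0$. Hence $\Wsp$ is freely generated over $R$, all Jacobi identities hold, and $\nlcalg$ is a genuine nonlinear Lie conformal algebra with character \eqref{character}.

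Finally, free generation over $R$ propagates to the quotients and localizations. Because $I\subseteq R\cong \Wsp[0]$ acts by weight-preserving multiplication (so $a_{(r)}=0$ on positive weights for $a\in I$), the vertex ideal $I\cdot \Wsp$ is just $I\otimes_R \Wsp$; for prime $I$ the quotient $\cW^{\fr{sp},I}_{\infty}$ is therefore free over the domain $R/I$ on the images of the PBW monomials, with the same character, and localizing along $E$ is flat and preserves this basis, yielding the stated character for $\cW^{\fr{sp},I}_{\infty,S}$. The main obstacle is the input that $\cP^k(n)$ is free through weight $n^2+3n+1$ with first relation in weight $n^2+3n+2$: this rests on the $\T{Sp}_{2n}$ invariant theory (Weyl's first and second fundamental theorems) carried out in Proposition \ref{weakgenerationOrbifold}. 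Granting that, together with the distinctness of the central charges $c_n(k)$, the density argument above—that a nonzero polynomial cannot vanish on infinitely many distinct curves—is the crux that upgrades the finite-weight freeness of the quotients to full free generation of the universal object, exactly as in \cite{Lin,KL}.
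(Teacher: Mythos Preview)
Your proof is correct and follows the same approach as the paper: use the generalized parafermions $\cP^k(n)$, whose freeness through weight $n^2+3n+1$ (from Proposition \ref{weakgenerationOrbifold}) forces any putative null relation in $\Wsp$ to vanish. The paper's argument is terser---it asserts that a rank deficiency in $\Wsp[N]$ ``would hold in any quotient'' and invokes the single parafermion $\cP^k(N)$---whereas you spell out the density step more carefully, showing that the coefficients $c_\alpha$ of a hypothetical relation must vanish on infinitely many distinct curves $c=c_n(k)$ and hence be zero in $R$. Your version is in fact a bit more precise, since a single quotient alone does not immediately rule out the possibility that all $c_\alpha$ happen to lie in that particular kernel; the paper's phrasing glosses over this, but the intended argument is the same as yours.
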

	\begin{proof}
		If some Jacobi identity among generators $\{W^{i} | i\geq 1\}$ does not hold as a consequence of \eqref{conformal identity}-\eqref{quasi-derivation}, there would be a null vector of weight $N$ in $\Wsp$ for some $N$. 
		Then $\T{rank}_{R}(\Wsp [N])$ would be smaller than that given by (\ref{character}), and the same would hold in any quotient of $\Wsp [N]$, as well as any localization of such a quotient. 
		But since $\cC^{\psi}_{BC}(0,m)$ is a localization of such a quotient and is freely generated of type $\cW(1^3,2,3^3,4,\dots,(2m-1)^3, 2m, (2m+1)^3)$, this is impossible.
	\end{proof}

	\begin{corollary} The vertex algebra $\Wsp$ is simple in the sense of Definition \ref{VOAsimplicity}. 
	\end{corollary}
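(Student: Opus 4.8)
The plan is to rule out a nonzero proper graded ideal by pushing it into the simple quotients $\cP^k(n)$ and then exploiting that these quotients are spread over infinitely many distinct curves in the $(c,k)$-plane. Suppose $\cI \subseteq \Wsp$ is a graded ideal with $\cI[0]=0$; since $\cI[0]=0 \neq R \cong \Wsp[0]$ it is automatically proper, and it suffices to show $\cI = 0$. Assume not, let $N \geq 1$ be minimal with $\cI[N] \neq 0$, and fix $0 \neq \omega \in \cI[N]$. By Corollary \ref{Wsp freely generated} the algebra $\Wsp$ is freely generated, so I would write $\omega = \sum_\mu r_\mu M_\mu$, where the $M_\mu$ are the weight-$N$ PBW monomials in the generators and the $r_\mu \in R$ are not all zero; the goal then becomes to prove $r_\mu = 0$ for all $\mu$.

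For each $n \geq 1$, Corollary \ref{Genpara:quotient} gives a graded surjection $\pi_n \colon \Wsp \twoheadrightarrow \cP^k(n)$, which is a homomorphism over $R$ via the specialization $c \mapsto c_n(k)$ of the central charge \eqref{cGenPar} and which sends each strong generator of $\Wsp$ to the corresponding generator of $\cP^k(n)$. Since $\pi_n$ preserves the weight grading and $\cI[0]=0$, the image $\pi_n(\cI)$ is a graded ideal of $\cP^k(n)$ with trivial degree-zero part; as $\cP^k(n)$ is simple by Corollary \ref{Genparatype C}, this forces $\pi_n(\cI)=0$, so in particular $\pi_n(\omega)=0$.

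Next I would convert $\pi_n(\omega)=0$ into the vanishing of the coefficients $r_\mu$ on a curve. By Corollary \ref{Genparatype C} together with Proposition \ref{weakgenerationOrbifold}, $\cP^k(n)$ is freely generated of type $\cW(1^3,2,3^3,4,\dots,M_n)$, where the first relation among its generators occurs in weight of order $n^2$ (this is Weyl's second fundamental theorem, the relations being Pfaffians). Hence for every $n$ with $M_n \geq N$ the images $\pi_n(M_\mu)$ of the weight-$N$ PBW monomials belong to a PBW basis of $\cP^k(n)$, so they are linearly independent over the relevant localization of $\mathbb{C}[k]$. Then $\pi_n(\omega)=\sum_\mu r_\mu(c_n(k),k)\,\pi_n(M_\mu)=0$ yields $r_\mu(c_n(k),k)=0$ for all $\mu$; that is, each $r_\mu$ vanishes identically on the curve $\Gamma_n = \overline{\{(c_n(k),k)\}} \subseteq \mathrm{Spec}\,R$. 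This holds for all sufficiently large $n$, and the $\Gamma_n$ are pairwise distinct irreducible curves (for instance $c_n(k) \to 4n$ as $k \to \infty$). Since the zero locus of a nonzero element of $R$ is a hypersurface with only finitely many irreducible components, no nonzero $r_\mu$ can vanish on infinitely many $\Gamma_n$; thus every $r_\mu = 0$, contradicting $\omega \neq 0$.

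The main obstacle is the input that $\cP^k(n)$ has no relations below a weight growing with $n$, equivalently that the maps $\pi_n$ are injective in each fixed weight for $n$ large; this is precisely the free-generation and character statement recorded in Proposition \ref{weakgenerationOrbifold} and Corollary \ref{Wsp freely generated}, resting ultimately on Weyl's second fundamental theorem for $\mathrm{Sp}_{2n}$. Everything else is formal bookkeeping with graded ideals and the elementary geometry of the curves $\Gamma_n$.
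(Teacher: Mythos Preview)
Your argument is correct and rests on the same key ingredient as the paper's: for $n$ large the generalized parafermions $\cP^k(n)$ have the free character up to weight $N$, so the weight-$N$ PBW monomials remain linearly independent there. The paper organizes the endgame differently. Instead of ranging over infinitely many curves $\Gamma_n$, it fixes a single one: given a singular vector $\omega$ in weight $N$ and any irreducible $p\in R$, it divides $\omega$ by the highest power of $p$ so that $\omega$ descends to a nonzero singular vector in $\cW^{\fr{sp},(p)}_{\infty}$, forcing the simple quotient along \emph{every} curve to have strictly smaller weight-$N$ rank; taking $(p)$ to be the prime for $\cP^k(N)$ then contradicts the character match. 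Your version trades this rescaling trick for the Zariski-style observation that a nonzero element of $R$ cannot vanish on infinitely many distinct irreducible curves. Both routes are short; yours avoids the singular-vector language and is perhaps more transparently geometric, while the paper's shows that already one well-chosen curve suffices.
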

	
	\begin{proof}
		If $\Wsp$ is not simple, it would have a singular vector $\omega$ in some weight $N$. Let $p\in R$ be an irreducible polynomial and let $I$ be the ideal $(p)\subseteq R$. By rescaling if necessary, we can assume without loss of generality that $\omega$ is not divisible by $p$, and hence descends to a nontrivial singular vector in $\cW^{\fr{sp},I}_{\infty}$. Then for any localization $S$ of $R/I$, the simple quotient of $\cW^{\fr{sp},I}_{\infty,S}$ would have a smaller weight $N$ submodule than $\cW^{\fr{sp},I}_{\infty,S}$ for all such $I$. This contradicts the fact that for all $m$, $\cC^{\psi}_{BC}(0,m)$ is a quotient of $\cW^{\fr{sp},I}_{\infty,S}$ for some $I$ and $S$, and has the same character as $\Wsp$ up to weight $2m+1$.
	\end{proof}
	
	\begin{corollary}
		The vertex algebra $\Wsp$ has full automorphism group $\text{SL}_2(\mathbb{C})$.
	\end{corollary}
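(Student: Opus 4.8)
The plan is to prove the statement in two stages: first realize $\mathrm{SL}_2(\mathbb C)$ inside $\mathrm{Aut}(\Wsp)$, and then show that every automorphism arises this way. Throughout I take automorphisms to be $R$-linear (fixing $c$ and $k$) and grading-preserving, which is the natural notion here.

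\emph{Step 1 (the $\mathrm{SL}_2$-action).} By features (1)--(2) of Subsection \ref{subsec:setup}, the zero modes $X^1_{(0)},H^1_{(0)},Y^1_{(0)}$ realize $\gs\gp_2\cong\gs\gl_2$ as derivations of $\Wsp$ obeying the bracket. By Corollary \ref{Wsp freely generated}, $\Wsp$ is freely generated, so each $\Wsp[N]$ is a finite-rank free $R$-module; since every strong generator lies in the trivial or adjoint $\gs\gp_2$-module, $\Wsp[N]$ is a finite direct sum of finite-dimensional $\gs\gp_2$-modules as in \eqref{sl2 decomposition}. Hence the derivation action is locally finite with nilpotent raising and lowering operators on each graded piece, and exponentiating $X^1_{(0)}$ and $Y^1_{(0)}$ yields an algebraic action $\rho\colon\mathrm{SL}_2(\mathbb C)\to\mathrm{Aut}(\Wsp)$ by $R$-linear, grading-preserving vertex algebra automorphisms.

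\emph{Step 2 (reduction to the affine subalgebra).} Let $\phi\in\mathrm{Aut}(\Wsp)$. It fixes $\Wsp[0]=R$ and preserves $\Wsp[1]=\mathrm{Span}_R\{X^1,H^1,Y^1\}$ and hence the affine subalgebra $V^k(\gs\gp_2)$ it generates. The restriction $\phi|_{\Wsp[1]}$ is an automorphism of $\gs\gp_2$ preserving its normalized invariant form; since every automorphism of $\gs\gl_2$ is inner and lifts to $\mathrm{SL}_2(\mathbb C)$, there is $g\in\mathrm{SL}_2(\mathbb C)$ with $\rho(g)|_{\Wsp[1]}=\phi|_{\Wsp[1]}$. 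Replacing $\phi$ by $\psi:=\rho(g)^{-1}\circ\phi$, we reduce to the case where $\psi$ fixes $V^k(\gs\gp_2)$ pointwise; in particular $\psi$ commutes with the $\gs\gp_2$-action. It remains to prove $\psi=\mathrm{id}$.

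\emph{Step 3 (rigidity).} Since $V^k(\gs\gp_2)$, $\vir$, and $W^4$ weakly generate $\Wsp$, it suffices to show $\psi$ fixes $L$ and $W^4$. For $L$: $\psi$ preserves the weight-$2$ invariant space $\mathrm{Span}_R\{L,L^{\gs\gp_2}\}$ and fixes $L^{\gs\gp_2}$, so $\psi(L)=\alpha L+\beta L^{\gs\gp_2}$; evaluating the intrinsic identity $\psi(L)_{(0)}=\psi\,\partial\,\psi^{-1}=\partial$ on $W^4$ (where $L^{\gs\gp_2}_{(0)}$ acts by $0$) and on $X^1$ (where it acts by $\partial$) forces $\alpha=1,\ \beta=0$, so $\psi(L)=L$ and therefore $\psi$ fixes the subalgebra $\cA:=\langle V^k(\gs\gp_2),L\rangle$ pointwise. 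For $W^4$: being $\gs\gp_2$-equivariant and $\vir$-fixing, $\psi$ sends $W^4$ to a weight-$4$, $\gs\gp_2$-invariant, $\vir$-primary vector, hence $\psi(W^4)=\lambda W^4$ up to a $\psi$-fixed element of $\cA$; comparing the top pole $W^4_{(7)}W^4=w^{4,4}_0\mathbf 1$, with $w^{4,4}_0\neq 0$ from \eqref{scaling}, gives $\lambda^2=1$. To rule out $\lambda=-1$ I would use the base-case relation of Proposition \ref{prop:base case proposition}, namely $W^3_{(1)}W^3=\epsilon^{2,2}_0 W^4+R$ with $\epsilon^{2,2}_0\neq 0$ and $R\in\cA$: this expresses $W^4$ through the weight-$3$ generators and the $\psi$-fixed algebra $\cA$. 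Combining this with the characterization of $X^3,Y^3,H^3$ as the unique new adjoint $\vir$-primaries in weight $3$ (so that $\psi$ fixes them, by the explicit finite data in weights $\le 4$) forces $\lambda=1$, hence $\psi=\mathrm{id}$, $\phi=\rho(g)$, and the full automorphism group is the image $\rho(\mathrm{SL}_2(\mathbb C))$.

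The main obstacle is the rigidity step, and within it the determination of $\psi$ on $W^4$ and on the weight-$3$ generators. Representation theory and the leading pole only yield $\lambda=\pm 1$; excluding the sign genuinely requires the explicit low-weight OPE relations of Proposition \ref{prop:base case proposition} together with weak generation, which then propagate the identity from weights $\le 4$ to all weights. A secondary technical point is confirming that $W^4$ (respectively $X^3,Y^3,H^3$) spans the space of weight-$4$ invariant (respectively weight-$3$ adjoint) $\vir$-primaries modulo $\cA$, which can be checked directly in the explicit base-case data $D_9$.
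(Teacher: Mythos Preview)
Your proposal is essentially correct and follows the same strategy as the paper: after reducing to an automorphism $\psi$ fixing the weight-one fields pointwise (your Steps 1--2, which the paper leaves implicit), one pins down $\psi$ on $L$, $W^4$, and the weight-$3$ fields using low-weight OPE constraints and then invokes weak generation. The paper takes $\psi(L)=L$ as part of the VOA-automorphism convention, uses primarity together with the raising property $g(W^4)_{(1)}X^1=g(X^3)$ to obtain $\psi(W^4)=a_4W^4$ and $\psi(X^3)=a_3X^3$, and then imposes the three products $W^4_{(7)}W^4$, $H^3_{(5)}H^3$, $W^4_{(3)}W^4$ to solve $a_3=a_4=1$; your choice of $W^4_{(7)}W^4$ and $W^3_{(1)}W^3$ is an equally valid set of constraints.

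Two small technical repairs are needed. First, $W^4$ is \emph{not} affine primary (the raising property $W^4_{(1)}X^1=X^3$ together with skew-symmetry forces $X^1_{(1)}W^4\neq 0$), so your claim $L^{\gs\gp_2}_{(0)}W^4=0$ is unjustified; either evaluate on $\tilde L=L-L^{\gs\gp_2}$ or on $\tilde W^4$ instead, or simply adopt the paper's convention that automorphisms fix $L$. Second, the characterization of $X^3,Y^3,H^3$ as the unique adjoint $\vir$-primaries in weight $3$ only yields $\psi(X^3)=\mu X^3$ for some scalar $\mu$, and the top pole $w^{3,3}_0$ gives $\mu^2=1$, not $\mu=1$. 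To close the argument you should also invoke the raising relation $W^4_{(1)}X^1=X^3$, which gives $\lambda=\mu$; combined with $\mu^2=\lambda$ from $W^3_{(1)}W^3=\epsilon^{2,2}_0 W^4+\dotsb$ this forces $\lambda=\lambda^2$ and hence $\lambda=\mu=1$.
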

	\begin{proof}
		Let $g$ be any automorphism of $\Wsp$ that fixes weight one fields.
		By definition, $g$ preserves the Virasoro generator $L$, and by our assumption it acts as identity on the weight one space. We must have $g(W^4) = a_4 W^4 + \dots$ and $g(X^3) = a_3 X^3 +\dots$, where the omitted terms are normally ordered products in $X^1,Y^1,H^1,X^3,H^3,Y^3, L$ of weights $4$ and $3$ that transform as trivial and adjoint $\fr{sp}_2$-modules, respectively. Imposing the conditions that $g(X^3)$ and $g(W^4)$ are primary with respect to $L$, and that the raising property holds \[g(W^4)(z)X^1(w) \sim g(X^3)(z-w)^{-2} + \partial g(X^3)(z-w)^{-1},\] we find the solution $g(W^4) = a_4 W^4$ and $g(X^3) = a_{3}X^3$.
		Finally, the following vertex algebra products must be respected 
		\begin{equation}
			\begin{split}
		g(W^4_{(7)}W^4) =& g (W^4)_{(7)}g(W^4),\\
		 g(H^3_{(5)}H^3) = &g(H^3)_{(5)}g(H^3),\\
		  g(W^4_{(3)}W^4) =& g (W^4)_{(3)}g(W^4).
			\end{split}
		\end{equation}
		Solving the above constraints fixes the automorphism $g$ to be the identity map.
	\end{proof}

	\subsection{Quotients by maximal ideals of $\Wsp$}
	So far, we have considered quotients of the form $\cW^{\fr{sp}}_{\infty,I}$ which are 1-parameter vertex algebras in the sense that $R/I$ has Krull dimension 1. Here, we consider simple quotients of $\cW^{\fr{sp}}_{\infty,I}$ where $I \subseteq R$ is a {\it maximal} ideal. Such an ideal always has the form $I=(c-c_0,k-k_0)$ for $c_0,k_0 \in\mathbb{C}$, and $\cW^{\fr{sp}}_{\infty,I}$ is a vertex algebra over $\mathbb{C}$.
	We first need a criterion for when the simple quotients of two such vertex algebras are isomorphic.
	\begin{theorem}\label{max quotients}
		Let $c_0,c_1,k_0,k_1$ be complex numbers in the complement of the set $D$ given by (\ref{loc}), and let \[I_0 = (c-c_0,k-k_0),\quad I_1 =(c-c_1,k-k_1)\]
		be the corresponding maximal ideals in $R$.
		Let $\cW_0$ and $\cW_1$ be the simple quotients of $\cW^{\fr{sp},I_0}_{\infty}$ and $\cW^{\fr{sp},I_1}_{\infty}$, respectively. 
		Then $\cW_0 \simeq \cW_1$ if and only if $c_0 = c_1$ and $k_0=k_1$.
	\end{theorem}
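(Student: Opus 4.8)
The implication $(c_0,k_0)=(c_1,k_1)\Rightarrow \cW_0\cong\cW_1$ is trivial, since then $I_0=I_1$ and the two simple quotients literally coincide. For the converse I will recover both parameters from the abstract vertex algebra structure. Let $\phi\colon\cW_0\to\cW_1$ be an isomorphism, write $L$ and $L'$ for the conformal vectors of $\cW_0$ and $\cW_1$, and $X^1,Y^1,H^1$ for the weight-one generators of $\cW_1$. Note that $\phi$ fixes the vacuum and commutes with the canonical translation operator $T$ (recall $Ta=a_{(-2)}\mathbf 1$). Both $\cW_0$ and $\cW_1$ are $\mathbb Z_{\geq 0}$-graded with one-dimensional degree-zero piece, and (since $c_i,k_i$ lie in the complement of $D$) each still contains its affine subalgebra $V^{k_i}(\slt)$ and its Virasoro field nontrivially, so $\cW_i[1]\cong\slt$ is three-dimensional. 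The plan is to prove the single statement $\phi(L)=L'$; this simultaneously forces $\phi$ to preserve the conformal weight grading and yields $c_0=c_1$, after which $k_0=k_1$ will drop out of the weight-one affine data.

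The heart of the argument is that $\nu:=\phi(L)$ is the conformal vector $L'$. Indeed $\nu$ is again a conformal vector, with $\nu_{(0)}=T$, and it induces the transported grading $\cW_1=\bigoplus_n\phi(\cW_0[n])$, which takes values in $\mathbb Z_{\geq 0}$ with one-dimensional bottom piece $\phi(\cW_0[0])=\mathbb C\mathbf 1$. Writing $w:=\nu-L'$ one has $w_{(0)}=T-T=0$. The conformal vectors of $\cW_1$ with vanishing residue and a grading bounded below are exactly the Feigin--Fuks-type deformations $\nu=L'+Tv$ with $v\in\cW_1[1]\cong\slt$ (a direct check, using that $v_{(0)}v=0$ and $v_{(1)}v\in\mathbb C\mathbf 1$, shows each such $L'+Tv$ satisfies the Virasoro relations with shifted central charge $c_1-12k_1(v|v)$); for $\nu_{(1)}=L'_{(1)}-v_{(0)}$ to be a semisimple grading operator, $v$ must be semisimple, and after composing $\phi$ with a suitable element of $\operatorname{Aut}(\cW_1)=\mathrm{SL}_2(\mathbb C)$ (which fixes $L'$ and commutes with $T$) I may assume $v=\beta H^1$. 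Then $\nu_{(1)}=L'_{(1)}-\beta H^1_{(0)}$, so the generators $X^1$ and $Y^1$, having $L'_{(1)}$-eigenvalue $1$ and $H^1_{(0)}$-eigenvalues $\pm 2$, acquire $\nu$-weights $1\mp 2\beta$. Since the $\nu$-grading lands in $\mathbb Z_{\geq 0}$ and neither of $X^1,Y^1$ may fall into the one-dimensional bottom piece, both $1-2\beta$ and $1+2\beta$ are positive integers summing to $2$, whence $1-2\beta=1+2\beta=1$ and $\beta=0$. Therefore $\nu=L'$, and in particular $\phi$ is a graded isomorphism.

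Given $\phi(L)=L'$, the central charge is immediate: applying $\phi$ to $L_{(3)}L=\tfrac{c_0}{2}\mathbf 1$ and using $\phi(\mathbf 1)=\mathbf 1$ gives $\tfrac{c_0}{2}\mathbf 1=L'_{(3)}L'=\tfrac{c_1}{2}\mathbf 1$, so $c_0=c_1$. For the level, grading preservation means $\phi$ restricts to a linear isomorphism $\cW_0[1]\to\cW_1[1]$ of the two copies of $\slt$; since $\phi$ preserves the bracket $a_{(0)}b$, this restriction is a Lie algebra isomorphism, hence lies in $\operatorname{Aut}(\slt)$ and preserves the normalized invariant form $(\cdot\,|\,\cdot)$. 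Comparing $\phi(a_{(1)}b)=\phi(a)_{(1)}\phi(b)$ for $a,b\in\cW_0[1]$ yields $k_0(a|b)\mathbf 1=k_1(\phi(a)|\phi(b))\mathbf 1=k_1(a|b)\mathbf 1$, and as the form is nonzero we conclude $k_0=k_1$. This proves the nontrivial direction.

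The main obstacle is the middle paragraph, namely grading preservation. The subtlety is precisely that the conformal vector is \emph{not} unique: each $L'+Tv$ with $v\in\slt$ is a genuine conformal vector with $(L'+Tv)_{(0)}=T$ but a different central charge, so one cannot simply invoke uniqueness of the conformal structure. What rescues the argument is the rigidity of the integer grading together with the explicit $\slt$-weight structure of the low-weight generators, which pins the Feigin--Fuks parameter to $\beta=0$. The step requiring the most care is the claim that the correction $w=\nu-L'$ is exactly of the form $Tv$ with $v$ a semisimple weight-one element: this uses that $\nu_{(1)}$ is diagonalizable and bounded below with one-dimensional bottom, and that the only grading-preserving Hamiltonian shifts available are the zero modes of weight-one currents.
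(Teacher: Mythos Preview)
The paper's proof is a single sentence: $c$ is the central charge of $L$ and $k$ is the level of the affine subalgebra $V^k(\slt)$, so both are invariants and an isomorphism forces them to match. Implicitly the paper works in the category where isomorphisms preserve the conformal vector (equivalently, the grading), which is the ambient convention throughout.

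You attempt something genuinely stronger: to show that an \emph{arbitrary} vertex algebra isomorphism, not assumed to respect the conformal structure, must nonetheless send $L$ to $L'$. This is a different and harder question, and your argument has a gap at its crux. You assert that every conformal vector $\nu$ of $\cW_1$ with $\nu_{(0)}=T$ and a bounded-below integer grading is a Feigin--Fuks deformation $L'+Tv$ with $v\in\cW_1[1]$. You verify that such deformations \emph{are} conformal vectors, but you never prove the converse, and it is not a standard fact: a priori $\nu$ need not be $L'$-homogeneous of weight~$2$, and the condition $(\nu-L')_{(0)}=0$ alone does not force $\nu-L'\in T(\cW_1[1])$. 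You flag this step as ``requiring the most care,'' but the justification you offer (``the only grading-preserving Hamiltonian shifts available are the zero modes of weight-one currents'') presupposes the very classification you need. A secondary point: you write $\operatorname{Aut}(\cW_1)=\mathrm{SL}_2(\mathbb C)$, but the paper establishes this only for $\Wsp$ itself, not for its simple quotients; fortunately your argument only requires $\mathrm{SL}_2\subseteq\operatorname{Aut}(\cW_1)$, which does follow since the $\mathrm{SL}_2$-action fixes $c$ and $k$ and hence descends.

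If you are content with the paper's reading of ``isomorphism'' (graded, or preserving $L$), then your first and last paragraphs already contain a complete proof and the middle paragraph is unnecessary. If you intend the stronger reading, the middle paragraph needs a real proof of the conformal-vector classification in $\cW_1$.
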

	\begin{proof}
		Since the parameters $c$ and $k$ arise as the central charge of the Virasoro field $L$ and level of $\aff$, it follows that if $\cW_0\simeq \cW_1$, then necessarily $c_0=c_1$ and $k_0=k_1$.
	\end{proof}
	
	\begin{corollary}\label{max quotients2}
		Let $I=(p)$ and $J=(q)$ be prime ideals in $R$ such that $\cW^{\gs\gp,I}_{\infty}$ and $\cW^{\gs\gp,J}_{\infty}$ are not simple. Then any pointwise coincidences between the simple quotients of $\cW^{\fr{sp},I}_{\infty}$ and $\cW^{\fr{sp},J}_{\infty}$ must correspond to intersection points of the truncation curves $V(I)\cap V(J)$.
	\end{corollary}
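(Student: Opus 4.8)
The plan is to deduce the corollary directly from Theorem \ref{max quotients}, which already fixes both parameters of any simple quotient attached to a maximal ideal. First I would unwind the meaning of a \emph{pointwise coincidence}. The simple quotient of $\cW^{\fr{sp},I}_{\infty}$ forms a family of vertex algebras over the truncation curve $V(I)$ in the $(c,k)$-plane: at a point $(c_0,k_0)\in V(I)$ lying outside the excluded locus $D$ of \eqref{loc}, the relation $I=(p)\subseteq (c-c_0,k-k_0)=:I_0$ holds since $p$ vanishes there, so specializing produces the vertex algebra $\cW_0$ over $\mathbb{C}$ which is the simple quotient of $\cW^{\fr{sp},I_0}_{\infty}$. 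Likewise a point $(c_1,k_1)\in V(J)$ yields the simple quotient $\cW_1$ of $\cW^{\fr{sp},I_1}_{\infty}$ with $I_1=(c-c_1,k-k_1)$. A pointwise coincidence between the two simple-quotient families is then precisely an isomorphism $\cW_0\cong\cW_1$ for such a pair of points.

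Second, I would invoke Theorem \ref{max quotients}. The parameter $c$ is recovered intrinsically as the central charge of the weight-two Virasoro field $L$, and $k$ as the level of the affine subalgebra $V^k(\fr{sp}_2)$ generated by the weight-one fields; both are preserved by any isomorphism of vertex algebras, independently of how it acts on a chosen generating set. Hence $\cW_0\cong\cW_1$ forces $c_0=c_1$ and $k_0=k_1$, so the coincidence occurs at a single point $(c_0,k_0)=(c_1,k_1)$ which lies simultaneously on both truncation curves, i.e.\ in $V(I)\cap V(J)$. This is exactly the assertion.

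The argument is essentially formal, so the real work lies in the setup rather than in any computation, and the closest thing to an obstacle is conceptual: one must phrase ``pointwise coincidence'' so that it literally becomes an isomorphism between two maximal-ideal quotients $\cW_0$ and $\cW_1$, after which Theorem \ref{max quotients} closes the argument at once. The single point requiring care is to define the family members via the maximal-ideal quotients directly, rather than by naively specializing the $1$-parameter simple quotient, since at the finitely many special points of $V(I)$ where the graded rank drops further the two need not coincide; restricting throughout to the complement of $D$ keeps all structure constants regular and guarantees the hypotheses of Theorem \ref{max quotients}. With these caveats in place, the corollary follows immediately.
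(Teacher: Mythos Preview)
Your argument is correct and is exactly what the paper intends: the corollary is stated without proof, as an immediate consequence of Theorem~\ref{max quotients}, and your unwinding of ``pointwise coincidence'' together with the appeal to that theorem is precisely the intended deduction. Your care in phrasing the specialization via maximal-ideal quotients is appropriate and matches the setup of Subsection~\ref{subsection:voaring}.
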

	
	\begin{corollary}
		Suppose that $\cA$ is a simple, 1-parameter vertex algebra which is isomorphic to the simple quotient of $\cW^{\fr{sp},I}_{\infty}$ for some prime ideal $I\subseteq R$, possibly after localization. 
		Then if $\cA$ is the quotient of $\cW^{\fr{sp},I}_{\infty}$ for some prime ideal $J$, possibly localized, we must have $I=J$.
	\end{corollary}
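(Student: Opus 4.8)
The plan is to exploit that $\cA$ carries a genuine one-parameter family of specializations and to compare the two realizations point by point, using Theorem \ref{max quotients} and Corollary \ref{max quotients2}. First I would record the algebraic preliminaries: since $R = D^{-1}\mathbb{C}[c,k]$ is a localization of the UFD $\mathbb{C}[c,k]$, it is itself a UFD, and because $\cA$ is a $1$-parameter vertex algebra both $R/I$ and $R/J$ have Krull dimension $1$. Hence $I$ and $J$ are height-one primes of the two-dimensional ring $R$, and are therefore principal; write $I=(p)$ and $J=(q)$ with $p,q$ irreducible. The corresponding truncation curves $V(I)$ and $V(J)$ are then irreducible curves in $\T{Spec}(R)$.

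Next I would pass to specializations. The weight-zero subspace $\cA[0]$ is simultaneously a localization of $R/I$ (via the first realization) and of $R/J$ (via the second), so each maximal ideal $\mathfrak{m}\subseteq\cA[0]$ produces a simple $\mathbb{C}$-vertex algebra $\cA_{\mathfrak{m}}$ together with a closed point $P_I\in V(I)$ and a closed point $P_J\in V(J)$ for which $\cA_{\mathfrak{m}}$ is the simple quotient of $\cW^{\fr{sp},I}_{\infty}$ at $P_I$ and of $\cW^{\fr{sp},J}_{\infty}$ at $P_J$. The key point, which is the content of Theorem \ref{max quotients}, is that the coordinates $c$ and $k$ of such a point are intrinsic invariants of $\cA_{\mathfrak{m}}$, namely the central charge of the Virasoro field $L$ and the level of the affine subalgebra $V^k(\fr{sp}_2)$. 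Hence $P_I$ and $P_J$ carry identical coordinates, so $P_I=P_J$ as points of $\T{Spec}(R)$, and this common point lies in $V(I)\cap V(J)$; this is exactly the pointwise coincidence of Corollary \ref{max quotients2}.

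Finally I would run the counting argument. Because $R$ has already been localized away from $D$, every closed point of $V(I)$ avoids the bad locus, and the simplicity of $\cA_{\mathfrak{m}}$ can fail only on a proper closed, hence finite, subset of the curve $V(I)$; thus $\mathfrak{m}$ may be chosen from a cofinite, in particular infinite, set of closed points, and by Corollary \ref{Wsp freely generated} each $\cA_{\mathfrak{m}}$ remains freely generated of the expected type, so that Theorem \ref{max quotients} legitimately applies. Letting $\mathfrak{m}$ vary over this infinite set yields infinitely many distinct points of $V(I)\cap V(J)$. If $I\neq J$, then $p$ and $q$ are non-associate irreducibles and $V(I)\cap V(J)=V(p,q)$ is a proper closed subscheme of the irreducible curve $V(I)$, hence $0$-dimensional and finite, a contradiction. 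Therefore $I=J$.

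The main obstacle is the specialization bookkeeping in the third paragraph: one must verify that for all but finitely many $\mathfrak{m}$ the specialized algebra $\cA_{\mathfrak{m}}$ is honestly simple and of the correct strong generating type, so that the coordinates $(c,k)$ are read off unambiguously and Theorem \ref{max quotients} is correctly invoked. Everything else is a direct translation of Corollary \ref{max quotients2} into an intersection-theoretic statement about the two irreducible truncation curves.
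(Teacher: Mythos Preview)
Your argument is correct and follows the same route as the paper: invoke Theorem \ref{max quotients} and Corollary \ref{max quotients2} to see that every pointwise coincidence lands in $V(I)\cap V(J)$, then use that distinct height-one primes in the two-dimensional UFD $R$ give irreducible curves meeting in only finitely many points, while a $1$-parameter family supplies infinitely many such specializations. One minor slip worth flagging: the specialization $\cA_{\mathfrak{m}}$ is the \emph{simple} quotient and is not itself freely generated (Corollary \ref{Wsp freely generated} concerns $\cW^{\fr{sp},I}_{\infty}$ before taking the simple quotient), but this is harmless since Theorem \ref{max quotients} only needs that $c$ and $k$ are recoverable as the Virasoro central charge and $\fr{sp}_2$-level, which holds automatically off the locus $D$.
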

	\begin{proof}
		This is immediate from Theorem \ref{max quotients} and Corollary \ref{max quotients2}, since if $I$ and $J$ are distinct prime ideals, their truncation curves $V(I)$ and $V(J)$ can intersect in at most finitely many points. The simple quotients of $\cW^{\fr{sp},I}_{\infty}$ and $\cW^{\fr{sp},J}_{\infty}$ therefore cannot coincide as $1$-parameter families. \end{proof}

\subsection{Extending $\cW^{\gs\gp}_{\infty}$ over $D$}
Recall that $\cW^{\gs\gp}_{\infty}$ is only defined over the localization $D^{-1}\mathbb{C}[c,k]$, so it is not defined along the curves in $\mathbb{C}^2$ where the generators of $D$ vanish. Here we show that it can naturally be extended to each of these curves by suitably rescaling the generators.

First we consider the curve $k = 0$. Before specializing to $k=0$, we define new fields $V^{2i+1} = k^i W^{2i+1}$ and $V^{2i+2}= k^{i}W^{2i+2}$ for $i\geq 0$. One checks that the OPEs among the new fields $V^i$ are now all defined at $k=0$. We can regard the vertex algebra with generators $V^i$ as defining a new $2$-parameter vertex algebra which agrees with $\cW^{\gs\gp}_{\infty}$ for all points in the parameter space, but which is also defined along the curve $k=0$. By abuse of notation, we continue to denote this vertex algebra by $\cW^{\gs\gp}_{\infty}$. We denote by $\cW^{\gs\gp, I_0}_{\infty}$ its quotient by the ideal generated by $I_0 = (k)$, and we denote by $\cW^{\gs\gp}_{\infty,I_0}$ the simple quotient of $\cW^{\gs\gp, I_0}_{\infty}$ graded by conformal weight. One checks that the fields $V^{2i+1}$ for all $i \geq 0$ lie in the maximal ideal of $\cW^{\gs\gp, I_0}_{\infty}$, but the fields $V^{2i+2}$ for $i\geq 0$ survive in the simple quotient. The simple quotient $\cW^{\gs\gp}_{\infty, I_0}$ is then of type $\cW(2,4,6,\dots)$ and hence is a $1$-parameter quotient of $\cW^{\text{ev}}_{\infty}$. It can be checked that its truncation curve is given by 
\begin{equation} \label{truncation:k=0} \lambda = \frac{2 + 3 c}{7 (-1 + c) (22 + 5 c)}.\end{equation} It is natural to ask how it is related to the quotients of $\cW^{\text{ev}}_{\infty}$ appearing in \cite{CL4}.
Consider either
 $\cC^{\psi}_{1B}(n,0) \cong \text{Com}(V^k(\gs\go_{2n+1}), V^k(\gs\go_{2n+2}))^{Z_2}$, or $\cC^{\psi}_{1D}(n,0) \cong \text{Com}(V^k(\gs\go_{2n}), V^k(\gs\go_{2n+1}))^{Z_2}$.
For $\cC^{\psi}_{1B}(n,0)$, the truncation curve has a parametrization $(c(\psi), \lambda(\psi))$ appearing in \cite{CL4}, and we have
 $$ \lim_{\psi \rightarrow \infty} c(\psi) = 2n+1,\qquad \lim_{\psi \rightarrow \infty} \lambda(\psi) = \frac{5 + 6 n}{14 n (27 + 10 n)}.$$
Similarly, for $\cC^{\psi}_{1D}(n,0)$, we have
 $$ \lim_{\psi \rightarrow \infty} c(\psi) = 2n,\qquad \lim_{\psi \rightarrow \infty} \lambda(\psi) = \frac{1 + 3 n}{7 (-1 + 2 n) (11 + 5 n)}.$$
 Note that $\lim_{\psi \rightarrow \infty} \cC^{\psi}_{1B}(n,0) \cong \cH(2n+1)^{\text{O}_{2n+1}}$ and $\lim_{\psi \rightarrow \infty} \cC^{\psi}_{1D}(n,0) \cong \cH(2n)^{\text{O}_{2n}}$. The orbifolds $\cH(m)^{\text{O}_m}$ were studied in \cite{LHeis}, and the generators in weights $2,4,6,\dots$ close linearly under OPE and all structure constants are independent of $m$ except for the vacuum terms, which are given by polynomials in $m$. Therefore the OPE algebra of these generators is a Lie conformal algebra over $\mathbb{C}[m]$, and we can replace $m$ with a formal variable to obtain a $1$-parameter vertex algebra with central charge $m$. If we replace $2n+1$ with $c$ in the case of $\cC^{\psi}_{1B}(n,0)$, or replace $2n$ with $c$ in the case of $\cC^{\psi}_{1D}(n,0)$, we recover the truncation curve \eqref{truncation:k=0}. Therefore we can identify $\cW^{\gs\gp}_{\infty, I_0}$ with this $1$-parameter quotient of $\cW^{\text{ev}}_{\infty}$. Note that the first relation among the generators of $\cH(m)^{\text{O}_{m}}$ occurs in weight $m^2+3m+2$ \cite{LHeis}. Therefore in the $1$-parameter vertex algebra $\cW^{\gs\gp}_{\infty, I_0}$ where $m = c$ is a free parameter, there are no relations among the generators, so it has the same graded character as $\cW^{\text{ev}}_{\infty}$.

Similarly, consider the $k = -\frac{1}{2}$. As above, we define new fields $V^{2i+1} = (k+1/2)^i W^{2i+1}$ and $V^{2i+2}= (k+1/2)^{i}W^{2i+2}$ for $i\geq 0$. The OPEs among the new fields $V^i$ are now all defined at $k=-\frac{1}{2}$, so we regard the vertex algebra with generators $V^i$ as defining an extension of $\cW^{\gs\gp}_{\infty}$ to the line $k=-\frac{1}{2}$. By abuse of notation, we continue to denote this vertex algebra by $\cW^{\gs\gp}_{\infty}$. We denote by $\cW^{\gs\gp, I_{-1/2}}_{\infty}$ its quotient by the ideal generated by $I_{-1/2} = (2k +1)$, and we denote by $\cW^{\gs\gp}_{\infty,I_{-1/2}}$ the simple graded quotient of $\cW^{\gs\gp, I_{-1/2}}_{\infty}$. In this case, the affine vertex algebra $V^{-1/2}(\gs\gp_2)$ has a singular vector in weight $2$, the fields $V^{2i+1}$ for $i\geq 1$ lie in the maximal ideal, and the fields $V^{2i+2}$ for $i \geq 0$ survive in the simple quotient and commute with $L_{-1/2}(\gs\gp_{2})$. So $\cW^{\gs\gp}_{\infty, I_{-1/2}}$ has the form $L_{-1/2}(\gs\gp_2) \otimes \cW$ where $\cW$ is a $1$-parameter quotient of $\cW^{\text{ev}}_{\infty}$. Again, one can check that the truncation curve is given by 
\begin{equation} \label{truncation:k=-1/2} \lambda = -\frac{-34 + 19 c}{49 (-1 + c)^2 (22 + 5 c)^2}.\end{equation} As above, $\cW$ is easily seen to coincide with the large level limit of $\cC^{\psi}_{2D}(n,0)$ where $n$ becomes the parameter, and we set $c=n$. By a similar argument, $\cW$ has the same graded character as $\cW^{\text{ev}}_{\infty}$.
 
For the remaining points in $D$, namely, $k = -1, -2, -4, -\frac{4}{3}, -\frac{8}{5}, -\frac{16}{7}$, and $c = -\frac{22}{5}$, we can similarly extend $\cW^{\gs\gp}_{\infty}$ to be defined along each of these curves. Finally, we can perform all these rescalings simultaneously and extend $\cW^{\gs\gp}_{\infty}$ so that is is defined over the affine space $\mathbb{C}^2$; equivalently, this vertex algebra is defined over the polynomial ring $\mathbb{C}[c,k]$ and coincides with $\cW^{\gs\gp}_{\infty}$ after localizing along $D$.

\section{$1$-parameter quotients of $\cW^{\fr{sp}}_{\infty}$} \label{sect:1paramquot}
In this section, we prove that the $8$ families of $Y$-algebras of type $C$ introduced in Section \ref{sect:YtypeC}, as well as the $4$ additional families of diagonal cosets introduced in Section \ref{sect:Diagonal}, all arise as $1$-parameter quotients of $\cW^{\fr{sp}}_{\infty}$. This means that there exist localizations $E^{-1} R/I_{XY}(n,m)$ and $F^{-1} \mathbb{C}[\psi]$ or $F^{-1}\mathbb{C}[\ell]$ such we have isomorphisms of $1$-parameter vertex algebras
$$ E^{-1} R/I_{XY}(n,m) \otimes_{R/I_{XY}(n,m)} \cW^{\gs\gp}_{\infty, I_{XY}(n,m)} \cong \cC^{\psi}_{XY}(n,m),\qquad E^{-1} R/I_n \otimes_{R/I_n} \cW^{\gs\gp}_{\infty, I_{XY}(n,m)} \cong \cC^{\psi}_{XY}(n,m),$$
 where $E$ and $F$ are the (finite) sets of denominators of structure constants of $\cW^{\gs\gp}_{\infty}$ after replacing $c, k$ with the corresponding functions of $\psi$ (respectively $\ell$). Throughout this section, we suppress these localizations from our notation.

The easiest cases are given by the following
\begin{theorem} The following vertex algebras arise as $1$-parameter quotients of $\cW^{\fr{sp}}_{\infty}$.
\begin{enumerate}
		\item For $n\geq 1$, $\text{Com}(V^{\ell}(\fr{sp}_{2n}), V^{\ell}(\fr{sp}_{2n+2}))$ (Case $\cC^{\psi}_{BC}(n,0)$),
		\item For $n\geq 1$, $\text{Com}(V^{\ell}(\fr{so}_{2n+1}), V^{\ell}(\fr{osp}_{2n+1|2}))^{\mathbb{Z}_2}$ (Case $\cC^{\psi}_{BB}(n,0)$),
		\item For $n\geq 1$, $\text{Com}(V^{\ell}(\fr{so}_{2n}), V^{\ell}(\fr{osp}_{2n|2}))^{\mathbb{Z}_2}$ (Case $\cC^{\psi}_{BD}(n,0)$),
		\item For $n\geq 1$, $\text{Com}(V^{\ell}(\go\gs\gp_{1|2n}), V^{\ell}(\go\gs\gp_{1|2n+2})^{\mathbb{Z}_2}$ (Case $\cC^{\psi}_{BO}(n,0)$),
		\item For $n\geq 1$, $\text{Com}(V^{\ell}(\fr{sp}_{2n}), V^{\ell-1}(\fr{sp}_{2n}) \otimes \cE(2n))$,
		\item For $n\geq 3$ and $n=2$, $n=4$, and $n=8$, $\text{Com}(V^{\ell}(\fr{so}_{n}), V^{\ell-2}(\fr{so}_{n}) \otimes \cS(n))^{\mathbb{Z}_2}$,
		\item For $n\geq 1$, $\text{Com}(V^{\ell}(\fr{osp}_{1|2n}), V^{\ell-1}(\fr{osp}_{1|2n}) \otimes \cS(1)\otimes \cE(2n))^{\mathbb{Z}_2}$.
	\end{enumerate}
	\end{theorem}
\begin{proof} We only prove this for $\cC^{\psi}_{BC}(n,0)$ since the proof for the other cases is similar. First, $\cC^{\psi}_{BC}(n,0)$ has large level limit
\[\lim_{\psi\to \infty} \cC^{\psi}_{BC}(n,0)  \simeq \cH(3)\otimes \cH(4n)^{\T{Sp}_{2n}},\] where $\cH(m)$ denotes the rank $m$ Heisenberg vertex algebra. Recall that $\cH(4n)$ has full automorphism $\T{O}_{4n}$. There is an embedding of $\T{Sp}_{2n} \hookrightarrow \T{O}_{4n}$, such that the standard representation $\mathbb{C}^{4n}$ of $\T{O}_{4n}$ decomposes as 2 standard representations $\mathbb{C}^{2n}$ of $\T{Sp}_{2n}$.
	We choose generators $\{u^i,u^{-i}| i=1,\dots n \}$ and $\{v^i,v^{-i}| i=1,\dots n \}$ for $\cH(4n)$ which transform as the standard modules in the usual symplectic basis.
	They satisfy the following OPE relations
	\begin{equation}
		\begin{split}
			u^{i}(z)v^{-j}(w)\sim \delta_{i,j} (z-w)^{-2},\quad u^{-i}(z)v^{j}(w)\sim -\delta_{i,j} (z-w)^{-2}.
		\end{split}
	\end{equation}

Next, $\cH(4n)$ has a good increasing filtration \cite{L}, such that the associated graded $\T{gr}^F(\cH(4n)^{\T{Sp}_{2n}})$ is isomorphic to the classical invariant ring 
		\[\Big(\T{Sym}\Big(\bigoplus_{i=0}^{\infty}(V_i \oplus U_i )\Big)\Big)^{\T{Sp}_{2n}},\]
		where $U_i\cong V_i \cong \mathbb{C}^{2n}$ as $\T{Sp}_{2n}$-module.
		Generators for this ring are given by Weyl's first fundamental theorem of invariant theory for the standard representation of $\T{Sp}_{2n}$ \cite{W}. These are quadratics, each corresponding to a pair of distinct modules in $\{U_i,V_i|i\geq 0\}$.
		Let $\{\partial^d u^i,\partial^d u^{-i}  | i=1,\dots,n\}$ and $\{\partial^d v^i,\partial^d v^{-i}  | i=1,\dots,n\}$ be the symplectic bases for each $U_d$ and $V_d$, respectively.
		We have 3 kinds of $\fr{sp}_{2n}$-invariants
		\begin{equation}
			\begin{split}
				X^{a,b} =&\sum_{i=1}^n:\!\partial^{a} u^i\partial^{b}u^{-i}\!: -:\!\partial^{b} u^i\partial^{a}u^{-i}\!:,\quad a> b \geq 0,\\
				Y^{a,b} =&\sum_{i=1}^n:\!\partial^{a} v^i\partial ^{b}v^{-i}\!:-:\!\partial^{b} v^i\partial ^{a}v^{-i}\!:,\quad a> b \geq 0,\\
				H^{a,b} =&\sum_{i=1}^n:\!\partial^{a} u^i\partial ^{b}v^{-i}\!:-:\!\partial^{b} u^i\partial ^{a}v^{-i}\!:,\quad a,b \geq 0,
			\end{split}
		\end{equation}
		which form a strong generating set for $\cH(4n)^{\T{Sp}_{2n}}$.
		Note that $\{X^{a,b} | a> b \geq 0\} \cup \{Y^{a,b} | a> b \geq 0\} \cup \{H^{a,b} | a,b \geq 0\}$ and their derivatives span the same vector space as the smaller set \begin{equation}\label{genset}
			\{X^{2a+1,0} | a\geq 0\}\cup\{Y^{2a+1,0} | a \geq 1\}\cup\{H^{a,0} | a \geq 0\}
		\end{equation}
		and their derivatives. Therefore \eqref{genset} is also a strong generating set for $\cH(4n)^{\T{Sp}_{2n}}$, which therefore has a strong generating set of type $\cW(2,3^3,4,5^3,\dots)$. In fact, by \cite[Theorem 6.6]{Lin2}, $\cH(4n)^{\T{Sp}_{2n}}$ is of type $\cW(2,3^3,4,5^3,\dots, N)$ for some $N$, i.e., it is strongly generated by a finite subset of the generators (\ref{genset}).

		We have the following relations in $\cH(4n)^{\T{Sp}_{2n}}$.
		\begin{equation}
			\begin{split}
				H^{1,0}_{(0)}X^{2a+1,0} =&2 X^{2a+3,0}+\dotsb,\\
				H^{1,0}_{(0)}Y^{2a+1,0} =& -2 Y^{2a+3,0}+\dotsb, \\
				H^{1,0}_{(1)}H^{a,0} =&(a+3) H^{a+1,0}+\dotsb,
			\end{split}
		\end{equation}
		where the omitted terms are in the span of $\{\partial^{2k}X_{2a-3k+3},\partial^{2k}Y_{2a-3k+3},\partial^{k}X_{a-k+1}|k\geq 1\}$. 
		By induction we can get all the generators in (\ref{genset}), thus proving that $\cH(4n)^{\T{Sp}_{2n}}$ is weakly generated by the fields in weight at most $3$. By \cite[Lemma 2.5]{AL}, $\cC^{\psi}_{BC}(n,0)$ is weakly generated by the fields in weight at most $3$ for generic $\psi$. Finally, by Theorem \ref{thm:induction}, $\cC^{\psi}_{BC}(n,0)$ is a $1$-parameter quotient of $\cW^{\mathfrak{sp}}_{\infty}$.
\end{proof}
	
\begin{remark} For $\text{Com}(V^{\ell}(\fr{so}_{n}), V^{\ell-2}(\fr{so}_{n}) \otimes \cS(n))^{\mathbb{Z}_2}$ when $n=2$, $n=4$, and $n=8$, the level $k$ of the subalgebra $V^k(\gs\gp_2)$ is $-1$, $-2$, and $-4$, respectively. Since the set $D$ contains $(k+1)$, $(k+2)$, and $(k+4)$, the algebra $\cW^{\gs\gp}_{\infty}$ is not defined along these curves. But as discussed in Section \ref{sect:main}, it is straightforward to extend $\cW^{\gs\gp}_{\infty}$ along these curves by suitably rescaling the generators. Then $\text{Com}(V^{\ell}(\fr{so}_{n}), V^{\ell-2}(\fr{so}_{n}) \otimes \cS(n))^{\mathbb{Z}_2}$ is a $1$-parameter quotient of the extension of $\cW^{\gs\gp}_{\infty}$.
\end{remark}

However, this argument does not work for $\cW^{\psi}_{XY}(n,m)$ when $m \geq 1$ since the weak generation property fails in the large level level limit, so we need a different approach.

\begin{lemma} Suppose $\cW$ is a $1$-parameter vertex algebra satisfying the symmetry and strong generation requirements, but not necessarily the weak generation hypotheses. Consider the subalgebra $\tilde{\cW} \subseteq \cW$ generated by the fields in weights at most $4$. Then $\tilde{\cW}$ is of type $\cW(1^3, 2, 3^3, 4,\dots)$, and is a $1$-parameter quotient of $\cW^{\gs\gp}_{\infty}$ which need not be simple. 
\end{lemma}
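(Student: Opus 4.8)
The strategy is to transplant the entire construction of $\cW^{\gs\gp}_{\infty}$—the base computation of Proposition \ref{prop:base case proposition} followed by the induction of Theorem \ref{thm:induction}—into $\tilde{\cW}$, using only that the ingredients of that construction are available there: the $V^k(\gs\gp_2)\oplus\vir$ symmetry, the raising property of $W^4$, and the Jacobi identities. First I would fix generators. By the symmetry hypotheses, $\cW$ contains the affine triple $X^1,Y^1,H^1$ in weight $1$, a Virasoro vector $L$ in weight $2$, an adjoint triple $X^3,Y^3,H^3$ in weight $3$, and a $\gs\gp_2$-trivial field $W^4$ in weight $4$; correcting $W^4$ as in Lemma \ref{deform} I may assume it commutes with $V^k(\gs\gp_2)$ and is $\vir$-primary. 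Inside $\tilde{\cW}$ I then define $\hat{W}^{n+2}=W^4_{(1)}\hat{W}^{n}$ for $n\geq 3$, as in the raising property \eqref{raising}; each $\hat{W}^n$ lies in $\tilde{\cW}$ since the latter is closed under products of its weight-$\leq 4$ generators, and since $W^4$ is $\gs\gp_2$-trivial the fields $\hat{W}^{2i}$ are $\gs\gp_2$-trivial while the $\hat{W}^{2i\pm 1}$ are adjoint. Thus $\tilde{\cW}$, with the generating set $\mathcal{G}=\{X^1,Y^1,H^1,L\}\cup\{\hat{W}^n\mid n\geq 3\}$, satisfies all of the features (1)--(7) of Subsection \ref{subsec:setup}; crucially, weak generation now holds \emph{by construction}, which is precisely the hypothesis that $\cW$ itself was allowed to fail.

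Next I would run the base case in $\tilde{\cW}$. The Jacobi identities $J_{11}$ hold automatically in any vertex algebra, and the $V^k(\gs\gp_2)\oplus\vir$ symmetry constraints hold by hypothesis, so the computation proving Proposition \ref{prop:base case proposition} applies verbatim: after rescaling $\hat{W}^3,\hat{W}^4$ into the normalization \eqref{scaling}, all structure constants in $D_9$ are forced to coincide with the rational functions of $c,k$ determined for $\cW^{\gs\gp}_{\infty}$. With $D_9$ pinned down, the inductive step transfers unchanged: Proposition \ref{aa} and Lemmas \ref{lemma:recursions0}--\ref{rproduct} express every product in $D^{2n+2}\cup D^{2n+3}$ in terms of $D_{2n}\cup D_{2n+1}$ using only the raising property and Jacobi identities, all valid in $\tilde{\cW}$. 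By induction every $\hat{W}^n_{(r)}\hat{W}^m$ is a normally ordered polynomial in $\mathcal{G}$, so the strong span of $\mathcal{G}$ is a vertex subalgebra; it contains the weight-$\leq 4$ generators of $\tilde{\cW}$ and hence equals $\tilde{\cW}$. This proves $\tilde{\cW}$ is of type $\cW(1^3,2,3^3,4,\dots)$ (not necessarily minimally).

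Since the OPEs among the $\hat{W}^n$ now coincide with those of $\cW^{\gs\gp}_{\infty}$ after specializing along the truncation curve $V(I)$—the prime $I\subseteq R$ cutting out the relation between $c$ and $k$ in $\cW$—the assignment $W^i\mapsto\hat{W}^i$ extends to a surjective homomorphism $\cW^{\gs\gp,I}_{\infty}\twoheadrightarrow\tilde{\cW}$ (possibly after localization); equivalently, one may invoke the classifying property of Theorem \ref{thm:induction}, which applies precisely because $\tilde{\cW}$ now satisfies (1)--(7). Thus $\tilde{\cW}$ is a $1$-parameter quotient of $\cW^{\gs\gp}_{\infty}$. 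It need not be simple: passing from $\cW$ to the subalgebra $\tilde{\cW}$ need not kill singular vectors, so the kernel of this map can be strictly smaller than the maximal graded ideal.

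The main obstacle is the base case. One must check that the symmetry hypotheses on $\cW$ are strong enough, and the leading structure constants $w^{3,3}_0,w^{4,4}_0$ nonvanishing enough, to rescale into \eqref{scaling} and invoke the uniqueness in Proposition \ref{prop:base case proposition}; this is also where the two solution branches related by the automorphism $\sigma$ must be fixed by a scaling convention. Once $D_9$ is matched the remainder is the mechanical transfer of the induction, since the raising property and the Jacobi identities are formal and hold in every vertex algebra of the prescribed symmetry type.
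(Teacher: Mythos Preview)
Your proposal is correct and is precisely the argument the paper intends: the paper's own proof reads in full ``This is similar to the proof of Lemma 5.10 and Theorem 5.4 of \cite{CL4}, and is omitted,'' and you have supplied exactly that omitted argument by transplanting the base computation (Proposition \ref{prop:base case proposition}) and the induction (Theorem \ref{thm:induction}) into $\tilde{\cW}$, where the raising property holds by construction. Your identification of the potential obstacle---that one must be able to rescale into the normalization \eqref{scaling}, which requires generic nonvanishing of $w^{3,3}_0$ and $w^{4,4}_0$---is the right caveat, and it is handled by the $1$-parameter hypothesis.
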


\begin{proof} This is similar to the proof of Lemma 5.10 and Theorem 5.4 of \cite{CL4}, and is omitted.
\end{proof}

\begin{lemma} \label{upto2m+2} For $X = B,C$ and $Y = B,C,D,O$, let $\tilde{\cC}^{\psi}_{XY}(n,m) \subseteq \cC^{\psi}_{XY}(n,m)$ be the subalgebra generated by the fields in weight at most $4$. Suppose that $\tilde{\cC}^{\psi}_{XY}(n,m)$ contains the strong generators up to weight $2m+2$. Then $\tilde{\cC}^{\psi}_{XY}(n,m) = \cC^{\psi}_{XY}(n,m)$, and in particular, $\tilde{\cC}^{\psi}_{XY}(n,m)$ is simple.
\end{lemma}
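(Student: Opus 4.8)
The plan is to show that every strong generator $W^N$ of $\cC^{\psi}_{XY}(n,m)$ lies in $\tilde{\cC}^{\psi}_{XY}(n,m)$; since $\cC^{\psi}_{XY}(n,m)$ is strongly generated by these fields, this forces $\tilde{\cC}^{\psi}_{XY}(n,m) = \cC^{\psi}_{XY}(n,m)$. By the previous lemma, $\tilde{\cC}^{\psi}_{XY}(n,m)$ is a $1$-parameter quotient of $\cW^{\gs\gp}_{\infty}$, so the raising property \eqref{raising} together with the explicit leading structure constants of Proposition \ref{prop:structure constants} is available inside it. I would argue by induction on $N$. The base case is exactly the hypothesis: once all generators of weight at most $2m+2$ lie in $\tilde{\cC}^{\psi}_{XY}(n,m)$, their normally ordered products fill up the whole of $\tilde{\cC}^{\psi}_{XY}(n,m)[d]=\cC^{\psi}_{XY}(n,m)[d]$ for every $d\le 2m+2$, since only generators of weight at most $d$ enter a weight-$d$ monomial.

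For the inductive step I would use that $W^4_{(1)}$ raises conformal weight by $2$ while preserving the $\fr{sp}_2$-module type, so that iterating it on the weight $2m+1$ adjoint triple $X^{2m+1},Y^{2m+1},H^{2m+1}$ produces the triples in all higher odd weights, and iterating it on the weight $2m+2$ singlet $W^{2m+2}$ produces the singlets in all higher even weights. This is precisely why $2m+2$ is the relevant bound: the free-field limits of Lemmas \ref{lemma:CD}--\ref{lemma:BO} exhibit $\cC^{\psi}_{XY}(n,m)$ as a two-factor structure, with the first factor carrying the generators of weight at most $2m$ and the second factor an orbifold of a single free field algebra of type $\cW((2m+1)^3,2m+2,(2m+3)^3,2m+4,\dots)$; the weight $2m+1$ triple and the weight $2m+2$ singlet are exactly the two seeds needed to generate this second factor under a weight-$(+2)$ raising operation. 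Writing the first-order pole as $W^4_{(1)}W^{N}=c_N\,W^{N+2}+\cdots$, where by $\fr{sp}_2$-symmetry and the weight grading the omitted terms are normally ordered products of generators of weight strictly below $N+2$, the induction closes as soon as $c_N\neq 0$ for $N\ge 2m+1$: then $W^4,W^N$ and all correction terms lie in $\tilde{\cC}^{\psi}_{XY}(n,m)$, whence so does $W^{N+2}$.

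The main obstacle is establishing the nonvanishing of $c_N$ for $N\ge 2m+1$. This cannot follow from a graded-dimension count alone, because in the large level limit $W^4$ sits in the first factor and commutes with the entire second factor, so the leading part of $c_N$ vanishes there for $N\ge 2m+1$; this is the concrete manifestation of the failure of weak generation that obstructs the naive argument. The approach I would take is to analyze the free-field limit of the second factor with the generators suitably rescaled. The second factor is an orbifold of a single free field algebra, and such orbifolds are weakly generated by their lowest-weight fields, exactly as in the proofs of Lemmas \ref{lemma:orbifoldB and D}, \ref{lemma:C}, and \ref{lemma:O} and Proposition \ref{weakgenerationOrbifold}. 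The hypothesis guarantees that, after rescaling, the weight $2m+1$ and $2m+2$ generators converge to the lowest generators of the second factor; one then checks that the subleading (in the level) part of the OPE of $W^4$ with these fields reproduces the intrinsic raising operation of the orbifold, so that $c_N$ is a nonzero rational function of $\psi$ and hence nonzero for generic $\psi$. As with the diagonal cosets, the weak generation of the single orbifold lifts generically to $\cC^{\psi}_{XY}(n,m)$. Finally, once $\tilde{\cC}^{\psi}_{XY}(n,m)=\cC^{\psi}_{XY}(n,m)$, simplicity is immediate from Proposition \ref{extension features}(1), which gives simplicity of $\cC^{\psi}_{XY}(n,m)$ for generic $\psi$.
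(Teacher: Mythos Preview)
Your overall structure is right, and you correctly identify the two-factor decomposition in the large level limit and the role of the threshold $2m+2$. But the argument you actually run---raising via $W^4_{(1)}$---takes an unnecessary detour and leaves a real gap. You acknowledge that in the limit $W^4$ sits in the first factor and commutes with the orbifold, so the leading part of $c_N$ vanishes for $N\ge 2m+1$; you then claim that ``the subleading (in the level) part of the OPE of $W^4$ with these fields reproduces the intrinsic raising operation of the orbifold.'' This is the step that is never justified, and it is not clear why it should hold: the orbifold's intrinsic raising is produced by OPEs of its \emph{own} lowest-weight generators with each other, not by any residual coupling to $W^4$. Establishing such a subleading statement would amount to a delicate next-to-leading-order computation in the coset that you have not carried out.

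The paper's proof avoids this entirely by dropping $W^4$ from the inductive step. The hypothesis already places the weight-$(2m+1)$ triple and the weight-$(2m+2)$ singlet inside $\tilde{\cC}^{\psi}_{XY}(n,m)$, so one can simply use \emph{their} OPEs to climb. Passing to the large level limit, these fields are precisely the lowest-weight generators of the orbifold factor (e.g.\ $\cS_{\text{ev}}(2n+1,2m+1)^{\text{O}_{2n+1}}$ in case $CB$, $\cO_{\text{ev}}(4n,2m+2)^{\text{Sp}_{2n}}$ in case $BC$, etc.), and one checks directly that each such orbifold is generated by its fields in weights $2m+1$ and $2m+2$ (for $X=C$) or in weight $2m+2$ (for $X=B$). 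That weak-generation statement in the limit then lifts to generic $\psi$ by the standard deformation argument---exactly the principle you invoke in your last sentence. You already had this idea; the point is that it suffices on its own, without routing through $W^4_{(1)}$.
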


\begin{proof} If $\tilde{\cC}^{\psi}_{XY}(n,m)$ contains the strong generators up to weight $2m+2$, so does its large level limit $\lim_{\psi \ra \infty} \tilde{\cC}^{\psi}_{XY}(n,m)$. We will show that the subalgebra of $\lim_{\psi \ra \infty} \cC^{\psi}_{XY}(n,m)$ generated by the fields in weights up to $2m+2$ is all of $\lim_{\psi \ra \infty} \cC^{\psi}_{XY}(n,m)$, which proves the lemma. For $X=B$, in the large level limit the fields in weights $1,2,\dots,2m+1$ decouple from the ones in higher weight. These higher weight fields are the generators of the following orbifolds
\begin{enumerate}
\item $\cS_{\text{odd}}(2n+1, 2m+2)^{\text{SO}_{2n+1}}$, for $\cC^{\psi}_{BB}(n,m)$,
\item $\cO_{\text{ev}}(4n, 2m+2)^{\text{Sp}_{2n}}$, for $\cC^{\psi}_{BC}(n,m)$,
\item $\cS_{\text{odd}}(2n, 2m+2)^{\text{SO}_{2n}}$, for $\cC^{\psi}_{BD}(n,m)$,
\item $(\cO_{\text{ev}}(4n, 2m+2) \otimes \cS_{\text{odd}}(1,2m+2))^{\text{Osp}_{1|2n}}$, for $\cC^{\psi}_{BO}(n,m)$.
\end{enumerate}
In all cases, it can be checked easily that these orbifolds are generated by the fields in weights $2m+2$. The proof is similar to the proof of Lemmas 5.1-5.8 of \cite{CL4}, and is omitted. Similarly, in the case $X=C$, the fields in weights $1,2,\dots, 2m$ decouple from the ones in higher weight. The fields in higher weights are the generators of the following orbifolds
\begin{enumerate}
\item $\cS_{\text{ev}}(2n+1, 2m+1)^{\text{SO}_{2n+1}}$, for $\cC^{\psi}_{CB}(n,m)$,
\item $\cO_{\text{odd}}(4n, 2m+1)^{\text{Sp}_{2n}}$, for $\cC^{\psi}_{CC}(n,m)$,
\item $\cS_{\text{ev}}(2n, 2m+1)^{\text{SO}_{2n}}$, for $\cC^{\psi}_{CD}(n,m)$,
\item $(\cO_{\text{odd}}(4n, 2m+1) \otimes \cS_{\text{ev}}(1,2m+1))^{\text{Osp}_{1|2n}}$, for $\cC^{\psi}_{CO}(n,m)$.
\end{enumerate}
Again, it is straightforward to check that these orbifolds are generated by the fields in weights $2m+1$ and $2m+2$.
\end{proof}

Next, recall that for all $m\geq 2$, $\cC^{\psi}_{BC}(0,m) = \cW^{\psi - 2m-2}(\gs\gp_{2(2m+1)}, f_{2m+1, 2m+1})$ has the weak generation property $\cC^{\psi}_{BC}(0,m) = \tilde{\cC}^{\psi}_{BC}(0,m)$ for all $\psi \in \mathbb{C}$ except for an explicit finite set of exceptions which is given by Theorem \ref{weakgeneration:Walgebras}.

\begin{theorem} \label{Yalgebra:Quotientproperty} For $m\geq 1$, $X = B,C$, and $Y = B,C,D,O$, $\tilde{\cC}^{\psi}_{XY}(n,m) = \cC^{\psi}_{XY}(n,m)$ as $1$-parameter vertex algebras. In particular, $\cC^{\psi}_{XY}(n,m)$ is a simple, $1$-parameter quotient of $\cW^{\gs\gp}_{\infty}$, and we have 
$$\cC^{\psi}_{XY}(n,m) \cong \cW^{\gs\gp}_{\infty, I_{XY,n,m}}.$$ In this notation, $I_{XY,n,m} \subseteq \mathbb{C}[c,k]$ is the ideal generated by $c = c_{XY}$, where $c_{XY}$ is given by \eqref{CD}-\eqref{BO}, and $\cW^{\gs\gp}_{\infty, I_{XY,n,m}}$ is the simple quotient of $\cW^{\gs\gp}_{\infty} / I_{XY,n,m} \cdot \cW^{\gs\gp}_{\infty}$ by its maximal graded ideal. \end{theorem}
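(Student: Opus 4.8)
The plan is to reduce the entire statement to the \emph{weak generation} of $\cC^{\psi}_{XY}(n,m)$ by its fields of weight at most $4$, and then to verify this weak generation up to weight $2m+2$. First I would invoke the lemma of this section asserting that the subalgebra $\tilde{\cC}^{\psi}_{XY}(n,m)$ generated by the fields of weight at most $4$ is of type $\cW(1^3,2,3^3,4,\dots)$ and is a (not necessarily simple) $1$-parameter quotient of $\cW^{\gs\gp}_{\infty}$; in particular $\tilde{\cC}^{\psi}_{XY}(n,m)$ is itself weakly generated in weight at most $4$ through the raising action of $W^4$, so $\tilde{\cC}^{\psi}_{XY}(n,m) \cong \cW^{\gs\gp}_{\infty, I}$ for some prime ideal $I \subseteq \mathbb{C}[c,k]$. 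By Lemma \ref{upto2m+2} it then suffices to show that $\tilde{\cC}^{\psi}_{XY}(n,m)$ already contains the strong generators of the coset up to weight $2m+2$, for then $\tilde{\cC}^{\psi}_{XY}(n,m) = \cC^{\psi}_{XY}(n,m)$ and the coset is simple.

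Next I would pin down the ideal $I$. The conformal vector $L - L^{\gs\gp_2} - L^{\ga}$ has central charge $c_{XY}$ recorded in \eqref{CD}--\eqref{BO}, while the affine subalgebra has $\gs\gp_2$-level $k$; since $c$ and $k$ are recovered from the quotient as this central charge and this level, Theorem \ref{max quotients} and Corollary \ref{max quotients2} force $I$ to be the height-one prime cut out by the relation $c = c_{XY}$, i.e.\ $I = I_{XY,n,m}$. Combined with the simplicity of $\cC^{\psi}_{XY}(n,m)$ for generic $\psi$ (Proposition \ref{extension features}(1)), the equality $\tilde{\cC}^{\psi}_{XY}(n,m) = \cC^{\psi}_{XY}(n,m)$ will then yield the asserted isomorphism $\cC^{\psi}_{XY}(n,m) \cong \cW^{\gs\gp}_{\infty, I_{XY,n,m}}$.

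The substance of the argument is therefore the weak generation up to weight $2m+2$. Because the raising structure constants of $\cW^{\gs\gp}_{\infty}$ are the universal nonzero rationals of Proposition \ref{prop:structure constants}, each raised field $W^4_{(1)}W^{2i-1}$ or $W^4_{(1)}W^{2i}$ survives in $\tilde{\cC}^{\psi}_{XY}(n,m)$ exactly when the coset-specific raising coefficient $w_j(k)$, of the type computed in \eqref{weak}, is nonzero along the truncation curve for $j \le 2m+2$. For $m=1$ in Cases $CB$ and $CD$ this is precisely Proposition \ref{prop:minimal}; the remaining $m=1$ cases follow from the same finite-level computation carried out with the weight-$\frac{3}{2}$ generators $P^{\mu,i}$ of the corresponding minimal $\cW$-superalgebra, using the nondegenerate pairing of Proposition \ref{extension features}(3). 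For $m \ge 2$ I would show that each $w_j(k)$ is a nonzero rational function by constructing the generators $\bar{W}^{2m+2}, \bar{X}^{2m+3}, \dots$ recursively inside the full $\gs\gp_2$-rectangular-with-tail $\cW$-algebra exactly as in the minimal case and tracking which denominators can vanish; alternatively one may check nonvanishing at a single convenient finite specialization of $\psi$, since a rational function vanishing at a generic point of the curve vanishes identically.

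The hard part will be this finite-level coupling between the two blocks. In the free field limit the coset splits as a $\gs\gp_2$-rectangular factor tensored with a decoupled orbifold tail, so that $W^4$ cannot raise the top rectangular generator (weight $2m$, resp.\ $2m+1$) into the first tail generator (weight $2m+1$, resp.\ $2m+2$); this is why no large-level argument suffices and one must exhibit the relevant raising coefficient at finite $k$, where the off-diagonal OPE terms, whose leading coefficients are the universal nonzero constants of Proposition \ref{prop:structure constants}, genuinely link the rectangular part to the tail. I expect the most delicate bookkeeping to be the determination of the finite exceptional set of levels $k$ at which a $w_j(k)$ can vanish or acquire a pole, exactly as the set $S_n$ arises in Proposition \ref{prop:minimal}, since weak generation, and hence the equality $\tilde{\cC}^{\psi}_{XY}(n,m) = \cC^{\psi}_{XY}(n,m)$, must be established as an identity of $1$-parameter vertex algebras away from those finitely many points.
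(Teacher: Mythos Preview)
Your overall skeleton matches the paper: reduce to weak generation up to weight $2m+2$ via Lemma \ref{upto2m+2}, identify the ideal $I$ from the central charge and the $\gs\gp_2$-level, and conclude simplicity from Proposition \ref{extension features}. Where you diverge is in \emph{how} you propose to establish weak generation for $m\geq 2$ and for the six families other than $CB$, $CD$ at $m=1$.

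You propose either (a) repeating the explicit recursive construction of the $\bar W^{2j}$, $\bar H^{2j+1}$ inside each $\cW^{\psi}_{XY}(n,m)$ as in the minimal case, or (b) verifying nonvanishing of the raising coefficients at a single finite specialization. Option (a) would in principle work but is a substantial new computation for each of the eight families and every $m$; you give no mechanism for carrying it out uniformly, and the \lq\lq hard part\rq\rq\ you identify (coupling the rectangular block to the tail at finite level) is exactly what makes this nontrivial. Option (b) is the right instinct, but you have not said how to \emph{produce} such a specialization: checking at a random value of $\psi$ tells you nothing unless you already know the structure of $\cC^{\psi}_{XY}(n,m)$ at that point.

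The paper's key idea, which your proposal lacks, is to choose the specialization to be an \emph{intersection point of truncation curves}. For each family $XY$ and each $m\geq 1$, one locates a value of $k$ at which the curve $V(I_{XY,n,m})$ meets the curve $V(I_{CD,r,1})$ or $V(I_{CB,r,1})$ for some large $r$; one checks by an explicit formula that the shifted level of the tail $V^a(\gs\go)$ at this point is a positive rational for $r$ sufficiently large, hence the point avoids the exceptional set $S_r$ of Proposition \ref{prop:minimal}. At such a point the simple quotients of $\tilde{\cC}^{\psi}_{XY}(n,m)$ and $\cC^{\psi'}_{CD}(r,1)$ coincide (Theorem \ref{max quotients}), and the latter is known to be weakly generated in weight at most $4$ by Proposition \ref{prop:minimal}. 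This transfers the weak generation to $\tilde{\cC}^{\psi}_{XY}(n,m)$ at that point, with no singular vector below weight $2m+2$ provided $r$ is also chosen to avoid the finitely many bad specializations of $\cC^{\psi}_{XY}(n,m)$ itself. Thus the only explicit raising computation ever needed is the one already done for the minimal $\cW$-algebras of types $B$ and $D$; the remaining six $m=1$ cases and all $m\geq 2$ cases are handled uniformly by this intersection argument rather than by further direct computation.
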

	
\begin{proof} First, to show that $\tilde{\cC}^{\psi}_{XY}(n,m) = \cC^{\psi}_{XY}(n,m)$ as $1$-parameter vertex algebras, it is enough to find a level $\psi$ where the simple quotients coincide, and there is no singular vector in weight below $2m+2$. Consider first the case of $\tilde{\cC}^{\psi}_{CD}(n,m)$ for $m\geq 1$. 

The truncation curves for $\tilde{\cC}^{\psi}_{CD}(n,m)$ and $\tilde{\cC}^{\psi'}_{BC}(0,r)$ intersect at the point when $\psi = \frac{2 ( 2 m + n-1)}{1 + 2 m + 2 r}$ and $\psi' = \frac{2 - n + 2 r}{1 + 2 m + 2 r}$, so that $k = - \frac{4 m +  n + 4 m r + 2 n r}{1 + 2 m + 2 r}$. If we choose $r \neq \frac{n-2}{2}$ and so that $k \neq -1, -2, -4, -\frac{4}{3}, -\frac{8}{5}, -\frac{16}{7}$, this is a point where $\cC^{\psi'}_{BC}(0,r)$ has the weak generation property $\tilde{\cC}^{\psi'}_{BC}(0,r) = \cC^{\psi'}_{BC}(0,r)$. 

Suppose first that $n=0$. Then $\cC^{\psi}_{CD}(0,m)$ is freely generated of type $\cW(1^3, 2, 3^3, \dots, (2m-1)^3, 2m)$ for generic $\psi$, so there can be relations below weight $2m+1$ only for finitely many levels. So we can choose $r$ such that there are no relations in $\cC^{\psi}_{CD}(0,m)$ below weight $2m+1$ at the corresponding value of $\psi$.  Since $\tilde{\cC}^{\psi}_{CD}(0,m)$ has the same OPE algebra as $\cC^{\psi'}_{BC}(0,r)$ and there are no relations below weight $2m+1$, $\tilde{\cC}^{\psi}_{CD}(0,m)$ must contain the strong generators of $\cC^{\psi}_{CD}(0,m)$ up to weight $2m$. Then $\tilde{\cC}^{\psi}_{CD}(0,m) = \cC^{\psi}_{CD}(0,m)$ at this value of $\psi$. By Lemma \ref{upto2m+2}, $\tilde{\cC}^{\psi}_{CD}(0,m) = \cC^{\psi}_{CD}(0,m)$ as $1$-parameter vertex algebras.

Next, suppose that $n \geq 1$. Using Weyl's second fundamental of invariant theory for the standard representation of $\text{SO}_{2n}$, one checks that first relation among the generators of $\tilde{\cC}^{\psi}_{CD}(n,m)$ occurs at weight
$(2m+2n+1)(2n+1)$ for generic $\psi$. As above, there can be relations in lower weight only for finitely many values of $k$. Therefore we can find $r$ such that at the corresponding value of $\psi$ the first relation occurs at weight $(2m+2n+1)(2n+1)$. Since $\tilde{\cC}^{\psi}_{CD}(n,m)$ has the same OPE algebra as $\cC^{\psi}_{BC}(0,r)$ and there are no relations below weight $(2m+2n+1)(2n+1)$, $\tilde{\cC}^{\psi}_{CD}(n,m)$ must contain the strong generators of $\cC^{\psi}_{CD}(n,m)$ up to weight $(2m+2n+1)(2n+1) > 2m+2$. By Lemma \ref{upto2m+2}, $\tilde{\cC}^{\psi}_{CD}(n,m) = \cC^{\psi}_{CD}(n,m)$ at this value of $\psi$, and hence this holds as $1$-parameter vertex algebras. The proof for the other families is similar and is based on the following computations.

\begin{enumerate}

\item The truncation curves for $\tilde{\cC}^{\psi}_{CB}(n,m)$, $\tilde{\cC}^{\psi'}_{BC}(0,r)$ intersect when 
$\psi = \frac{-1 + 4 m + 2 n}{1 + 2 m + 2 r},\ \psi' = \frac{3 - 2 n + 4 r}{2 (1 + 2 m + 2 r)}$.

\item The truncation curves for $\tilde{\cC}^{\psi}_{CC}(n,m)$, $\tilde{\cC}^{\psi'}_{BC}(0,r)$ intersect when 
$\psi = \frac{2 (-1 + 2 m - n)}{1 + 2 m + 2 r}$ and $\psi' = \frac{2 + n + 2 r}{1 + 2 m + 2 r}$.

\item The truncation curves for $\tilde{\cC}^{\psi}_{CO}(n,m)$, $\tilde{\cC}^{\psi'}_{BC}(0,r)$ intersect when $\psi =\frac{-1 + 4 m - 2 n}{1 + 2 m + 2 r}$ and $\psi' = \frac{3 + 2 n + 4 r}{2 (1 + 2 m + 2 r)}$.

\item The truncation curves for $\tilde{\cC}^{\psi}_{BB}(n,m)$, $\tilde{\cC}^{\psi'}_{BC}(0,r)$ intersect when $\psi = \frac{1 + 4 m - 2 n}{4 (1 + m + r)}$ and $\psi' = \frac{3 + 2 n + 4 r}{4 (1 + m + r)}$.

\item The truncation curves for $\tilde{\cC}^{\psi}_{BC}(n,m)$, $\tilde{\cC}^{\psi'}_{BC}(0,r)$ intersect when 
$\psi = \frac{1 + 2 m + n}{2 (1 + m + r)}$ and $\psi' = \frac{1 - n + 2 r}{2 (1 + m + r)}$.

\item The truncation curves for $\tilde{\cC}^{\psi}_{BD}(n,m)$, $\tilde{\cC}^{\psi'}_{BC}(0,r)$ intersect when $\psi = \frac{1 + 2 m - n}{2 (1 + m + r)} $ and $\psi' = \frac{1 + n + 2 r}{2 (1 + m + r)}$.

\item The truncation curves for $\tilde{\cC}^{\psi}_{BO}(n,m)$, $\tilde{\cC}^{\psi'}_{BC}(0,r)$ intersect when $\psi = \frac{1 + 4 m + 2 n}{4 (1 + m + r)}$ and $\psi' = \frac{3 - 2 n + 4 r}{4 (1 + m + r)}$. \end{enumerate}\end{proof}
	
\begin{corollary} \label{weakgeneration:finiteness} For generic values of $\psi_0 \in \mathbb{C}$,
$\cC^{\psi_0}_{XY}(n,m)$ is a quotient of $\cW^{\gs\gp}_{\infty}$. Similarly, for $n \in \frac{1}{2}\mathbb{Z}$, $\cC^{\ell_0}(n)$ is a quotient of $\cW^{\gs\gp}_{\infty}$ for generic values of $\ell_0$. 
\end{corollary}

\begin{corollary} \label{notrialities} There are no isomorphisms between distinct $1$-parameter vertex algebras of the form $\cC^{\psi}_{XY}(n,m)$.
\end{corollary}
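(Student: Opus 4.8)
The plan is to reduce the non-isomorphism statement to the pairwise distinctness of the truncation curves of the eight families, and then to the pairwise distinctness of their central-charge functions. First I would invoke Theorem~\ref{Yalgebra:Quotientproperty}, which realizes each $\cC^{\psi}_{XY}(n,m)$ as a simple $1$-parameter quotient $\cW^{\gs\gp}_{\infty, I_{XY,n,m}}$, the prime ideal $I_{XY,n,m}$ cutting out the truncation curve $V(I_{XY,n,m}) \subseteq \mathbb{C}^2$, namely the graph of the central charge $c_{XY}(n,m)$ of \eqref{CD}--\eqref{BO}. Given a hypothetical isomorphism $\cC^{\psi}_{XY}(n,m) \cong \cC^{\psi'}_{X'Y'}(n',m')$ of $1$-parameter vertex algebras, I would observe that it restricts at each generic parameter value to an isomorphism of the simple fibers, and hence produces infinitely many pointwise coincidences between the two families of simple quotients. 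By Corollary~\ref{max quotients2} each such coincidence lies in $V(I) \cap V(J)$ (with $I = I_{XY,n,m}$, $J = I_{X'Y',n',m'}$); since two distinct irreducible plane curves meet in finitely many points, this forces $V(I) = V(J)$. Here I am using, as in the proof of Theorem~\ref{max quotients}, that the central charge $c$ and the level $k$ of the canonical affine subalgebra $V^k(\gs\gp_2)$ are isomorphism invariants.

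Next I would turn equality of curves into equality of functions. In every family the $\gs\gp_2$-level $k$ is an affine function of $\psi$ with nonzero slope, so $k$ is a global coordinate on each truncation curve and $V(I_{XY,n,m})$ is precisely the graph $\{(c_{XY}(n,m;k),k)\}$. Thus $V(I) = V(J)$ if and only if $c_{XY}(n,m;k) = c_{X'Y'}(n',m';k)$ as rational functions of $k$, and the whole corollary reduces to showing that the functions in \eqref{CD}--\eqref{BO} are pairwise distinct over the admissible ranges of $(X,Y,n,m)$.

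For this final step I would extract two invariants from the explicit formulas. The leading behavior as $k \to \infty$ gives $c_{XY}(n,m;k)/k \to -8m^2$ when $X=C$ and $\to -8m(m+1)$ when $X=B$; since these value sets are disjoint and each is strictly monotonic in $m$ over the admissible range, the asymptotics already determine $X$ and $m$. The pole locations then separate $Y$: away from the universal pole $k=-2$, the remaining poles of $c_{XY}(n,m;k)$ sit at integers when $Y \in \{C,D\}$ and at half-integers when $Y \in \{B,O\}$, which also pins down the residue data fixing $n$. Within a fixed class the two members are interchanged by $n \mapsto -n$ (the relations $c_{XD}(n,m)=c_{XC}(-n,m)$ and $c_{XB}(n,m)=c_{XO}(-n,m)$ of the Remark following Lemma~\ref{lemma:BO}), so a coincidence would require $n' = -n$; this is incompatible with the admissible ranges in every such pair, since in each the two members are constrained by $n \geq 0$ and $n \geq 1$ respectively, leaving no solution with $n'=-n$. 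I expect the only real obstacle to be bookkeeping at the finitely many small values of $n$ where a denominator factor is cancelled by the numerator, so that the naive pole count drops: for those exceptional cases I would fall back on the cancellation-robust leading asymptotics supplemented by a direct evaluation, after which all eight families have pairwise distinct central-charge functions and the corollary follows.
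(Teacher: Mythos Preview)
Your approach is essentially the same as the paper's: reduce to showing that the ideals $I_{XY,n,m}$ are pairwise distinct (equivalently, that the central-charge functions \eqref{CD}--\eqref{BO} are pairwise distinct as rational functions of $k$), and then verify this from the explicit formulas. The paper simply calls this last step ``straightforward to check'' while you spell out a strategy via leading asymptotics and pole locations; one minor slip is that for $X=C$ the limit of $c_{XY}/k$ is $-(8m^2-2)$ rather than $-8m^2$, but the disjointness and monotonicity you invoke still hold with the corrected value, so the argument goes through.
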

	
\begin{proof} Since the algebras $\cC^{\psi}_{XY}(n,m)$ all arise as $1$-parameter quotients of $\cW^{\gs\gp}_{\infty}$ of the form $\cW^{\gs\gp}_{\infty, I_{XY,n,m}}$, it suffices to show that the ideals $I_{XY,n,m}$ are all distinct. But this is straightforward to check using the formulas for these ideals given by \eqref{CD}-\eqref{BO}.
\end{proof}

\section{Reconstruction} \label{sect:recon} 
Recall the $\fr{sp}_2$-rectangular $\cW$-algebra $\cW^{\psi}_{XY}(n,m)$ from Section \ref{sect:YtypeC} whose coset by $V^{a}(\fr{a})$ is the $Y$-algebra $\cC^{\psi}_{XY}(n,m)$, where $\fr{a}$-level $a$ is determined in terms of $\fr{sp}_2$-level $k$, as in Table \ref{tab:Walgebras}. Theorem \ref{Yalgebra:Quotientproperty} implies that $\cW^{\psi}_{XY}(n,m)$ is an extension of $V^{a}(\mathfrak{a}) \otimes \cW$, where $\cW \cong \cC^{\psi}_{XY}(n,m)$ is a simple, $1$-parameter quotient of $\cW^{\mathfrak{sp}}_{\infty}$. Moreover, the extension is generated by fields in a fixed weight and parity (determined by $X$ and $Y$), which transform as $\mathbb{C}^2 \otimes \rho_{\mathfrak{a}}$ as a module for $\mathfrak{sp}_2 \oplus \mathfrak{a}$. 

	The main result of this section is a {\it reconstruction theorem}, which states that the full OPE algebra of $\cW^{\psi}_{XY}(n,m)$ is determined by the structure of $\cW$ and the action of $\fr{sp}_2 \oplus\fr{a}$ on the generating fields. We will use the same notation for the generators of $\cW$; in particular, we have fields of odd weight $\{X^{2i-1},Y^{2i-1},H^{2i-1}|i\geq 1\}$ and fields of even weight $\{W^{2i}|i\geq 1\}$, which we refer to uniformly as $W^n$, see (\ref{convention Walpha}). We would like to handle cases when the nilpotent is of type $B$ and $C$ in a uniform way. Above, $m$ is the rank of the Lie algebra in which the nilpotent element is principal, and it enters our procedure as a degree of the leading pole in (\ref{normalization}). Therefore we adopt the following convention. Let $m$ be the conformal weight of the extension fields, which is either an integer or half-integer. 
	\begin{itemize}
	\item If $m\geq 1$ is an integer, then nilpotent is principal in $\fr{so}_{2m-1}$.
	\item If $m\geq \frac{3}{2}$ is a half-integer, then nilpotent is principal in $\fr{sp}_{2m-2}$.
		If $m=\frac{1}{2}$ then we are in the case of diagonal cosets discussed in Section \ref{sect:Diagonal}.
	\end{itemize}

	\begin{theorem}\label{thm:reconstruction}
		Let $\cA^{\psi}_{XY}(n,m)$ be a simple $1$-parameter vertex (super)algebra with the following properties, which are shared with $\cW^{\psi}_{XY}(n,m)$.
		
		\begin{enumerate}\label{features of extension}\label{assumptions}
			\item 
			$\cA^{\psi}_{XY}(n,m)$ is a conformal extension of $\cW\otimes V^a(\fr{a})$, where $\cW$ is some 1-parameter quotient of $\Wsp$.
		
			\item The extension is generated by fields $P^{\mu,j}$ in weight $m$ which are primary for the conformal vector $L+L^{\fr{a}}$ of $\cA^{\psi}_{XY}(n,m)$, primary for the affine subalgebra $V^k(\fr{sp}_2)\otimes V^{a}(\fr{a})$, and transform as $U = \mathbb{C}^2\otimes \rho_{\fr{a}}$ under $\fr{sp}_2\oplus\fr{a}$.
			
			\item The extension fields have the same parity as the corresponding fields of $\cW^{\psi}_{XY}(n,m)$ appearing in Table \ref{tab:Walgebras}.
			 Algebra $\cA^{\psi}_{XY}(n,m)$ is strongly generated by these fields, together with the generators of  $\cW\otimes V^a(\fr{a})$.
			\item The restriction of the Shapovalov form to the extension fields is non-degenerate.
		\end{enumerate}
		Then $\cA^{\psi}_{XY}(n,m)$ is isomorphic to $\cW^{\psi}_{XY}(n,m)$ as $1$-parameter vertex (super)algebras.
	\end{theorem}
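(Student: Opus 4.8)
The plan is to show that hypotheses (1)--(4) force the entire OPE algebra of $\cA^{\psi}_{XY}(n,m)$ to coincide, as a function of the parameter $\psi$ (equivalently of $c$ and $k$), with that of $\cW^{\psi}_{XY}(n,m)$. Since $\cW^{\psi}_{XY}(n,m)$ itself satisfies (1)--(4) by Theorem \ref{Yalgebra:Quotientproperty} and Proposition \ref{extension features}, and since both algebras are assumed simple, a positive resolution of this uniqueness statement immediately yields the isomorphism: two simple $1$-parameter vertex (super)algebras with the same strong generators and the same OPE relations coincide, being the unique simple graded quotient of the common universal object, exactly as in the proof of Theorem \ref{one-parameter quotients theorem}. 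Thus the whole content of the theorem is the constructive determination of the OPEs.

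First I would exploit weak generation to reduce to a finite problem. Since $\cW \cong \cC^{\psi}_{XY}(n,m)$ is a $1$-parameter quotient of $\Wsp$ by Theorem \ref{Yalgebra:Quotientproperty}, and $\Wsp$ is weakly generated by its fields of weight at most $4$, the subalgebra $\cW$ is weakly generated by $X^1,Y^1,H^1,L,W^4$; the affine factor $V^a(\fr{a})$ is generated by its weight-one currents; and by hypotheses (2)--(3) the extension module is generated by the primary fields $P^{\mu,j}$ of weight $m$. Hence $\cA^{\psi}_{XY}(n,m)$ is weakly generated by this finite set, and it suffices to pin down the OPEs among these weak generators, after which every remaining OPE follows formally from \eqref{conformal identity}--\eqref{quasi-derivation} and the Jacobi identities. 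The OPEs internal to $\cW\otimes V^a(\fr{a})$ are already known, so the genuinely new data are (a) the action of $L$, of the affine currents, and of $W^3,W^4$ on the $P^{\mu,j}$, and (b) the product $P^{\mu,i}(z)P^{\nu,j}(w)$, which by the $\mathbb{Z}_2$-grading of the extension lands in $\cW\otimes V^a(\fr{a})$.

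Next I would write the most general ansatz for these OPEs compatible with conformal weight, with $\fr{sp}_2\oplus\fr{a}$-equivariance, and with the prescribed parity. Primality for $V^k(\fr{sp}_2)\otimes V^a(\fr{a})$ and for the conformal vector fixes the OPEs of the affine currents and of $L$ with $P^{\mu,j}$ entirely in terms of the zero-mode action on $U\cong\mathbb{C}^2\otimes\rho_{\fr{a}}$. For the products $W^3\times P$, $W^4\times P$, and $P\times P$, symmetry under $\fr{sp}_2\oplus\fr{a}$ factorizes each structure constant through the Clebsch--Gordan data of $U\otimes U$ exactly as in \eqref{sp2motion}, so that only a finite list of scalar unknowns survives in each pole; the leading pole of $P\times P$ is fixed by the normalization \eqref{normalization}, which is available precisely because hypothesis (4) makes the invariant pairing on $U$ non-degenerate. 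I would then propagate the action of the higher generators using $W^4$ as a raising operator: once $W^4\times P$ is known, the identity $W^{n+2}=W^4_{(1)}W^n$ determines $W^n\times P$ for all $n$ through the recursive mechanism of Lemmas \ref{lemma:recursions0}--\ref{rproduct}, so the full module structure of the extension over $\cW$ is governed by finitely many constants.

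Finally I would impose the Jacobi identities---principally $J(W^4,P,P)$, $J(W^3,P,P)$, $J(X^1,P,P)$, and the super-symmetric $J(P,P,P)$ when $P$ is fermionic---to solve for the surviving unknowns as rational functions of $c$ and $k$. The main obstacle is exactly this last step: one must show that these associativity constraints, together with the normalization and the known internal structure of $\cC^{\psi}_{XY}(n,m)$, admit a \emph{unique} solution, with no residual free parameter and no inconsistency, uniformly across the eight cases and both parities. The quadratic nature of some of these identities (as already seen in the base computation, Proposition \ref{prop:base case proposition}) means the solution set is a priori finite rather than a single point; I expect the remaining discrete ambiguity to be removed by the fixed normalization and parity together with the inner automorphism $\sigma$, in the same manner that $\sigma$ resolved the sign ambiguity in the construction of $\Wsp$. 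Granting this uniqueness, $\cA^{\psi}_{XY}(n,m)$ and $\cW^{\psi}_{XY}(n,m)$ have identical OPE algebras, and simplicity yields the asserted isomorphism. The argument runs parallel to, but is more elaborate than, the reconstructions in \cite{CL4} and \cite{ACKL}, the additional complications being the simultaneous action of the two symmetry algebras $\fr{sp}_2$ and $\fr{a}$ and the need to handle the fermionic cases.
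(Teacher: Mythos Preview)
Your overall framework---write a symmetry-constrained ansatz, impose Jacobi identities, and deduce uniqueness---is exactly the paper's strategy. However, there is a genuine gap in your execution, and it occurs already in the second paragraph: you write ``Since $\cW \cong \cC^{\psi}_{XY}(n,m)$ is a $1$-parameter quotient of $\Wsp$ by Theorem \ref{Yalgebra:Quotientproperty}\dots''. But hypothesis (1) only says $\cW$ is \emph{some} $1$-parameter quotient of $\Wsp$; it does not identify which one. Determining that $\cW$ must equal $\cC^{\psi}_{XY}(n,m)$---equivalently, that the truncation curve is forced---is not an input but the first substantial step of the proof. The paper achieves this by imposing conformal symmetry $J(L,W^3,P)$ and then a specific short list of Jacobi identities ($J_{2,1}(X^3,H^3,P)$, $J_{1,3}(H^3,H^3,P)$, $J_{2,4}(W^4,W^4,P)$, etc.) to obtain a quadratic, a cubic, and a quartic equation in $c$, $k$, and the leading structure constant $p^3_0$; solving this system explicitly yields $c$ (and $p^3_0$) uniquely as a rational function of $k$, which is exactly the truncation curve \eqref{trunk}. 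Your final paragraph tacitly acknowledges that this nonlinear step is where the work lies, but you leave it as an expectation rather than an argument.

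The second half of the paper's proof is cleaner than you sketch. Rather than using $W^4$ as a raising operator on $P$, the paper uses $W^3$: the Jacobi identity $J_{r,1}(W^3,W^n,P)$ gives a recursion expressing $W^{n+1}_{(r)}P$ in terms of $W^n_{(s)}P$ and known $\cW$-data, so once $W^3\times P$ is pinned down (by Lemma \ref{lem:truncation}), all $W^n\times P$ follow by induction. For $P\times P$, the paper does not use $J(P,P,P)$ at all; instead it runs a downward induction on pole order via $J_{1,r}(X^3,P^{-1,i},P^{1,j})$, starting from the leading pole fixed by \eqref{normalization}. These two inductive schemes replace the speculative uniqueness argument in your last paragraph with a constructive one, and they complete the proof.
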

	
	Our strategy is similar to the one used in \cite{CL3} and \cite{CL4}, and consists of 3 steps.
	\begin{enumerate}
		\item We first show that the existence of an extension $\cA^{\psi}_{XY}(n,m)$ satisfying (\ref{features of extension}) uniquely determines the truncation curve expressing $\cW$ as a $1$-parameter quotient of $\Wsp$. It can be uniformly expressed in the form
		\begin{equation}\label{trunk}
			\begin{split}
				c\left(4 \lambda (k+2)+(k+1) (2 k+1)	\right)=4 k  \lambda \left(4  \lambda(k+1) (k+2)-(2 k+1) (2 k+3)\right),
			\end{split}
		\end{equation} where $\lambda$ is given by \eqref{def:lambda}
		
		\item We express the OPEs $W^{3}(z)P^{\mu,j}(w)$ for all  $P^{\mu,j}\in U$, in terms of $\fr{sp}_2$-level $k$.
		\item We argue that the all OPEs of $\cA^{\psi}_{XY}(n,m)$  are determined from OPEs of $\Wsp$ together with $W^{3}(z)P^{\mu,j}(w)$ for all $P^{\mu,j}\in U$.
	\end{enumerate}

	\subsection{Set-up}
	Let $\{q^{\alpha}|\alpha\in S\}$ denote the basis of $\fr{a}$, and $X^{\alpha}(z)$ be the corresponding fields.
	We use generators $\{\tilde W^{i} | i\geq 1\}$ as in (\ref{raising}), to cast our assumptions (\ref{assumptions}) in the OPE form.
	
	\begin{enumerate}\label{def of extension}
		\item $V^{a}(\fr{a})$ is affine subalgebra:
		\[X^{\alpha}(z)X^{\beta}(w)\sim a (q^{\alpha}|q^{\beta})(z-w)^{-2}+(\sum_{\gamma\in S}f^{\alpha,\beta}_{\gamma}X^{\gamma})(w)(z-w)^{-1}.\]
		\item $V^{a}(\fr{a})$ commutes with $\cW$:
		\[\tilde W^{n}(z) X^{\alpha}(w)\sim 0,\quad q^{\alpha}\in\fr{a},\quad n\geq 1.\]
		\item Fields $P^{\mu,j}$ are primary for $V^k(\fr{sp}_2)\otimes V^{a}(\fr{a})$ and transform as $U = \mathbb{C}^2\otimes \rho_{\fr{a}}$:
		\begin{equation*}\label{gP}
			X^{\alpha}(z)P^{\mu,j}(w) \sim (\rho_{\fr{sp}_2\oplus\fr{a}}(q^{\alpha})P^{\mu,j})(w)(z-w)^{-1}, \quad q^{\alpha} \in \fr{sp}_2\oplus \fr{a}.
		\end{equation*}
		\item In $\cW\otimes V^{a}(\fr{a})$ the total Virasoro field is $T=\tilde L+L^{\fr{sp}_2}+L^{\fr{a}}$, so the OPEs of extension fields $L$ are
		\begin{equation*}\label{LP}
			\begin{split}
				\tilde L(z)P^{\mu,j}(w)\sim  \lambda  P^{\mu,j}(w)(z-w)^{-2}+{(\partial P^{\mu,j}-L_{(0)}^{\fr{sp}_2}P^{\mu,j}- L_{(0)}^{\fr{a}}P^{\mu,j})(w)}(z-w)^{-1}
			\end{split}
		\end{equation*}
		where constant 
		\begin{equation} \label{def:lambda} \lambda=m- \frac{3}{4(k+2)}-\frac{\T{Cas}}{a+h^{\vee}_{\fr{a}}}.\end{equation}
		Here $\T{Cas}$ is the eigenvalue of the Casimir in $U(\fr{a})$ of the standard representation $\rho_{\fr{a}}$.
		\item Since $\cA$ is nondegenerate, so we can renormalize fields $P^{\mu,j}$ so that (\ref{normalization}) holds.
	\end{enumerate}
	
	Next, we proceed to set up OPEs among the generators of $\cW$ and extension fields $P^{\mu,j}$.
	For our computation we need only a few structure constants, defined in the following OPEs.
	\begin{equation}\label{ansatz extension}
		\begin{split}
			\tilde W^{2n}(z)P^{-1,1}(w)\sim &p^{2n}_{0} P^{-1,1}(w)(z-w)^{-2n}+(p^{2n}_{1}\partial P^{-1,1}+\dotsb)(w) (z-w)^{-2n+1}\\&
			+(p^{2n}_{1,1}\partial^2 P^{-1,1}+p^{2n}_{2} :\!LP^{-1,1}\!:\!+\dotsb)(w)(z-w)^{-2n+2}+\dotsb,\\
			\tilde X^{2n-1}(z)P^{-1,1}(w)\sim & p^{2n-1}_{0}P^{1,1}(w)(z-w)^{-2n+1}\\
			&+(p^{2n-1}_{1}\partial P^{1,1}+a^{2n-1}L_{(0)}^{\fr{sp}_2}P^{1,1}+b^{2n-1}L_{(0)}^{\fr{a}}P^{1,1})(w)(z-w)^{-2n+2}\\&
			+(p^{2n-1}_{1,1}\partial^2 P^{1,1} +p^{2n}_{2}:\!LP^{1,1}\!:\!+\dots)(w) (z-w)^{-2n+3}+\dotsb.
		\end{split}
	\end{equation}
	From (\ref{ansatz extension}), using Jacobi identities $J_{0,r}(X^{\alpha},W^{n},P^{-1,1})$ for $X^{\alpha} \in V^k(\fr{sp}_2) \otimes V^a(\fr{a})$ one can determine all OPEs $W^n(z)P^{\mu,i}(w)$ for $P^{\mu,j}\in U$.
	Finally, we posit that the OPEs among the extension fields of weight $m$ have the most general form that is compatible with the conformal weight grading and $\fr{sp}_2\oplus\fr{a}$-symmetry.

	\subsection{Step 1: Truncation}
	Computation of truncation curve amounts to the imposition of several Jacobi identities. 
	We will arrive at system consisting of a quadratic, cubic and quartic equations.
	Solving it, we will obtain the formula (\ref{trunk}).
	
	First, we impose conformal symmetry thanks to the Jacobi identities $J(L,W^{3},P^{-1,1})$.
	For the purposes of evaluating the truncation curve, only the structure constants displayed in (\ref{ansatz extension}) are relevant. We find the following expressions.
	\begin{equation}
		\begin{split}
			p^{3}_{1}=&\frac{(3 k+4)p^3_0}{2(k+2)\lambda},\\
			p^{3}_{1,1}=&\frac{ (3 k+4) (2 c k+3 c+4 \lambda k)p^3_0}{2(k+2)^2 \left(2 c
				\lambda+c+16 \lambda^2-10 \lambda\right)\lambda},\\
			p^{3}_{2}=&\frac{2 (3 k+4) (4 \lambda k+8 \lambda-4 k-5)p^3_0}{(k+2)^2 \left(2 c
				\lambda+c+16\lambda^2-10 \lambda\right)},\\
			a^{3}_{1}=&\frac{\left(4 \lambda  k^2-3 k^2+8 \lambda  k-k+4\right)p^3_0}{4 (k-1) (k+2)^2 \lambda},\\
			b^{3}_{1}=&\frac{ \left(2 \lambda  k^2-3 k^2-k-8 \lambda +4\right)p^3_0}{2 (k-1) (k+2)^2 \lambda}.\\
		\end{split}
	\end{equation}
	
	Next, we impose the Jacobi identities $J_{2,1}(X^3,H^3,P^{-1,1})$,
	$J_{1,3}(H^3,H^3,P^{-1,1})$ and $J_{2,4}(W^4,W^4,P^{-1,1})$ to determine the relevant structure constants (\ref{ansatz extension}) arising in the OPEs $W^{4}(z)P^{-1,1}(w)$, 
	$W^{5}(z)P^{-1,1}(w)$ and  $W^{6}(z)P^{-1,1}(w)$  in terms of variables $p^{3}_0$ and $c,k$.
	Extracting the coefficients of $:\!X^1P^{1,1}\!:$, $P^{1,1}$ and $\partial^2 P^{1,1}$ 
	in identities $J_{2,1}(X^3,X^3,P^{-1,1})$, 
	$J_{3,2}(X^3,W^4,P^{-1,1})$ and 
	$J_{1,3}(W^4,W^4,P^{-1,1})$ we obtain a quadratic, cubic and quartic equations in variables $c,k$ and $p^{3}_0$, respectively. 
	Finally, assuming $p^{3}_{0}\neq 0$, and solving this system we uniquely determines $c$ and $p^3_0$ as functions of $k$.
	\begin{equation}
		\begin{split}
			c=&4 k  \lambda\frac{4  \lambda(k+1) (k+2)-(2 k+1) (2 k+3)}{4  \lambda (k+2)+(k+1) (2 k+1)},\\
			p^{3}_0 =&\frac{2 \lambda (k-1) (k+1) (k+2)^2 (2 k+1) (4  \lambda (k+2)+2 k+1)}{(3k+4) \left(4  \lambda (k+2)+(k+1) (2 k+1)\right)}.\\
		\end{split}
	\end{equation}
	In particular, this proves part (1) of Theorem \ref{thm:reconstruction}.
	Moreover, we have the following.
	
	\begin{lemma}\label{lem:truncation}
		The OPEs $W^{3}(z)P^{\mu,j}(w)$ for $P^{\mu,j}\in U$ are expressed as rational functions of the $\fr{sp}_2$-level $k$. 
	\end{lemma}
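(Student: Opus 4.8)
The plan is to show that, once the truncation curve \eqref{trunk} has been established, every structure constant occurring in $W^{3}(z)P^{\mu,j}(w)$ is forced to be a rational function of $k$ alone. I would proceed in three steps, mirroring the logic already used for the weight-$3$ fields of $\Wsp$ itself.

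First I would reduce to a single reference OPE. Because $V^k(\fr{sp}_2)\otimes V^{a}(\fr{a})$ is an affine subalgebra, and the triple $W^{3}=\{X^{3},Y^{3},H^{3}\}$ and the extension fields $P^{\mu,j}$ transform respectively in the adjoint $\fr{sp}_2$-module and in $U=\mathbb{C}^2\otimes\rho_{\fr{a}}$, the zero modes $X^{\alpha}_{(0)}$ with $q^{\alpha}\in\fr{sp}_2\oplus\fr{a}$ act by derivations on both sides of the product. Applying these zero modes relates every component $W^{3}(z)P^{\mu,j}(w)$ to the single OPE $X^{3}(z)P^{-1,1}(w)$; this is precisely the content of the Jacobi identities $J_{0,r}(X^{\alpha},W^{3},P^{-1,1})$ referenced after \eqref{ansatz extension}. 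Hence it suffices to pin down the structure constants in $X^{3}(z)P^{-1,1}(w)$.

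Second I would exploit the fact that this reference OPE is very short. Since $W^{3}$ lies in the subalgebra $\cW$ and the module $M$ generated by the extension fields is a $\cW\otimes V^{a}(\fr{a})$-module whose lowest conformal weight is $m$, the field $X^{3}_{(r)}P^{-1,1}$ has conformal weight $3+m-(r+1)$, which is $\ge m$ only for $r+1\le 3$; thus the OPE has terms only at pole orders $1,2,3$. The order-$3$ pole produces a weight-$m$ field of $\fr{sp}_2\oplus\fr{a}$-weight matching $P^{1,1}$, which by the generation hypothesis (assumption (3)) must be the multiple $p^{3}_{0}P^{1,1}$; the order-$2$ and order-$1$ poles produce descendants such as $\partial P^{1,1}$, $L^{\fr{sp}_2}_{(0)}P^{1,1}$, $L^{\fr{a}}_{(0)}P^{1,1}$, $\partial^{2}P^{1,1}$, $:\!LP^{1,1}\!:$, and affine descendants $X^{\alpha}_{(-1)}P$, exactly the terms displayed in \eqref{ansatz extension}. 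The essential structural input, the module analogue of Remark \ref{rem:prim}, is that the combined conformal and affine Ward identities $J(L,W^{3},P^{-1,1})$ and $J_{0,r}(X^{\alpha},W^{3},P^{-1,1})$ determine every descendant coefficient in terms of the single primary coefficient $p^{3}_{0}$ together with $c$, $k$, $\lambda$, and $a$; the displayed constants $p^{3}_{1},p^{3}_{1,1},p^{3}_{2},a^{3}_{1},b^{3}_{1}$ were already produced this way in Step 1. Finally I would substitute the outputs of the truncation computation: on the curve \eqref{trunk} both $c$ and $p^{3}_{0}$ are rational in $k$, the level $a=a(k)$ is the affine-linear function recorded in Table \ref{tab:Walgebras}, and $\lambda$ is the rational function of $k$ given by \eqref{def:lambda}. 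Feeding these in turns every structure constant in $X^{3}(z)P^{-1,1}(w)$, and hence in all of $W^{3}(z)P^{\mu,j}(w)$, into a rational function of $k$.

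The main obstacle I anticipate is the second step: verifying that the enlarged symmetry $\cW\otimes V^{a}(\fr{a})$ genuinely fixes all descendant coefficients and leaves no residual free parameter. For pure Virasoro this is the classical fact behind Remark \ref{rem:prim}, but here the symmetry algebra is larger, and the analogous statement was only verified computationally in the construction of $\Wsp$. What makes it tractable is that the descendants live only in the two weights $m+1$ and $m+2$, and the generation hypothesis guarantees that $M$ contains no primary other than $P$ in this range, so the Ward identities assemble into a finite, determined linear system. Checking that this system has maximal rank — equivalently, that no null descendant can absorb an undetermined constant — is the crux, and it is already handled by the explicit evaluation begun in Step 1.
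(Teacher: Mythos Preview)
Your proposal is correct and follows essentially the same approach as the paper: the lemma is stated there as an immediate consequence of the Step~1 computation, in which conformal and affine Ward identities (the Jacobi relations $J(L,W^{3},P^{-1,1})$ and $J_{0,r}(X^{\alpha},W^{3},P^{-1,1})$) express every structure constant in the reference OPE as a rational function of $p^{3}_{0},c,k,\lambda$, after which the truncation computation fixes $c$ and $p^{3}_{0}$ as rational functions of $k$. Your write-up simply spells out this logic in more detail, including the useful observation that the OPE terminates at third-order poles, which the paper leaves implicit.
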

	
	\begin{remark}
		Substituting the values of $\lambda$ as in (\ref{def:lambda}),
		we recover the formulae for the central charges obtained in (\ref{CD}-\ref{CO}). 
	\end{remark}

	\begin{remark}
		The truncation curve depends on the extension data only via the parameter $\lambda$. A similar feature is exhibited by the $\Winf$ and $\Wev$ algebras.
	\end{remark}

	\subsection{Step 2: Reconstruction}
	By Lemma \ref{lem:truncation}, the OPEs $W^{3}(z)P^{\mu,j}(w)$ for $P^{\mu,j}\in U$ are fully determined in terms of the $\fr{sp}_2$-level $k$. 
	In the following we show that this property propagates to all OPEs $W^{n}(z)P(w)$ for $n\geq 4$.
	\begin{lemma}
For $n\geq 3$, the OPEs $W^{n}(z)P^{\mu,j}(w)$ for $P^{\mu,j}\in U$, are uniquely determined by $W^{3}(z)P^{\nu,i}(w)$ for $P^{\nu,i}\in U$ and the OPEs of $\cW$.
	\end{lemma}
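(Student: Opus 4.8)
The plan is to proceed by induction on $n$, using the raising property of $W^4$ to generate the higher-weight fields from $W^3$. Since $\cW$ is a quotient of $\Wsp$, we have $W^{n+2} = W^4_{(1)} W^n$ for $n\geq 3$, and more generally all of $\cW$ is weakly generated by the fields in weights up to $4$. The base case $n=3$ is precisely Lemma \ref{lem:truncation}, which expresses $W^3(z)P^{\mu,j}(w)$ as rational functions of $k$. For the inductive step, I would assume the OPEs $W^a(z)P^{\nu,i}(w)$ are determined for all $a \leq n$ and all $P^{\nu,i}\in U$, and then show that $W^{n+2}(z)P^{\mu,j}(w)$ is determined.

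First I would consider the Jacobi identity $J_{1,r}(W^4, W^n, P^{\mu,j})$ for each $r\geq 0$. Expanding this via \eqref{Jacobi} gives a relation of the schematic form
\begin{equation*}
	(W^4_{(1)}W^n)_{(r)}P^{\mu,j} = W^4_{(1)}(W^n_{(r)}P^{\mu,j}) - r\, W^4_{(r)}(W^n_{(1)} \cdots) + \sum_{i} \binom{1}{i}(W^4_{(i)}W^n)_{(r+1-i)}P^{\mu,j},
\end{equation*}
where every term on the right-hand side except the one involving $W^{n+2}$ is expressible from inductive data: the products $W^a_{(s)}P^{\mu,j}$ with $a\leq n$ are known by the inductive hypothesis, the products $W^4_{(i)}W^n$ for $i\geq 1$ lie in $\cW$ and are governed by its OPE algebra (itself determined by $c,k$, hence by $k$ after imposing the truncation \eqref{trunk}), and $W^4_{(1)}(W^n_{(r)}P^{\mu,j})$ is a product of $W^4$ with a known field, which can be resolved using the quasi-associativity and quasi-derivation identities \eqref{quasi-associativity}--\eqref{quasi-derivation} together with the already-known OPEs of $W^4$ with fields of weight below $n+2$. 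The key point, exactly as in Lemma \ref{recursionsSym} and Lemma \ref{lemma:recursions3}, is that $W^4_{(1)}W^n = W^{n+2}$ appears with coefficient $1$, so the Jacobi identity can be solved for $W^{n+2}_{(r)}P^{\mu,j}$ in terms of inductive data. One must check the leading coefficient is nonzero; here it is literally $1$ by the normalization of the raising property, so no degenerate denominators arise in this recursion (unlike the recursions of Section \ref{sect:main}, where factors such as $1-v^{4,i}_0$ had to be tracked).

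The main obstacle I anticipate is bookkeeping the $\fr{sp}_2\oplus\fr{a}$-equivariance: the field $P^{\mu,j}$ carries an index in $U = \mathbb{C}^2\otimes\rho_{\fr{a}}$, and the products $W^n_{(r)}P^{\mu,j}$ take values in $U$ twisted by whatever module $W^n$ generates. Since $W^{2i}$ is $\fr{sp}_2$-trivial and $W^{2i\pm1}$ is $\fr{sp}_2$-adjoint, and all $W^n$ commute with $V^a(\fr{a})$ by assumption (2) in the set-up, the $\fr{a}$-structure is inert and only the $\fr{sp}_2$-tensor decomposition $\rho_{\bar n}\otimes \rho_2 \cong \rho_2$ (for $\bar n$ even, giving back $U$) or the adjoint-times-$U$ decomposition needs tracking, which is handled by the same $\epsilon$-symbol factorization as in \eqref{sp2motion}. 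I would record that it suffices to determine the single highest-weight component, say $W^{n+2}(z)P^{-1,1}(w)$ as in the ansatz \eqref{ansatz extension}, and recover the rest by applying the lowering operators $Y^1_{(0)}$ and the $\fr{a}$-action. Thus the equivariance reduces the problem to a scalar recursion per pole order $r$, and the induction closes because all pole orders vanish for $r$ sufficiently large relative to $n+2+m$.
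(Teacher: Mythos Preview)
Your approach — induction on $n$ with base case Lemma~\ref{lem:truncation} and inductive step via a Jacobi identity pairing a low-weight generator with $W^n$ and $P^{\mu,j}$ — is the same as the paper's. The paper uses $J_{r,1}(W^{3},W^{n},P^{\mu,j})$ to step from $n$ to $n+1$ and records the resulting denominator $\epsilon^{2,\bar n}_{\bar n}\bigl(r\,v^{3,n}_1-(r+1)v^{3,n}_0\bigr)$ explicitly; you use $J_{1,r}(W^{4},W^{n},P^{\mu,j})$ and step by $2$. Either route is viable.

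There is, however, a real gap in your coefficient analysis. Your displayed identity is garbled (it places $(W^4_{(1)}W^n)_{(r)}P$ on the left while also including it in the sum on the right, and the $-r\,W^4_{(r)}(\cdots)$ term does not occur in $J_{1,r}$), but more to the point, the claim that the coefficient of $W^{n+2}_{(r)}P^{\mu,j}$ is ``literally $1$'' is false. The correct expansion of $J_{1,r}(W^4,W^n,P^{\mu,j})$ reads
\[
W^4_{(1)}\bigl(W^n_{(r)}P^{\mu,j}\bigr) - W^n_{(r)}\bigl(W^4_{(1)}P^{\mu,j}\bigr) = \bigl(W^4_{(0)}W^n\bigr)_{(r+1)}P^{\mu,j} + \bigl(W^4_{(1)}W^n\bigr)_{(r)}P^{\mu,j},
\]
and the \emph{first} term on the right also contributes $W^{n+2}_{(r)}P^{\mu,j}$: by Proposition~\ref{prop:structure constants} one has $W^4_{(0)}W^n = v^{4,n}_0\,\partial W^{n+2}+\cdots$ with $v^{4,n}_0 = \tfrac{3}{n+2}$ (from \eqref{tempEq} and the raising normalization), and $(\partial W^{n+2})_{(r+1)} = -(r+1)W^{n+2}_{(r)}$. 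The net coefficient is therefore $1-\tfrac{3(r+1)}{n+2}$, which vanishes precisely when $n=3r+1$ (e.g.\ $(n,r)=(4,1),(7,2),\ldots$). So your recursion has degenerate denominators at infinitely many $(n,r)$ and the ``no degenerate denominators'' remark is wrong. These cases must be handled by a second Jacobi identity with a different leading coefficient — for instance the paper's $J_{r,1}(W^3,W^n,P^{\mu,j})$, whose denominator $r\,v^{3,n}_1-(r+1)v^{3,n}_0$ has a different zero locus.
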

	\begin{proof}
		We proceed by induction, with our base case the Lemma \ref{lem:truncation}.
		Inductively, assume that the OPEs $W^{i}(z)P^{\mu,j}(w)$ for $P^{\mu,j}\in U$ have been expressed in terms of $k$.
		Imposing the Jacobi identity $J_{r,1}(W^{3},W^{n},P^{\mu,j})$ we obtain a relation
		\begin{equation}\label{XPFull}
			\begin{split}
				W^{n+1}_{(r)}P^{\mu,j}=\frac{1}{\epsilon^{2,\bar{n}}_{\bar n}}\frac{1}{rv^{3,n}_{1}-(r+1)v^{3,n}_{0}} \left(W^4_{(r)}W^{n}_{(1)}P^{\mu,j}-W^{n}_{(1)}W^4_{(r)}P^{\mu,j}-R_{r,1}^{3,n}\right),
			\end{split}
		\end{equation}
		where
		\begin{equation}
			R_{r,1}^{3,n}=(W^{4,n}_{\bar n}(0))_{(r+1)}P^{\mu,j}
			-\sum_{j=2}^r\binom{r}{j}(W^4_{(j)}W^{i})_{(r+1-j)}P^{\mu,j}.
		\end{equation}
		By induction, the right side of (\ref{XPFull}) is known, and this completes the proof.
	\end{proof}
	
	Let $i,j \in \{1,\dots,\T{dim}(\rho_{\fr{a}})\}$ be fixed.
	We have the following.
	
	\begin{lemma}\label{PP}
		The OPEs $P^{\mu,i}(z)P^{\nu,j}(w)$ are fully determined from $W^n(z)P^{\alpha,l}(w)$ for $P^{\alpha,l}\in U$, and the OPEs of $\cW$.
	\end{lemma}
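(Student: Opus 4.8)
The plan is to reconstruct each OPE coefficient of two extension fields from data that is already pinned down. Because $\cA^{\psi}_{XY}(n,m)$ is strongly generated by the fields of $\cW\otimes V^a(\fr{a})$ together with the $P^{\mu,j}$, and since a product of two extension fields is even, every coefficient $P^{\mu,i}_{(t)}P^{\nu,j}$ lies in $\cW\otimes V^a(\fr{a})$. First I would write the most general OPE $P^{\mu,i}(z)P^{\nu,j}(w)$ compatible with the conformal weight grading, with the parity prescribed in Table \ref{tab:Walgebras}, and with $\fr{sp}_2\oplus\fr{a}$-equivariance, each coefficient being a normally ordered differential polynomial of weight $2m-t-1$ in the generators of $\cW\otimes V^a(\fr{a})$. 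The leading pole is already fixed by the normalization \eqref{normalization}, so the goal is to determine every remaining structure constant as a rational function of the $\fr{sp}_2$-level $k$, using only the OPE algebra of $\cW$ and the OPEs $W^{n}(z)P^{\alpha,l}(w)$ for $P^{\alpha,l}\in U$ established in Lemma \ref{lem:truncation} and the preceding lemma.

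The engine is the family of Jacobi identities $J_{r,s}(W^{n},P^{\mu,i},P^{\nu,j})$ and $J_{r,s}(X^{\alpha},P^{\mu,i},P^{\nu,j})$, where $W^{n}$ ranges over the strong generators of $\cW$ and $X^{\alpha}$ over the affine currents of $\fr{a}$. In each such identity the two ``crossed'' terms $P^{\mu,i}_{(s)}(W^{n}_{(r)}P^{\nu,j})$ and $(W^{n}_{(t)}P^{\mu,i})_{(r+s-t)}P^{\nu,j}$ involve only the already-known OPEs $W^{n}(z)P(w)$, while the term $W^{n}_{(r)}\bigl(P^{\mu,i}_{(s)}P^{\nu,j}\bigr)$ is the action of a known mode on an unknown element of $\cW\otimes V^a(\fr{a})$, computed through the OPE algebra of $\cW$ once that element is expanded in the PBW basis. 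Reading off the coefficient of each PBW monomial turns this into a finite linear system for the undetermined structure constants appearing in $P^{\mu,i}_{(s)}P^{\nu,j}$.

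To solve the system I would induct downward on the pole order $s$ (equivalently, upward on the weight of the coefficient), the base case being the leading pole fixed by \eqref{normalization}; in the inductive step every product $P^{\mu,i}_{(s')}P^{\nu,j}$ with $s'>s$ occurring on the right-hand side of $J_{r,s}(W^{3},P^{\mu,i},P^{\nu,j})$ is already determined. The crucial point is uniqueness: specifying $W^{n}_{(r)}\alpha$ and $X^{\alpha}_{(r)}\alpha$ for all strong generators and all $r\geq 0$ must determine $\alpha:=P^{\mu,i}_{(s)}P^{\nu,j}\in\cW\otimes V^a(\fr{a})$ uniquely. If $\beta$ denotes the difference of two candidate values, then $\beta$ has positive conformal weight and is annihilated by every nonnegative mode of every strong generator; since $\cW\otimes V^a(\fr{a})$ is simple at generic $k$ and its generators act via the Jacobi identities, such a $\beta$ generates no nonzero proper graded ideal and hence vanishes. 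Equivalently, one invokes the nondegeneracy of the Shapovalov form (assumption (4) of Theorem \ref{thm:reconstruction}) on each graded piece of $\cW\otimes V^a(\fr{a})$ to extract the coefficients by pairing against a spanning set.

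The hard part will be this uniqueness step: verifying that the nonnegative modes of the generators (respectively, the Shapovalov form) genuinely detect every coefficient, so that the linear system produced by the Jacobi identities is nondegenerate at generic $k$. This rests on the simplicity of $\cW\otimes V^a(\fr{a})$ for generic $k$ and on controlling the finitely many levels where denominators in the recursively solved structure constants vanish. Away from this finite set every coefficient is forced, and since the $\cC^{\psi}_{XY}(n,m)$ are $1$-parameter vertex algebras with structure constants rational in $k$, determination at generic $k$ suffices to conclude Lemma \ref{PP}.
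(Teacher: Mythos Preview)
Your argument has a genuine gap in the claim that the ``crossed'' terms $(W^{n}_{(t)}P^{\mu,i})_{(r+s-t)}P^{\nu,j}$ and $P^{\mu,i}_{(s)}(W^{n}_{(r)}P^{\nu,j})$ involve only the known OPEs $W^{n}(z)P(w)$. The element $W^{n}_{(t)}P^{\mu,i}$ is itself a normally ordered polynomial in the $P^{\alpha,l}$ and their derivatives (with coefficients in $\cW\otimes V^{a}(\fr a)$), so to evaluate its $(r+s-t)$-th product with $P^{\nu,j}$ you must already know the $P$--$P$ OPEs. Concretely, a term $\partial P^{\alpha,l}$ in $W^{n}_{(t)}P^{\mu,i}$ contributes $(\partial P^{\alpha,l})_{(r+s-t)}P^{\nu,j}=-(r+s-t)P^{\alpha,l}_{(r+s-t-1)}P^{\nu,j}$, which for $t=r$ has pole order $s-1<s$, not covered by your inductive hypothesis. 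Likewise $P^{\mu,i}_{(s)}(\partial P^{\alpha,l})=\partial\bigl(P^{\mu,i}_{(s)}P^{\alpha,l}\bigr)+s\,P^{\mu,i}_{(s-1)}P^{\alpha,l}$ involves the unknown orders $s$ and $s-1$. Thus the right-hand side of $J_{r,s}(W^{n},P,P)$ is \emph{not} expressible purely in terms of $P_{(s')}P$ with $s'>s$; it depends linearly on the very coefficients you are trying to determine. Your uniqueness argument then collapses: if $\alpha_{1},\alpha_{2}$ are two candidate values for $P_{(s)}P$, subtracting the two Jacobi identities does not give $W^{n}_{(r)}(\alpha_{1}-\alpha_{2})=0$ but rather $W^{n}_{(r)}(\alpha_{1}-\alpha_{2})=L(\alpha_{1}-\alpha_{2})$ for some nonzero linear operator $L$ coming from the crossed terms, so the simplicity/Shapovalov argument does not apply.

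The paper exploits exactly this phenomenon rather than trying to avoid it. It reverses the roles of the two sides: assuming $P_{(s)}P$ known for $s\geq r$, the \emph{left} side $X^{3}_{(1)}(P^{-1,i}_{(r)}P^{1,j})$ is computable inside $\cW\otimes V^{a}(\fr a)$, while the \emph{right} side of $J_{1,r}(X^{3},P^{-1,i},P^{1,j})$ produces the next unknown $P_{(r-1)}P$ precisely through the derivative contributions $(\partial P)_{(r)}P=-r\,P_{(r-1)}P$ sitting inside $X^{3}_{(0)}P$ and $X^{3}_{(1)}P$. Collecting the (nonzero) coefficient of $P_{(r-1)}P$ gives a direct one-step recursion; no global nondegeneracy argument is needed. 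Your sketch misses that the mechanism is a recursion \emph{driven} by the derivative terms in $W^{3}_{(\cdot)}P$, not a determination of the mode action on an unknown element.
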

	
	\begin{proof}
		We proceed inductively to determine $P^{\mu,i}_{(r)}P^{\nu,j}$ for all $1 \leq r \leq 2m-1$. 
		Our base case is $r=2m-1$, which is known by assumption (\ref{def of extension}).
		Inductively, assume that all products $$\{P^{\alpha,a}_{(s)}P^{\beta,b}|\ \alpha,\beta\in\{-1,1\},a,b \in\{1,\dots,\T{dim}(\rho_{\fr{a}})\}, \ s>r\},$$ are expressed in terms of the $\fr{sp}_2$-level $k$.
		Consider the Jacobi identity $J_{1,r}(X^3, P^{-1,i}, P^{1,j})$, which reads as follows.
		\begin{equation}\label{extensionXP}
			X^3_{(1)}P^{-1,i}_{(r)}P^{1,j}   =
			(X^3_{(1)}P^{-1,i})_{(r)}P^{1,j} +(X^3_{(0)}P^{-1,i})_{(r+1)}P^{1,j}+P^{-1,i}_{(r)}X^3_{(1)}P^{1,j}.
		\end{equation}
		Note that the left side of relation (\ref{extensionXP}) is known by induction, and only the right side gives rise to a contribution of the desired product $P^{-1,i}_{(r-1)}P^{-1,j}$.
		Each such contribution arises from monomials in the products $X^{3}_{(1)}P^{-1,i}, X^{3}_{(0)}P^{-1,i}$ and $X^{3}_{(1)}P^{1,i}$.
		Upon collecting every contribution of the desired product $P^{i}_{(r-1)}P^{j}$, we may express it in terms of inductively data.
	\end{proof}
	This completes the proof of Theorem \ref{thm:reconstruction}.
	
	\begin{remark}
		The Jacobi identities $J_{1,0}(W^{3}, P^{\mu,i}, P^{\nu,j})$ give rise to relations expressing strong generators $W^{n}$ for $n\geq m$, in terms of extension fields.
	\end{remark}
	
	\begin{remark}
		Note that for weight 1 and 2 fields, the Jacobi identities $J(W^1,P^{\mu,i},P^{\nu,j})$ and $J(L,P^{\mu,i},P^{\nu,j})$ express the conformal and affine symmetries.
		More generally, each generator $W^n$ of  $\Wsp$ the Jacobi identity $J(W^n,P^{\mu,i},P^{\nu,j})$ gives rise to a family of recursions among the products $P^{\mu,i}_{(r)}P^{\nu,j}$.
		In this sense, $\fr{sp}_2$-rectangular $\cW$-algebras with a tail possesses a larger symmetry, namely the universal 2-parameter vertex algebra $\Wsp$.
	\end{remark}

\section{Rational quotients of $\cW^{\gs\gp}_{\infty}$} \label{sect:rational} 
Recall that the cosets $\cC^{\ell}(n)= \text{Com}(V^{\ell}(\gs\gp_{2n}), V^{\ell-1}(\gs\gp_{2n}) \otimes \cE(2n))$ given by \eqref{diagonalC} are all quotients of $\cW^{\mathfrak{sp}}_{\infty}$. When $\ell \in \mathbb{N}$, we have an embedding
$L_{\ell}(\gs\gp_{2n}) \rightarrow L_{\ell-1}(\gs\gp_{2n}) \otimes \cE(2n)$, and $\cC_{\ell}(n):= \text{Com}(L_{\ell}(\gs\gp_{2n}), L_{\ell-1}(\gs\gp_{2n}) \otimes \cE(2n))$ coincides with the simple quotient of $\cC^{\ell}(n)$ \cite[Lemma 8.1]{CL2}. The generators $X,Y,H$ of $L_n(\gs\gp_2) \subseteq \cC_{\ell}(n)$ are given by \eqref{sl2generators}, and the conformal vector of $\cC_{\ell}(n)$ is given by $L = L^{\gs\gp_{2n}} + L^{\cE} - L^{\text{diag}}$, where $L^{\cE}$ is the conformal vector in $\cE(2n)$ such that $b^{\pm i}, c^{\pm i}$ have conformal weight $\frac{1}{2}$, and $L^{\text{diag}}$ is the image of the Sugawara vector in the diagonally embedded copy of $L_{\ell}(\gs\gp_{2n})$. 

If we now replace $L$ with the new conformal vector $L + \frac{1}{2} \partial H$, then $b^{\pm i}, c^{\pm i}$ have conformal weights $0,1$ respectively, and $X,H,Y$ have conformal weights $0,1,2$, respectively. By \cite[Corollaries 7.6 and 7.7]{CL4}, for $\ell \in \mathbb{N}$, $\cC_{\ell}(n)$ is then a conformal extension of
\begin{equation} \label{eq:levels}
\cW_{\ell_1}(\gs\gp_{2n}) \otimes \cW_{\ell_2}(\gs\gp_{2n}),\qquad \ell_1 = -(n+1) + \frac{\ell+n+1}{2\ell+2n+1}, \qquad \ell_2 = -(n+1) + \frac{\ell+n}{2\ell+2n+1}.
\end{equation}
 This implies that for $\ell,n \in \mathbb{N}$, $\cC_\ell(n)$ is strongly rational \cite{SY, CMSY}. In this section we will generalize this result by proving that $\cC_{\ell}(n)$ is strongly rational when $\ell-1$ is admissible for $\widehat{\mathfrak{sp}}_{2n}$; see Theorem \ref{thm:coset-rat}. We will also show in Corollary \ref{rationalquotients} that all these vertex algebras are generated by the fields in weight at most $4$, and hence are quotients of $\cW^{\mathfrak{sp}}_{\infty}$.

\begin{lemma} \label{twocopiesofW(sp)} ${}$
 \begin{enumerate}
\item For $n \in \mathbb{N}$, after a suitable extension of scalars, the $1$-parameter vertex algebra $\cC^{\ell}(n)$ with conformal vector $L + \frac{1}{2} \partial H$, is an extension of the product of universal $\cW$-algebras $$\cW^{\ell_1}(\gs\gp_{2n}) \otimes \cW^{\ell_2}(\gs\gp_{2n}),\qquad \ell_1 = -(n+1) + \frac{\ell+n+1}{2\ell+2n+1}, \qquad \ell_2 = -(n+1) + \frac{\ell+n}{2\ell+2n+1}.$$
\item For generic values of $\ell$, $V^{\ell-1}(\gs\gp_{2n}) \otimes \cE(2n)$ is direct sum of irreducible $\cW^{\ell_1}(\gs\gp_{2n}) \otimes \cW^{\ell_2}(\gs\gp_{2n})$-modules which each have $1$-dimensional top space.
\end{enumerate}
\end{lemma}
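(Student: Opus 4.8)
The plan is to deduce both parts by propagating the special-level results of \cite{CL4} across the $1$-parameter family $\cC^{\ell}(n)$, exploiting the fact that by Lemma \ref{lemma:C} this family is freely generated of the fixed type $\cW(1^3,2,3^3,4,\dots)$ and hence is a deformable family whose structure constants are rational functions of $\ell$. For part (1), I would first recall from \cite[Corollaries 7.6 and 7.7]{CL4} that for each $\ell \in \mathbb{N}$ the simple quotient $\cC_{\ell}(n)$, equipped with the shifted conformal vector $L + \frac{1}{2}\partial H$, is a conformal extension of $\cW_{\ell_1}(\gs\gp_{2n}) \otimes \cW_{\ell_2}(\gs\gp_{2n})$ at the indicated levels. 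The remaining task is to realize commuting copies of the \emph{universal} algebras $\cW^{\ell_1}(\gs\gp_{2n})$ and $\cW^{\ell_2}(\gs\gp_{2n})$ inside the universal coset. The natural starting point is to split the shifted coset Virasoro as a sum $L + \frac{1}{2}\partial H = L_1 + L_2$ of two mutually commuting conformal vectors, written explicitly in terms of the Sugawara vector of $V^{\ell-1}(\gs\gp_{2n})$, the free-fermion stress tensor of $\cE(2n)$, and the diagonal Sugawara vector, and to check that their central charges are precisely the rational functions of $\ell$ equal to $c(\cW^{\ell_i}(\gs\gp_{2n}))$.

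To promote this to a full embedding, I would produce the remaining principal $\cW$-algebra generators in each factor and verify the defining OPEs. Since $\cC^{\ell}(n)$ is strongly and freely generated with structure constants rational in $\ell$, it suffices to exhibit candidate generators whose OPE structure constants are rational in $\ell$ and then check that these agree with the corresponding $\cW^{\ell_i}(\gs\gp_{2n})$ structure constants at the infinitely many integer points $\ell \in \mathbb{N}$ supplied by \cite{CL4}; agreement of rational functions at infinitely many points forces identical agreement. The extension of scalars in the statement is exactly what is needed so that the levels $\ell_1, \ell_2$ and the associated structure constants lie in a common ring: I would localize $\mathbb{C}[\ell]$ to invert the denominator $2\ell+2n+1$ and the finitely many factors controlling the two $\cW$-algebra OPEs. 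The outcome is a homomorphism $\cW^{\ell_1}(\gs\gp_{2n}) \otimes \cW^{\ell_2}(\gs\gp_{2n}) \to \cC^{\ell}(n)$ that is an embedding for generic $\ell$ (its image is nonzero and the source is simple at generic $\ell_i$), with $\cC^{\ell}(n)$ an extension of the image.

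For part (2), I would decompose $V^{\ell-1}(\gs\gp_{2n}) \otimes \cE(2n)$ by using the two commuting actions simultaneously: the diagonal $V^{\ell}(\gs\gp_{2n})$ together with its commutant $\cC^{\ell}(n)$, refined by the action of $\cW^{\ell_1} \otimes \cW^{\ell_2}$ from part (1). For generic $\ell$ the affine vertex algebras are simple and the modules occurring in the tensor decomposition are irreducible, so a double-commutant/branching argument produces a multiplicity decomposition into tensor products of an affine module and a coset module. Each coset module should then be identified, via quantum Hamiltonian reduction of the corresponding affine highest-weight module, with an irreducible $\cW^{\ell_1} \otimes \cW^{\ell_2}$-module; the $1$-dimensional top space is a direct consequence of reduction carrying an affine module with $1$-dimensional top space to a $\cW$-module with $1$-dimensional top space, valid at generic level where the reduction functor is exact and preserves irreducibility. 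A graded-character computation, matching $\chi(V^{\ell-1}(\gs\gp_{2n}) \otimes \cE(2n))$ against the sum of products of $\cW$-module characters, would give an independent check that no summand is missed and that all top spaces are one-dimensional.

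The main obstacle is part (1): controlling the embedding uniformly over the family rather than merely at the integer points. Verifying that the candidate $\cW$-algebra generators remain well-defined with no uncancelled poles in $\ell$, that their OPEs close as genuine rational-function identities, and that the embedding does not degenerate on the locus where $\cW^{\ell_1}$ or $\cW^{\ell_2}$ fails to be simple, all require careful bookkeeping of denominators in the chosen localization. For part (2) the delicate input is the irreducibility of the coset modules and the exactness of reduction; these are standard at generic level, but the genericity hypothesis must be made precise (excluding a countable set of $\ell$), which is exactly why part (2) is asserted only for generic $\ell$.
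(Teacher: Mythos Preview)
Your proposal for part (1) has a genuine gap in how you interpret ``extension of scalars.'' You propose to produce the generators $L_1, L_2, W^4_1, W^4_2, \dots$ as explicit elements of $\cC^{\ell}(n)$ with coefficients in a \emph{localization} of $\mathbb{C}[\ell]$, and then verify OPE identities by matching rational functions at the integer points furnished by \cite{CL4}. The difficulty is that there is no reason for the coefficients of these generators (written in a fixed basis of the finite-dimensional weight spaces of $\cC^{\ell}(n)$) to be rational functions of $\ell$ at all. The conditions you must impose---that $L_1, L_2$ are commuting Virasoros with the prescribed central charges, that $W^4_i$ is primary for $L_i$, that the higher $W^{2r}_i$ satisfy the $\cW^{\text{ev}}_{\infty}$ OPEs along the $\cW^{\ell_i}(\gs\gp_{2n})$ truncation curves---form a system of algebraic equations in the unknown coefficients. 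The paper's argument is precisely to set this up as an algebraic variety, observe that the solutions for $\ell \in \mathbb{N}_{\geq 4}$ force its projection to the $\ell$-axis to be dominant, and then pass to a solution over a finite algebraic extension $\overline{\mathbb{C}(\ell)}$. The structure constants of the resulting OPE algebra are rational in $\ell$, but the coefficients of the generators themselves may only be algebraic. Your plan to ``exhibit candidate generators'' does not explain how to find them, and your assertion that localization suffices is not justified.

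A second, smaller gap: you do not argue that the image of the resulting homomorphism is the \emph{universal} $\cW^{\ell_i}(\gs\gp_{2n})$ rather than some non-simple quotient of $\cW^{\text{ev}}_{\infty}$ along the same truncation curve. The paper addresses this by showing that the weight-$(2n+2)$ singular vector of $\cW^{\text{ev}}_{\infty}$ along that curve vanishes in $\cC^{\ell}(n)$ for infinitely many $\ell \in \mathbb{N}$ (by comparing characters with $\cE(2n)^{\text{Sp}_{2n}}$), hence generically, giving the required decoupling relation.

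For part (2), your approach via double commutant and exactness of quantum Hamiltonian reduction is different from the paper's: the paper argues directly that the Zhu algebra of $\cW^{\ell_1}(\gs\gp_{2n}) \otimes \cW^{\ell_2}(\gs\gp_{2n})$ acts diagonalizably on each space of lowest-weight vectors (over a further extension of $\overline{\mathbb{C}(\ell)}$), and that no new lowest-weight vectors appear at higher conformal weight, both facts propagated from semisimplicity at the integer points. Your route is plausible but would need substantially more work to make precise at generic $\ell$, whereas the paper's argument reuses the same integer-point-to-generic mechanism already employed in part (1).
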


\begin{proof} Since $\cC^{\ell}(n)$ has finite-dimensional weight spaces, we can choose a finite basis $m^i_{1}, \dots, m^i_{d_i}$ for the weight $i$ subspace of $\cC^{\ell}(n)$. Let 
$$L_1 = \sum_{j=1}^{d_2} \alpha^2_j m^2_j,\qquad W^4_1 = \sum_{j=1}^{d_4} \alpha^4_j m^4_j,\qquad L_2 = \sum_{j=1}^{d_2} \beta^2_j m^2_j,\qquad W^4_2 = \sum_{j=1}^{d_4} \beta^4_j m^4_j$$ be elements of weights $2$ and $4$, respectively, with undetermined coefficients. We impose the following conditions:
\begin{enumerate}
\item $L_1$ and $L_2$ are commuting Virasoro fields with central charges \begin{equation} \label{pairofccc} c_1 = -\frac{\ell n (3 + 4 \ell + 2 n)}{(1 + \ell + n) (1 + 2 \ell + 2 n)},\qquad c_2 = -\frac{n (1 + \ell + 2 n) (1 + 4 \ell + 6 n)}{(\ell + n) (1 + 2 \ell + 2 n)},\end{equation} respectively
\item $W^4_i$ is primary of weight $4$ for $L_i$ for $i=1,2$, and $W^4_1, W^4_2$ commute.
\item For $i = 1,2$, and $3 \leq k \leq 12$, the fields $W^{2k}_i = (W^4_i)_{(1)} W^{2k-2}_i$ satisfy the OPE relations of $\cW^{\text{ev}}_{\infty}$ along the truncation curves for $\cW^{\ell_1}(\gs\gp_{2n})$ and $\cW^{\ell_2}(\gs\gp_{2n})$, respectively.
\end{enumerate} 
These conditions impose a finite set of algebraic equations in the variables $\alpha^2_j, \alpha^4_j,  \beta^2_j, \beta^4_j$ as well as $\ell$, so the set of solutions to this system is an algebraic variety. Moreover, for all $\ell \in  \mathbb{N}_{\geq 4}$, the system has a solution as above, so this variety is at least $1$-dimensional. Viewing the above system as a system of equations in $\alpha^2_j, \alpha^4_j,  \beta^2_j, \beta^4_j$ with coefficients in the field $\mathbb{C}(\ell)$,  it will have a solution in some (finitely generated) extension $\overline{\mathbb{C}(\ell)}$ of $\mathbb{C}(\ell)$. Substituting these functions of $\ell$ gives us the desired map. Note that structure constants in the OPEs among these fields are rational functions of $\ell$ even though the above coefficients might live in the extension.

Next, we claim that the algebra generated by $\{L_i, W^4_i\}$ is isomorphic to $\cW^{\ell_i}(\gs\gp_{2n})$ rather that some non-simple quotient of $\cW^{\text{ev}}_{\infty}$ along the same truncation curve. For this purpose, it suffices to show that there exists decoupling relation $W^{2n+2}_i = P(L_i, W^4_i,\dots, W^{2n}_i)$ for some normally ordered polynomial $P$ in these fields and their derivatives. We know that there is a singular vector in $\cW^{\text{ev}}_{\infty,I}$ of weight $2n+2$ of the form $W^{2n+2}_i - P(L_i, W^4_i,\dots, W^{2n}_i)$, which generates the maximal ideal. 

Recall that $\cC^{\ell}(n)$ is generically simple and has the same graded character as $\cE(2n)^{\text{Sp}_{2n}}$. In particular, for infinitely many values of $\ell  \in \mathbb{N}$, the graded character of the simple quotient $\cC_{\ell}(n)$ is the same as that of $\cE(2n)^{\text{Sp}_{2n}}$ in weights up to $2n+2$.  Therefore for infinitely many values of $\ell$, this singular vector must vanish, so it must vanish for generic values of $\ell$, and hence is a decoupling relation generically.

Next, we prove part (2) of the lemma. The weight zero subspace $U_0 = \cC^{\ell}(n)[0]$ has dimension $n+1$ and necessarily consists of lowest weight vectors for the action of $\cW^{\ell_1}(\gs\gp_{2n}) \otimes \cW^{\ell_2}(\gs\gp_{2n})$. Let $V_0$ denote the $\cW^{\ell_1}(\gs\gp_{2n}) \otimes \cW^{\ell_2}(\gs\gp_{2n})$-module generated by $U_0$. We claim that the Zhu algebra of $\cW^{\ell_1}(\gs\gp_{2n}) \otimes \cW^{\ell_2}(\gs\gp_{2n})$, which is the polynomial ring generated by the images of $L_i, W^{2r}_i$ for $i = 1,2$ and $r = 2,3,\dots, n$, acts diagonalizably on $U_0$ over some finite extension of $\mathbb{C}(\ell)$ which we can take to be $\overline{\mathbb{C}(\ell)}$ without loss of generality, so we can chose a basis of eigenvectors with coefficients in this field. Moreover, we claim that in weights $m > 0$, there are no lowest weight vectors for $\cW^{\ell_1}(\gs\gp_{2n}) \otimes \cW^{\ell_2}(\gs\gp_{2n})$ in $V_0$ for generic $\ell$. Both statements follow from the semisimplicity of the action of $\cW_{\ell_1}(\gs\gp_{2n}) \otimes \cW_{\ell_2}(\gs\gp_{2n})$ on $\cC_{\ell}(n)$ for all $\ell \in \mathbb{N}$. In particular, a lowest weight vector in $\cC^{\ell}(n)$ would specialize to a lowest weight vectors for infinitely many values $\ell \in \mathbb{N}$, which would then descend to a nontrivial lowest weight vector in the simple quotient $\cC_{\ell}(n)$ for $\cW_{\ell_1}(\gs\gp_{2n}) \otimes \cW_{\ell_2}(\gs\gp_{2n})$. It follows that $V_0$ is a sum of $n+1$ modules with $1$-dimensional lowest weight space. 

Next, let $m_1 > 0$ be the first integer for which the space $U_{m_1} \subseteq \cC^{\ell}(n)[m_1]$ of lowest weight vectors of weight $m_1$ for $\cW^{\ell_1}(\gs\gp_{2n}) \otimes \cW^{\ell_2}(\gs\gp_{2n})$ is nontrivial, and let $d_{1} = \text{dim}\ U_{m_1}$. By the same argument, the Zhu algebra of $\cW^{\ell_1}(\gs\gp_{2n}) \otimes \cW^{\ell_2}(\gs\gp_{2n})$ acts diagonalizably on $U_{m_1}$ over a finite extension of $\overline{\mathbb{C}(\ell)}$, and the $\cW^{\ell_1}(\gs\gp_{2n}) \otimes \cW^{\ell_2}(\gs\gp_{2n})$-module $V_{m_1}$ generated by $U_{m_1}$ has no lowest weight vectors in any weight $m > m_1$ for generic $\ell$. It follows that $V_{m_1}$ is a sum of $d_{1}$ simple modules over $\cW^{\ell_1}(\gs\gp_{2n}) \otimes \cW^{\ell_2}(\gs\gp_{2n})$, with $1$-dimensional lowest weight spaces of weight $m_1$, and that $V_0 \cap V_{m_1}$ is trivial. Inductively, over some extension of $\overline{\mathbb{C}(\ell)}$, we can find two sequences of positive integers $m_i, d_i$ for $i\geq 2$ such that $\tilde{\cC}^{\ell}(n)= \bigoplus_{i\geq 0} V_{m_i}$ where each $V_{m_i}$ is a sum of $d_{i}$ simple $\cW^{\ell_1}(\gs\gp_{2n}) \otimes \cW^{\ell_2}(\gs\gp_{2n})$-modules with $1$-dimensional lowest weight spaces of weight $m_i$. \end{proof}

Next, we improve some results from Section 7.5. of \cite{CL4}. Let $\cS(n)$ denote the rank $n$ $\beta\gamma$-system, and recall that
$$\cS(n) \cong L_{-\frac{1}{2}}(\gs\gp_{2n}) \oplus L_{-\frac{1}{2}}(\omega_1),$$
with $L_{-\frac{1}{2}}(\omega_1)$ the simple module corresponding to the standard representation of $\gs\gp_{2n}$ whose highest-weight is the first fundamental weight $\omega_1$. 
Let $P^n_\ell$ denote the set of admissible weights of $L_\ell(\gs\gp_{2n})$ whose finite part is integrable for $\gs\gp_{2n}$. Let $Q$ be the root lattice of $\gs\gp_{2n}$. In general we denote the simple module of highest-weight $\lambda$ by $L_\ell(\lambda)$.
We are interested in $m \in \mathbb Z_{\geq 0}$.
Then 
\begin{equation}
\begin{split}
L_m(\mu) \otimes \cS(n) \cong \bigoplus_{\lambda \in P_{m-\frac{1}{2}}} L_{m-\frac{1}{2}}(\lambda) \otimes M(\lambda, \mu)
\end{split}
\end{equation}
for certain multiplicity spaces which are in fact modules of $\cW_\ell(\gs\gp_{2m})$ with $\ell = -(m+1) + \frac{n+m+1}{2n+2m+1}$ by Theorem 7.8 of \cite{CL4}.
The multiplicity spaces $M(\mu, \mu)$ are non-zero as the top level of $L_m(\mu) \otimes \cS(n)$ is the integrable module $\rho_\mu$ of highest-weight $\mu$ for the Lie algebra $\gs\gp_{2n}$ corresponding to the zero-mode of the subalgebra $L_{m-\frac{1}{2}}(\gs\gp_{2n})$. Similarly, the subspace of conformal weight one half plus the top level is $\rho_\mu \otimes \rho_{\omega_1}$ and it must be the top level for the action of the subalgebra $L_{m-\frac{1}{2}}(\gs\gp_{2n})$. Hence also $M(\lambda, \mu) \neq 0$ for $\rho_\lambda \hookrightarrow \rho_\mu \otimes \rho_{\omega_1}$. 

Consider the minimal quantum Hamiltonian reduction and denote the image of $L_k(\lambda)$ by $\cW_k(\lambda, f_{\text{min}})$. Then $\cW_k(\lambda, f_{\text{min}})$ is an irreducible module for the minimal $\cW$-algebra as long as $k$ is not a positive integer \cite{Ar3}. By Remark 7.3 of \cite{CL4} we have that $\cW_{-\frac{1}{2}}( f_{\text{min}})\cong \mathbb C$ and hence also $\cW_\ell(\omega_1, f_{\text{min}})\cong \mathbb C$.
We can apply Theorem 1 of \cite{AFC} for the minimal nilpotent element to get
\begin{equation}\nonumber
\begin{split}
L_m(\mu) &\cong L_m(\mu)  \otimes \mathbb C \cong L_m(\mu)  \otimes \cW_{-\frac{1}{2}}( f_{\text{min}})  \cong  \bigoplus_{\lambda \in P^n_{m-\frac{1}{2}} \cap Q} \cW_{m-\frac{1}{2}}(\lambda, f_{\text{min}}) \otimes M(\lambda, \mu) \\
L_m(\mu) &\cong L_m(\mu)  \otimes \mathbb C \cong L_m(\mu)  \otimes \cW_{-\frac{1}{2}}( \omega_1,  f_{\text{min}})  \cong  \bigoplus_{\lambda \in P^n_{m-\frac{1}{2}} \cap Q + \omega_1} \cW_{m-\frac{1}{2}}(\lambda, f_{\text{min}}) \otimes M(\lambda, \mu) .
\end{split}
\end{equation}
The highest weight and the conformal weight of the minimal reduction of $L_k(\lambda)$ are given by formula (67)  of \cite{Ar4} and this data uniquely characterizes a simple module of the minimal $W$-algebra \cite[Theorem 6.3.1]{Ar4}. Let $\lambda$ in $P^n_{m-\frac{1}{2}}$ and write $\lambda = \lambda \omega_1 + \bar\lambda$ with $\bar\lambda$ in the orthogonal complement of $\omega_1$. Then the highest-weight of $\cW_k(\lambda, f_{\text{min}})$ is $\bar \lambda$ and the conformal weight is
\[
|\lambda+\rho|^2 -|\rho|^2 -(m+n+\frac{1}{2}) \lambda_1.
\]
In particular for $\mu = \mu_1 \omega_1 + \bar\mu$ and $\bar\mu$ orthogonal to $\omega_1$ one has $\cW_k(\lambda, f_{\text{min}})\cong \cW_k(\mu, f_{\text{min}})$ if and only if $\bar\lambda = \bar\mu$ and $\mu_1 \in \{ \lambda_1, 2m+1 -\lambda_1 \}$. 
This implies 
\begin{corollary}
For $m \in \mathbb Z_{\geq 0}$ and $\gg = \gs\gp_{2n}$, there is a bijection $\tau: P^n_{m-\frac{1}{2}} \cap Q \rightarrow P^n_{m-\frac{1}{2}} \cap (Q + \omega_1)$, mapping $\lambda$ to $\lambda$ to $\lambda + (2m+1 - 2(\lambda, \omega_1))\omega_1$, such that
\begin{enumerate}
\item $\cW_{m-\frac{1}{2}}(\lambda, f_{\text{min}}) \cong \cW_{m-\frac{1}{2}}(\tau(\lambda), f_{\text{min}})$;
\item $L_{m-\frac{1}{2}}(\gs\gp_{2n}) \oplus L_{m-\frac{1}{2}}((2m+1)\omega_1) \subset \text{Com}\left(\cW_\ell(\gs\gp_{2m}), L_m(\gs\gp_{2n}) \otimes \cS(n) \right)$;
\item  the multiplicity spaces satisfy $M(\lambda, \mu) \cong M(\tau(\lambda), \mu)$ as modules for $\cW_\ell(\gs\gp_{2m})$.
\end{enumerate}
\end{corollary}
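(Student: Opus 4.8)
The plan is to establish the three claims by combining the branching decomposition coming from the pair of commuting $\cW$-algebras with the explicit description of the minimal reduction computed just above. First I would set $\ell = -(m+1) + \frac{n+m+1}{2n+2m+1}$ and fix the notation $\lambda = \lambda_1 \omega_1 + \bar\lambda$ with $\bar\lambda \perp \omega_1$. The bijection $\tau$ is forced on us by the preceding analysis: the two conditions $\bar\lambda = \bar\mu$ and $\mu_1 \in \{\lambda_1, 2m+1-\lambda_1\}$ that characterize when $\cW_{m-\frac{1}{2}}(\lambda, f_{\text{min}}) \cong \cW_{m-\frac{1}{2}}(\mu, f_{\text{min}})$ identify the nontrivial involution on highest weights as $\lambda_1 \mapsto 2m+1-\lambda_1$, fixing $\bar\lambda$. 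I would define $\tau(\lambda) = \lambda + (2m+1 - 2(\lambda,\omega_1))\omega_1$ and verify that this is the map implementing $\lambda_1 \mapsto 2m+1-\lambda_1$, i.e.\ that $(\lambda,\omega_1) = \lambda_1$ under the chosen normalization of the form, so that $\tau$ sends $P^n_{m-\frac{1}{2}} \cap Q$ into $P^n_{m-\frac{1}{2}} \cap (Q + \omega_1)$ bijectively (the parity shift by the odd coefficient of $\omega_1$ moves between the two cosets of $Q$).

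Part (1) is then immediate: since $\tau$ preserves $\bar\lambda$ and sends $\lambda_1$ to $2m+1-\lambda_1$, the equivalence criterion $\cW_{m-\frac{1}{2}}(\lambda, f_{\text{min}}) \cong \cW_{m-\frac{1}{2}}(\mu, f_{\text{min}})$ applies directly with $\mu = \tau(\lambda)$, giving the isomorphism of minimal $\cW$-algebra modules. I would note that this also forces $\tau$ to be an involution on the union of the two weight sets (composing the two reduction formulas), which is what makes it a genuine bijection between the two $Q$-cosets. For part (3), the key observation is that the two branching decompositions of $L_m(\mu)$ written above --- one indexed by $P^n_{m-\frac{1}{2}} \cap Q$, the other by $P^n_{m-\frac{1}{2}} \cap (Q+\omega_1)$ --- are decompositions of the \emph{same} module $L_m(\mu)$ as a $\cW_\ell(\gs\gp_{2m})$-module, obtained respectively from $\cW_{-\frac{1}{2}}(f_{\text{min}}) \cong \mathbb{C}$ and $\cW_\ell(\omega_1, f_{\text{min}}) \cong \mathbb{C}$. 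Matching the two decompositions summand by summand through the isomorphism of part (1) identifies $\cW_{m-\frac{1}{2}}(\lambda, f_{\text{min}}) \otimes M(\lambda,\mu)$ with $\cW_{m-\frac{1}{2}}(\tau(\lambda), f_{\text{min}}) \otimes M(\tau(\lambda),\mu)$; since the $\cW$-algebra factors are isomorphic and the multiplicity spaces are uniquely determined as the $\cW_\ell(\gs\gp_{2m})$-isotypic components, we conclude $M(\lambda,\mu) \cong M(\tau(\lambda),\mu)$ as $\cW_\ell(\gs\gp_{2m})$-modules.

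For part (2), I would extract the two distinguished summands. The lattice vertex operator structure on $\cS(n) \cong L_{-\frac{1}{2}}(\gs\gp_{2n}) \oplus L_{-\frac{1}{2}}(\omega_1)$ shows that the copy of $L_{m-\frac{1}{2}}(\gs\gp_{2n})$ sits inside the commutant for the trivial piece $\lambda = 0$, while the second summand, carrying highest weight $(2m+1)\omega_1 = \tau(0)$, is precisely the image of the vacuum through the $\omega_1$-twisted reduction; the statement $\cW_\ell(\omega_1, f_{\text{min}}) \cong \mathbb{C}$ guarantees that this summand again has trivial $\cW_\ell(\gs\gp_{2m})$-multiplicity and hence lies in the commutant. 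The main obstacle I anticipate is the bookkeeping at the level of the invariant bilinear form and root-lattice normalizations needed to confirm that $2m+1 - 2(\lambda,\omega_1)$ is the correct shift realizing the involution and lands in the opposite $Q$-coset; once the normalization of $(\cdot,\cdot)$ on $\gs\gp_{2n}$ is pinned down so that $(\omega_1,\omega_1)$ and $(\lambda,\omega_1) = \lambda_1$ come out as used in the conformal-weight formula $|\lambda+\rho|^2 - |\rho|^2 - (m+n+\tfrac12)\lambda_1$, everything else follows formally from uniqueness of minimal $\cW$-algebra modules \cite[Theorem 6.3.1]{Ar4} and the semisimplicity underlying the branching.
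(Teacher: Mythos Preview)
Your proposal is correct and follows essentially the same approach that the paper leaves implicit: the Corollary is stated immediately after the equivalence criterion $\cW_{m-\frac{1}{2}}(\lambda, f_{\text{min}}) \cong \cW_{m-\frac{1}{2}}(\mu, f_{\text{min}}) \Leftrightarrow \bar\lambda = \bar\mu,\ \mu_1 \in \{\lambda_1, 2m+1-\lambda_1\}$ with nothing more than ``This implies'', and your argument correctly unpacks how the two parallel decompositions of $L_m(\mu)$ (one indexed by $P^n_{m-\frac{1}{2}}\cap Q$, the other by $P^n_{m-\frac{1}{2}}\cap(Q+\omega_1)$) are matched through that criterion to yield (1) and (3), with (2) being the specialization to $\mu=0$, $\lambda=0$.
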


Next recall symplectic level-rank duality \cite{ORS}. For positive integers $n, m$, there is a bijection $\sigma$ between the sets $P^n_m$ of $\gs\gp_{2n}$ and $P^m_n$ of $\gs\gp_{2m}$, such that
\[
\cE(2nm) \cong \bigoplus_{\mu \in P^n_m} L_n(\sigma(\lambda)) \otimes L_m(\lambda).
\]

\begin{proposition}
For $n, m \in \mathbb Z_{> 0}$ and $k = -(n+1) + \frac{n+m}{2n+2m+1}$, 
\[
L_m(\mathfrak{osp}_{1|2n}) \otimes \cW_k(\mathfrak{sp}_{2n}) \hookrightarrow  L_{m-1}(\mathfrak{sp}_{2n}) \otimes \cE(2n).
\]
\end{proposition}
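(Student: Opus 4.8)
The plan is to realize both tensor factors as mutually commuting subalgebras of the right-hand side and then to conclude by simplicity. The starting point is the diagonal embedding $L_m(\gs\gp_{2n}) \hookrightarrow L_{m-1}(\gs\gp_{2n}) \otimes \cE(2n)$ of \eqref{diagonalC} (here $m = (m-1)+1$, using the copy $L_1(\gs\gp_{2n}) \subset \cE(2n)$), whose commutant is the simple quotient $\cC_m(n)$ of $\cC^m(n)$. Applying Lemma \ref{twocopiesofW(sp)} with $\ell = m$, this commutant is an extension of $\cW_{\ell_1}(\gs\gp_{2n}) \otimes \cW_{\ell_2}(\gs\gp_{2n})$, and the level formulas \eqref{eq:levels} give precisely $\ell_2 = k$. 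Thus $\cW_k(\gs\gp_{2n}) = \cW_{\ell_2}(\gs\gp_{2n})$ already sits inside $L_{m-1}(\gs\gp_{2n}) \otimes \cE(2n)$ as a subalgebra commuting with the diagonal $L_m(\gs\gp_{2n})$, so $L_{m-1}(\gs\gp_{2n}) \otimes \cE(2n)$ decomposes into bimodules over $L_m(\gs\gp_{2n}) \otimes \cW_{\ell_1}(\gs\gp_{2n}) \otimes \cW_k(\gs\gp_{2n})$. It remains to assemble a commuting copy of $L_m(\go\gs\gp_{1|2n})$ out of this diagonal $\gs\gp_{2n}$ and suitable odd currents.

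First I would pass to the shifted conformal vector $L + \tfrac12 \partial H$ used throughout this section, under which the odd generators $b^i, c^i$ of $\cE(2n)$ acquire the integer conformal weights $0,1$; this is what makes weight-one odd currents available, since in the unshifted grading $L_{m-1}(\gs\gp_{2n}) \otimes \cE(2n)$ has no odd elements of integer weight. Working inside $\text{Com}(\cW_k(\gs\gp_{2n}), L_{m-1}(\gs\gp_{2n}) \otimes \cE(2n))$, which is an extension of $L_m(\gs\gp_{2n}) \otimes \cW_{\ell_1}(\gs\gp_{2n})$, I would locate the odd weight-one fields $\psi^i$ transforming in the standard representation $\rho_{\omega_1}$ of the diagonal $\gs\gp_{2n}$; these are the images of the odd currents of $\go\gs\gp_{1|2n}$. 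Their existence and uniqueness up to scale follows from the appearance, with one-dimensional top space, of the corresponding standard-representation bimodule in the decomposition above. I would then verify that the even currents $J^{ab}$ together with the $\psi^i$ close under OPE into the affine super vertex algebra of $\go\gs\gp_{1|2n}$ at level $m$, matching the type-$B$ coset realization \eqref{cosetBC} with the affine level equal to $m$; the reconstruction principle of Section \ref{sect:recon}, or a direct uniqueness argument for the extension, reduces this to checking finitely many leading OPE coefficients. Since the $\psi^i$ lie in $\text{Com}(\cW_k(\gs\gp_{2n}), \cdot)$ by construction, $L_m(\go\gs\gp_{1|2n})$ and $\cW_k(\gs\gp_{2n})$ are then commuting subalgebras.

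Finally, as both $L_m(\go\gs\gp_{1|2n})$ and $\cW_k(\gs\gp_{2n})$ are simple and mutually commuting, their tensor product is a simple vertex superalgebra, so the natural nonzero map $L_m(\go\gs\gp_{1|2n}) \otimes \cW_k(\gs\gp_{2n}) \to L_{m-1}(\gs\gp_{2n}) \otimes \cE(2n)$ is injective, which is the claimed embedding. The main obstacle is the middle step: showing that the candidate odd fields $\psi^i$ genuinely generate $\go\gs\gp_{1|2n}$ at \emph{precisely} level $m$ and commute with \emph{all} of $\cW_k(\gs\gp_{2n})$, not merely with its generating fields. I expect to control this through two inputs already available, namely the explicit bimodule decomposition furnished by Lemma \ref{twocopiesofW(sp)} (which forces the odd fields and hence their OPEs), and Feigin--Frenkel duality between the principal $\cW$-algebras of $\gs\go_{2n+1}$ and $\gs\gp_{2n}$ used to match the levels in \eqref{cosetBC}; note there is no contradiction with the coset factor $\cW_{\ell_1}(\gs\gp_{2n})$, since $L_m(\go\gs\gp_{1|2n})$ is built as a nontrivial extension that also recruits non-vacuum $\cW_{\ell_1}(\gs\gp_{2n})$-modules appearing in the decomposition.
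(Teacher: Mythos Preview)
Your approach is genuinely different from the paper's, and the central step contains a gap that the tools you cite do not close.

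The paper never constructs the odd currents of $\go\gs\gp_{1|2n}$ inside $L_{m-1}(\gs\gp_{2n})\otimes\cE(2n)$ directly. Instead it starts from the conformal embedding $L_m(\go\gs\gp_{1|2n})\otimes L_{n-\frac12}(\gs\gp_{2m})\hookrightarrow\cE(2nm)\otimes\cS(n)$ of \cite{CLS}, applies the minimal quantum Hamiltonian reduction in $\gs\gp_{2m}$ via \cite[Theorem~1]{AFC} (using that $\cW_{-\frac12}(f_{\min})\cong\mathbb C$) to land in $\cE(2nm)$, then invokes \cite[Theorem~7.7]{CL4} to see $L_n(\gs\gp_{2m-2})\otimes\cW_k(\gs\gp_{2n})\hookrightarrow\cW_{n-\frac12}(\gs\gp_{2m},f_{\min})$. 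Finally symplectic level-rank duality identifies the commutant of $L_n(\gs\gp_{2m-2})$ in $\cE(2nm)\cong\cE(2n(m-1))\otimes\cE(2n)$ with $L_{m-1}(\gs\gp_{2n})\otimes\cE(2n)$. Every step is an already-established structural embedding; no OPE is checked by hand.

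Your route tries to build $L_m(\go\gs\gp_{1|2n})$ from inside $\text{Com}(\cW_k(\gs\gp_{2n}),L_{m-1}(\gs\gp_{2n})\otimes\cE(2n))$ by finding odd weight-one fields $\psi^i$. The problem is that none of the inputs you list actually produces these fields or verifies their OPEs. Lemma~\ref{twocopiesofW(sp)} decomposes only the \emph{even} coset $\cC^\ell(n)$ as a $\cW^{\ell_1}\otimes\cW^{\ell_2}$-module; the odd fields you need do not live there, and the lemma says nothing about the full module structure of $L_{m-1}(\gs\gp_{2n})\otimes\cE(2n)$ over $L_m(\gs\gp_{2n})\otimes\cW_{\ell_1}\otimes\cW_k$. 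The reconstruction theorem of Section~\ref{sect:recon} concerns extensions of $V^a(\ga)\otimes\cW$ with $\cW$ a quotient of $\cW^{\gs\gp}_\infty$, by fields transforming as $\mathbb C^2\otimes\rho_{\ga}$; this is an entirely different setup and does not apply to extensions of $L_m(\gs\gp_{2n})\otimes\cW_{\ell_1}(\gs\gp_{2n})$. Feigin--Frenkel duality relates levels but does not produce an embedding. So the existence of $\psi^i$, the closure $\psi^i_{(0)}\psi^j$ landing on the diagonal $\gs\gp_{2n}$ currents at the correct level $m$, and the commutation with all of $\cW_k(\gs\gp_{2n})$ remain unproven. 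You have correctly identified this as ``the main obstacle,'' but as written it is not an obstacle you have overcome; it is precisely the content of the proposition.

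A minor point: for the specialization $\ell=m\in\mathbb N$ you should invoke \cite[Cor.~7.6--7.7]{CL4} (as the paper does just before Lemma~\ref{twocopiesofW(sp)}) rather than Lemma~\ref{twocopiesofW(sp)} itself, which is a statement about the generic one-parameter family.
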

\begin{proof}
There is a conformal embedding
\[
L_m(\mathfrak{osp}_{1|2n}) \otimes L_{n-\frac{1}{2}}(\mathfrak{sp}_{2m}) \hookrightarrow \cE(2nm) \otimes \cS(n).
\]
This follows from Corollary 2.1 together with Theorem 2.1 and Equation (2.7) of \cite{CLS}.
Since $\cE(nm)$ is an integrable module for  $L_{n}(\mathfrak{sp}_{2m})$, we can apply \cite[Theorem 1]{AFC} for the minimal nilpotent element to get
\[
L_m(\mathfrak{osp}_{1|2n}) \otimes \cW_{n-\frac{1}{2}}(\mathfrak{sp}_{2m}, f_{\text{min}}) \hookrightarrow \cE(2nm) \otimes  \mathbb C^2. 
\]
As we discussed, the two copies, $\mathbb C^2$, come from the fact that the minimal quantum Hamiltonian reduction at this level is two-to-one, in particular we must have an embedding  
\begin{equation}\label{eq:level-rank}
L_m(\mathfrak{osp}_{1|2n}) \otimes \cW_{n-\frac{1}{2}}(\mathfrak{sp}_{2m}, f_{\text{min}}) \hookrightarrow \cE(2nm) . 
\end{equation}
Then \cite[Theorem 7.7]{CL4} tells us that $\cW_{n-\frac{1}{2}}(\mathfrak{sp}_{2m}, f_{\text{min}})$ is strongly rational and that there is a conformal embedding
\[
L_n(\gs\gp_{2m-2}) \otimes \cW_k(\gs\gp_{2n}) \hookrightarrow \cW_{n-\frac{1}{2}}(\mathfrak{sp}_{2m}, f_{\text{min}})
\]
and this embedding extends to a conformal embedding 
\[
L_n(\gs\gp_{2m-2}) \otimes \cW_k(\gs\gp_{2n}) \otimes L_m(\mathfrak{osp}_{1|2n})   \hookrightarrow \cE(2nm).
\]
By \cite[Remark 2.1]{CL3} the embedding of $L_n(\gs\gp_{2m-2})$ is the natural one in the first factor of $\cE(2nm) \cong \cE(2n(m-1)) \otimes \cE(2n)$, in particular its commutant by symplectic-level rank duality is $L_{m-1}(\gs\gp_{2n})  \otimes \cE(2n)$ and our claim follows. 
\end{proof}
We remark that \eqref{eq:level-rank} can be viewed as a novel level-rank duality. It has been recently discovered in connection to mirror symmetry in three-dimensional superCFTs \cite{CGK}.

Note that the condition $\ell = -(n+1) + \frac{n+m}{2n+2m+1}$ can be written as
\begin{equation}\label{eq:relation_levels}
\frac{1}{4} \frac{1}{m+n+\frac{1}{2}} + \ell + n + 1 = \frac{1}{2}
\end{equation}
and let $\tilde \ell$ be the Feigin-Frenkel dual level of $\ell$, that is $2(\ell+ n+1)(\tilde \ell + 2n-1)=1$, i.e. $\cW^\ell(\gs\gp_{2n}) \cong W^{\tilde \ell}(\gs\go_{2n+1})$.
Then 
\[
\frac{1}{2m+2n+1} +\frac{1}{\tilde \ell + 2n-1} = 1.
\] 
\begin{corollary}
For $n\in \mathbb Z_{>0}$ amd $m, \ell$ related via \eqref{eq:relation_levels}
\[
V^m(\mathfrak{osp}_{1|2n}) \otimes \cW^\ell(\mathfrak{sp}_{2n}) \hookrightarrow  V^{m-1}(\mathfrak{sp}_{2n}) \otimes \cE(2n)
\]
\end{corollary}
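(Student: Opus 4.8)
The plan is to promote the embedding of simple algebras in the preceding Proposition to the universal (generic-level) statement, viewing all the algebras involved as $1$-parameter vertex (super)algebras whose structure constants are rational in the level. Throughout, $\ell$ and $m$ are linked by \eqref{eq:relation_levels}, which is exactly the condition $\ell = -(n+1)+\frac{n+m}{2n+2m+1}$; in particular, for generic $m$ the affine algebras $V^{m-1}(\gs\gp_{2n})$ and $V^m(\go\gs\gp_{1|2n})$ are simple, so the generic fibre of the target agrees with the simple algebra of the Proposition.

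First I would embed the affine factor directly, over $\mathbb C[m]$. The even subalgebra $\gs\gp_{2n}\subset\go\gs\gp_{1|2n}$ goes in diagonally, $G_{i,j}\mapsto G_{i,j}\otimes\mathbf 1+\mathbf 1\otimes\varphi(G_{i,j})$ with $\varphi:L_1(\gs\gp_{2n})\hookrightarrow\cE(2n)$ as in \eqref{embedC}, producing currents of level $(m-1)+1=m$. The odd generators are realized as fermionic fields built from the affine currents and the $bc$-fields of $\cE(2n)$, transforming in the standard $\gs\gp_{2n}$-module. By $\gs\gp_{2n}$-equivariance their even--odd and odd--odd OPEs are forced, up to the level, to be the defining OPEs of $V^m(\go\gs\gp_{1|2n})$; the only free datum is the level, which equals $m$, so the structure constants are polynomial in $m$. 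This gives a homomorphism $V^m(\go\gs\gp_{1|2n})\to V^{m-1}(\gs\gp_{2n})\otimes\cE(2n)$, injective for generic $m$ since the universal affine object is then simple. As the statement is purely about vertex-algebra homomorphisms, the half-integer conformal-weight shifts discussed before Lemma \ref{twocopiesofW(sp)} are immaterial here.

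Next I would produce a copy of $\cW^\ell(\gs\gp_{2n})$ inside the commutant $\mathcal C=\text{Com}\big(V^m(\go\gs\gp_{1|2n}),\,V^{m-1}(\gs\gp_{2n})\otimes\cE(2n)\big)$. Its large-level limit is computed by Theorem \ref{thm:largelevellimit}: it is an $\text{Osp}_{1|2n}$-orbifold of free fields which, by Sergeev's first fundamental theorem (as in Lemma \ref{lemma:O}), is of type $\cW(2,4,6,\dots)$ with only finitely many generators needed. Hence $\mathcal C$ is a $1$-parameter quotient of $\cW^{\text{ev}}_{\infty}$, and I would pin down its truncation curve by computing the central charge of the coset conformal vector as a function of $m$ and checking, via \eqref{eq:relation_levels}, that it equals that of $\cW^\ell(\gs\gp_{2n})$. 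To upgrade this to the \emph{universal} $\cW^\ell(\gs\gp_{2n})$ rather than a proper quotient, I would run the decoupling/character argument of Lemma \ref{twocopiesofW(sp)}: choose generators $L',{W'}^{4},\dots,{W'}^{2n}$ with coefficients rational (or algebraic) in $\ell$, and show that no relation among them occurs below the weight predicted for generic $\cW^{\text{ev}}_{\infty}$, the required character lower bounds being supplied by the simple-level embeddings of the Proposition at the integers $m\in\mathbb Z_{>0}$. Once $V^m(\go\gs\gp_{1|2n})$ and this copy of $\cW^\ell(\gs\gp_{2n})$ are both embedded and commute, their tensor product maps in, and injectivity at generic level yields the corollary.

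The main obstacle is this last point: showing that the commutant realizes the universal $\cW^\ell(\gs\gp_{2n})$ uniformly in $\ell$, not merely a specialization. The delicate issue is that at each integer $m$ the Proposition lives in the simple quotient $L_{m-1}(\gs\gp_{2n})\otimes\cE(2n)$, so a naive level-by-level interpolation could be spoiled by null vectors of $V^{m-1}(\gs\gp_{2n})$; I would avoid this by determining the strong generating type, the truncation curve, and the freeness of $\mathcal C$ \emph{intrinsically} in the universal algebra (large-level limit together with Lemma \ref{twocopiesofW(sp)}), using the Proposition only for the numerical identification of the curve and for the character estimates. As a consistency check, the Feigin--Frenkel reformulation noted after \eqref{eq:relation_levels}, namely $\cW^\ell(\gs\gp_{2n})\cong\cW^{\tilde\ell}(\gs\go_{2n+1})$ with $\tfrac{1}{2m+2n+1}+\tfrac{1}{\tilde\ell+2n-1}=1$, must be compatible with the resulting truncation curve.
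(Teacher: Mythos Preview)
Your approach differs substantially from the paper's and is far more elaborate. The paper's proof is a short interpolation argument: for $m\in\mathbb Z_{>0}$ the generating singular vector of $V^{m-1}(\gs\gp_{2n})$ sits in conformal weight $m$, so the OPE algebra of $V^{m-1}(\gs\gp_{2n})\otimes\cE(2n)$ agrees with that of $L_{m-1}(\gs\gp_{2n})\otimes\cE(2n)$ for fields of weight below $m/2$; since the (countably many) structure constants of the universal algebra are rational functions of $m$, each is determined by its values at sufficiently large positive integers, where the Proposition already supplies the embedding. No large-level limits, commutant identifications, truncation curves, or decoupling arguments are invoked.

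Your Step 1 is under-justified and is where the real work in your scheme would lie. You claim the odd generators of $V^m(\go\gs\gp_{1|2n})$ can be realized as explicit fields whose OPEs are ``forced'' by $\gs\gp_{2n}$-equivariance, but equivariance only restricts the shape of such fields; it neither determines the coefficients nor guarantees that some choice closes into the \emph{diagonal} $V^m(\gs\gp_{2n})$ at level $m$. The Proposition itself obtains the integer-level embedding indirectly---via level-rank duality inside $\cE(2nm)$ followed by commutants and minimal quantum Hamiltonian reduction---and no explicit formula for the odd images drops out of that construction. If instead you justify Step 1 by writing candidate images with undetermined coefficients rational in $m$ and appealing to the Proposition at integers to solve the resulting system, you have reproduced the paper's interpolation argument directly, at which point your Steps 2--4, and the Step 4 obstacle you correctly identify, become unnecessary.
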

\begin{proof}
The generating singular vector of $V^{m-1}(\mathfrak{sp}_{2n}) $ for $m \in \mathbb Z_{>0}$ has conformal weight $m$ and so the OPE algebra of $V^{m-1}(\mathfrak{sp}_{2n})  \otimes \cE(2n)$ 
and $L_{m-1}(\mathfrak{sp}_{2n}) \otimes \cE(2n)$ coincides for fields whose conformal weight is less than $m/2$. Since the structure constants of the OPE algebra of  $V^{m-1}(\mathfrak{sp}_{2n})  \otimes \cE(2n)$ are rational functions in $m$ and there are only countable many structure constants, these structure constants are completely determined by those of $L_{m-1}(\mathfrak{sp}_{2n}) \otimes \cE(2n)$ for $m \in \mathbb Z_{>0}$. 
Hence the claim.
\end{proof}

\begin{theorem}\label{thm:coset-rat}
Let $\ell -1$ be an admissible level for $\widehat{\gs\gp}_{2n}$. Then $\cC_\ell(n)$ is strongly rational. 
\end{theorem}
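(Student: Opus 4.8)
The plan is to deduce the strong rationality of $\cC_\ell(n)$ from the realization of this coset as a conformal extension of a tensor product of two principal $\cW$-algebras of $\gs\gp_{2n}$, each of which is strongly rational, and then to invoke the general principle that a sufficiently nice extension of a strongly rational vertex algebra is again strongly rational \cite{SY, CMSY, McR}. This is precisely the strategy used for the integer levels $\ell \in \mathbb{N}$ indicated in the introduction; the new content is to push it through at all admissible levels. First I would record the extension structure: by Lemma \ref{twocopiesofW(sp)}, the $1$-parameter algebra $\cC^{\ell}(n)$ (with conformal vector $L + \frac{1}{2}\partial H$) is an extension of $\cW^{\ell_1}(\gs\gp_{2n}) \otimes \cW^{\ell_2}(\gs\gp_{2n})$ with $\ell_1, \ell_2$ as in \eqref{eq:levels}. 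Specializing at our level and passing to simple quotients, one checks that the image of $\cW^{\ell_1}(\gs\gp_{2n}) \otimes \cW^{\ell_2}(\gs\gp_{2n})$ inside $\cC_\ell(n)$ is the simple algebra $\cW_{\ell_1}(\gs\gp_{2n}) \otimes \cW_{\ell_2}(\gs\gp_{2n})$, so that $\cC_\ell(n)$ is a simple conformal extension of the latter.

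Second, I would verify the rationality of the two factors. Writing $(\ell-1) + h^{\vee} = \ell + n = p/q$ in lowest terms, where $h^{\vee} = n+1$, one has $\ell_1 + h^{\vee} = \frac{p+q}{2p+q}$ and $\ell_2 + h^{\vee} = \frac{p}{2p+q}$. A short numerical check, using $\gcd(p,q)=1$ together with the admissibility bound on $p$, shows that both are nondegenerate admissible levels for $\widehat{\gs\gp}_{2n}$ (principal or coprincipal according to the divisibility of the denominator by the lacing number). By Arakawa's rationality theorem \cite{Ar1, Ar2}, the principal $\cW$-algebras $\cW_{\ell_1}(\gs\gp_{2n})$ and $\cW_{\ell_2}(\gs\gp_{2n})$ are therefore strongly rational, and since strong rationality is preserved under tensor products, $\cU := \cW_{\ell_1}(\gs\gp_{2n}) \otimes \cW_{\ell_2}(\gs\gp_{2n})$ is strongly rational.

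Finally, I would apply the extension machinery. Since $\cC_\ell(n)$ is simple and of CFT type, it is self-contragredient, and it remains to show that it is a finite direct sum of simple $\cU$-modules; once this is established, the results of \cite{SY, CMSY, McR} give strong rationality of $\cC_\ell(n)$ at once. I expect this finiteness to be the main obstacle. The subtlety is that the ambient algebra $L_{\ell-1}(\gs\gp_{2n}) \otimes \cE(2n)$ is \emph{not} lisse at admissible level, so $C_2$-cofiniteness of $\cC_\ell(n)$ cannot be inherited from it. Instead one must exploit the representation theory of admissible affine vertex algebras: the finiteness and semisimplicity of the relevant category of ordinary/admissible modules for the diagonally embedded $L_\ell(\gs\gp_{2n})$ (note $\ell + h^{\vee} = \frac{p+q}{q}$ is again admissible), combined with the rationality of $\cE(2n)$, to conclude that the coset decomposition of $\cC_\ell(n)$ over $\cU$ is finite. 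This is the admissible-level analogue of the branchings established for integer levels in \cite{CL4} via the $\go\gs\gp_{1|2n}$-embeddings and symplectic level-rank duality recorded above, and of the coset arguments of \cite{ACL}. With finiteness and self-duality in hand, the extension theorem applies and the proof is complete.
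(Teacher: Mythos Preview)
Your outline has the right overall shape and correctly identifies the endgame (simple conformal extension of a strongly rational algebra is strongly rational, via \cite{SY, CMSY}), but the weight is misplaced and there is a genuine gap.

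The gap is in the sentence ``Specializing at our level and passing to simple quotients, one checks that the image of $\cW^{\ell_1}(\gs\gp_{2n}) \otimes \cW^{\ell_2}(\gs\gp_{2n})$ inside $\cC_\ell(n)$ is the simple algebra $\cW_{\ell_1}(\gs\gp_{2n}) \otimes \cW_{\ell_2}(\gs\gp_{2n})$.'' Lemma \ref{twocopiesofW(sp)} is a generic statement over an extension of $\mathbb{C}(\ell)$; it gives no control at a fixed admissible value of $\ell$. Even granting that the map specializes, there is no reason the image in $\cC_\ell(n)$ should be the \emph{simple} quotient of each factor rather than some intermediate quotient. This is exactly the nontrivial content, and the paper does not obtain it by specialization. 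Instead it constructs the embedding directly at the admissible level through the intermediate superalgebra $L_\ell(\go\gs\gp_{1|2n})$: one shows that $L_\ell(\go\gs\gp_{1|2n})$ acts on $L_{\ell-1}(\gs\gp_{2n})\otimes\cE(2n)$ (using \cite{KW89} and \cite{GS}), that its commutant is simple and equal to $\cW_{\ell_2}(\gs\gp_{2n})$ (using \cite{ACK} and \cite{CL2}), and that $L_\ell(\go\gs\gp_{1|2n})$ itself is a conformal extension of $L_\ell(\gs\gp_{2n})\otimes \cW_{\ell_1}(\gs\gp_{2n})$ (using \cite{CGL}). Putting these together yields the conformal embedding $\cW_{\ell_1}(\gs\gp_{2n})\otimes\cW_{\ell_2}(\gs\gp_{2n})\hookrightarrow \cC_\ell(n)$ with both factors simple, at the specific admissible level.

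Conversely, the step you flag as ``the main obstacle'' --- finiteness of $\cC_\ell(n)$ as a $\cU$-module --- is not an obstacle at all. Once $\cU = \cW_{\ell_1}(\gs\gp_{2n})\otimes\cW_{\ell_2}(\gs\gp_{2n})$ is known to be $C_2$-cofinite, any $\mathbb{N}$-graded $\cU$-module with finite-dimensional graded pieces has finite length; since $\cC_\ell(n)$ is a conformal extension of $\cU$ this is automatic, and the results of \cite{SY, CMSY} apply immediately. The paper's argument ends the moment the conformal embedding of the simple tensor product is established; no separate branching or finiteness analysis is needed for the rationality statement.
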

\begin{proof}
We need to show that $\cC_\ell(n)$ is a conformal extension of 
\begin{equation} \label{eq:levels}
\cW_{\ell_1}(\gs\gp_{2n}) \otimes \cW_{\ell_2}(\gs\gp_{2n}),\qquad \ell_1 = -(n+1) + \frac{\ell+n+1}{2\ell+2n+1}, \qquad \ell_2 = -(n+1) + \frac{\ell+n}{2\ell+2n+1}.
\end{equation}
If $\ell - 1$ is admissible for $\gs\gp_{2n}$, then both $\ell_1$ and $\ell_2$ are non-degenerate prinicipal or co-principal admissible levels and hence $\cW_{\ell_1}(\gs\gp_{2n})$ and $\cW_{\ell_2}(\gs\gp_{2n})$ are strongly rational \cite{Ar1, Ar2}. By \cite{SY, CMSY} the same is then true for $\cC_\ell(n)$ as a simple vertex algebra that is a conformal extension of a strongly rational one.

The simple admissible level affine vertex algebra $L_\ell(\gs\gp_{2n})$ embeds into $L_{\ell-1}(\gs\gp_{2n}) \otimes \cE(2n)$ by \cite[Cor. 4.1]{KW89}. Hence by \cite[Theorem 4.5.2]{GS} as explained in the proof  of \cite[Theorem 5.3]{GS} the maximal ideal of $V^\ell(\go\gs\gp_{1|2n})$ is generated by the one of the $V^\ell(\gs\gp_{2n})$ subalgebra and so also $L_\ell(\go\gs\gp_{1|2n})$ acts on $L_{\ell-1}(\gs\gp_{2n}) \otimes \cE(2n)$.
Theorem 4.1 of \cite{ACK} is stated for Lie algebras, but it also holds for $\gg = \go\gs\gp_{1|2n}$, since this Lie superalgebra has the property that $V^k(\gg)$ is projective in $KL_k(\gg)$ as long as $k  + h^\vee \notin \mathbb Q_{\leq 0}$. Thus by Theorem 4.1 of \cite{ACK}
the coset $\text{Com}(L_\ell(\go\gs\gp_{1|2n}), L_{\ell-1}(\gs\gp_{2n}) \otimes \cE(2n))$  is simple and by Theorem 8.1 of \cite{CL2} it is isomorphic to $\cW_{\ell_2}(\gs\gp_{2n})$, $\text{Com}(L_\ell(\go\gs\gp_{1|2n}), L_{\ell-1}(\gs\gp_{2n}) \otimes \cE(2n))\cong \cW_{\ell_2}(\gs\gp_{2n})$. 
By Theorem 4.2 of \cite{CGL} $L_\ell(\go\gs\gp_{1|2n})$ is a conformal extension of $L_\ell(\gs\gp_{2n}) \otimes \cW_{\ell_1}(\gs\gp_{2n})$.
It follows that $\cC_\ell(n)$ is a conformal extension of $\cW_{\ell_1}(\gs\gp_{2n}) \otimes \cW_{\ell_2}(\gs\gp_{2n})$. Since both levels are non-degenerate principal or coprincipal admissible and hence strongly rational, $\cC_\ell(n)$ is simple and as a conformal extension of a strongly rational vertex algebra, it is so as well by \cite{SY, CMSY}.
\end{proof}

Since $\cC^\ell(n)$ is a $1$-parameter quotient of $\cW^{\gs\gp}_{\infty}$, it is immediate from Corollary \ref{weakgeneration:finiteness} that for each $n$, $\cC_\ell(n)$ is a quotient of $\cW^{\gs\gp}_{\infty}$ for all but finitely many values of $\ell$ such that $\ell -1$ is admissible for $\widehat{\gs\gp}_{2n}$. In the next subsection, we will show that $\cC_\ell(n)$ is generated by the fields in weights up to $4$ for all such $\ell$, so in fact {\it all} the strongly rational vertex algebras $\cC_\ell(n)$ are quotients of $\cW^{\gs\gp}_{\infty}$.

\subsection{Branching Rules}
Gurbir Dhillon, Shigenori Nakatsuka and the first named author have a current project on algebras of chiral differential operators (CDOs) $\cD^{\text{ch}}_{G,k}$ for a supergroup $G$. In particular in that work a relation between the CDO of $\go\gs\gp_{1|2n}$ at level $\ell$ and the CDO of $\gs\gp_{2n}$ at level $\ell-1$ will be shown, 
\[
\mathcal D^{\text{ch}}_{\text{Osp}_{1|2n}, \ell + n + \frac{1}{2}} \cong \mathcal D^{\text{ch}}_{\text{Sp}_{2n}, \ell + n} \otimes \cE(2n).
\]
Let $P^+_{\go\gs\gp_{1|2n}}, P^+_{\gs\gp_{2n}}$ be the sets of dominant weights. 
Let $V^\ell(\lambda)$ denote the universal Weyl module of $V^\ell(\gg)$ whose top level is the irreducible highest-weight representation $E_\lambda$  of highest-weight $\lambda$ of $\gg$. Let $L_\ell(\lambda)$ be its simple quotient.   
Then for generic level $\ell$ the CDOs decompose as
\[
\mathcal D^{\text{ch}}_{\text{Osp}_{1|2n}, \ell + n + \frac{1}{2}} \cong \bigoplus_{\lambda \in P^+_{\go\gs\gp_{1|2n}}} V^\ell(\lambda) \otimes V^{-\ell - 2n-1}(\lambda), \qquad
\mathcal D^{\text{ch}}_{\text{Sp}_{2n}, \ell + n} \cong \bigoplus_{\lambda \in P^+_{\gs\gp_{2n}}} V^{\ell-1}(\lambda) \otimes V^{-\ell - 2n-1}(\lambda).
\]
The twisted quantum Hamiltonian reductions \cite{AF} are labelled by dominant coweights $\mu \in \check{P}^+$ and the reduction functor is denoted by $H^0_{DS, \mu}$. Set $T^\kappa_{\lambda, \mu} := H^0_{DS, \mu}(V^k(\lambda))$ with $\kappa = k + h^\vee$. 
As before set 
$\ell_1 = -(n+1) + \frac{\ell+n+1}{2\ell+2n+1}$ and $\ell_2 = -(n+1) + \frac{\ell+n}{2\ell+2n+1}$.
Then by Theorems 3.1 and 3.2 of \cite{CGL} for generic $\ell$
\[
 V^\ell(\mu) \cong \bigoplus_{\lambda \in P^+_{\gs\gp_{2n}}}  V^\ell(\lambda) \otimes T^{\ell_1 - n }_{\lambda, \mu}
\] 
and using that coweights of $\gs\gp_{2n}$ are naturally identified with weights of $\gs\go_{2n+1}$ and dominant weights of $\go\gs\gp_{1|2n}$ are in one-to-one correspondence with dominant weights of $\gs\go_{2n+1}$ that lie in the root lattice. 
It follows that 
\begin{equation}\nonumber
\begin{split}
\mathcal D^{\text{ch}}_{\text{Osp}_{1|2n}, \ell + n + \frac{1}{2}} &\cong \bigoplus_{\lambda \in P^+_{\go\gs\gp_{1|2n}}} V^\ell(\lambda) \otimes V^{-\ell - 2n-1}(\lambda) 
\cong \bigoplus_{\lambda \in P^+_{\go\gs\gp_{1|2n}}}  \bigoplus_{\mu \in P^+_{\gs\gp_{2n}}}
 V^\ell(\lambda)  \otimes T^{\ell_2 - n }_{\mu, \lambda} \otimes V^{-\ell -2n-1}(\mu)
\end{split}
\end{equation}
On the other hand 
\[
\mathcal D^{\text{ch}}_{\text{Osp}_{1|2n}, \ell + n + \frac{1}{2}} \cong \mathcal D^{\text{ch}}_{\text{Sp}_{2n}, \ell + n} \otimes \cE(2n) \cong 
\bigoplus_{\mu \in P^+_{\gs\gp_{2n}}} V^{\ell-1}(\mu) \otimes V^{-\ell - 2n-1}(\mu) \otimes \cE(2n)
\]
and so by comparing coefficients we get the generic branching rules
\[
V^{\ell-1}(\mu)  \otimes \cE(2n) \cong \bigoplus_{\lambda \in P^+_{\go\gs\gp_{1|2n}}}   
 V^\ell(\lambda)  \otimes T^{\ell_2 - n }_{\mu, \lambda}. 
\]
Let $\ell -1$ be admissible and denote by $L_{\ell_2} (\mu, \lambda)$ the simple quotient of $T^{\ell_2 - n }_{\mu, \lambda}$. 
We want to determine the branching rules at these admissible levels. First we note the following. 
In \cite{C} the fusion rules of principal $\cW$-algebras at non-degenrate (co)-principal admissible levels were computed. In particular it was noted that the fusion rules of the $L_{\ell_2  }(0, \lambda)$ coincide with the ones of the one of the semisimplification of $U_q(\gs\go_{2n+1})$ for $q = e^{\frac{\pi i }{2}(\ell_2 + n + 1)}$. But the tensor product of the semisimplification is in principle computable, see \cite[Prop.5]{Saw}.  In particular in type $B$ the tensor product with the standard representation $E_{\omega_1}$ is given in Example 4.3 of \cite{Row}\footnote{\cite{Row} states the result for odd roots of unity as he is using an older result than \cite[Prop.5]{Saw} which was stated in lesser generality. Since \cite[Prop.5]{Saw} holds for all roots of unity $q$, the same is then true for Example 4.3 of \cite{Row}.} and the whole tensor ring is generated by it.

With this information and (2.3) and (2.4) of \cite{CGL} exactly the same reasoning as the proof of Theorem 4.2 of \cite{CGL} can be done, and one gets that
\[
L_{\ell-1}(\mu)  \otimes \cE(2n) \cong \bigoplus_{\lambda \in P^B_Q}   
 L_\ell(\lambda)  \otimes L_{\ell_2  }(\mu, \lambda) 
\]
with $\ell = - (n+1) + \frac{u}{v}$ and $P^B = P(u+v, 2u +v)$ the set of admissible weights of $\gs\go_{2n+1}$ at level $\ell_2$ and $P^B_Q = P^B \cap Q$ the subset of admissible weights that lie in the root lattice $Q$. 
In particular 
\[
L_{\ell-1}(\gs\gp_{2n})  \otimes \cE(2n) \cong \bigoplus_{\lambda \in P^B_Q}   
 L_\ell(\lambda)  \otimes L_{\ell_2  }(0, \lambda). 
\]
 By the main Theorem of \cite{CKM} it means that $L_{\ell-1}(\gs\gp_{2n})  \otimes \cE(2n)$ is generated under OPE by $L_\ell(\go\gs\gp_{1|2n})  \otimes \cW_{\ell_2  }(\gs\gp_{2n})\oplus L_\ell(\omega_1)  \otimes L_{\ell_2  }(0, \omega_1)$. 
The same reasoning now applies to $C_\ell(n)$, that is 
\[
C_\ell(n) \cong  \bigoplus_{\lambda \in P^B_Q}   
 L_{\ell_1}(0, \lambda)  \otimes L_{\ell_2  }(0, \lambda)
\]
is generated under OPE by $\cW_{\ell_1}(\gs\gp_{2n})  \otimes \cW_{\ell_2  }(\gs\gp_{2n})\oplus L_{\ell_1}(0, \omega_1)  \otimes L_{\ell_2  }(0, \omega_1)$. This is stronger than the property that we need, namely that $\cC_{\ell}(n)$ is generated by the fields in weights up to $4$, so we obtain

\begin{corollary} \label{rationalquotients} For all $n\geq 1$ and all levels $\ell$ such that $\ell -1$ is admissible for $\widehat{\gs\gp}_{2n}$,  $\cC_\ell(n)$ is a quotient of $\cW^{\gs\gp}_{\infty}$.
\end{corollary}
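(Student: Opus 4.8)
The plan is to verify the hypotheses of Theorem \ref{one-parameter quotients theorem} for $\cC_\ell(n)$ at \emph{every} level $\ell$ with $\ell-1$ admissible for $\widehat{\gs\gp}_{2n}$, and in particular to establish weak generation uniformly, covering the finitely many admissible levels left open by Corollary \ref{weakgeneration:finiteness}. Several of the hypotheses are already in hand: by Theorem \ref{thm:coset-rat} the algebra $\cC_\ell(n)$ is strongly rational, hence simple; as the simple quotient of the $1$-parameter vertex algebra $\cC^\ell(n)$ it contains $L_n(\gs\gp_2)$ and inherits the $\gs\gp_2$-symmetry, parity, and (possibly non-minimal) strong generating type $\cW(1^3,2,3^3,4,\dots)$ from Lemma \ref{lemma:C}; and since the $\gs\gp_2$-level here is the fixed positive integer $k=n$, we are safely away from the set $D$, so $\Wsp$ is defined there with no rescaling. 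Its OPEs among the generators of weight summing to at most $9$ agree with those of $\Wsp$, being specializations of the $1$-parameter family. Thus the only point that can fail at a special level, and the only one I would need to prove, is that $\cC_\ell(n)$ is weakly generated by its fields of conformal weight at most $4$: the affine currents $X,Y,H$, the Virasoro field $L$, and a weight $4$ primary $W^4$ obeying the raising property \eqref{raising}.

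First I would invoke the branching-rule decomposition
\[
\cC_\ell(n)\cong\bigoplus_{\lambda\in P^B_Q}L_{\ell_1}(0,\lambda)\otimes L_{\ell_2}(0,\lambda)
\]
together with the main theorem of \cite{CKM} to conclude that $\cC_\ell(n)$ is generated under OPE by its identity component $\cW_{\ell_1}(\gs\gp_{2n})\otimes\cW_{\ell_2}(\gs\gp_{2n})$ (the summand $\lambda=0$) and the single extension module $L_{\ell_1}(0,\omega_1)\otimes L_{\ell_2}(0,\omega_1)$ (the summand $\lambda=\omega_1$). The justification that $\omega_1$ alone suffices is that, under the identification of the fusion rules of the $L_{\ell_2}(0,\lambda)$ with the tensor ring of the semisimplification of $U_q(\gs\go_{2n+1})$ from \cite{Saw,Row}, the set $P^B_Q$ is generated from $\omega_1$ by the standard representation; every other summand is therefore produced by iterated fusion, i.e.\ by iterated OPEs among the $\omega_1$-fields. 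This is the categorical input that makes the argument uniform across all admissible $\ell$.

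Next I would extract weight $\leq 4$ weak generation from this stronger generation statement. The identity component supplies the Virasoro field $L$ and the even generators of the two $\cW_{\ell_i}(\gs\gp_{2n})$ factors, while the low-weight part of the extension module $L_{\ell_1}(0,\omega_1)\otimes L_{\ell_2}(0,\omega_1)$ supplies the weight $1$ affine currents $X,Y,H$ of $L_n(\gs\gp_2)$ together with the weight $3$ adjoint fields; the higher even generators are then recovered from $W^4$ via the raising operator $W^4_{(1)}$, and the higher summands by fusion. Since being generated by the identity component and the $\omega_1$-module is strictly stronger than being generated by the fields of weight at most $4$, the weak generation hypothesis follows, and Theorem \ref{one-parameter quotients theorem} then identifies $\cC_\ell(n)$ with a simple quotient of $\Wsp$. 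I expect the main obstacle to be precisely this last translation: bridging from the module-theoretic generation furnished by \cite{CKM} to the concrete weak generation by weight $\leq 4$ fields that Theorem \ref{one-parameter quotients theorem} demands. Making it airtight requires confirming that at these admissible levels the structure constants governing the raising property $W^4_{(1)}W^{n}=W^{n+2}$ do not degenerate — the very failure that produced the finite exceptional set in Corollary \ref{weakgeneration:finiteness} — and that the requisite low-weight fields genuinely lie in the $\omega_1$-summand with the claimed conformal weights, which reduces to a conformal-weight computation for the reduced modules $L_{\ell_i}(0,\omega_1)$ using the highest-weight/conformal-weight formulas for the reductions cited above.
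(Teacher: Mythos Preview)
Your proposal follows essentially the same route as the paper: establish the branching decomposition $\cC_\ell(n)\cong\bigoplus_{\lambda\in P^B_Q}L_{\ell_1}(0,\lambda)\otimes L_{\ell_2}(0,\lambda)$, invoke the main theorem of \cite{CKM} together with the fact that the fusion ring is generated by $\omega_1$ to conclude that $\cC_\ell(n)$ is generated under OPE by $\cW_{\ell_1}(\gs\gp_{2n})\otimes\cW_{\ell_2}(\gs\gp_{2n})$ and $L_{\ell_1}(0,\omega_1)\otimes L_{\ell_2}(0,\omega_1)$, and then deduce generation by fields of weight at most $4$. The paper is just as brief about the final translation step you flag: it simply asserts that the module-theoretic generation is ``stronger than the property that we need'' and concludes.

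One small correction to your phrasing: the raising property $W^4_{(1)}W^n=W^{n+2}$ is the \emph{definition} of the higher generators in $\Wsp$, not a relation whose structure constants can degenerate. What could fail at a special level is that the subalgebra produced from the weight $\leq 4$ fields is strictly smaller than the whole of $\cC_\ell(n)$; the module-theoretic argument rules this out by bounding the whole algebra from above. Also, be careful about which grading you are using when you place $X,Y,H$ inside the $\omega_1$ summand: the decomposition into $\cW_{\ell_1}\otimes\cW_{\ell_2}$-modules is taken with respect to the shifted conformal vector $L+\tfrac{1}{2}\partial H$, under which $X,H,Y$ have weights $0,1,2$, so the assignment of individual fields to components is not quite as you describe, though this does not affect the argument.
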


\subsection{Unitarity}

The notion of unitarity of a vertex algebra and its modules was introduced in \cite{DL14}. Examples of strongly rational vertex algebras that are strongly unitary in the sense that all of its modules are unitary are free fermions and affine vertex algebras at positive integer level. Unitarity is important for the physics of conformal field theory, i.e. it is necessary for the CFTs involved in the correspondence to higher spin gravity theories on Anti-de-Sitter space in three dimensions; but also the connection to conformal nets can only be made for unitary vertex algebras \cite{Te}.

Cosets provide a way to find new unitary vertex algebras, namely if $W \subseteq V$ are unitary, then $\text{Com}(W, V)$ is unitary as well \cite[Cor. 2.8]{DL14}. By the same argument as the proof of that corollary, the $\text{Com}(W, V)$-module $\text{Hom}_{W-\text{mod}}(M, N)$ is unitary if $M$ is a unitary $W$-module and $N$ a unitary $V$-module. We obtain 
\begin{corollary}
For $\ell \in \mathbb Z_{\geq 1}$,  $\cC_\ell(n)$  is unitary and any  $\cC_\ell(n)$-module that appears in the decomposition of any module of $L_{\ell-1}(\gs\gp_{2n})  \otimes \cE(2n)$ is unitary as well. 
\end{corollary}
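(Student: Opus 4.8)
The plan is to realize $\cC_\ell(n)$ as the commutant of a unitary vertex subalgebra inside a unitary vertex algebra and then invoke \cite[Cor.~2.8]{DL14}. First I would identify the building blocks and check that each is strongly unitary. The $bc$-system $\cE(2n) \cong \cF(4n)$ is a free fermion algebra, hence strongly unitary. For $\ell \in \mathbb{Z}_{\geq 1}$, the affine vertex algebras $L_\ell(\gs\gp_{2n})$ and $L_{\ell-1}(\gs\gp_{2n})$ sit at nonnegative integer level (with $L_0(\gs\gp_{2n}) = \mathbb{C}$ in the edge case $\ell = 1$), so they too are strongly unitary. Since a tensor product of strongly unitary vertex algebras is strongly unitary, $V := L_{\ell-1}(\gs\gp_{2n}) \otimes \cE(2n)$ is strongly unitary; in particular every $V$-module is unitary. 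Recall also that $\cC_\ell(n) = \text{Com}(L_\ell(\gs\gp_{2n}), V)$ is precisely this commutant.

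Next I would verify that the diagonal copy $W := L_\ell(\gs\gp_{2n}) \hookrightarrow V$ from \eqref{diagonalC} is a \emph{unitary} subalgebra, i.e.\ that the anti-linear involution on $V$ restricts to the canonical involution on $W$. The involution on $V$ is the tensor product of the standard involutions on its two factors, both induced by the compact real form of $\gs\gp_{2n}$, and the embedding \eqref{embedC} of $L_1(\gs\gp_{2n})$ into $\cE(2n)$ is itself unitary. Because $W \hookrightarrow V$ is induced from the diagonal Lie algebra homomorphism $\gs\gp_{2n} \to \gs\gp_{2n} \oplus \gs\gp_{2n}$, which preserves the compact form, the two involutions agree on $W$, so $W$ is a unitary subalgebra. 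Applying \cite[Cor.~2.8]{DL14} then yields that $\cC_\ell(n)$ is unitary.

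For the statement about modules, let $N$ be any $V$-module; by strong unitarity of $V$ it is unitary. By standard coset theory the $\cC_\ell(n)$-modules appearing in the decomposition of $N$ are exactly the multiplicity spaces $\text{Hom}_{W\text{-mod}}(M, N)$, where $M$ ranges over the irreducible $W$-modules. Since $W = L_\ell(\gs\gp_{2n})$ is strongly unitary at positive integer level, every such $M$ is unitary, and by the module version of \cite[Cor.~2.8]{DL14} recorded at the start of this subsection, each $\text{Hom}_{W\text{-mod}}(M, N)$ is a unitary $\cC_\ell(n)$-module. This gives the second assertion.

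The main obstacle is the verification in the second paragraph that the embedding $W \hookrightarrow V$ respects the unitary structures, since \cite[Cor.~2.8]{DL14} applies only to unitary subalgebras. Once the common compact-form origin of all the involutions is made precise this is routine, but it is the one place where the positive-integer-level hypothesis $\ell \in \mathbb{Z}_{\geq 1}$ (rather than merely $\ell - 1$ admissible, as in Theorem~\ref{thm:coset-rat}) is genuinely needed, since strong unitarity of the affine and fermionic pieces is what drives the entire argument.
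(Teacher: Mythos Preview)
Your proposal is correct and follows essentially the same approach as the paper: the corollary is stated immediately after the paragraph noting that \cite[Cor.~2.8]{DL14} makes $\text{Com}(W,V)$ unitary whenever $W\subseteq V$ are unitary, and that the same argument yields unitarity of the multiplicity modules $\text{Hom}_{W\text{-mod}}(M,N)$. Your extra care in verifying that the diagonal embedding $L_\ell(\gs\gp_{2n})\hookrightarrow L_{\ell-1}(\gs\gp_{2n})\otimes\cE(2n)$ respects the anti-linear involutions is a point the paper leaves implicit, but is indeed required for \cite[Cor.~2.8]{DL14} to apply.
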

We conjecture that $\cC_\ell(n)$ is strongly unitary for $\ell \in \mathbb Z_{\geq 1}$. We note that the family $\cC_\ell(n)$ for $\ell, n$ positive integers should exactly be the one corresponding to higher spin gravity on AdS$_3$ with $\text{Sp}_2$-restricted matrix extension \cite{CHY}. 

The GKO coset construction was used to show that a simple, principal $\cW$-algebra of a simply-laced Lie algebra at non-degenerate admissible level is unitary if and only if the level is of the form $k = - h^\vee + \frac{p}{q}$ and $|p-q|=1$ \cite[Theorem 12.6]{ACL}. Unitary, strongly rational $\cW$-algebras are rare, and we will now explain that up to collapsing levels where $\cW_k(\gg) \cong \mathbb C$, the minimal unitary series of simply-laced principal $\cW$-algebras are the only unitary, strongly rational principal $\cW$-algebras. In particular, the $\cW$-algebras $\cW_{\ell_1}(\gs\gp_{2n})$ and $\cW_{\ell_2}(\gs\gp_{2n})$ appearing in the description \eqref{eq:levels} of $\cC_{\ell}(n)$, are not unitary.

In the proof of \cite[Theorem 12.6]{ACL}, a necessary condition for unitarity of a strongly rational vertex algebra was given, namely that the effective central charge coincides with the actual central charge. This criterion can be used to show that in the non simply-laced case, the only non-degenerate admissible levels $k$ where $\cW_k(\gg)$ is unitary are the collapsing levels where $\cW_k(\gg) \cong \mathbb C$. We verify this case by case, (because of Feigin-Frenkel duality \cite{FF} it is enough to consider principal non-degenerate admissible levels).

Let $\cW_k(\gg)$ be the simple principal $\cW$-algebra of $\gg$ at level $k$ and let $k$ be non-degenerate principal admissible, that is $k = - h^\vee + \frac{p}{q}$ with $p, q$ co-prime positive integers and $p \geq h^\vee, q \geq h$ and $q$ is co-prime to the lacity $r^\vee$ of $\gg$. 
In this case $\cW_k(\gg)$ is strongly rational \cite{Ar1, Ar2} and the central charge is \cite{Ar2}
\[
c = - \frac{\ell ((h+1)p - h^\vee q)(r^\vee h^\vee_{{}^L\gg}p - (h+1)q)}{pq}
\]
with $\ell$ the rank of $\gg$. 
While the effective central charge for principal non-degenerate admissible level is  \cite{KW4}
\[
c_{\text{eff}} = \ell - \frac{h^\vee \dim \gg}{pq}.
\]
In the case of $\gg = \gs\gp_{2n}$ one has $\ell = n, r^\vee =2, h^\vee = n+1, h =2n, h^\vee_{{}^L\gg} = 2n-1$. Hence $c = c_{\text{eff}}$ if and only if 
\[
0 = -(n+1) (2n+1) + p^2 2(4n^2-1) + q^2(n+1)(2n+1) - pq 2(n+1)(4n-1). 
\]
Set $p = \frac{q}{2} + m$. In particular since $k$ is principal admissible $q$ is odd and so $m \in \mathbb Z+ \frac{1}{2}$. 
The condition simplifies to
\[
0 = \frac{3}{2}q^2 - 6mnq + (2n+1) ( 2m^2(2n-1) -(n+1)) 
\]
whose solutions are given by 
\[
2mn \pm \sqrt{D}, \qquad D = 4 m^2n^2  - \frac{2}{3} (2n+1) (2m^2(2n-1) -(n+1)) = \frac{2}{3}(n+1)(2n+1 -2m^2(n-1)). 
\]
We analyze $D$. First if $n=1$, then $D =4$ and so $q = 2m \pm 2$, i.e. $p = q \pm 1$, recovering the well-known unitary Virasoro minimal model series. 
If $n>1$, then we get a solution for $m^2 = \frac{1}{4}$, namely $D = (n+1)^2$. In this case $q = \epsilon n \pm \epsilon' (n+1)$ for $\epsilon, \epsilon' \in \{ \pm 1\}$ and the only non-degenerate admissible level of those four is  $q = 2n+1$ and hence $p = n+1$. In this case it is well-known that $\cW_k(\gs\gp_{2n}) \cong \mathbb C$, this follows e.g. from Theorem 1.1. of \cite{AMP} using that the central charge vanishes.
If $|m| \geq \frac{3}{2}$, then $D<1$, so there is no real solution to our condition. 

In the case of $\gg_2$, one needs the solutions of 
$84p^2 +28q^2 -96pq = 28$ with unique non-degenerate principal admissible level solution $q = 7, p=4$. Finally, in the case of $\gf_4$, one needs the solutions of 
$234p^2 +117q^2-330pq = 117$ with unique non-degenerate principal admissible level solution $q = 13, p=9$. In both cases we verify that $c=0$ and hence again by Theorem 1.1. of \cite{AMP} the $\cW$-algebra collapses to $\mathbb C$.

\subsection{Some exceptional $\cW$-algebras} Let $\gg$ be a simple Lie algebra and $k = -h^{\vee} + \frac{p}{q}$ an admissible level for $\widehat{\gg}$. The associated variety of $L_k(\gg)$ is a nilpotent orbit closure $\overline{\mathbb{O}_q}$ which depends only on the denominator $q$. If $f\in \gg$ is a nilpotent lying in $\mathbb{O}_q$, the simple $\cW$-algebra $\cW_k(\gg,f)$ is known to be non-zero and lisse \cite{Ar1}. Such pairs $(f,q)$ are called {\it exceptional pairs}; they generalize the notion of exceptional pair in \cite{KW4,EKV}. The corresponding $\cW$-algebras are also called exceptional, and were conjectured by Arakawa to be rational in \cite{Ar1}, generalizing the original conjecture of \cite{KW4}. This was proven for principal $\cW$-algebras by Arakawa in \cite{Ar2}, for all type $A$ cases by Arakawa and van Ekeren \cite{AvE}, and in full generality by McRae \cite{McR}.

Here we give a new proof of the rationality of certain exceptional $\cW$-algebras by exhibiting them as extensions of the tensor product of two rational principal $\cW$-algebras. First recall that $\cW_{r}(\gs\gp_{2(2m+1)}, f_{2m+1, 2m+1})$ is exceptional when $$r= -((2m+1)+1) + \frac{p}{2m+1},\qquad p \geq 2m+1,\qquad p,\ 2m+1 \ \text{coprime}.$$
 If $k$ is a positive integer, we have an intersection of the truncation curves for $\cW^{r}(\gs\gp_{2(2m+1)}, f_{2m+1, 2m+1}) \cong \cC^{\psi}_{BC}(0,m)$ and $\cC^{\ell}(k)$ at the point
$$r = -(2m+2) + \frac{2 + k + 2 m}{2m+1} = -(k+1) + \ell.$$
 The shifted levels for the subalgebras $\cW^{\ell_1}(\gs\gp_{2k})$ and $\cW^{\ell_2}(\gs\gp_{2k})$ of $\cC^{\ell}(k)$ are 
$\frac{1 + k}{3 + 2 k + 2 m}$ and $\frac{2 + k + 2 m}{3 + 2 k + 2 m}$. It follows that when  $k +1$ and $2m+1$ are coprime, $\cC_{\ell}(k)$ is strongly rational.

By Theorem \ref{weakgeneration:Walgebras}, $\cW_{r}(\gs\gp_{2(2m+1)}, f_{2m+1, 2m+1})$ is generated by the fields of weight at most $4$ for all such values of $r$. This implies that $\cW_{r}(\gs\gp_{2(2m+1)}, f_{2m+1, 2m+1}) \cong \cC_{\ell}(k)$ at the these points, which provides a new proof that these exceptional $\cW$-algebras are strongly rational.

Similarly, recall that $\cW_{r}(\gs\go_{4m+1}, f_{2m,2m})$ is exceptional when $r = -(4m-1) + \frac{p}{2m+1}$ for $p \geq 4m-1$ and $p,2m$ coprime. If $k$ is a positive integer, we have an intersection of the truncation curves for $\cW_{r}(\gs\go_{4m+1}, f_{2m,2m})^{\mathbb{Z}_2} \cong \cC^{\psi}_{CB}(0,m)$ and $\cC^{\ell}(k)$ at the point
\begin{equation} \label{orbifold:valuesofr} r =-(4m-1) + \frac{1 + 2 k + 4 m}{2 m} = -(k+1) + \ell.\end{equation}
 The shifted levels for the subalgebras $\cW^{\ell_1}(\gs\gp_{2k})$ and $\cW^{\ell_2}(\gs\gp_{2k})$ of $\cC^{\ell}(k)$ are 
$\frac{1 + 2 k }{2 (1 + 2 k + 2 m)}$ and $\frac{1 + 2 k + 4 m}{2 (1 + 2 k + 2 m)}$. It follows that when $1+2k$, $2m$ are coprime, $\cC_{\ell}(k)$ is strongly rational. 

Recall that $\cW^{r}(\gs\go_{4m+1}, f_{2m,2m})$ is freely generated of type $\cW\big(1^3, 2, 3^3, \dots, (2m-1)^3,2m, \big(\frac{2m+1}{2}\big)^2\big)$, and that its $\mathbb{Z}_2$-orbifold is a $1$-parameter quotient of $\cW^{\gs\gp}_{\infty}$. By the same argument as the proof of Theorem \ref{weakgeneration:Walgebras}, as long as $k$ is not in the set $\{ -1, -2, -4, -\frac{4}{3}, -\frac{8}{5}, -\frac{16}{7}\}$ and the central charge $c \neq -\frac{22}{5}$, the fields in weight at most $3$ weakly generate all generators of $\cW^{r}(\gs\go_{4m+1}, f_{2m,2m})$ except for the two fields $P^{\pm}$ of weight $\frac{2m+1}{2}$, which have eigenvalue $-1$ with respect to the $\mathbb{Z}_2$-action. The orbifold $\cW^{r}(\gs\go_{4m+1}, f_{2m,2m})^{\mathbb{Z}_2}$ is strongly generated by the fields in weights $1,2,\dots, 2m$ together with the composite fields for $n\geq 0$:
\begin{equation} \label{CBorbifoldgen}
\begin{split} & w^{2n+2m+2} =\ :\!\partial^{2n+1}P^{+}P^{-}\!: \ -\  :\!P^{+}\partial^{2n+1} P^{-}\!:, \qquad h^{2n+2m+1} =\ :\!\partial^{2n}P^{+}P^{-}\!: \ +\ :\!P^{+}\partial^{2n} P^{-}\!:,
\\ & x^{2n+2m+1} =\ :\!\partial^{2n}P^{+}P^{+}\!:, \qquad y^{2n+2m+1} =\ :\!\partial^{2n}P^{-}P^{-}\!:.
\end{split} \end{equation}
Let $W^{2n+2m+2}, X^{2n+2m+1}, Y^{2n+2m+1}, H^{2n+2m+1}$ be the images of the corresponding generators of $\cW^{\gs\gp}_{\infty}$ in $\cW^{r}(\gs\go_{4m+1}, f_{2m,2m})^{\mathbb{Z}_2}$. We have
\begin{equation} \begin{split} & W^{2n+2m+2} = \lambda_{2n+2m+2} w^{2n+2m+2} + \cdots, \qquad H^{2n+2m+1} = \lambda_{2n+2m+1} h^{2n+2m+1} + \cdots,
\\ & X^{2n+2m+1} = \lambda_{2n+2m+1} x^{2n+2m+1} + \cdots,\qquad Y^{2n+2m+1} = \lambda_{2n+2m+1} y^{2n+2m+1} + \cdots,
\end{split}
\end{equation}
 for some structure constants $\lambda_{2n+2m+1}, \lambda_{2n+2m+2}$, where the remaining terms depend only on the generators in weights $1,2,\dots, 2m$ and their derivatives.

Since the fields $X^{2m+1}, Y^{2m+1}, H^{2m+1}$ are nontrivial in Zhu's commutative algebra of $\cW^{r}(\gs\go_{4m+1}, f_{2m,2m})$, and the Poisson structure is independent of $r$, the same argument in the proof of Theorem \ref{weakgeneration:Walgebras} that shows that $\lambda_{2m+1}$ is a nonzero constant for all $i < m$, we also get that $\lambda_{2m+1}$ is a nonzero constant. (It is nonzero because for generic values of $r$, $\cW^{r}(\gs\go_{4m+1}, f_{2m,2m})^{\mathbb{Z}_2}$ satisfies the weak generation property). Therefore $h^{2m+1}, x^{2m+1}, y^{2m+1}$ are generated by the fields in weight at most $3$ when $k \notin \{ -1, -2, -4, -\frac{4}{3}, -\frac{8}{5}, -\frac{16}{7}\}$ and $c \neq -\frac{22}{5}$. 

It is not true $\lambda_{2n+2m+1}, \lambda_{2n+2m+2}$ are independent of $r$ for $n >0$. However, using the OPEs $H^3(z) P^{\pm}(w)$ and $X^3(z) P^{\pm}(w)$, up to the normalization of $H^3, X^3$ we compute
\begin{equation}
\begin{split}
X^3_{(0)}h^{2n+2m+1} =& \frac{4  k (k+1) (4 k n+2 k+2 n-2 m+1) (2 k n+2 k+n+m+1)}{(n+1) (2 n+1) (2 k+4 m+1)}x^{2n+2m+3}+\dotsb,\\
H^3_{(1)}w^{2n+2m+2} =&\frac{2 k (k+1) (2 k n+2 k+n+m+1) (4 k n+4 k m+6 k+2 n+6 m+3)}{(n+1) (2 k+4 m+1)}h^{2n+2m+3}+\dotsb,\\
H^3_{(1)}h^{2n+2m+1} =& \frac{4 k (k+1) (2 k+1) (m+n+1) (4 k n+2 k+2 n-2 m+1)}{(2 n+1) (2 k+4 m+1)}w^{2n+2m+2}+\dotsb,\\
\end{split}
\end{equation}
where the remaining terms depend on generators of lower weight. Again, by the same argument in the proof of Theorem \ref{weakgeneration:Walgebras}, $\lambda_{2n+2m+1}, \lambda_{2n+2m+2}$ are nonzero whenever the numerator and denominator in the above expressions do not vanish. This never happens for positive integer values of $k$, so all the fields \eqref{CBorbifoldgen} in  $\cW^r(\gs\go_{4m+1}, f_{2m,2m})^{\mathbb{Z}_2}$ are generated by the fields in weight at most $3$ for the values of $r$ given by \eqref{orbifold:valuesofr}. Therefore the simple quotient $\cW_{r}(\gs\go_{4m+1}, f_{2m,2m})^{\mathbb{Z}_2}$ has the weak generation property, and hence is isomorphic to $\cC_{\ell}(k)$. Since $\cW_{r}(\gs\go_{4m+1}, f_{2m,2m})$ is a simple current extension of $\cC_{\ell}(k)$, this provides a new proof that these $\cW$-algebras are strongly rational.

\subsection{New rational $\cW$-superalgebras}
Now we consider $\cW_{r}(\go\gs\gp_{1|2(2m+1)}, f_{2m+1,2m+1})$. If $k$ is a positive integer, we have an intersection of the truncation curves for $\cW_{r}(\go\gs\gp_{1|2(2m+1)}, f_{2m+1,2m+1})^{\mathbb{Z}_2} \cong \cC^{\psi}_{BO}(0,m)$ and $\cC^{\ell}(k)$ at the point
$$r = -(2m+\frac{3}{2}) + \frac{3 + 2 k + 4 m}{2 (1 + 2 m)}  = -(k+1) + \ell.$$

 The shifted levels for the subalgebras $\cW^{\ell_1}(\gs\gp_{2k})$ and $\cW^{\ell_2}(\gs\gp_{2k})$ of $\cC^{\ell}(k)$ are 
$\frac{3 + 2 k + 4 m}{4 (1 + k + m)}$ and $\frac{1 + 2 k}{4 (1 + k + m)}$. It follows that when $2m+1$ and $2k+1$ are coprime, $\cC_{\ell}(k)$ is strongly rational.

\begin{theorem} \label{newrationalsuper} For all $m \geq 1$, $\cW_{r}(\go\gs\gp_{1|2(2m+1)}, f_{2m+1,2m+1})$ is strongly rational for all but finitely many values $r = -(2m+\frac{3}{2}) + \frac{3 + 2 k + 4 m}{2 (1 + 2 m)}$, when $2m+1$ and $2k+1$ are coprime.
\end{theorem}

\begin{proof} Since we have an intersection of truncation curves for $\cC^{\psi}_{BO}(0,m)$ and $\cC^{\ell}(k)$ at these values of $r$, the simple quotients of the specializations of  $\cC^{\psi}_{BO}(0,m)$ and $\cC^{\ell}(k)$ to the corresponding values $\psi = \frac{3 + 2 k + 4 m}{2 (1 + 2 m)}$ and $\ell =  (k+1) + \frac{3 + 2 k + 4 m}{2 (1 + 2 m)}$ are isomorphic. 

As above, $\cW^{r}(\go\gs\gp_{1|2(2m+1)}, f_{2m+1,2m+1})$ is freely generated of type $\cW\big(1^3, 2, 3^3, \dots, 2m,(2m+1)^3, (m+1)^2\big)$, where the two fields $P^{\pm}$ in weight $m+1$ are odd and have eigenvalue $-1$ with respect to the $\mathbb{Z}_2$-action. The orbifold $\cW^{r}(\go\gs\gp_{1|2(2m+1)}, f_{2m+1,2m+1})^{\mathbb{Z}_2}$ is a $1$-parameter quotient of $\cW^{\gs\gp}_{\infty}$, and is strongly generated by the fields in weights $1,2,\dots, 2m+1$ together with the composite fields for $n\geq 0$:
\begin{equation} \label{BOorbifoldgen}
\begin{split} & w^{2n+2m+2} =\ :\!\partial^{2n}P^{+}P^{-}\!: \ + \ :\!P^{+}\partial^{2n} P^{-}\!:, \qquad h^{2n+2m+3} =\ :\!\partial^{2n+1} P^{+}P^{-}\!:\ -\ :\!P^{+}\partial^{2m+1} P^{-}\!:,
\\ & x^{2n+2m+3} =\ :\!\partial^{2n+1}P^{+}P^{+}\!:, \qquad y^{2n+2m+3} =\ :\!\partial^{2n+1}P^{-}P^{-}\!:. 
\end{split} \end{equation}
Let $W^{2n+2m+2}, X^{2n+2m+1}, Y^{2n+2m+1}, H^{2n+2m+1}$ be the images of the corresponding generators of $\cW^{\gs\gp}_{\infty}$ in $\cW^{r}(\go\gs\gp_{1|2(2m+1)}, f_{2m+1,2m+1})^{\mathbb{Z}_2}$. As above, we have
\begin{equation} \begin{split} & W^{2n+2m+2} = \lambda_{2n+2m+2} w^{2n+2m+2} + \cdots, \qquad H^{2n+2m+1} = \lambda_{2n+2m+1} h^{2n+2m+1} + \cdots,
\\ & X^{2n+2m+1} = \lambda_{2n+2m+1} x^{2n+2m+1} + \cdots,\qquad Y^{2n+2m+1} = \lambda_{2n+2m+1} y^{2n+2m+1} + \cdots,
\end{split}
\end{equation} where the remaining terms depend only on the generators in weights $1,2,\dots, 2m+1$ and their derivatives.

Again by the proof of Theorem \ref{weakgeneration:Walgebras}, all even generators in weights $1,\dots, 2m+1$ are generated by the fields in weight at most $3$ for $k \notin \{ -1, -2, -4, -\frac{4}{3}, -\frac{8}{5}, -\frac{16}{7}\}$ and $c \neq -\frac{22}{5}$. Since $\cW^{r}(\go\gs\gp_{1|2(2m+1)}, f_{2m+1,2m+1})^{\mathbb{Z}_2}$ is generated by the fields in weight at most $3$ for generic values of $r$, the coefficient $\lambda_{2m+2}$ is a nonzero rational function of $r$, so it can vanish for at most finitely many values of $r$. However, since $w^{2m+2}$ vanishes in Zhu's commutative algebra, we cannot use the above argument to conclude that $\lambda_{2m+2}$ is a constant. 

A similar computation using the OPEs $H^3(z) P^{\pm}(w)$ and $X^3(z) P^{\pm}(w)$ shows that up to the normalization of $H^3, X^3$,
\begin{equation}
\begin{split}
    H^3_{(1)}w^{2n+2m+2} =&\frac{4 k (k+1) (2 k n+k+n+m+1) (4 k n+4 k m+6 k+2 n+6 m+5)}{(2 n+1) (2 k+4 m+3)}h^{2n+2m+3}+\dotsb\\
    H^3_{(1)}h^{2n+2m+3} =& \frac{2 k (k+1) (2 k+1) (n+m+2) (4 k n+4 k+2 n-2 m+1)}{(n+1) (2 k+4 m+3)}w^{2n+2m+4}+\dotsb\\
     X^3_{(0)}h^{2n+2m+3} =&\frac{4 k (k+1) (4 k n+4 k+2 n-2 m+1) (2 k n+3 k+n+m+2)}{(n+1) (2 n+3) (2 k+4 m+3)}x^{2n+2m+5}+\dotsb.
\end{split}
\end{equation}
As long as $\lambda_{2m+2} \neq 0$, and none of the above numerators and denominators vanish (which never happens for $k\in \mathbb{N}$), this shows that $\lambda_{2n+2m+2}, \lambda_{2n+2m+3}$ are also nonzero. Therefore $\cW_{r}(\go\gs\gp_{1|2(2m+1)}, f_{2m+1,2m+1})^{\mathbb{Z}_2}$ has the weak generation property whenever $\lambda_{2m+2} \neq 0$, and therefore is isomorphic to $\cC_{\ell}(k)$ for all but finitely many of the above values of $r$. Since it is strongly rational, so is the simple current extension $\cW_{r}(\go\gs\gp_{1|2(2m+1)}, f_{2m+1,2m+1})$. \end{proof}

\begin{remark} We expect that $\lambda_{2m+2} \neq 0$ for all $k\in \mathbb{N}$, so that Theorem \ref{newrationalsuper} holds for all $k\in \mathbb{N}$. \end{remark}

\subsection{New level-rank dualities} 
By the proof of \cite[Cor. 7.6]{CL4}, for positive integers $n,\ell$, we have the following level-rank duality for $\cC_{\ell}(n)$.
$$\cC_{\ell}(n) \cong  \cC_{BC, n + \ell + 1}(\ell-1,0) = \text{Com}(L_n(\mathfrak{sp}_{2\ell-2}), L_n(\mathfrak{sp}_{2\ell})).$$
By Corollary \ref{rationalquotients}, $\cC_{\ell}(n)$ is a quotient of $\cW^{\gs\gp}_{\infty}$ at these points. We observe that for each $\ell \in \mathbb{N}$, the truncation curves for $\cC^{\ell}(n)$ and $\cC^{\psi}_{CD}(n+2\ell,1)$ intersect at the point where $\psi = 2 n + 2 \ell +2$. Therefore we expect isomorphisms
\begin{equation}\label{levelrank:C} \cC_{\ell}(n) \cong   \cC_{BC, n + \ell + 1}(\ell-1,0) \cong \cC_{CD,2 n + 2 \ell +2}(n+2\ell,1), \end{equation}
In this subsection, we conjecture similar level-rank dualities for the algebras $\cC_{\ell}(n-\frac{1}{2})$ and $\cC_{\ell}(-\frac{n}{2})$.

First, recall that for $n \in \mathbb{N}$, $\cC^{\ell}(n-\frac{1}{2}) = \text{Com}(V^{\ell}(\mathfrak{osp}_{1|2n}), V^{\ell-1}(\mathfrak{osp}_{1|2n}) \otimes \cE(2n) \otimes \cS(1))^{Z_2}$. As above, based on intersections of truncation curves, we expect that for all integers $n\geq 1$ and $\ell \geq 2$, we have the isomorphisms
\begin{equation} \label{levelrank:O} \cC_{\ell}\big(n-\frac{1}{2}\big) \cong \cC_{BC, n + \ell + \frac{1}{2}}(\ell-1,0) \cong  \cC_{CB,2\ell + 2n + 1}(n+2\ell-1, 1),\end{equation}
where we recall that 
\begin{equation*} \begin{split} \cC_{BC, n + \ell + \frac{1}{2}}(\ell-1,0) & \cong \text{Com}(L_{n-1/2}(\mathfrak{sp}_{2\ell-2}), L_{n-1/2}(\mathfrak{sp}_{2\ell})),
\\ \cC_{CB,2\ell + 2n + 1}(n+2\ell-1, 1) & \cong \text{Com}(L_{-2\ell + 2}(\gs\go_{2n+4\ell-1}), \cW_{-2\ell}(\gs\go_{2n+4\ell +3}, f_{\text{min}}))^{\mathbb{Z}_2}. \end{split} \end{equation*}

Next, recall that for $n \in \mathbb{N}$, 
$\cC_{2\ell}(-n) = \text{Com}(V^{\ell}(\mathfrak{so}_{2n}), V^{\ell+2}(\mathfrak{so}_{2n}) \otimes \cS(2n))^{Z_2}$. Based on intersections of truncation curves,
we expect that for positive integers $n,\ell$,
\begin{equation} \label{levelrank:D} \cC_{2\ell}(-n) \cong \cC_{BD,-n-\ell+1}(\ell+1,0) \cong \cC_{CD,2 (n + \ell)}(3 n + 2 \ell -2, 1),\end{equation}
where 
\begin{equation*} \begin{split} \cC_{BD,-n-\ell+1}(\ell+1,0) & \cong \text{Com}(L_{2n}(\mathfrak{so}_{2\ell+2}), L_{-n}(\mathfrak{osp}_{2\ell +2|2}))^{\mathbb{Z}_2},
\\ \cC_{CD,2 (n + \ell)}(3n+2\ell-2, 1) & \cong \text{Com}(L_{-2 (2 n + \ell - 2)}(\gs\go_{6n+4\ell-4}), \cW_{-2 (2 n + \ell -1)}(\gs\go_{6n+4\ell}, f_{\text{min}}))^{\mathbb{Z}_2}.\end{split} \end{equation*}

Finally, recall that for $n \in \mathbb{N}$, $\cC_{2\ell+1}\big(- n - \frac{1}{2}\big) = \text{Com}(V^{\ell}(\mathfrak{so}_{2n+1}), V^{\ell+2}(\mathfrak{so}_{2n+1}) \otimes \cS(2n+1))^{Z_2}$. Again, based on intersections of truncation curves, we expect that for positive integers $n,\ell$,
\begin{equation} \label{levelrank:B} \cC_{2\ell+1}\big(- n - \frac{1}{2}\big) \cong \cC_{BB,-n-\ell}(\ell+1,0) \cong \cC_{CB, 2 (n +\ell+1)}(3 n + 2 \ell, 1),\end{equation}
where 
\begin{equation*} \begin{split} \cC_{BB,-n-\ell}(\ell+1,0) & \cong \text{Com}(L_{2n+1}(\mathfrak{so}_{2\ell+3}), L_{-n-1/2}(\mathfrak{osp}_{2\ell +3|2}))^{\mathbb{Z}_2},
\\ \cC_{CB,2 (n +\ell+1)}(3n+2\ell, 1) & \cong \text{Com}(L_{- 4 n - 2 \ell +1}(\gs\go_{6n+4\ell+1}), \cW_{ - 4 n - 2 \ell -1}(\gs\go_{6n+4\ell+5}, f_{\text{min}}))^{\mathbb{Z}_2}.\end{split} \end{equation*}

\section{A completion of $\cW^{\gs\gp}_{\infty}$ and its relation to $\cW^{\rm{ev}}_{\infty}$} \label{sect:completion}
	
Lemma \ref{twocopiesofW(sp)} suggests that there is a close connection between $\cW^{\gs\gp}_{\infty}$ and $\cW^{\rm{ev}}_{\infty}$, which was denoted by $\cW^{\text{ev}}(c,\lambda)$ in \cite{KL}. Recall that the orthosymplectic $Y$-algebras of \cite{GR} were denoted by $\cC^{\psi}_{iZ}(a,b)$ in \cite{CL4}, where $i = 1,2$ and $Z = B,C,D,O$. They arise as $1$-parameter quotients of $\cW^{\rm{ev}}_{\infty}$ via the following procedure. There is an ideal $I_{iZ,a,b} \subseteq \mathbb{C}[c,\lambda]$ given in parametric form in terms of the parameter $\psi$ in \cite{CL4}. This generates a vertex algebra ideal $I_{iZ,a,b} \cdot  \cW^{\text{ev}}_{\infty}$, and
$\cC^{\psi}_{iZ}(a,b)$ is isomorphic to the simple graded quotient of $$\cW^{\text{ev}}_{\infty} / I_{iZ, a,b} \cdot \cW^{\text{ev}}_{\infty},$$ i.e., the quotient of $\cW^{\text{ev}}_{\infty}$ by its maximal ideal $\cI_{iZ,a,b}$ containing $I_{iZ,a,b}$. For example, in the case $I_{2C,0,n}$, 
$$\cC^{\psi}_{2C}(0,n) \cong \cW^k(\gs\gp_{2n}),\qquad k + n+1 = \psi.$$
For later use, we sometimes consider $1$-parameter quotients of $\cW^{\text{ev}}_{\infty} / I_{iZ, a,b} \cdot \cW^{\text{ev}}_{\infty}$ along ideals which are not maximal, i.e., quotients of $\cW^{\text{ev}}_{\infty}$ along non-maximal ideals $\cI$ containing $I_{iZ,a,b}$. We denote such a quotient by $\tilde{\cC}^{\psi}_{iZ}(a,b)$; it is apparent that as a $1$-parameter vertex algebra, $\cC^{\psi}_{iZ}(a,b)$ is the simple quotient of $\tilde{\cC}^{\psi}_{iZ}(a,b)$. For $n \in \mathbb{N}$, let $I_n \subseteq \mathbb{C}[c,k]$ be the ideal generated by $(k-n)$, and let $I_n \cdot \cW^{\gs\gp}_{\infty}$ be the ideal in $\cW^{\gs\gp}_{\infty}$ generated by $I_n$, so that $\cW^{\gs\gp, I_n}_{\infty} = \cW^{\gs\gp}_{\infty} / I_n \cdot \cW^{\gs\gp}_{\infty}$, which has simple graded quotient $\cW^{\mathfrak{sp}}_{\infty, I_n} \cong \cC^{\ell}(n)$.

We now consider the tensor product $\cW^{\rm{ev}}_{\infty} \otimes \cW^{\rm{ev}}_{\infty}$. Here the first copy of $\cW^{\rm{ev}}_{\infty}$ has parameters $c,\lambda$, and the second copy has parameters $c', \lambda'$, which we suppress from the notation. We use the notation $I_{iX, n,m}$ and $I'_{iX, n,m}$, for ideals in the first (respectively second) copy of $\cW^{\rm{ev}}_{\infty}$. If we give $\cW^{\gs\gp}_{\infty, I_n}$ the conformal vector $L + \frac{1}{2} \partial H^1$, Lemma \ref{twocopiesofW(sp)} has the following immediate corollary.

\begin{corollary} \label{cor:twocopiesofW(sp)} For $n \in \mathbb{N}$, there is a homomorphism
\begin{equation} \Psi_n: \cW^{\rm{ev}}_{\infty} \otimes \cW^{\rm{ev}}_{\infty} \rightarrow \cW^{\gs\gp}_{\infty, I_n},
\end{equation} which induces the conformal embedding
\begin{equation}  \label{inducedhomo} \cC^{\psi}_{2C}(0,n) \otimes \cC^{\psi'}_{2C}(0,n) \rightarrow \cC^{\ell}(n)\end{equation} given by Lemma \ref{twocopiesofW(sp)}. Here the ideals $I_{2C,0,n}$ and $I'_{2C,0,n}$ are parametrized by functions $(c(\psi), \lambda(\psi))$ and $(c'(\psi'), \lambda'(\psi'))$, and the shifted levels $\psi$ and $\psi'$ satisfy
\begin{equation} \label{twopsiparameters} \psi = \frac{\ell+n+1}{2\ell+2n+1}, \qquad \psi' = \ell_2 = (n+1) + \frac{\ell+n}{2\ell+2n+1}.\end{equation}
Via \eqref{twopsiparameters}, $\cC^{\psi}_{2C}(0,n) \otimes \cC^{\psi'}_{2C}(0,n)$ is a $1$-parameter vertex algebra with parameter $\ell$, so \eqref{inducedhomo} is a homomorphism of $1$-parameter vertex algebras.
\end{corollary}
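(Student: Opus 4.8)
The plan is to deduce Corollary \ref{cor:twocopiesofW(sp)} as an essentially formal consequence of Lemma \ref{twocopiesofW(sp)} together with the classifying property of $\cW^{\gs\gp}_{\infty}$ and $\cW^{\rm{ev}}_{\infty}$. First I would recall the setup: by Lemma \ref{twocopiesofW(sp)}(1), after extension of scalars the $1$-parameter vertex algebra $\cC^{\ell}(n)$ equipped with the shifted conformal vector $L + \frac{1}{2}\partial H^1$ contains two commuting copies of universal principal $\cW$-algebras $\cW^{\ell_1}(\gs\gp_{2n})$ and $\cW^{\ell_2}(\gs\gp_{2n})$, with the stated levels $\ell_1, \ell_2$. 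Since $\cW^{\ell_i}(\gs\gp_{2n})$ is (up to relabeling the parameter) exactly $\cC^{\psi}_{2C}(0,n)$ via $k + n + 1 = \psi$, and since each $\cW^{\ell_i}(\gs\gp_{2n})$ is a $1$-parameter quotient of $\cW^{\rm{ev}}_{\infty}$ along the ideal $I_{2C,0,n}$, the two commuting embeddings assemble into a single homomorphism out of the tensor product.

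The key steps, in order, are as follows. I would begin by passing to the quotient $\cW^{\gs\gp}_{\infty, I_n}$, the simple graded quotient of $\cW^{\gs\gp}_{\infty}/I_n\cdot\cW^{\gs\gp}_{\infty}$, which by the identification preceding the corollary is isomorphic to $\cC^{\ell}(n)$. Next, I would invoke the commuting pair of fields $\{L_1, W^4_1\}$ and $\{L_2, W^4_2\}$ produced in the proof of Lemma \ref{twocopiesofW(sp)}: these generate subalgebras isomorphic to $\cW^{\ell_1}(\gs\gp_{2n})$ and $\cW^{\ell_2}(\gs\gp_{2n})$ respectively (the decoupling relation argument in that proof guarantees these are the simple/universal principal $\cW$-algebras rather than some larger quotient of $\cW^{\rm{ev}}_{\infty}$, at least generically). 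Because each of these subalgebras satisfies the defining OPE relations of a $1$-parameter quotient of $\cW^{\rm{ev}}_{\infty}$, the classifying (initial object) property of $\cW^{\rm{ev}}_{\infty}$ yields homomorphisms $\cW^{\rm{ev}}_{\infty}\to\cW^{\gs\gp}_{\infty, I_n}$ for each copy. Finally, since the two images commute — which is precisely condition (2) in the proof of Lemma \ref{twocopiesofW(sp)} that $L_1, W^4_1$ commute with $L_2, W^4_2$ — the universal property of the tensor product of vertex algebras promotes the pair of homomorphisms to a single homomorphism $\Psi_n\colon\cW^{\rm{ev}}_{\infty}\otimes\cW^{\rm{ev}}_{\infty}\to\cW^{\gs\gp}_{\infty, I_n}$.

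To obtain the induced map \eqref{inducedhomo} and the parameter dictionary \eqref{twopsiparameters}, I would track which ideals $\Psi_n$ factors through. The first factor lands in the subalgebra generated by $\{L_1, W^4_1\}$ with central charge $c_1$ given by \eqref{pairofccc}; matching this against the parametrization $(c(\psi),\lambda(\psi))$ of $I_{2C,0,n}$ from \cite{CL4} forces the shifted level to be $\psi = \frac{\ell+n+1}{2\ell+2n+1}$, and similarly the second factor forces $\psi' = \ell_2 = (n+1) + \frac{\ell+n}{2\ell+2n+1}$. Substituting these into $\Psi_n$ shows it factors through $\cC^{\psi}_{2C}(0,n)\otimes\cC^{\psi'}_{2C}(0,n)$, and that the resulting map coincides with the conformal embedding of Lemma \ref{twocopiesofW(sp)}. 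Treating $\psi, \psi'$ as functions of $\ell$ via \eqref{twopsiparameters} makes the source a genuine $1$-parameter family with parameter $\ell$, so $\Psi_n$ descends to a homomorphism of $1$-parameter vertex algebras.

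The main obstacle I anticipate is the bookkeeping around extension of scalars and the parameter identification, rather than any deep structural point. Lemma \ref{twocopiesofW(sp)} only produces the commuting fields over a finitely generated extension $\overline{\mathbb{C}(\ell)}$ of $\mathbb{C}(\ell)$, so I must verify that the \emph{structure constants} of the two $\cW^{\rm{ev}}_{\infty}$-subalgebras, and hence the homomorphism $\Psi_n$ itself, are defined over $\mathbb{C}(\ell)$ even though the generators may involve the extension — this is exactly the rationality-of-structure-constants remark already established in the proof of Lemma \ref{twocopiesofW(sp)}, so it should transfer directly. The only genuinely substantive input beyond formal nonsense is ensuring the two subalgebras are the \emph{universal} objects $\cW^{\ell_i}(\gs\gp_{2n})$ (equivalently, that $\Psi_n$ factors through exactly the ideals $I_{2C,0,n}$ and $I'_{2C,0,n}$ and no smaller ones), which again is supplied by the generic decoupling argument of Lemma \ref{twocopiesofW(sp)}; thus the corollary is indeed immediate once that lemma is in hand.
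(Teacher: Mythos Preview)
Your proposal is correct and matches the paper's approach: the paper declares the corollary ``immediate'' from Lemma \ref{twocopiesofW(sp)}, and the mechanism you spell out---using the classifying property of $\cW^{\rm{ev}}_{\infty}$ on each commuting subalgebra, then the tensor product universal property, then the parameter identification via \eqref{pairofccc}---is exactly what that word is hiding. Your care about the extension-of-scalars bookkeeping and the generic decoupling argument is well placed and already supplied by the lemma's proof.
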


It is natural next to ask whether there exists a vertex algebra homomorphism $\Psi: \cW^{\text{ev}}_{\infty} \otimes \cW^{\text{ev}}_{\infty} \rightarrow \cW^{\gs\gp}_{\infty}$ which induces the homomorphisms $\Psi_n$ for $n \in \mathbb{N}$ in the sense that $\Psi_n$ is the  composition of $\Psi$ with the quotient map $\cW^{\gs\gp}_{\infty} \rightarrow \cW^{\gs\gp}_{\infty, I_n} \cong \cC^{\ell}(n)$. In fact, no such map $\Psi$ exists, and it is easy to check this by computer. The reason is that the dimensions of each weight space of $\cW^{\gs\gp}_{\infty, I_n}$ increase without bound as $n$ increases, and the number of terms in the formula for the images of $L_i, W^4_i$ for $i=1,2$ also increases without bound. So if such a homomorphism were to exist, we would need to replace $\cW^{\gs\gp}_{\infty}$ with a completion in which certain infinite sums make sense. The main result in this section is that there exists a completion $\tilde{\cW}^{\gs\gp}_{\infty}$ of $\cW^{\gs\gp}_{\infty}$ with the following properties:

\begin{enumerate}
\item For $n \in \mathbb{N}$, we can take the quotient $\tilde{\cW}^{\gs\gp, I_n}_{\infty}$ along the ideal generated by $(k-n)$, as well as its simple graded quotient $\tilde{\cW}^{\gs\gp}_{\infty, I_n}$, and we have an isomorphism $\tilde{\cW}^{\gs\gp}_{\infty, I_n} \cong \cW^{\gs\gp}_{\infty, I_n}$.

\item There exists a homomorphism
\begin{equation} \Psi: \cW^{\rm{ev}}_{\infty} \otimes \cW^{\rm{ev}}_{\infty} \rightarrow \tilde{\cW}^{\gs\gp}_{\infty},
\end{equation} which induces the maps $\Psi_n$ in the sense that $\Psi_n$ is the composition of $\Psi$ with the quotient map $ \tilde{\cW}^{\gs\gp}_{\infty} \rightarrow \cW^{\gs\gp}_{\infty, I_n}$.
\end{enumerate}

\subsection{Completion of $\cW^{\gs\gp}_{\infty}$}
First, we replace the conformal vector $L \in \cW^{\gs\gp}_{\infty}$ with $\tilde{L} = L + \frac{1}{2}\partial H$, so that $X^1,H^1,Y^1$ have conformal weights $0,1,2$, respectively. Similarly, $X^{2i+1}, H^{2i+1}, Y^{2i+1}$ have conformal weights $2i, 2i+1, 2i+2$ with respect to $\tilde{L}$, respectively. Then the strong generating type of $\cW^{\gs\gp}_{\infty}$ becomes $\cW(0,1,2^3,3,4^3,5,\dots)$. The weight zero space is infinite-dimensional since it is isomorphic to $\mathbb{C}[X^1]$, and the higher weight spaces are linearly isomorphic to finite rank modules over $\mathbb{C}[X^1]$. We define the completion $\tilde{\cW}^{\gs\gp}_{\infty}$ by allowing the coefficients to be in the ring of power series $\mathbb{C}[[X^1]]$, i.e., we allow infinite sums of elements with fixed conformal weight. Since $X^1$ has Cartan weight $2$, the algebra is filtered but not graded by Cartan weight. It is more convenient to work with the eigenvalue of $\frac{1}{2} H^1_{(0)}$, which is half the Cartan weight. For $N \in \mathbb{Z}$, let $\cG^d_N := \cG^d_N(\tilde{\cW}^{\gs\gp}_{\infty})$ be the subspace of elements consisting of infinite sums of weight $d$ and $\frac{1}{2} H^1_{(0)}$-eigenvalue at least $N$, and $\cG_N:= \bigoplus_{d \geq 0}\cG^d_N$. Clearly we have 
$$\cG^d_N \supset \cG^d_{N+1},$$ so this is a decreasing filtration, and we have the associated graded algebra
$$\text{gr}(\tilde{\cW}^{\gs\gp}_{\infty}) = \bigoplus_{N \in \mathbb{Z}} \cG_N / \cG_{N+1}$$ and similarly for $d$th weighted component. For each weight $d\geq 0$, the minimum $\frac{1}{2} H^1_{(0)}$-eigenvalue is $-\frac{d}{2}$, and each weighted component of $\text{gr}(\tilde{\cW}^{\gs\gp}_{\infty})$ is finite-dimensional.

We need the following property of the filtration which is easy to check. 
\begin{equation} \label{filt:behavior}  \begin{split} & \text{For}\ a \in \cG_N, \ b \in \cG_M,\ \text{and}\ n \in \mathbb{Z},
\ a_{(n)} b \in \cG_{N+M},
\\ & \text{For}\ a \in \cG^d_{-d/2}, \ b \in \cG^e_{-e/2}, \ \text{and}\ n\geq 0, \ a_{(n)} b \in \cG^{d+e-n-1}_{-(d+e-1)/2}.
\end{split}\end{equation} 
For each weight $d \geq 0$ and any $M > N$ we have the projection maps 
$$\pi^d_{M,N}:\cG^d_{-d/2} / \cG^d_M \rightarrow \cG^d_{-d/2} / \cG^d_N.$$

Note that $\tilde{\cW}^{\gs\gp}_{\infty}$ has a well-defined vertex algebra structure in which locality continues to hold. For $n \in \mathbb{N}$, recall that $\cI_n \subseteq \cW^{\gs\gp}_{\infty}$ denotes the maximal ideal containing $(k-n)$, and that $\cC^{\ell}(n) \cong \cW^{\gs\gp}_{\infty, I_n}$. Similarly, we denote by $\tilde{\cW}^{\gs\gp}_{\infty, I_n}$ the simple quotient of $\tilde{\cW}^{\gs\gp,I_n}_{\infty} = \tilde{\cW}^{\gs\gp}_{\infty}/ I_n \cdot \tilde{\cW}^{\gs\gp}_{\infty}$ by its maximal ideal graded by conformal weight. Since $L_n(\gs\gp_2) \subseteq \tilde{\cW}^{\gs\gp,I_n}_{\infty}$, every power series in  $\mathbb{C}[[X^1]]$ truncates to a polynomial of degree at most $n$, so $\tilde{\cW}^{\gs\gp}_{\infty, I_n} \cong \cC^{\ell}(n) \cong \cW^{\gs\gp}_{\infty, I_n}$.

\begin{theorem} \label{completion:twocopies} After a suitable extension of scalars, we have a vertex algebra homomorphism
\begin{equation} \label{2paramembedding} \Psi: \cW^{\text{ev}}_{\infty} \otimes \cW^{\text{ev}}_{\infty} \rightarrow \tilde{\cW}^{\gs\gp}_{\infty}\end{equation} with the following properties:
\begin{enumerate}
\item For all $n \in \mathbb{N}$, composing this map with the quotient map $\tilde{\cW}^{\gs\gp}_{\infty} \rightarrow \cW^{\gs\gp}_{\infty, I_n}$, we recover the homomorphism from Lemma \ref{twocopiesofW(sp)}.
 \item As a module over $\cW^{\text{ev}}_{\infty} \otimes \cW^{\text{ev}}_{\infty}$, $\tilde{\cW}^{\gs\gp}_{\infty}$ decomposes as a direct sum of simple modules with $1$-dimensional lowest weight spaces. 
 
 \item For each $n \in \mathbb{N}$, under the quotient map $\tilde{\cW}^{\gs\gp}_{\infty} \rightarrow \cW^{\gs\gp}_{\infty, I_n}$, we recover the decomposition of $\cW^{\gs\gp}_{\infty, I_n}$ given by Lemma \ref{twocopiesofW(sp)} (2).
\end{enumerate}
 \end{theorem}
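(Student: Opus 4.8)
The plan is to realize $\Psi$ by producing, inside the completion $\tilde{\cW}^{\gs\gp}_{\infty}$ and over a suitable finite extension of $\mathbb{C}(c,k)$, two mutually commuting copies of a Virasoro field together with a commuting weight-$4$ primary, and then invoking the classifying property of $\cW^{\text{ev}}_{\infty}$. First I would fix the parameter specialization: substituting the formal level $k$ for the integer $n$ in the central charge formulas \eqref{pairofccc} of Lemma \ref{twocopiesofW(sp)}, and eliminating $\ell$ via the truncation-curve relation between $\ell$ and the central charge $c$ of $\tilde{\cW}^{\gs\gp}_{\infty}$, I express $(c_1,\lambda_1)$ and $(c_2,\lambda_2)$ as functions of the two parameters $(c,k)$. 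As in the proof of Lemma \ref{twocopiesofW(sp)}, this may require passing to a finite extension of $\mathbb{C}(c,k)$, which accounts for the extension of scalars in the statement. Composing this specialization with $k\mapsto n$ should reproduce the parametrizations $(c(\psi),\lambda(\psi))$ and $(c'(\psi'),\lambda'(\psi'))$ of Corollary \ref{cor:twocopiesofW(sp)} with $\psi,\psi'$ given by \eqref{twopsiparameters}; this is a routine check.

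The core step is the construction of the images $\Psi(L)=L_1$, $\Psi(W^4)=W^4_1$ and the corresponding fields $L_2,W^4_2$ for the second tensor factor. I would build these order by order with respect to the decreasing filtration by $\tfrac{1}{2}H^1_{(0)}$-eigenvalue, that is, as elements of the inverse limit $\varprojlim_M \cG^d_{-d/2}/\cG^d_M$ under the projections $\pi^d_{M,N}$. At each filtration level the conditions imposed — that $L_1,L_2$ be commuting Virasoro fields of central charges $c_1,c_2$, that $W^4_i$ be primary of weight $4$ for $L_i$ with $W^4_1,W^4_2$ commuting, and that the fields $W^{2r}_i=(W^4_i)_{(1)}W^{2r-2}_i$ satisfy the $\cW^{\text{ev}}_{\infty}$ OPE relations along the two truncation curves — amount to a finite-dimensional system of polynomial equations over the base ring, exactly as in Lemma \ref{twocopiesofW(sp)}. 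Consistency of each such finite system is forced by a Zariski-density argument: the equations have coefficients rational in $(c,k)$, and for every $n\in\mathbb{N}$ the truncation of the solution produced in Lemma \ref{twocopiesofW(sp)} solves the system at $k=n$, so it is solvable for generic $(c,k)$. Assembling the solutions across all levels produces genuine power series in $X^1$, which live in $\tilde{\cW}^{\gs\gp}_{\infty}$ but not in $\cW^{\gs\gp}_{\infty}$, which is precisely why the completion is indispensable. The classifying property of $\cW^{\text{ev}}_{\infty}$ then upgrades each commuting pair $(L_i,W^4_i)$ to a homomorphism of the corresponding copy of $\cW^{\text{ev}}_{\infty}$ onto the subalgebra it generates, and mutual commutativity assembles these into $\Psi$.

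I would verify property (1) by specializing at $k=n$: since $L_n(\gs\gp_2)\subseteq\tilde{\cW}^{\gs\gp,I_n}_{\infty}$ forces every power series in $\mathbb{C}[[X^1]]$ to truncate to a polynomial of degree at most $n$, the infinite expressions for $L_i,W^4_i$ collapse to the finite ones of Corollary \ref{cor:twocopiesofW(sp)}, and uniqueness (up to the identified normalization freedom) of solutions to the defining OPE conditions forces $\Psi\bmod I_n=\Psi_n$. For properties (2) and (3) I would transcribe the lowest-weight-space argument from the proof of Lemma \ref{twocopiesofW(sp)} (2) into the completion: the weighted components of $\text{gr}(\tilde{\cW}^{\gs\gp}_{\infty})$ are finite-dimensional, so the Zhu algebra of $\cW^{\text{ev}}_{\infty}\otimes\cW^{\text{ev}}_{\infty}$ acts diagonalizably on each space of lowest weight vectors over a finite extension, and the absence of higher lowest weight vectors inside each generated module follows by specializing to the infinitely many integer levels $k=n$, where semisimplicity is already known. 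The weight-zero lowest weight vectors $1,X^1,(X^1)^2,\dots$ then yield a completed direct sum of simple modules with one-dimensional lowest weight spaces, and this decomposition is manifestly compatible with truncation at $k=n$, giving (3).

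The main obstacle will be the order-by-order construction of the generators. Two points require care: first, showing that the finite systems remain consistent at every filtration level and that their solutions can be chosen compatibly, so that they assemble into a well-defined element of the inverse limit rather than an incoherent family; and second, ensuring that the denominators introduced at successive levels do not accumulate so as to spoil membership in the completion, that is, that after the finite extension of scalars all coefficients lie in the appropriate ring. The density argument handles consistency at each fixed level, but compatibility across levels and control of denominators is where the real work lies; here the filtration property \eqref{filt:behavior}, which guarantees that nonnegative products lower the $\tfrac{1}{2}H^1_{(0)}$-eigenvalue in a controlled way, is the essential tool.
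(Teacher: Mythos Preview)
Your proposal is correct and follows essentially the same approach as the paper's proof: an order-by-order construction of $L_i,W^4_i$ in the filtration $\cG_N$, with consistency of each finite system deduced from the existence of solutions at the infinitely many integer values $k=n$ furnished by Lemma~\ref{twocopiesofW(sp)}, followed by the same transcription of the lowest-weight-space/semisimplicity argument for parts (2) and (3). The paper works with the parameters $(\ell,k)$ rather than $(c,k)$ and phrases the density step as ``the projection of $V_N$ onto the $k\ell$-plane is $2$-dimensional,'' but this is only a change of coordinates; you have also correctly flagged the compatibility of solutions across filtration levels as the delicate point, which the paper handles in the same way (choosing $L_{i,M},W^4_{i,M}$ agreeing with $L_{i,N},W^4_{i,N}$ modulo $\cG_{N+1}$ and passing to the direct limit).
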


\begin{proof} Fix an integer $N$. As in the proof of Lemma \ref{twocopiesofW(sp)}, for each weight $i\geq 0$, we can choose a finite basis $m^i_{1}, \dots, m^i_{d_i}$ for the weight $i$ subspace of $\cG^{i}_{-i/2} / \cG^i_{N+10}$ consisting of elements with $\frac{1}{2} H^1_{(0)}$-eigenvalue at most $N+9$. By abuse of notation, we use the same notation for lifts of these elements of $\cG^{i}_{-i/2} / \cG^i_{N+10}$ to elements of $\cG^i_{-i/2}$. Let
$$L_{1,N} = \sum_{j=1}^{d_2} \alpha^2_j m^2_j,\qquad W^4_{1,N} = \sum_{j=1}^{d_4} \alpha^4_j m^4_j,\qquad L_{2,N} = \sum_{j=1}^{d_2} \beta^2_j m^2_j,\qquad W^4_{2,N} = \sum_{j=1}^{d_4} \beta^4_j m^4_j$$ be elements of weights $2$ and $4$ in $\cG^2_{-1}$ and $G^4_{-2}$, respectively, with undetermined coefficients. We impose the following conditions:
\begin{enumerate}
\item Modulo the space $\cG_{N+1}$, $L_{1,N}$ and $L_{2,N}$ are commuting Virasoro fields with central charges $c_1, c_2$ given by \eqref{pairofccc}.
\item Modulo the space $\cG_{N+1}$, $W^4_{i,N}$ is primary of weight $4$ for $L_{i,N}$ for $i=1,2$, and $W^4_{1,N}, W^4_{2,N}$ commute.
\item For $i = 1,2$, and $r \geq 3$, define the fields $W^{2r}_{i,N} = (W^4_{i})_{(1)} W^{2r-2}_{i,N}$. Modulo the space $\cG_{N+1}$, for $r \leq 6$ and $s+t \leq 7$, $L_{i,N}(z) W_{i,N}^{2r}(w)$ and $W_{i,N}^{2s}(z)W_{i,N}^{2t}(w)$ satisfy the OPE relations of $\cW^{\text{ev}}_{\infty}$ along the truncation curves for $\cW^{\ell_1}(\gs\gp_{2k})$ and $\cW^{\ell_2}(\gs\gp_{2k})$, respectively.
\end{enumerate}

Since $W^{2r}_{i,N} \in \cG^{2r}_{-r}$, it follows from \eqref{filt:behavior} that the only contributions modulo $\cG_{N+1}$ to the OPEs 
$$L_{i,N}(z) W^{2r}_{j,N}(w),\qquad W^{2s}_{i,N}(z) W^{2t}_{j,N}(w),\qquad i,j = 1,2,\qquad r \leq 6,\qquad s+t \leq 7,$$ can come from the terms appearing in $L_{i,N}$, $W^{2r}_{i,N}$, $W^{2s}_{i,N}$, and $W^{2t}_{i,N}$, modulo $\cG_{N+10}$. The above conditions impose a finite set of algebraic equations in the variables $\alpha^2_j, \alpha^4_j,  \beta^2_j, \beta^4_j$, as well as $\ell,k$, so the set of solutions to this system is an algebraic variety which we denote by $V_N$

For each $k \geq N$, the projection of $V_N$ onto the $\ell$-axis is $1$-dimensional by Corollary \ref{cor:twocopiesofW(sp)}, so the projection of $V_N$ onto the $k\ell$-plane is $2$-dimensional. Therefore we have a parametrization (not necessarily with rational functions) of a $2$-dimensional subset of this variety. For each $M > N$ we can construct elements $L_{i,M}, W^4_{i,M}$ which satisfy the above conditions modulo $\cG_{M+1}$, which agree with $L_{i,N}, W^4_{i,N}$ modulo $\cG_{N+1}$ Therefore the direct limit $L_i = L_{i,\infty}, W^4_i = W^4_{i,\infty}$ of these terms are well-defined elements of $\tilde{\cW}^{\gs\gp}_{\infty}$ and satisfy the desired OPEs. This completes the proof of statement (1).

Next we prove statement (2). Since $W_i^{2a} \in \cG^{2a}_{-2a}$, the operator $(W_i^{2a})_{(2a-1)}$ maps the space $\cG^0_{2N+1}$ to $\cG^0_{N+1}$. Therefore it induces a map of finite-dimensional spaces 
$$(W_i^{2a})_{(2a-1)}: \cG^0_0 / \cG^0_{2N+1} \rightarrow \cG^0 /\cG^0_{N+1}.$$ 
The space $\cG^0_0 / \cG^0_{2N+1}$ has dimension $2N+1$, with basis $(X^1)^j$ for $0 \leq j \leq 2N$. We claim that the operators $(W^{2a}_i)_{(2a-1)}$ for $2\leq a \leq N$ are diagonalizable over an extension $K_{0,N}$ of the field $\mathbb{C}(k,\ell)$, up to order $N$. This has the following meaning: we can choose a basis of $\cG^0_0 / \cG^0_{2N+1}$ consisting of vectors $v_t = \sum_{j=0}^{2N} \alpha^t_j (X^1)^j$ with coefficients in $K_{0,N}$, such that for $a \leq N$,
$$(W^{2a}_i)_{(2a-1)} (\sum_{j=0}^{2N} \alpha^t_j (X^1)^j) = \lambda_{i,a,t,N} (\sum_{j=0}^{N} \alpha^t_j (X^1)^j),\qquad \lambda_{i,a,t,N} \in K_{0,N}.$$ Equivalently, we can write this condition as $(W^{2a}_i)_{(2a-1)} (v_t) = \lambda_{i,a,t,N}\pi^0_{2N+1,N+1}(v_t)$. This is because for infinitely many values of $k \in \mathbb{N}$, this holds for generic $\ell$ by Lemma \ref{twocopiesofW(sp)} (2).

Via the projection map $\cG^0_0 / \cG^0_{2M+1} \rightarrow \cG^0_0 / \cG^0_{2N+1}$ for $N<M$, an eigenvector $v_t$ up to order $M$ as above for the operators $\{(W^{2a}_i)_{(2a-1)}| \ a \leq M\}$, projects to an eigenvector for $\{(W^{2a}_i)_{(2a-1)}| \ a \leq N\}$ of order $N$, with the same eigenvalue. This map is surjective on eigenspaces in the above sense. Therefore for fixed $N$, we can assume the vectors $v_t$ above (of which there are finitely many) all come from power series that are honest eigenvectors. Since $N$ is arbitrary, we get such a basis of eigenvectors for all of $\cG^0_0 = \tilde{\cW}^{\gs\gp}_{\infty}[0]$.

Let $V_0$ be the $\cW^{\text{ev}}_{\infty} \otimes \cW^{\text{ev}}_{\infty}$-module generated by the span of these eigenvectors. We claim that in weights $m> 0$, there are no lowest weight vectors for the action of $\cW^{\text{ev}}_{\infty} \otimes \cW^{\text{ev}}_{\infty}$. This follows from the fact that for $k \in \mathbb{N}$, the action of $\cW_{\ell_1}(\gs\gp_{2k}) \otimes \cW_{\ell_2}(\gs\gp_{2k})$ on the simple quotient is semisimple for generic $\ell$. Therefore $V_0$ is a sum of simple modules with $1$-dimensional lowest weight space.

Next, let $m_1 > 0$ be the first integer for which the space $U_{m_1} \subseteq \tilde{\cW}^{\gs\gp}_{\infty} [m_1]$ of lowest weight vectors of weight $m_1$ for $\cW^{\text{ev}}_{\infty} \otimes \cW^{\text{ev}}_{\infty}$ is nontrivial. As above, for each fixed $N$, the operators 
$(W^{2a}_i)_{(2a-1)}$ for $2\leq a \leq N$ map the space $\cG^{m_1}_{2N+1}$ to $\cG^{m_1}_{N+1}$, so they induces maps of finite-dimensional spaces 
$$(W^{2a}_i)_{(2a-1)}:(\cG^{m_1}_{-m_1/2} \cap U_{m_1} )/  (\cG^{m_1}_{2N+1} \cap U_{m_1}) \rightarrow ( \cG^{m_1}_{-m_1/2} \cap U_{m_1} ) / (\cG^{m_1}_{N+1}  \cap U_{m_1}).$$ 
Similarly, the operators 
$(W^{2a}_i)_{(2a-1)}$ for $2\leq a \leq N$ are diagonalizable on $(\cG^{m_1}_{-m_1} \cap U_{m_1} )/  (\cG^{m_1}_{2N+1} \cap U_{m_1})$ over an extension $K_{1,N}$ of the field $K_0$, up to order $N$, i.e., we can choose a basis of $(\cG^{m_1}_{-m_1} \cap U_{m_1} )/  (\cG^{m_1}_{2N+1} \cap U_{m_1})$ consisting of vectors $v_1,\dots, v_d$ such that 
$$(W^{2a}_i)_{(2a-1)} (v_t) = \lambda_{i,a,t,N} \pi^{m_1}_{2N+1,N}(v_t).$$ 

For $N<M$, via the projection map 
$$\pi^{m_1}_{2M+1,2N+1}: (\cG^{m_1}_{-m_1/2} \cap U_{m_1}) / (\cG^{m_1}_{2M+1} \cap U_{m_1})  \rightarrow (\cG^{m_1}_{-m_1/2} \cap U_{m_1})/ ( \cG^{m_1}_{2N+1} \cap U_{m_1}),$$ an eigenvector $v_t$ up to order $M$ as above for the operators $\{(W^{2a}_i)_{(2a-1)}| \ a \leq M\}$, projects to an eigenvector for $\{(W^{2a}_i)_{(2a-1)}| \ a \leq N\}$ up to order $N$, with the same eigenvalue. Therefore for fixed $N$, we can assume without loss of generality that the vectors $v_1,\dots, v_d$ above all come from power series that are honest eigenvalues, with coefficients in some extension $K_1$ which the union of the fields $K_{1,N}$. Since $N$ is arbitrary, we get such a basis of eigenvectors for all of $U_{m_1}$

Let $V_{m_1}$ be the $\cW^{\text{ev}}_{\infty} \otimes \cW^{\text{ev}}_{\infty}$-module generated by the span of these eigenvectors in $U_{m_1}$. As above, in weights $m> m_1$, there are no lowest weight vectors for the action $\cW^{\text{ev}}_{\infty} \otimes \cW^{\text{ev}}_{\infty}$ in $V_{m_1}$. This follows from the semisimplicity of the action of $\cW_{\ell_1}(\gs\gp_{2k}) \otimes \cW_{\ell_2}(\gs\gp_{2k})$ on the simple quotient for each $k \in \mathbb{N}$ and generic $\ell\in \mathbb{C}$. Therefore $V_{m_1}$ is a sum of simple modules with $1$-dimensional lowest weight space, and $V_0 \cap V_1$ is trivial. Inductively, over some extension of $\mathbb{C}(\ell,k)$, we can find two sequences of positive integers $m_i, d_i$ for $i\geq 2$ such that $\tilde{\cW}^{\gs\gp}_{\infty} = \bigoplus_{i\geq 0} V_{m_i}$ where each $V_{m_i}$ is a sum of simple $\cW^{\text{ev}}_{\infty} \otimes \cW^{\text{ev}}_{\infty}$-modules with $1$-dimensional lowest weight space of weight $m_i$. 

Finally, it is apparent from the proof of (2) that statement (3) holds as well. \end{proof}

\subsection{Completion of $Y$-algebras of type $C$}
For $X = B,C$ and $Y = B,C,D,O$, we can complete $\cC^{\psi}_{XY}(n,m)$ in the same way by changing conformal vector $L$ to $L + \frac{1}{2} \partial H^1$, and then allowing power series in the field $X^1$, so that we have infinite-dimensional weight spaces. We denote the completion by $\tilde{\cC}^{\psi}_{XY}(n,m)$, and it is straightforward to check that $\tilde{\cC}^{\psi}_{XY}(n,m)$ is just the quotient
$\tilde{\cW}^{\gs\gp}_{\infty, I_{X,Y,n,m}}$, i.e., the process of taking the simple graded quotient along the ideal $I_{XY,n,m}$ commutes with completion. Let \begin{equation} \label{defpsixynm} \Psi_{XY,n,m}: \cW^{\text{ev}}_{\infty} \otimes \cW^{\text{ev}}_{\infty} \rightarrow \tilde{\cC}^{\psi}_{XY}(n,m)\end{equation} denote the composition of $\Psi$ with the quotient map $\tilde{\cW}^{\gs\gp}_{\infty} \rightarrow \tilde{\cW}^{\gs\gp}_{\infty, I_{X,Y,n,m}}$.

Recall that $\tilde{\cC}^{\psi}_{iZ}(a,b)$ denotes a possibly non-simple $1$-parameter quotient of $\cW^{\text{ev}}_{\infty}$ along a (possibly nonmaximal) ideal $\cI$ containing $I_{iZ,a,n}$, and that $\cC^{\psi}_{iZ}(a,b)$ is the simple quotient of $\tilde{\cC}^{\psi}_{iZ}(a,b)$ as $1$-parameter vertex algebras.

\begin{theorem} \label{pairsofcurves} For each of the $Y$-algebra $\cC^{\psi}_{XY}(n,m)$ of type $C$, there is a pair of orthosymplectic $Y$-algebras $\cC^{\psi'}_{i_1 Z_1}(a_1, b_1)$ and $\cC^{\psi''}_{i_2 Z_2}(a_2, b_2)$ where $i_1, i_2 \in \{1,2\}$ and $Z_1, Z_2 \in \{B,C,D,O\}$, such that $\Psi_{XY,n,m}$ descends to a map of $1$-parameter vertex algebras
\begin{equation} \label{psiXYnmInducedtilde} \tilde{\cC}^{\psi'}_{i_1 Z_1}(a_1, b_1) \otimes \tilde{\cC}^{\psi''}_{i_2 Z_2}(a_2, b_2)  \hookrightarrow \tilde{\cC}^{\psi}_{XY}(n,m).\end{equation}
They are listed as follows:
\begin{equation} \begin{split} \label{pairofcurves:case1} 
\tilde{\cC}^{\psi}_{1D}(m+n+1,m-1) \otimes \tilde{\cC}^{\psi-1}_{1B}(n,m) & \hookrightarrow \tilde{\cC}^{\psi}_{CB}(n,m),
\\  \tilde{\cC}^{\psi}_{1B}(m+n,m-1) \otimes \tilde{\cC}^{\psi-1}_{1D}(n,m) & \hookrightarrow \tilde{\cC}^{\psi}_{CD}(n,m),
\\  \tilde{\cC}^{\psi}_{2C}(m+n+1,m) \otimes \tilde{\cC}^{\psi-1/2}_{2C}(n,m+1) & \hookrightarrow \tilde{\cC}^{\psi}_{BC}(n,m),
\\  \tilde{\cC}^{\psi}_{2O}( m + n + 1, n) \otimes \tilde{\cC}^{\psi-1/2}_{2O}(n,m+1) & \hookrightarrow \tilde{\cC}^{\psi}_{BO}(n,m). \end{split} \end{equation}


\begin{equation} \begin{split} \tilde{\cC}^{\psi}_{2B}(n-m-1,  m) & \otimes \tilde{\cC}^{\psi-1/2}_{2B}(n,m+1) \hookrightarrow \tilde{\cC}^{\psi}_{BB}(n,m), \quad n \geq m+1,
\\ \tilde{\cC}^{\psi'}_{2O}(m-n+1,  m) & \otimes \tilde{\cC}^{\psi-1/2}_{2B}(n,m+1) \hookrightarrow \tilde{\cC}^{\psi}_{BB}(n,m),\quad n \leq m. \end{split} \end{equation}


\begin{equation} \begin{split} 
\\ \tilde{\cC}^{\psi}_{1O}(n-m,  m-1) & \otimes \tilde{\cC}^{\psi-1}_{1C}(n,m) \hookrightarrow \tilde{\cC}^{\psi}_{CC}(n,m),\quad n \geq m+1,
\\ \tilde{\cC}^{\psi}_{1B}(m-n,  m-1) & \otimes \tilde{\cC}^{\psi-1}_{1C}(n,m) \hookrightarrow \tilde{\cC}^{\psi}_{CC}(n,m),\quad n \leq m.  \end{split} \end{equation}


\begin{equation} \begin{split} 
\tilde{\cC}^{\psi}_{1C}(n-m-1,  m-1) & \otimes \tilde{\cC}^{\psi-1}_{1O}(n,m)\hookrightarrow \tilde{\cC}^{\psi}_{CO}(n,m),
\quad n \geq m+1,
\\ \tilde{\cC}^{\psi}_{1D}(m-n+1,  m-1) & \otimes \tilde{\cC}^{\psi-1}_{1O}(n,m)\hookrightarrow \tilde{\cC}^{\psi}_{CO}(n,m),\quad n \leq m.
\end{split} \end{equation}

\begin{equation} \begin{split} \tilde{\cC}^{\psi}_{2D}(n-m-1,m) & \otimes \tilde{\cC}^{\psi-1/2}_{2D}(n,m+1) \hookrightarrow \tilde{\cC}^{\psi}_{BD}(n,m), \quad n \geq m+1,
\\ \tilde{\cC}^{\psi}_{2C}(m-n+1,  m) & \otimes \tilde{\cC}^{\psi-1/2}_{2D}(n,m+1) \hookrightarrow \tilde{\cC}^{\psi}_{BD}(n,m), \quad n \leq m.\end{split} \end{equation}
\end{theorem}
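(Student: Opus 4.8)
The plan is to leverage the homomorphism $\Psi : \cW^{\text{ev}}_{\infty} \otimes \cW^{\text{ev}}_{\infty} \to \tilde{\cW}^{\gs\gp}_{\infty}$ produced in Theorem \ref{completion:twocopies} and to track what happens to the two tensor factors under the quotient $\tilde{\cW}^{\gs\gp}_{\infty} \to \tilde{\cW}^{\gs\gp}_{\infty, I_{XY,n,m}} = \tilde{\cC}^{\psi}_{XY}(n,m)$ that defines $\Psi_{XY,n,m}$ in \eqref{defpsixynm}. Since $\Psi$ is a homomorphism of vertex algebras over a two-parameter base, each factor $\cW^{\text{ev}}_{\infty}$ is mapped to a subalgebra whose defining data, namely the central charge and the parameter $\lambda$ of $\cW^{\text{ev}}_{\infty}$, are explicit rational functions of the two parameters of $\tilde{\cW}^{\gs\gp}_{\infty}$. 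Restricting along $I_{XY,n,m}$ then produces a one-parameter quotient of each factor, and the whole content of the theorem is the identification of these one-parameter quotients with the claimed orthosymplectic $Y$-algebras.

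First I would fix coordinates $(k,\ell)$ on the parameter space of $\tilde{\cW}^{\gs\gp}_{\infty}$, where $k$ is the $\gs\gp_2$-level and $\ell$ is the diagonal-coset parameter, normalized so that the slice $k=n$ recovers $\cC^{\ell}(n)$. By Lemma \ref{twocopiesofW(sp)} and Corollary \ref{cor:twocopiesofW(sp)}, the two factors are sent under $\Psi$ onto the principal $\cW$-algebra families $\cC^{\psi_1}_{2C}(0,k)$ and $\cC^{\psi_2}_{2C}(0,k)$, where $\psi_1 = \frac{\ell+k+1}{2\ell+2k+1}$ and $\psi_2 = \frac{\ell+k}{2\ell+2k+1}$ are obtained by replacing $n$ with the free variable $k$ in \eqref{eq:levels}; in particular the two central charges are given by \eqref{pairofccc} with $n \mapsto k$, and the two values of $\lambda$ are the unique rational functions of $(k,\ell)$ whose restriction to each slice $k=n$ agrees with the truncation curve of $\cW^{\psi_i}(\gs\gp_{2n}) = \cC^{\psi_i}_{2C}(0,n)$ recorded in \cite{CL4}. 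Next, using Theorem \ref{Yalgebra:Quotientproperty}, I would express the ideal $I_{XY,n,m}$ as an explicit curve in the $(k,\ell)$-plane: the relation $\psi = \psi(k)$ from the definition of $\cC^{\psi}_{XY}(n,m)$ in Section \ref{sect:YtypeC} fixes $k = k(\psi)$, while the requirement that the central charge equal $c_{XY}$ from \eqref{CD}--\eqref{BO} fixes $\ell = \ell(\psi)$. Substituting $(k(\psi),\ell(\psi))$ into $\psi_1,\psi_2$ and into the central-charge and $\lambda$-formulas yields two explicit parametric curves in the $(c,\lambda)$-plane.

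The heart of the argument is then to match these two parametric curves against the truncation curves of the orthosymplectic $Y$-algebras on the right-hand sides of \eqref{pairofcurves:case1} and the subsequent displays. For each case $XY$ the truncation curves $(c(\psi),\lambda(\psi))$ of the candidate algebras $\cC^{\psi'}_{i_1 Z_1}(a_1,b_1)$ and $\cC^{\psi''}_{i_2 Z_2}(a_2,b_2)$ are available in parametric form from \cite{CL4}, and I would verify that, after the indicated reparametrizations $\psi' = \psi$ together with $\psi'' = \psi - 1$ for $X=C$ (resp. $\psi'' = \psi - \tfrac12$ for $X=B$), the image curve of the first factor coincides with the truncation curve of $\cC^{\psi'}_{i_1 Z_1}(a_1,b_1)$ and that of the second factor with $\cC^{\psi''}_{i_2 Z_2}(a_2,b_2)$. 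Because two one-parameter families in the $(c,\lambda)$-plane define the same ideal in $\mathbb{C}[c,\lambda]$ precisely when they have the same truncation curve, this equality of curves shows that the restriction of $\Psi$ to each factor factors through the quotient of $\cW^{\text{ev}}_{\infty}$ along the corresponding ideal $I_{i_1 Z_1, a_1, b_1}$ (resp. $I_{i_2 Z_2, a_2, b_2}$), which is exactly the possibly non-simple algebra $\tilde{\cC}^{\psi'}_{i_1 Z_1}(a_1,b_1)$ (resp. $\tilde{\cC}^{\psi''}_{i_2 Z_2}(a_2,b_2)$), giving the embedding \eqref{psiXYnmInducedtilde}. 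The dichotomies $n \geq m+1$ versus $n \leq m$ in the $BB$, $CC$, $CO$, and $BD$ cases arise because a single image curve is realized by different orthosymplectic $Y$-algebras in the two parameter ranges; this is the curve-level manifestation of the heuristic in the Remark of Section \ref{sect:YtypeC} that $\gs\go_{2n+1}$ behaves as the negative of $\go\gs\gp_{1|2n}$ and $\gs\go_{2n}$ as the negative of $\gs\gp_{2n}$, and I would resolve each range by selecting the realization whose labels $a_i,b_i$ are nonnegative.

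The main obstacle is practical rather than conceptual: producing the two $\lambda$-functions explicitly and carrying out the curve-matching in all eight cases. The central charges are already pinned down by \eqref{pairofccc}, but the $\lambda$-coordinate of each image factor is not recorded in Theorem \ref{completion:twocopies} and must be reconstructed, most efficiently by the interpolation above, as the unique low-degree rational function of $(k,\ell)$ agreeing with the $\cC^{\psi_i}_{2C}(0,n)$ curves on all integer slices $k=n$, whose existence is guaranteed by Theorem \ref{completion:twocopies}(1). Once $\lambda_1(k,\ell)$ and $\lambda_2(k,\ell)$ are in hand, each identification in \eqref{pairofcurves:case1} and the following displays reduces to verifying an equality of two explicitly parametrized rational curves, a finite computation best delegated to computer algebra. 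A secondary point requiring care is the bookkeeping of which $Y$-algebra realizes each curve in the ranges $n \gtrless m$, since the labels $(i,Z,a,b)$ and the shift in $\psi$ differ between the first and second factors.
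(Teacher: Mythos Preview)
Your proposal is correct and follows essentially the same strategy as the paper: both arguments use the map $\Psi$ of Theorem~\ref{completion:twocopies}, restrict to the slice $I_{XY,n,m}$, and identify the resulting one-parameter images of the two $\cW^{\text{ev}}_{\infty}$ factors by matching them against the truncation curves of orthosymplectic $Y$-algebras from \cite{CL4}. The only difference is one of execution: the paper works discretely, observing that for infinitely many $k\in\mathbb{N}$ the truncation curve of $\cC^{\psi}_{XY}(n,m)$ meets that of $\cC^{\ell}(k)$ and that at each such point the factors $\cW^{\ell_i}(\gs\gp_{2k})$ lie on the claimed orthosymplectic curves, whereas you propose to first interpolate $\lambda_i(k,\ell)$ into a global rational function and then verify curve equality directly. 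The paper's discrete-point version sidesteps your interpolation step entirely and is slightly cleaner, but both arrive at the same containment $I_{i_1Z_1,a_1,b_1}\otimes I_{i_2Z_2,a_2,b_2}\subseteq \ker\Psi_{XY,n,m}$.
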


\begin{proof} By using the data in Appendix B of \cite{CL4} on intersections between the truncation curves for $\cC^{\psi}_{iZ}(a,b)$ and $\cC^{\psi}_{2C}(0,n) \cong \cW^k(\gs\gp_{2n})$, we can check that for each $Y$-algebra of type $C$, $\cW^{\psi}_{XY}(n,m)$, there are infinitely many values of $k \in \mathbb{N}$ such that the truncation curves for $\cW^{\ell_1}(\gs\gp_{2k})$ and $\cW^{\ell_1}(\gs\gp_{2k})$ are coincident with the truncation curves for the pair of algebras $\cC^{\psi'}_{i_1 Z_1}(a_1,b_1)$ and $\cC^{\psi''}_{i_2 Z_2}(a_2,b_2)$ on the list above. This proves that the kernel of the map $\Psi_{X,Y,m,n}$ contains the ideal $I_{i_1Z_1, a_1, b_1} \otimes I_{i_2Z_2, a_2, b_2}$ given above. \end{proof}

We conjecture that the map \eqref{psiXYnmInducedtilde} descends to the simple quotient, i.e., it induces a map
\begin{equation} \label{psiXYnmInduced} \cC^{\psi'}_{i_1 Z_1}(a_1, b_1) \otimes \cC^{\psi''}_{i_2 Z_2}(a_2, b_2)  \hookrightarrow \tilde{\cC}^{\psi}_{XY}(n,m).\end{equation}

Although we are unable to prove this in all cases, we have the following argument that works for $3$ of the $8$ families.

\begin{theorem} \label{pairsofcurves:refined}
For cases $\cC^{\psi}_{CB}(n,m)$, $\cC^{\psi}_{BC}(n,m)$, and $\cC^{\psi}_{BO}(n,m)$,
\eqref{pairofcurves:case1} descends to a map $\cC^{\psi'}_{i_1 Z_1}(a_1, b_1) \otimes \cC^{\psi''}_{i_2 Z_2}(a_2, b_2)  \hookrightarrow \tilde{\cC}^{\psi}_{XY}(n,m)$. \end{theorem}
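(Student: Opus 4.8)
The plan is to upgrade the three cases of Theorem \ref{pairsofcurves} in which the right-hand factor on the list \eqref{pairofcurves:case1} is itself one of the $\fr{sp}_2$-rectangular minimal $\cW$-algebras whose weak generation we control via Proposition \ref{prop:minimal}. Concretely, in cases $\cC^{\psi}_{CB}(n,m)$, $\cC^{\psi}_{BC}(n,m)$, and $\cC^{\psi}_{BO}(n,m)$, the second factor is $\tilde{\cC}^{\psi-1}_{1B}(n,m)$, $\tilde{\cC}^{\psi-1/2}_{2C}(n,m+1)$, and $\tilde{\cC}^{\psi-1/2}_{2O}(n,m+1)$ respectively, and each of these is (up to the Feigin--Frenkel/level-shift bookkeeping of Table \ref{tab:Walgebras}) an affine coset of a minimal $\cW$-algebra of type $\cW_{CB}(r,1)$ or $\cW_{CD}(r,1)$, for which Proposition \ref{prop:minimal} gives an explicit set of levels $S_r$ outside of which the weak generation property holds. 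The key point is that by Theorem \ref{pairsofcurves} the two truncation curves already coincide for infinitely many $k\in\mathbb{N}$; what we must add is that at infinitely many of these values the induced map lands in (and in fact generates) the \emph{simple} quotient of each factor, not merely some non-simple $1$-parameter quotient.

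First I would record, for each of the three families, the explicit parametrization $\psi', \psi''$ of the two shifted levels as functions of $\ell$ (equivalently $k$), reading them off from \eqref{twopsiparameters} together with the central-charge formulas \eqref{CD}--\eqref{BO} and the level data in Table \ref{tab:Walgebras}. Next I would invoke Proposition \ref{prop:minimal}: for the minimal-type factor the weak generation by fields of weight at most $3$ holds provided $k \notin S_n \cup \{-\tfrac{n}{2}-n+2, -\tfrac{n}{2}-2, -2, -\tfrac{4}{3}\}$. Since this exceptional set is finite and the image curve is one-dimensional, I would show that for infinitely many $k\in\mathbb{N}$ the corresponding $\psi''$ avoids the bad set, so at those points the factor $\tilde{\cC}^{\psi''}_{i_2 Z_2}(a_2,b_2)$ is simple and equals $\cC^{\psi''}_{i_2 Z_2}(a_2,b_2)$ by Theorem \ref{Yalgebra:Quotientproperty} and Lemma \ref{upto2m+2}. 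The remaining left-hand factor $\tilde{\cC}^{\psi'}_{i_1 Z_1}(a_1,b_1)$ of type $\cW^{\text{ev}}_{\infty}$ is simple for generic $\psi'$ by the analogue of Proposition \ref{extension features}(1), so at a cofinitely-many subset of these $k$ both factors are simple simultaneously.

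The logical structure is then: for each such good $k$, the specialization of the tensor product $\tilde{\cC}^{\psi'}_{i_1 Z_1}(a_1,b_1)\otimes\tilde{\cC}^{\psi''}_{i_2 Z_2}(a_2,b_2)$ is already simple, so its image under $\Psi_{XY,n,m}$ coincides with $\cC^{\psi'}_{i_1 Z_1}(a_1,b_1)\otimes\cC^{\psi''}_{i_2 Z_2}(a_2,b_2)$. Because the structure constants of all the objects involved are rational functions of $\psi$ (each being strongly finitely generated with finitely many structure constants, as in the proof of Corollary \ref{weakgeneration:finiteness}), an identity among these rational functions that holds at infinitely many integer points holds identically. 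Hence the map $\Psi_{XY,n,m}$ factors through the simple quotient of each tensor factor as an identity of $1$-parameter vertex algebras, which is exactly \eqref{psiXYnmInduced} for these three cases. I would close by noting that the embedding (rather than mere homomorphism) follows because the composite with any quotient $\tilde{\cW}^{\gs\gp}_{\infty}\to\cW^{\gs\gp}_{\infty,I_n}$ recovers the conformal embedding of Corollary \ref{cor:twocopiesofW(sp)}, which is injective.

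The main obstacle I anticipate is exactly the step of ensuring the left-hand $\cW^{\text{ev}}_{\infty}$-factor is simple at the relevant points: unlike the minimal-type factor, we do not have a Proposition \ref{prop:minimal}-style statement pinning down its bad locus, and generic simplicity only guarantees that \emph{finitely many} $\psi'$ are excluded without telling us these excluded values avoid our chosen arithmetic progression of $k$. I would handle this by the same rationality-of-structure-constants/infinitude-of-good-points argument used throughout Section \ref{sect:1paramquot}: it suffices that the set of good $k$ be infinite, and since each obstruction (non-simplicity of either factor, or a premature relation below the first genuine relation weight) excludes only finitely many $k$, their union is still a proper subset of $\mathbb{N}$. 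The reason this argument fails for the other five families is that there the \emph{second} factor is no longer of minimal type, so Proposition \ref{prop:minimal} does not apply and we lack independent control of its simple locus — which is precisely why Theorem \ref{pairsofcurves:refined} is stated only for these three cases.
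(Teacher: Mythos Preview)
Your identification of the mechanism is incorrect. The tensor factors $\tilde{\cC}^{\psi-1}_{1B}(n,m)$, $\tilde{\cC}^{\psi-1/2}_{2C}(n,m+1)$, $\tilde{\cC}^{\psi-1/2}_{2O}(n,m+1)$ are \emph{orthosymplectic} $Y$-algebras in the sense of \cite{CL4}, i.e.\ quotients of $\cW^{\text{ev}}_{\infty}$; they are not affine cosets of the minimal $\cW$-algebras $\cW^{\psi}_{CB}(r,1)$ or $\cW^{\psi}_{CD}(r,1)$. Proposition~\ref{prop:minimal} concerns the latter, which are $Y$-algebras of type $C$ (quotients of $\cW^{\gs\gp}_{\infty}$), and says nothing about the weak generation or simplicity of the orthosymplectic factors. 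So your proposed use of Proposition~\ref{prop:minimal} to force simplicity of the second factor does not apply, and your diagnosis of why exactly these three cases are tractable is not the right one.

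The paper's argument runs through a different ingredient entirely: Theorem~\ref{thm:coset-rat}. One intersects the truncation curve for $\cC^{\psi}_{XY}(n,m)$ with that of the diagonal coset $\cC^{\ell}(k)$ for each $k\in\mathbb{N}$, and computes the resulting shifted levels $\ell_1,\ell_2$ of the two copies of $\cW^{\ell_i}(\gs\gp_{2k})$. For $XY\in\{CB,BC,BO\}$ one checks arithmetically that $\ell-1$ is admissible for $\widehat{\gs\gp}_{2k}$ for infinitely many $k$; Theorem~\ref{thm:coset-rat} then says $\cC_{\ell}(k)$ is strongly rational and hence an extension of the \emph{simple} $\cW_{\ell_1}(\gs\gp_{2k})\otimes\cW_{\ell_2}(\gs\gp_{2k})$. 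Combined with the coincidences $\cC_{i_j Z_j,\psi}(a_j,b_j)\cong\cW_{\ell_j}(\gs\gp_{2k})$ at all but finitely many of these points, this forces the image of $\tilde{\cC}^{\psi'}_{i_1Z_1}\otimes\tilde{\cC}^{\psi''}_{i_2Z_2}$ in the simple quotient to be simple for infinitely many $k$, whence any singular vector in either factor maps to zero as a $1$-parameter identity. The reason the remaining cases are excluded is that the analogous admissibility check on $\ell-1$ fails, not anything about Proposition~\ref{prop:minimal}.
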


\begin{proof} We first consider $\cC^{\psi}_{CB}(n,m)$ with $n,m$ fixed. For each $k \in \mathbb{N}$, we have an intersection of the truncation curves for 
$\cC^{\psi}_{CB}(n,m)$ and $\cC^{\ell}(k)$ at the point
$$\psi = \frac{1 + 2 k + 4 m + 2 n}{2 m},\qquad \ell = -(k+1) + \frac{1 + 2 k + 4 m + 2 n}{4 m}.$$
For these values of $\ell$, the shifted levels of $\cW^{\ell_1}(\gs\gp_{2k})$ and $\cW^{\ell_2}(\gs\gp_{2k})$ are 
$$ \frac{1+2k+4m+2n}{2(1+2k+2m+2n)}, \qquad \frac{1+2k+2n}{2(1+2k+2m+2n)}.$$ Since $\ell-1$ is admissible for $\widehat{\gs\gp}_{2k}$ for infinitely many values of $k$, $\cC_{\ell}(k)$ is an extension of the simple quotient $\cW_{\ell_1}(\gs\gp_{2k}) \otimes \cW_{\ell_2}(\gs\gp_{2k})$ at these values. By Corollary \ref{rationalquotients}, $\cC_{\ell}(k)$ has the weak generation property and is a quotient of $\cW^{\gs\gp}_{\infty}$ for these values of $\ell$. Since the image $\Psi_{CB,n,m}(\cW^{\text{ev}}_{\infty} \otimes \cW^{\text{ev}}_{\infty})$ is the simple vertex algebra $\cW_{\ell_1}(\gs\go_{2r})^{\mathbb{Z}_2} \otimes \cW_{\ell_2}(\gs\go_{2r})^{\mathbb{Z}_2}$, it follows that the maximal ideals $\cI_{1D,m+n+1,m-1}$ in the first copy of $\cW^{\text{ev}}_{\infty}$ and  $\cI_{1B,n,m}$ in the second copy $\cW^{\text{ev}}_{\infty}$, lie in the kernel of $\Psi_{CB,n,m}$ at the corresponding point in the parameter space. Since these two maximal ideals lie in this kernel at infinitely many points along the truncation curve for $\tilde{\cC}^{\psi}_{CB}(n,m)$, it follows that $\tilde{\cC}^{\psi}_{1D}(m+n+1,m-1)$ and $\tilde{\cC}^{\psi-1}_{1B}(n,m)$ must be simple.




For $\cC^{\psi}_{BC}(n,m)$, for all $k \in \mathbb{N}$, we have an intersection of the truncation curves for $\cC^{\psi}_{BC}(n,m)$ and $\cC^{\ell}(k)$ for $\ell = -(k+1) + \frac{2 + k + 2 m + 2n}{1 + 2 m}$, such that the shifted levels of $\cW^{\ell_1}(\gs\gp_{2k})$ and $\cW^{\ell_2}(\gs\gp_{2k})$ are $$ \frac{2 + k + 2 m + n}{3 + 2 k + 2 m + 2 n}, \qquad \frac{1 + k + n}{3 + 2 k + 2 m + 2 n}.$$ As above, $\ell-1$ is admissible for infinitely many values of $k$, and the rest of the argument is the same.

Finally, for $\cC^{\psi}_{BO}(n,m)$, for all $k \in \mathbb{N}$, we have an intersection of the truncation curves for $\cC^{\psi}_{BO}(n,m)$ and $\cC_{\ell}(k)$ for $\ell = -(k+1) + \frac{3 + 2 k + 4 m + 2 n}{2 (1 + 2 m)}$, such that the shifted levels of $\cW^{\ell_1}(\gs\gp_{2k})$ and $\cW^{\ell_2}(\gs\gp_{2k})$ are $$ \frac{3 + 2 k + 4 m + 2 n}{4 (1 + k + m + n)}, \qquad \frac{1 + 2 k + 2 n}{4 (1 + k + m + n)}.$$ Since $\ell-1$ is admissible for infinitely many values of $k$, the claim follows as above.  \end{proof}

Similarly, for $n \in \frac{1}{2} \mathbb{Z}$, $n \neq 0,-\frac{1}{2}$, recall the diagonal cosets $\cC^{\ell}(n)$ from Section \ref{sect:Diagonal}, which are simple $1$-parameter quotients of $\cW^{\gs\gp}_{\infty} / I_n  \cdot \cW^{\gs\gp}_{\infty}$. Here $I_n \subseteq \mathbb{C}[c,k]$ is the ideal $(k - n)$, and $c$ a rational function of $\ell$. As above, we may replace $L$ with $L + \frac{1}{2} \partial H^1$ and take the completion $\tilde{\cC}^{\ell}(n)$. When $n \in \mathbb{N}$, we have $\cC^{\ell}(n) = \tilde{\cC}^{\ell}(n)$. In all other cases, $\cC^{\ell}(n) \neq \tilde{\cC}^{\ell}(n)$, but for all $n \in \frac{1}{2} \mathbb{Z}$, we have the homomorphism $$\Psi_{n}: \cW^{\text{ev}}_{\infty} \otimes \cW^{\text{ev}}_{\infty} \rightarrow \tilde{\cC}^{\ell}(n)$$ given by the composition of $\Psi$ with the quotient map $\tilde{\cW}^{\gs\gp}_{\infty} \rightarrow  \tilde{\cC}^{\ell}(n)$.

\begin{theorem} For $n \in \frac{1}{2}\mathbb{Z}$, $n \neq 0,-\frac{1}{2}$ there is a pair of orthosymplectic $Y$-algebras $\cC^{\psi'}_{i_1 Z_1}(a_1, b_1)$ and $\cC^{\psi''}_{i_2 Z_2}(a_2, b_2)$ where $i_1, i_2 \in \{1,2\}$ and $Z_1, Z_2 \in \{B,C,D,O\}$, such that $\Psi_{n}$ descends to a map of $1$-parameter vertex algebras 
\begin{equation} \label{psiXYnmInduced} \tilde{\cC}^{\psi'}_{i_1 Z_1}(a_1, b_1) \otimes \tilde{\cC}^{\psi''}_{i_2 Z_2}(a_2, b_2)  \hookrightarrow \tilde{\cC}^{\ell}(n).\end{equation}
They are listed as follows:
\begin{equation} \begin{split} \tilde{\cC}^{\psi'}_{1D}(1, n) \otimes \tilde{\cC}^{\psi''}_{1D}(1,n) & \hookrightarrow \tilde{\cC}^{\ell}(n-\frac{1}{2}),
 \quad \psi' = \frac{1 + n + \ell}{1 + 2 n + 2 \ell}, \quad   \psi'' = \frac{1 + n + \ell}{3 + 2 n + 2 \ell},
 \\  \tilde{\cC}^{\psi'}_{1D}(n,0) \otimes \tilde{\cC}^{\psi''}_{1D}(n,0) & \hookrightarrow \tilde{\cC}^{\ell}(-n), \quad
\psi' = 3 - 2 n - \ell,  \quad \psi'' = -3 + 2 n + \ell,
\\ \tilde{\cC}^{\psi'}_{1B}(n,0) \otimes \tilde{\cC}^{\psi''}_{1B}(n,0)&  \hookrightarrow \tilde{\cC}^{\ell}(-n-\frac{1}{2}), \quad
\psi' = 2 - 2 n - \ell, \quad  \psi'' = -2 + 2 n + \ell. \end{split}\end{equation} 
  \end{theorem}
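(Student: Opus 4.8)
The plan is to adapt the argument of Theorem \ref{pairsofcurves} essentially verbatim, replacing the $Y$-algebra of type $C$ by the diagonal coset $\tilde{\cC}^{\ell}(n)$. The homomorphism $\Psi_n$ is already available: it is the composite of the map $\Psi$ of Theorem \ref{completion:twocopies} with the quotient map $\tilde{\cW}^{\gs\gp}_{\infty} \rightarrow \tilde{\cC}^{\ell}(n)$, where $\tilde{\cC}^{\ell}(n)$ is obtained from $\tilde{\cW}^{\gs\gp}_{\infty}$ by imposing the fixed $\gs\gp_2$-level $k=n$ together with the truncation curve $c=c(\ell)$ of $\cC^{\ell}(n)$. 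Since $\Psi$ is a map of $2$-parameter vertex algebras, the images $\cW_1$ and $\cW_2$ of the two tensor factors of $\cW^{\text{ev}}_{\infty}\otimes\cW^{\text{ev}}_{\infty}$ are $1$-parameter quotients of $\cW^{\text{ev}}_{\infty}$ whose truncation curves in the $(c,\lambda)$-plane are cut out by the universal expressions for the central charges $c_1,c_2$ (the rational functions \eqref{pairofccc}, specialized along $k=n$) together with the companion $\lambda$-coordinates, all of which are computable directly from $\Psi$.

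First I would note that, exactly as in the proof of Theorem \ref{pairsofcurves}, the induced map \eqref{psiXYnmInduced} exists precisely when the kernel of $\Psi_n$ contains the tensor product ideal $I_{i_1Z_1,a_1,b_1}\otimes I_{i_2Z_2,a_2,b_2}$ defining the candidate pair of orthosymplectic $Y$-algebras. Because the target here carries tildes, i.e.\ we only assert a map of possibly non-simple $1$-parameter quotients, it suffices to establish this ideal containment and we need not argue that the map descends to the simple quotients (the analogue of the harder Theorem \ref{pairsofcurves:refined}). The ideal containment in turn holds if and only if the truncation curves of $\cW_1$ and $\cW_2$ coincide with those of $\tilde{\cC}^{\psi'}_{i_1Z_1}(a_1,b_1)$ and $\tilde{\cC}^{\psi''}_{i_2Z_2}(a_2,b_2)$ for the stated functions $\psi'(\ell),\psi''(\ell)$. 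Thus the problem reduces to a finite collection of coincidences of truncation curves of $\cW^{\text{ev}}_{\infty}$, one for each of the three families and each factor.

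The main step, which I expect to be the principal obstacle, is the verification of these curve coincidences. For each of the three cases I would substitute the prescribed $\psi',\psi''$ into the central-charge and $\lambda$ parametrizations of $\cC^{\psi}_{1D}(a,b)$ and $\cC^{\psi}_{1B}(a,b)$ recorded in \cite{CL4}, and compare with $(c_1(\ell),\lambda_1(\ell))$ and $(c_2(\ell),\lambda_2(\ell))$; the requisite intersection data is collected in Appendix B of \cite{CL4}. The subtlety, and the reason the partners are orthogonal-type $Y$-algebras rather than the $2C$ algebras appearing in the integer case of Corollary \ref{cor:twocopiesofW(sp)}, is that for $n$ a positive half-integer or a negative (half-)integer the two commuting factors are no longer principal $\gs\gp_{2n}$-algebras; one must recognize the relevant curves through Feigin--Frenkel duality and the orthosymplectic triality identities of \cite{CL4}, which reinterpret them as members of the $1D$ and $1B$ families. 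Once each of the three pairs of coincidences is confirmed, which is routine in principle but requires careful bookkeeping of the parametrizing rational functions, it follows that $\Psi_n$ annihilates the stated ideals, yielding the maps \eqref{psiXYnmInduced}.
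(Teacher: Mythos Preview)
Your proposal is correct and follows essentially the same approach as the paper: the paper's own proof is in fact omitted, saying only that the argument is similar to that of Theorem \ref{pairsofcurves} and makes use of the intersection data from \cite{CL4} between truncation curves for the $\cC^{\psi}_{iZ}(a,b)$, which is precisely the strategy you outline. Your observation that only the ideal containment (not descent to simple quotients) is needed, and your identification of the verification as a finite collection of truncation-curve coincidences checked via the \cite{CL4} parametrizations, match the intended argument.
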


 \begin{proof} The argument is similar to the proof of Theorem \ref{pairsofcurves}. It makes use of the data in \cite{CL4} on intersections between the truncation curves for $\cC^{\psi}_{iZ}(a,b)$, so we omit it. 
 \end{proof}
 
 \begin{remark} By Corollary \ref{cor:twocopiesofW(sp)}, for $n \in \mathbb{N}$, $\Psi_n$ induces a map on the simple quotient $\cC^{\psi'}_{i_1 Z_1}(a_1, b_1) \otimes \cC^{\psi''}_{i_2 Z_2}(a_2, b_2)  \hookrightarrow \tilde{\cC}^{\ell}(n)$. We conjecture that this holds for all $n \in \frac{1}{2} \mathbb{Z}$, $n \neq 0,-\frac{1}{2}$. \end{remark}

To summarize, $\cW^{\gs\gp}_{\infty}$ is not an extension of the tensor product of two copies of $\cW^{\text{ev}}_{\infty}$ in a way that is compatible with the maps $\Psi_n$ for $n \in \mathbb{N}$ given by Corollary \ref{cor:twocopiesofW(sp)} but the completion $\tilde{\cW}^{\gs\gp}_{\infty}$ does have this property. This allows us to attach a pair of orthosymplectic $Y$-algebras $\cC^{\psi'}_{i_1Z_2}(a_1,b_1)$ and $\cC^{\psi''}_{i_2Z_2}(a_2,b_2)$ to each $\cC^{\psi}_{XY}(n,m)$, as well as each of the diagonal cosets $\cC^{\ell}(n)$ for $n\in \frac{1}{2}\mathbb{Z}$.

We conclude by mentioning another connection between $\cW^{\gs\gp}_{\infty}$ and a tensor product of two copies of $\cW^{\text{ev}}_{\infty}$, which is suggested by the following result of Fasquel, Nakatsuka, and the second author \cite{FKN}. Recall that $\cW^{\ell}(\mathfrak{sp}_{2(2n+1)}, f_{2n+1, 2n+1})$ has a copy of $V^{k}(\mathfrak{sp}_2)$ for $k= (2n+1)\ell + 4n(n+1)$. Let $f$ be a nonzero nilpotent inside this copy of $\mathfrak{sp}_2$.

\begin{theorem} \label{FKN} As $1$-parameter vertex algebras, $H_f(\cW^{\ell}(\mathfrak{sp}_{2(2n+1)}, f_{2n+1, 2n+1}))\cong \cW^{\ell}(\mathfrak{sp}_{2(2n+1)}, f_{2n+2, 2n})$. 
\end{theorem}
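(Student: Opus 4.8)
As $1$-parameter vertex algebras, $H_f(\cW^{\ell}(\mathfrak{sp}_{2(2n+1)}, f_{2n+1, 2n+1}))\cong \cW^{\ell}(\mathfrak{sp}_{2(2n+1)}, f_{2n+2, 2n})$, where $f$ is a nonzero nilpotent in the copy of $V^k(\gs\gp_2)$ inside $\cW^{\ell}(\gs\gp_{2(2n+1)}, f_{2n+1,2n+1})$.

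Let me think about what this statement is really asserting and how I would prove it.

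The left side is an iterated reduction. We start with $\cW^{\ell}(\gs\gp_{2(2n+1)}, f_{2n+1,2n+1})$, which by the analysis in Section 3 is an $\gs\gp_2$-rectangular $\cW$-algebra of type $\cW(1^3, 2, 3^3, 4, \dots, (2n+1)^3)$ (the distinguished case $\ga$ trivial). Inside it sits $V^k(\gs\gp_2)$. We then apply $H_f$ where $f$ is the nonzero nilpotent of $\gs\gp_2$. The right side is the single reduction of $\gs\gp_{2(2n+1)}$ at the nilpotent $f_{2n+2,2n}$, which corresponds to the partition $(2n+2, 2n)$ of $2(2n+1)$.

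The key conceptual content is that this is an instance of reduction by stages in the sense of Conjecture 3.1 / Definition 3.4. The nilpotent $f_{2n+2,2n}$ should decompose (up to conjugacy) as $f_{2n+1,2n+1} + f'$, where $f'$ is the minimal nilpotent in the centralizer $\gg^{\natural}(f_{2n+1,2n+1}) \cong \gs\gp_2$. Indeed, this is exactly parallel to the type $A$ statement $f_{n+1,n-1} = f_{n,n} + f_{2,1^{2n-2}}$ discussed after equation (3.5) in the excerpt: adding a minimal nilpotent in the $\gs\gp_2$ (resp. $\gg\gl_2$) centralizer to a rectangular nilpotent "merges" the two equal rows into rows of length $2n+2$ and $2n$. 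So the first step in the proof is a combinatorial/Lie-theoretic verification at the level of nilpotent orbits.

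**Proof strategy.**

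\textbf{Step 1: Identify $f$ and verify the orbit decomposition.} First I would show that $f = f_{2n+1,2n+1} + f$, interpreted inside $\gs\gp_{2(2n+1)}$ via the embedding $\gs\gp_2 \cong \gg^{\natural}(f_{2n+1,2n+1}) \hookrightarrow \gs\gp_{2(2n+1)}$, lies in the nilpotent orbit of $f_{2n+2,2n}$. This is a direct computation with the $\gs\gl_2$-grading: the rectangular nilpotent $f_{2n+1,2n+1}$ gives two Jordan blocks of size $2n+1$, and adding the minimal nilpotent from the centralizing $\gs\gp_2$ rearranges the partition $(2n+1,2n+1)\mapsto(2n+2,2n)$. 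One checks this partition is admissible for type $C$ (odd parts with even multiplicity: here all parts even, fine). The level $k = (2n+1)\ell + 4n(n+1)$ asserted in the statement is precisely the shifted $\gs\gp_2$-level from the Kac--Wakimoto Theorem 2.8 formula applied to the rectangular case, so I would confirm this matches.

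\textbf{Step 2: Establish the reduction-by-stages identity.} The heart of the proof is the identification
\[
H_f\bigl(\cW^{\ell}(\gs\gp_{2(2n+1)}, f_{2n+1,2n+1})\bigr) \cong \cW^{\ell}(\gs\gp_{2(2n+1)}, f_{2n+2,2n}).
\]
By the general formalism recalled in Section 2.3, $\cW^{\ell}(\gs\gp_{2(2n+1)}, f_{2n+1,2n+1}) = H_{f_{2n+1,2n+1}}(V^{\ell}(\gs\gp_{2(2n+1)}))$, and applying $H_f$ to a reduction is itself a reduction of the complex $C^{\ell}(\gs\gp_{2(2n+1)}, f_{2n+1,2n+1})$. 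The claim is that the composite BRST differential $d_{(0)}^{f_{2n+1,2n+1}} + d_{(0)}^{f}$ computes the homology associated to the combined nilpotent $f_{2n+2,2n}$. I would invoke the reduction-by-stages result: the first isomorphism in equation (3.5), proven in \cite{FKN} in type $A$, has a type $C$ analogue, and the present statement is exactly that analogue. Since the statement is attributed to \cite{FKN} in the excerpt, the rigorous input is their theorem; my task is to verify its hypotheses hold, i.e., that $f_{2n+2,2n}$ decomposes compatibly (Definition 3.4 condition (1), $f \in \gg^{\natural}(f_{2n+1,2n+1})$ and $f_{2n+1,2n+1} \in \gg^{\natural}(f)$) and that $f \geq f_{2n+1,2n+1}$ fails while the reverse holds, placing us in the regime of the Remark after Conjecture 3.1 where the two reductions differ by a free-field factor that is trivial here because $f$ is minimal in a rank-one centralizer.

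\textbf{Step 3: Character / generating-type matching as a consistency check and to pin down the $1$-parameter identification.} To upgrade an abstract homology isomorphism to an isomorphism of $1$-parameter vertex algebras, I would compare strong generating types and graded characters. Using Theorem 2.6 (Kac--Wakimoto) applied to $f_{2n+2,2n}$, I would compute $\gg^f = \gg^{f_{2n+2,2n}}$ and read off the generating type of the right-hand side from the $\gs\gl_2$-decomposition of $\gs\gp_{2(2n+1)}$ under the $(2n+2,2n)$-triple. On the left, the generators of $H_f$ of the $\gs\gp_2$-rectangular algebra can be tracked through the $\gs\gp_2$-reduction: the weight-one $\gs\gp_2$ triple $\{X^1,Y^1,H^1\}$ reduces to a single weight-$2$ field (a Virasoro piece), and each adjoint-valued odd-weight triple $\{X^{2i-1},Y^{2i-1},H^{2i-1}\}$ reduces according to the minimal $\gs\gp_2$-reduction to a field of shifted weight plus descendants. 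This should reproduce the type predicted for $\cW^{\ell}(\gs\gp_{2(2n+1)}, f_{2n+2,2n})$, and indeed matches the heuristic in the introduction that $H_f(\cW^{\gs\gp}_{\infty})$ is of type $\cW(2^3, 4, 3^3, 5, \dots)$ with the $\gs\gp_2$-triple in weight $1$ promoted to weight $2$.

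\textbf{The main obstacle.} The hardest part is \textbf{Step 2}: making precise that the composite reduction genuinely computes $\cW^{\ell}(\gs\gp_{2(2n+1)}, f_{2n+2,2n})$ rather than some extension or deformation of it, and in particular that no spurious free-field factor appears. In type $A$ the analogous statement is the first isomorphism of (3.5), proven in \cite{FKN}; adapting their argument to type $C$ requires controlling the interaction between the good $\gs\gl_2$-gradings of $f_{2n+1,2n+1}$ and of $f$, verifying that their sum is a good grading for $f_{2n+2,2n}$, and checking that the two BRST differentials anticommute so that the total differential has the correct cohomology. This compatibility of gradings is where the combinatorics of the partitions $(2n+1,2n+1)$ and $(2n+2,2n)$ must be handled carefully, since unlike the principal/hook cases the intermediate centralizer $\gs\gp_2$ is nonabelian and the minimal nilpotent $f$ has a nontrivial $\gs\gl_2$-action on the rectangular generators. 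Given that the statement is cited to \cite{FKN}, I would present Steps 1 and 3 in full and defer the deepest part of Step 2 to that reference, emphasizing the orbit combinatorics and the character match as the verification that the cited theorem applies and identifies the correct $1$-parameter family.
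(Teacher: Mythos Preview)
The paper does not prove this theorem. It is stated, without proof, as a result of Fasquel, Nakatsuka, and the second author from the manuscript \cite{FKN} (listed as ``in preparation'' in the bibliography). There is therefore no proof in the paper to compare your proposal against; the theorem functions here only as motivation for the discussion at the end of Section~\ref{sect:completion}.

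That said, your sketch is a reasonable outline of what a reduction-by-stages proof would involve, and you correctly identify the analogy with the type~$A$ statement \eqref{fks:typeA}. One correction to your Step~2: you invoke the Remark after Conjecture~\ref{reductionstages} about reductions in the ``wrong'' order differing by a free-field factor, but the order here is the correct one. With $f_1 = f_{2n+1,2n+1}$ applied first and $f_2 = f$ (the $\gs\gp_2$-nilpotent) applied second, one has $f_1 \geq f_2$ in the sense of Section~\ref{sec:reduction}, so this is exactly the situation of Conjecture~\ref{reductionstages}, not of the Remark, and no free-field discrepancy is expected. Your identification of the genuine difficulty---establishing that the two BRST differentials are compatible, that the sum of the two good gradings is a good grading for $f_{2n+2,2n}$, and that the composite complex computes the correct cohomology---is accurate, and is precisely the content that \cite{FKN} is cited for.
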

This statement is completely analogous to the first isomorphism in \eqref{fks:typeA}; the only difference is that $f_{n,n} \in \gs\gl_{2n}$ is decomposable, where is $f_{2n+1,2n+1} \in \gs\gp_{2(2n+1)}$ is not. The nilpotent $f_{2n+2, 2n}\in \gs\gp_{2(2n+1)}$ decomposes as the sum of hook-type nilpotents $f_{2n+2, 1^{2n}}$ and $f_{2n, 1^{2n+2}}$, so we expect by Conjecture \ref{reductionstages} that $\cW^{\ell}(\mathfrak{sp}_{2(2n+1)}, f_{2n+2, 2n})$ is an extension of the tensor product 
\begin{equation} \label{2Cstages} \cC^{\psi'}_{2C}(n, n+1) \otimes \cC^{\psi''}_{2C}(0,n),\qquad \psi' = \ell + 2n+2,\qquad \psi'' = \ell + 2n + \frac{3}{2}.\end{equation}
In particular, for each $n$, the reduction of $\cW^{\ell}(\mathfrak{sp}_{2(2n+1)}, f_{2n+1, 2n+1})$ should be an extension of a tensor product of two $1$-parameter quotients of $\cW^{\rm{ev}}_{\infty}$. This strongly suggests that we can apply the reduction functor $H_f$ to the $2$-parameter vertex algebra $\cW^{\gs\gp}_{\infty}$ itself, and the resulting vertex algebra, which is of type $\cW(2^3, 3, 4^3, 5, 6^3, 7,\dots)$, should be an extension of the product of two copies of $\cW^{\text{ev}}_{\infty}$ where the parameters of these two copies are related. The relation between the parameters is uniquely determined by the requirement that it is compatible with the isomorphisms given by Theorem \ref{FKN} for all $n \geq 3$. In fact, we can first take a $1$-parameter quotient $\cC^{\psi}_{XY}(n,m)$ of $\cW^{\gs\gp}_{\infty}$ and then apply $H_f$, and this should be equivalent to first applying $H_f$ to $\cW^{\gs\gp}_{\infty}$ and then passing to a $1$-parameter quotient of $H_f(\cW^{\mathfrak{sp}}_{\infty})$, where both copies of $\cW^{\text{ev}}_{\infty}$ truncate. We can thus associate a pair of truncation curves for $\cW^{\text{ev}}_{\infty}$ to each truncation curve for $\cW^{\mathfrak{sp}}_{\infty}$.

Rather surprisingly, this perspective does {\it not} lead to the same correspondence as Theorem \ref{pairsofcurves} between $Y$-algebras of type $C$, and pairs of orthosymplectic $Y$-algebras. Replacing $\cW^{\ell}(\mathfrak{sp}_{2(2n+1)}, f_{2n+1, 2n+1})$ with $\cC^{\psi}_{BC}(0,n)$ and replacing the parameter $\ell$ in \eqref{2Cstages} with $\psi$, Theorem \ref{FKN} suggests that we have a conformal embedding
 $$\cC^{\psi}_{2C}(n, n+1) \otimes \cC^{\psi-1/2}_{2C}(0,n) \hookrightarrow H_f(\cC^{\psi}_{BC}(0,n)).$$ On the other hand, by Theorems \ref{pairsofcurves} and \ref{pairsofcurves:refined}, we have the conformal embedding 
 $$\cC^{\psi}_{2C}(n+1,n) \otimes \cC^{\psi-1/2}_{2C}(0,n+1)  \hookrightarrow \tilde{\cC}^{\psi}_{BC}(0,n),$$ which corresponds to iterated reduction in the opposite order. Similarly, one verifies that for all orthosymplectic $Y$-algebras $\cC^{\psi}_{XY}(n,m)$, the conformal embeddings 
$$\cC^{\psi'}_{i_1 Z_1}(a_1, b_1) \otimes \cC^{\psi''}_{i_2 Z_2}(a_2, b_2)  \hookrightarrow H_f(\cC^{\psi}_{XY}(n,m))$$ coming from this perspective are related to the ones in Theorem \ref{pairsofcurves} by iterated reduction in the opposite order. It is an interesting question to find a conceptual explanation for this phenomenon.

\end{document}